\numberwithin{equation}{section}
\newcommand\numberthis{\addtocounter{equation}{1}\tag{\theequation}}
\newtheorem{theorem}{Theorem}[section]
\newtheorem{corollary}[theorem]{Corollary}
\newtheorem{proposition}[theorem]{Proposition}
\newtheorem{lemma}[theorem]{Lemma}
\theoremstyle{definition}
\theoremstyle{remark}
\newtheorem{remark}[theorem]{Remark}
\title{\textbf{Unstable shock formation of the Burgers-Hilbert equation}}
\author{Ruoxuan Yang}
\address{Department of Mathematics, Massachusetts Institute of Technology, Cambridge, MA 02139} \email{rxyang@mit.edu}
\date{}
\begin{document}
\maketitle
\begin{abstract}
    This paper proves the existence of unstable shock solutions of the Burgers-Hilbert equation conjectured in \cite{Yang}. More precisely, we construct a smooth initial datum with finite $H^9$-norm such that the solution in self-similar coordinates is asymptotic to the first unstable solution to the self-similar inviscid Burgers equation. The blowup profile is a cusp with H\"older 1/5 continuity with explicit blowup time and location. Unlike the previously established stable shocks, the initial data cannot be taken in an open set; instead, we control the two unstable directions by Newton's iteration.
\end{abstract}

\section{Introduction}
The Burgers-Hilbert (BH) equation consists of an inviscid Burgers equation with a source term given by the Hilbert transform 
\begin{align}
    \partial_t u+u\partial_xu=H[u],\label{eq:BH}
\end{align}
where the Hilbert transform is defined for $f:\mathbb{R}\to\mathbb{R}$ by
\begin{align*}
    H[f](x):=\frac{1}{\pi}\mathrm{p.v.}\int_\mathbb{R}\frac{f(y)}{x-y}\,dy,\qquad \widehat{H[f]}(\xi)=-i\mathrm{sgn}(\xi)\hat{f}(\xi),
\end{align*}
where p.v.\ stands for principal value. We refer the readers to our previous paper \cite{Yang} for an overview of previous studies on Burgers-Hilbert equation, and the references \cite{Biello-Hunter, Bressan-Nguyen, Bressan-Zhang, C-C-G, Hunter, Hunter-Ifrim, Hunter-I-T-Wong, Hunter-andmore, Kenig, Krupa-Vasseur, Saut-Wang}.

In our previous paper \cite{Yang}, we proved the existence of stable shocks of the Burgers-Hilbert equation by constructing a smooth
initial datum with a finite $H^5$-norm such that the  spatial derivative of the solution
tends to infinity at a single point in finite time. The blowup profile was a cusp with H\"older 1/3 continuity, with explicit blowup time and location. To this end, we applied the modulated self-similar transformation, then constructed a solution asymptotically converge to the stable self-similar inviscid Burgers solution. The shock is stable in a sense that the initial data can be taken from an open set in $H^5(\mathbb{R})$.

During the preparation of the current paper, there have been two independent self-similar gradient blowup results for perturbations of the Burgers equation by Chickering-Moreno-Vasquez-Pandy \cite{Chickering} and Oh-Pasqualotto \cite{Oh}. In \cite{Chickering}, Chickering-Moreno-Vasquez-Pandy constructed asymptotic self-similar shock solutions to the fractal Burgers equation
\begin{align*}
    \partial_tu+u\partial_xu+(-\Delta)^\alpha u=0
\end{align*}
for $0<\alpha<\frac{1}{3}$, starting from smooth initial data. The shock is an $H^6$ perturbation of the stable self-similar Burgers profile, and the time, location, and regularity of this
shock can also be precisely computed. In \cite{Oh}, Oh-Pasqualotto established gradient blowup for dispersive and dissipative perturbations of the Burgers equation. More precisely, for the fractional KdV equation of order $\alpha$
\begin{align}
    \partial_tu+u\partial_xu+|D_x|^{\alpha-1}\partial_xu=0,\qquad \text{for any }\alpha\in[0,1),\label{eq:KdV}
\end{align}
the fractal Burgers equation of order $\beta$
\begin{align}
    \partial_t +u\partial_xu+|D_x|^\beta u = 0,\qquad\text{for any }\beta\in[0,1),\label{eq:fractal}
\end{align}
where $|D_x|^\beta=(-\Delta)^\frac{\beta}{2}$, and Whitham's equation
\begin{align}
\partial_tu+u\partial_xu+\Gamma(D_x)\partial_xu=0,\qquad\text{where }\Gamma(\xi)=\sqrt{\frac{\tanh{\xi}}{\xi}},\label{eq:Whitham}
\end{align}
the authors constructed solutions with a ``shock-like" singularity, i.e.\ the amplitude is bounded but the gradient blows up at a single point. Moreover, they provided an asymptotic description of the blowup. Again the smooth self-similar solutions to the inviscid Burgers equation were used as the asymptotic profiles, but perturbations could also be around unstable self-similar Burgers profiles (decribed below). The case when $0<\beta<\frac{2}{3}$ covered the result in \cite{Chickering}, but the techniques were different. Both \cite{Chickering} and \cite{Oh} used the idea of modulated self-similar transformation in \cite{Yang}, which was in turn inspired by \cite{B-S-V1},\,\cite{B-S-V2}, and originally came from \cite{Merle},\,\cite{Merle-Zaag} applied to non-linear dispersive equations. In the appropriate self-similar coordinates, in order to deal with the dissipative or dispersive terms, just as what we did in \cite{Yang}, both works separated the space variable into two regions, middle field and far field, and used bootstrap arguments to show that the space derivatives of the solutions have spatial decay in the middle field and temporal decay in the far field.

It is well known that the invisicid Burgers equation 
\begin{align}
    \partial_tu+u\partial_xu=0\label{eq:invisicd}
\end{align}
has a family of self-similar solutions 
\begin{align}
    u(x,t)=(-t+T_*)^\frac{1}{2i}U_i\bigg(\frac{x-x_*}{(-t+T_*)^{\frac{1}{2i}+1}}\bigg)\label{eq:family}
\end{align}
for $i=1,2,3,...$, where $T_*$ is the blowup time, $x_*$ is the blowup location, and each $U_i\in\mathcal{C}^\infty(\mathbb{R})$ solves 
\begin{align*}
    -\frac{1}{2i}U_i+\bigg(\frac{2i+1}{2i}X+U_i\bigg)U_i'=0,
\end{align*}
where $X\in\mathbb{R}$ is the variable for $U_i$. Furthermore, if the initial datum $u_0$ to \eqref{eq:invisicd} has minimum slope at $x_*$, and satisfies 
\begin{align*}
   u_0(0)=0,\ u_0'(0)=-1,\ u_0^{(j)}(0) = 0\text{ for }j=2,...,2i,\ u_0^{(2i+1)}(0)=\nu >0,
\end{align*}
then the solution $u$ blows up at $T_*=1$, and as $t\to T_*$, 
\begin{align}
    u(x,t)\to \bigg(\frac{\nu}{(2i+1)!}\bigg)^{-\frac{1}{2i}}(T_*-t)^\frac{1}{2i}U_i\bigg(\Big(\frac{\nu}{(2i+1)!}\Big)^\frac{1}{2i}\frac{x}{(T_*-t)^\frac{2i+1}{2i}}\bigg).\label{eq:Burgersunified}
\end{align}
In some suitable function space, only $U_1$ is stable as a fixed point in some dynamical system, all the $U_i$ for $i\geq 2$ are unstable fixed points (see \cite{Eggers2}). Moreover, the set of initial conditions that yield solutions approaching $U_i$ for $i\geq 2$ is located at the boundary of the set of initial conditions leading
to solutions approaching $U_1$, and admits $2i-2$ instability directions yielding to shocks formed by $U_j$ for $j<i$. The blowup of the solution given by \eqref{eq:family} occurs in the H\"older space $\mathcal{C}^\delta(\mathbb{R})$ for all $\delta>\frac{1}{2i+1}$. For more detailed discussions, see \cite{Collot}, \cite{Eggers2} and \cite{Eggers}, Chapter 11.

In \cite{Yang} and \cite{Chickering}, solutions were constructed as perturbations of $U_1$ only. In \cite{Oh}, multiple unstable $U_i$'s were also used, in fact, for given $\alpha,\beta\in[0,1)$, for each $i\in\mathbb{N}$ such that $\alpha,\beta<\frac{2i}{2i+1}$, there is a gradient blowup solution associated with $U_i$\footnote{It is expected that in appropriate self-similar coordinates the solutions converge to $U_i$ on compact sets of $X$, but the authors did no carry out the details.}, hence the more unstable with respect to initial data perturbations (the bigger the $i$), the more stable with respect to perturbations of the Burgers equation (the bigger $\alpha$ and $\beta$ are allowed). In the stable case $i=1$, the initial data can be taken from an open subset of $H^5$ (which agrees with \cite{Yang}), and in the unstable case $i\geq 2$, the initial data form a ``codimension $2i-2$ subset" of $H^{2k+3}$. In the self-similar coordinates $(U,X,s)$\footnote{In \cite{Oh}, $y$ is used in place of $X$, and the orders of space and time is switched.}, modulation constraints are 
\begin{align*}
    U(0,s)=\partial_XU(0,s)+1=\partial_X^{2i}U(0,s)=0\qquad \text{for all }s,
\end{align*}
and the $(2i-2)$ unstable directions were controlled by selecting the values of the $j$-th derivatives of the initial data at $X=0$ for $j=2,...,2i-1$. The values were found by a topological argument well known in the dispersive community, namely, a trapping condition combined with  Brouwer's fixed point theorem. Such a topological argument relies on contradiction, so the values of the $(2i-2)$ derivatives of the initial data were not explicit. To achieve the sharp spatial growth of solutions in suitable self-similar coordinates (and thus the H\"older continuity of solutions in physical coordinates), instead of solely relying on the Lagrangian trajectory-based method in \cite{Yang}, Oh-Pasqualotto also applied a streamlined, sharp weighted $L^2$-based method to space derivatives of order $2i+2$ and $2i+3$ (``top order derivatives"). 
Such a weighted $L^2$-based approach, in addition to the Lagrangian trajectories for the spatial behavior of solutions in self-similar coordinates, together with the topological argument, works for all suitable $i$, so only one proof is carried out simultaneously for all suitable $i$'s.

Another gradient blowup result that used all $U_i$'s as blowup profiles via weighted $L^2$ estimates and a topological argument is \cite{Collot}, for 2D Burgers equation with transverse viscosity
\begin{align*}
\partial_tu+u\partial_xu-\partial_{yy}^2u&=0,\qquad (x,y)\in\mathbb{R}^2,\\
    u\big|_{t=0}&=0.
\end{align*}
In \cite{Collot}, for each $i\in\mathbb{N}$, if the initial datum satisfies $\partial_x^ju_0(0,y)=0$ for all $y\in\mathbb{R}$ and $j=2,...,2i$, then this remains true for later times, and the
traces of the derivatives 
\begin{align*}
    \xi(t,y):=-\partial_xu(t,0,y),\quad\text{and}\quad\zeta(t,y):=\partial_x^{2i+1}u(t,0,y)
\end{align*}
solve the parabolic system
\begin{equation*}
    \begin{cases}
    \partial_t\xi-\xi^2-\partial_{yy}^2\xi=0,\\
    \partial_t\zeta -(2i+2)\xi\zeta-\partial_{yy}^2\zeta=0.
    \end{cases}
\end{equation*}
Since $i$ enters the reduced system only as a coefficient, the proof can be done for all $i\in\mathbb{N}$ at once.                                            

On the other hand, in the context of 2D isentropic compressible Euler equation \cite{B-I}, Buckmaster-Iyer constructed unstable shocks by using the first unstable self-similar Burgers solution $U_2$ as the blowup profile and using only Lagrangian trajectories, and the values of 2nd and 3rd derivative of the initial data at $X=0$ were found via Newton's iteration, i.e.\ one obtain a sequence from the iterative steps, and the targeting value is the limit of the sequence. This method is more explicit at the cost of generality, i.e.\ one has to introduce different sets of bootstrap assumptions for different $i$ and re-prove the theorem; moreover, the bigger the $i$, the more bootstrap assumptions are required. The amount of calculations
becomes very large even with moderate values of $i$, which is the limitation of
this method.

In \cite{Yang}, we conjectured unstable shock formation of the Burgers-Hilbert equation, and this paper presents the proof. Here we only give a rough statement of our main result. The precise statement can be found in Theorem \ref{thm:main} below.
\begin{theorem}[rough statement of the main result]
There exists a smooth initial datum with finite $H^9$-norm and minimum initial slope $-1/\epsilon$ for $0<\epsilon\ll 1$, such that the corresponding solution to the Burgers-Hilbert equation \eqref{eq:BH} forms a shock in $O(\epsilon)$ time. The blowup profile is a cusp with (sharp) H\"older 1/5 continuity, and the blowup time and location are explicitly computable. The shock formation is unstable in the sense that the set of initial data leading to such blowup has codimension 2 in $H^9(\mathbb{R})$.
\end{theorem}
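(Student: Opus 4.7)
My plan is to recast equation \eqref{eq:BH} in modulated self-similar coordinates adapted to the $i=2$ scaling, in analogy with the $i=1$ treatment of \cite{Yang}, and to control the two unstable directions via Newton's iteration in the spirit of \cite{B-I} rather than by a topological argument as in \cite{Oh,Collot}. With modulation functions $\tau(t),\xi(t),\kappa(t)$ for blowup time, location, and amplitude, I would set
\begin{align*}
s=-\log(\tau(t)-t),\qquad X=\frac{x-\xi(t)}{(\tau(t)-t)^{5/4}},\qquad u(x,t)=\kappa(t)+(\tau(t)-t)^{1/4}\,U(X,s),
\end{align*}
so that \eqref{eq:BH} becomes a perturbation of the self-similar Burgers equation
\begin{align*}
\partial_s U-\tfrac14 U+\bigl(\tfrac54 X+U\bigr)\partial_X U=\mathcal{F}[U],
\end{align*}
with forcing $\mathcal{F}$ collecting the rescaled Hilbert term and the contributions of $\dot\tau,\dot\xi,\dot\kappa$. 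The three modulation parameters are fixed by imposing $U(0,s)=0$, $\partial_X U(0,s)=-1$ and $\partial_X^4 U(0,s)=0$, which are exactly the normalizations satisfied by $U_2$ at the origin.

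Writing $U=U_2+\tilde U$ and linearizing, the Taylor coefficients $a_j(s):=\partial_X^j\tilde U(0,s)$ evolve to leading order as $\dot a_j=\tfrac{5-j}{4}\,a_j+\text{l.o.t.}$, so the unstable modes are exactly $j\in\{0,1,2,3,4\}$: the modulation constraints kill $j=0,1,4$, the scaling mode $j=5$ is marginal, and modes $j\ge 6$ are asymptotically stable. The two surviving unstable modes $j=2,3$, with rates $3/4$ and $1/2$, account for the codimension-$2$ obstruction. To control the rest of the profile I would run a bootstrap splitting $\mathbb{R}$ into a middle region $|X|\lesssim e^{\eta s}$ and a far region: in the middle field, Lagrangian trajectories for the transport $\tfrac54 X+U$ are repelled from the origin and yield pointwise spatial decay of $\tilde U$ and its derivatives up to order five; in the far field, a suitable polynomial weight combined with the $e^{-s/4}$ prefactor tames the non-local $H[u]$, mirroring the far-field analysis of \cite{Yang}. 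Pointwise control of five derivatives (matching the $2i+1=5$ derivatives defining $U_2$) together with a weighted $L^2$ estimate at the top order produces the $H^9$ regularity requirement on the initial datum.

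To select the two unstable directions I would fix every other ingredient of the initial datum and view $(a_2^0,a_3^0):=(\partial_X^2 U(0,0),\partial_X^3U(0,0))$ as free parameters. Once the bootstrap is closed modulo these, the reduced system reads, schematically,
\begin{align*}
\dot a_2=\tfrac34\, a_2+R_2(s),\qquad \dot a_3=\tfrac12\, a_3+R_3(s),
\end{align*}
with small remainders $R_2,R_3$. For a given guess $(a_2^0,a_3^0)$ the bootstrap fails at some exit time $s_*$ at which $(a_2(s_*),a_3(s_*))$ first leaves a prescribed box; inverting the leading-order diagonal Jacobian $\operatorname{diag}(e^{3s_*/4},e^{s_*/2})$ of the exit map produces a correction of $(a_2^0,a_3^0)$, and iterating generates a Newton sequence that converges to the unique pair for which both $a_j(s)$ stay bounded (and in fact decay) for all $s\ge 0$. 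This identifies the codimension-$2$ subset of $H^9(\mathbb{R})$, and transferring the convergence $U\to U_2$ back to physical coordinates yields the cusp with sharp $C^{1/5}$ Hölder regularity together with explicit $T_*$ and $x_*$.

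The main obstacle I anticipate is the interplay of non-locality and the two-dimensional instability. Because $H[u]$ is globally coupled, $R_2$ and $R_3$ depend on $U$ everywhere, and to make the Newton map a contraction they must decay strictly faster than the slower unstable rate $\lambda_3=1/2$; producing such a rate will require sharp self-similar estimates on $H[u]$ and a few of its low-order derivatives at $X=0$, built on both the middle- and far-field bootstrap bounds. One must also verify that the Jacobian of the exit map is, to leading order, the diagonal matrix above, so that the two-dimensional inversion is genuinely stable despite the unequal growth rates $3/4$ and $1/2$. Once these two points are settled, closing the bootstrap on $\tilde U$, solving the modulation ODEs for $(\tau,\xi,\kappa)$, and converting the self-similar asymptotics into the $C^{1/5}$ cusp in physical variables are essentially direct generalizations of the $i=1$ arguments in \cite{Yang}, with the $U_2$-exponents substituted for the $U_1$-exponents throughout.
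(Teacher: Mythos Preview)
Your overall strategy is correct and matches the paper's: the same modulated self-similar variables, the same three constraints $U(0,s)=\partial_XU(0,s)+1=\partial_X^4U(0,s)=0$, the identification of $j=2,3$ as the unstable modes with rates $3/4$ and $1/2$, a Lagrangian bootstrap for spatial/temporal decay, and Newton's iteration (rather than a topological argument) to select the two unstable parameters. Your paragraph on anticipated obstacles is also accurate.

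The substantive gap is in what it takes to make the Newton step rigorous. You assert that the Jacobian of the map $(a_2^0,a_3^0)\mapsto(a_2(s),a_3(s))$ is to leading order $\operatorname{diag}(e^{3s/4},e^{s/2})$, but proving this requires propagating \emph{first-order parameter derivatives} $\partial_\alpha\partial_X^nU$, $\partial_\beta\partial_X^nU$ through the full bootstrap (near/middle/far field, $L^2$, modulation), and making Newton converge further requires a Hessian bound, i.e.\ propagating \emph{second-order parameter derivatives} $\partial_{\alpha\alpha}^2\partial_X^nU$, $\partial_{\alpha\beta}^2\partial_X^nU$, $\partial_{\beta\beta}^2\partial_X^nU$. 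In the paper this parameter-derivative bootstrap is roughly half the total work. It is also the actual source of the $H^9$ requirement, which your proposal misidentifies: one needs $\partial_{\alpha\beta}^2\partial_X^5U$ at $X=0$ for the Hessian, whose forcing contains $H[\partial_{\alpha\beta}^2\partial_X^6U]$, which in turn (through the nonlinear products and Hilbert losses) cascades up to $\|\partial_X^9U\|_{L^2}$. The sentence ``pointwise control of five derivatives \dots\ produces the $H^9$ requirement'' is not correct---the paper in fact bootstraps $\partial_X^nU$ pointwise for $n\le 8$, and the count $2i+1=5$ is not what fixes the top order.

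Two smaller points. First, the paper splits into \emph{three} regions, not two: a near field $|X|\le l$ (with $l=(\log M)^{-2}$) is treated separately by Taylor expansion from $X=0$, because the transport is not repelling there; your claim that trajectories ``are repelled from the origin'' is only true for $|X|\ge l$. Second, your exit-time formulation of Newton differs from the paper's scheme: the paper fixes discrete times $s_n=-\log\epsilon+n$, finds $(\alpha_n,\beta_n)$ with $\partial_X^2U_{\alpha_n,\beta_n}(0,s_n)=\partial_X^3U_{\alpha_n,\beta_n}(0,s_n)=0$, and shows $\{(\alpha_n,\beta_n)\}$ is Cauchy using the Jacobian and Hessian bounds on each unit interval $[s_n,s_{n+1}]$. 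Both schemes should ultimately work, but the discrete-time version avoids having to analyze the dependence of an exit time $s_*$ on the parameters.
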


\begin{remark}\label{rem:adhoc}
Note that the fractional KdV equation of order $\alpha=0$ is exactly the Burgers-Hilbert equation. But here we largely follows our previous work \cite{Yang}, the arguments are solely Lagrangian trajectory-based, $L^2$-estimates are only for pointwise estimates of the Hilbert transform in the forcing terms, so our approach is different from \cite{Oh}. Here we only use $U_2$ as the blowup profile. However, we are able to prove that after a suitable modulated self-similar transformation \eqref{eq:selftransform}, the solution $U$ in self-similar coordinates $(X,s)$ converges to $U_2$ in $s$-dependent compact subsets of $X$ (such details were not carried out in \cite{Oh}, because their weighted $L^2$-estimates and Lagrangian trajectory estimates were carried out on the solution directly rather than the difference with $U_2$). We adopt the iteration method illustrated in \cite{B-I} to explicitly find the initial values of $\partial_X^2U(0,s)$ and $\partial_X^3U(0,s)$. So our method is more explicit but also more ad hoc.
\end{remark}

\begin{remark}
Here the initial data are taken in $H^9(\mathbb{R})$, i.e.\ two orders of derivatives higher than those in \cite{Oh}. In \cite{Oh}, a bound for $\|\partial_X^7U\|_{L^2}$ is needed so that $\partial_X^5U(0,s)$ can be bounded by Sobolev embedding (and weighted $L^2$-estimates have no loss of derivatives). Here we need to go to $H^9$ due to the iteration method. For convenience, let the parameters $\alpha,\,\beta$ be the initial values of $\partial_X^2U(0,s)$ and $\partial_X^3U(0,s)$ we are searching for, then the solution $U$ a priori varies smoothly with respect to $\alpha,\,\beta$. Hence, we consider the equations for $\partial_\alpha\partial_X^nU$ and $\partial_{\alpha\beta}^2\partial_X^nU$. We need up to $\partial_{\alpha\beta}^2\partial_X^5U$ to bound the second order parameter derivatives of the modulation variables (which are necessary to close the bootstrap), but to bound $\|\partial_{\alpha\beta}^2\partial_X^5U\|_{L^\infty}$ we need an $L^2$-bound for $\partial_{\alpha\beta}^2\partial_X^6U$ to control $\|H[\partial_{\alpha\beta}^2\partial_X^6U]\|_{L^\infty}$. To control the forcing term in the $L^2$-inner product equation, we need $\|\partial_\alpha\partial_X^7U\|_{L^\infty}$, which in turns requires $\|\partial_X^8U\|_{L^\infty}$, and hence $\|\partial_X^9U\|_{L^2}$ due to the loss of derivative on the Hilbert transform.
\end{remark}

\begin{remark}[future directions]
As we discussed in Remark \ref{rem:adhoc}, the bootstrapping plus iteration method cannot find shock solutions to the Burgers-Hilbert equation in a unified fashion. In contrast, the method in \cite{Oh} is more general but is not as explicit. One future direction is to combine or modify the two methods in order to achieve something similar to the conclusion \eqref{eq:Burgersunified} for \eqref{eq:BH}, and for dispersive and dissipative modified Burgers equations~\eqref{eq:KdV}-\eqref{eq:Whitham}. We anticipate that the tools we use here and in \cite{Yang} to overcome the nonlocality and lack of pointwise boundedness of the Hilbert transform will also be useful for the resolution of this potential problem.

Here, and in \cite{B-I, B-S-V1, B-S-V2, Chickering, Oh, Saut-Wang, Yang}, the magnitude of the slope or gradient of the initial data has to be large at a single point. For the Burgers-Hilbert equation, this is not very surprising, since the convection $uu_x$ is already too strong to be depleted by the non-smoothing Hilbert transform term $H[u]$. It is more interesting and of course more difficult, to construct shock solutions starting with a moderate slope, or even better, a small slope. Or one can try to prove that small-slope initial data like the one in \cite{Biello-Hunter} indeed form a singularity, shock or non-shock, with an explicit description, and prove that the enhanced lifespan obtained in \cite{Hunter-Ifrim, Hunter-I-T-Wong} is sharp. The small slope initial data regime is physically more realistic, since the Burgers-Hilbert equation is a good approximation to the motion of a vorticity front when the slope is small (see \cite{Biello-Hunter, Hunter}); in fact, the large initial slope regime is not covered by this approximation. 

Another question is the continuation and propagation of shock solutions. Buckmaster-Shkoller-Vicol \cite{B-S-V1} use the so-called Lax-Olienik formula to give a unique continuation of the 2-dimensional compressible Euler's equation after the shock. The explicit calculations are performed for adiabatic  constant $\gamma=3$ when the shock solution only has velocity in the angular component but not in the radial component in the polar coordinate. The speed of movement of the shock and the line of jump discontinuity of the solution are given. One may wish to obtain such a continuation for the Burgers-Hilbert equation, but the Hilbert transform in \eqref{eq:BH} makes explicit calculations very difficult.
\end{remark}

\subsection{Strategy of the proof}
Since the Burgers-Hilbert equation is translation invariant in time, we take the initial time $t_0=-\epsilon$ for convenience. Since the asymptotic blowup profile is $U_2$, we perform the self-similar transformation in \eqref{eq:selftransform} to change the physical coordinates $(x,t)$ to the self-similar coordinates $(X,s)$, and map $u$
to $U$ by \eqref{eq:uUrelation} accordingly. We then rewrite the Burgers-Hilbert equation into \eqref{eq:ansatz} for $U$ in self-similar coordinates.

The $\tau,\,\xi,\,\kappa$ are dynamic modulation variables: the function $\tau(t)$ tracks the blowup
time, $\xi(t)$ tracks the movement of the shock location, and $\kappa(t)=u\big(\xi(t),t\big)$ controls the amplitude at the shock location. We impose the constraints \eqref{eq:constraint} on $U(0,s),\,\partial_XU(0,s)$ and $\partial_X^4U(0,s)$, then we get the equations \eqref{eq:eqkappadot}-\eqref{eq:kappa-xi} for $\tau,\,\xi,\,\kappa$. Together with the initial conditions \eqref{eq:modulationinitial}, we can solve for $\tau,\,\xi,\,\kappa$ explicitly.

In self-similar coordinates, the blowup time $T_*$ corresponds to $s=+\infty$ and the
shock location $x_*:=\xi(T_*)$ corresponds to $X = 0$. Shock formation at $x_*$ corresponds
to the constraint $\partial_XU(0,s)=-1$, and the regularity at other points corresponds to the spatial and
temporal decays of $\partial_XU(X,s)$. The instability of the self-similar shock and the $\mathcal{C}^{\frac{1}{5}}$ regularity of the blowup profile corresponds to $U(X,s)\to U_2$ on $\mathcal{C}^1\big([-\frac{1}{2}e^{\frac{5}{4}s},\frac{1}{2}e^{\frac{5}{4}s}]\big)$ and $\partial_X^2U(0,s)\to 0,\,\partial_X^3U(0,s)\to 0$, $\partial_X^5U(0,s)\to U_2^{(5)}(0)=120$ as $s\to+\infty$.

To ensure $\partial_X^2U(0,s)\to 0,\,\partial_X^3U(0,s)\to 0$ as $s\to+\infty$, we must find the exact values of\footnote{For simplicity of this outline, the definition of $\alpha,\,\beta$ here is different from the actual definition in \eqref{eq:initialdata}, hence the difference of $\alpha_0,\,\beta_0$, but the two only differ by a constant, hence the parameter derivatives are essentially the same. The actual definition \eqref{eq:initialdata} is to ensure that the solution varies smoothly with respect to $\alpha,\,\beta$.}
\begin{align*}
    \alpha:=\partial_X^2U(0,-\log\epsilon),\qquad\beta:=\partial_X^3U(0,-\log\epsilon),
\end{align*}
and hence the set of initial data has codimention $2$ in $H^9(\mathbb{R})$. This is done by Newton's iteration method: let $s_n=-\log\epsilon+n$ for $n=0,1,2,...$. We start with $\alpha_0=0$, $\beta_0=0$, so that
\begin{align*}
    \partial_X^2U(0,s_0)=0,\qquad \partial_X^3U(0,s_0)=0.
\end{align*}
Suppose we can find $(\alpha_n,\beta_n)$ such that 
\begin{align*}
\partial_X^2U_{\alpha_n,\beta_n}(0,s_n)=\partial_X^3U_{\alpha_n,\beta_n}(0,s_n)=0,
\end{align*}
where $U_{\alpha,\beta}$ denotes the solution with initial conditions
\begin{align*}
    \partial_X^2U(0,-\log\epsilon)=\alpha,\qquad\partial_X^3U(0,-\log\epsilon)=\beta,
\end{align*}
then we prove that we can find $(\alpha_{n+1},\beta_{n+1})\in B_n(\alpha_n,\beta_n)$ defined in \eqref{eq:sizealphabetan} such that at time $s_{n+1}=s_n+1$,
\begin{align*}
\partial_X^2U_{\alpha_{n+1},\beta_{n+1}}(0,s_{n+1})=\partial_X^3U_{\alpha_{n+1},\beta_{n+1}}(0,s_{n+1})=0.
\end{align*}
The existence of $\alpha_{n+1},\,\beta_{n+1}$ is guaranteed by a non-singular total derivative matrix and a bounded Hessian matrix. The sequences $\{\alpha_n\}$ and $\{\beta_n\}$ are Cauchy sequences. As $n\to\infty$, $\alpha_n\to \alpha_*$, $\beta_n\to\beta_*$, and
\begin{align*}
    \lim_{s\to+\infty} \partial_X^2U_{\alpha_*,\beta_*}(0,s)=\lim_{s\to+\infty}\partial_X^3U_{\alpha_*,\beta_*}(0,s)=0,
\end{align*}
so we obtain our targeting values $\alpha_*,\,\beta_*$.

Thus Theorem \ref{thm:main} follows directly from Theorem \ref{thm:selfsimilar}.
The proof of Theorem \ref{thm:main} utilizes a bootstrap argument applied to $\partial_X^nU,\,\partial_\alpha\partial_X^nU$ and $\partial_{\alpha\beta}^2\partial_X^nU$. The pointwise convergence is not part of the bootstrap since we will prove it a posteriori. We isolate the steps regarding estimates at $X=0$ at the end. For other estimates, we only describe the steps briefly, and the readers can refer to \cite{Yang} for more details. 

\emph{Without parameter derivatives}:
\begin{enumerate}
    \item We control $\|\partial_XU\|_{L^2}$ and $\|\partial_X^9U\|_{L^2}$ uniformly in $s$ by taking $L^2$-inner products of the equations.
    \item We control the growth of $\|U(\cdot,s)+e^{\frac{1}{4}s}\kappa\|_{L^\infty}$ by a transport estimate on the equation of $e^{-\frac{1}{4}s}U(X,s)+\kappa$.
    \item We prove that $U(X,s)$ and $\partial_XU(X,s)$ are close to $U_2$ for $|X|\leq \frac{1}{2}e^{\frac{5}{4}s}$, and the spatial decays of $\partial_X^nU(X,s)$, $n=2,...,8$. We consider equations on $(1+X^4)^{-\frac{1}{20}}\widetilde{U}$, $(1+X^4)^\frac{1}{5}\partial_X\widetilde{U}$ where $\widetilde{U}:=U-U_2$, and $(1+X^4)^\frac{1}{5}\partial_X^nU$, $n=2,...,8$. We use the repelling property of the Lagrangian trajectories to do transport estimates in the middle field $l\leq|X|\leq \frac{1}{2}e^{\frac{5}{4}s}$, and we deal with the near field $|X|\leq l$ differently because the Lagrangian trajectories are not necessarily repelling near $X=0$.
    \item We prove the temporal decay of $\partial_X^nU(X,s)$ in the far field $|X|\geq \frac{1}{2}e^{\frac{5}{4}s}$ for $n=1,...,8$ by considering the equations of $e^s\partial_X^nU$ composed with Lagrangian trajectories.
    \item We close the bootstrap assumptions on modulation variables $\tau,\,\xi,\,\kappa$ using their explicit ODEs. 
\end{enumerate}

\emph{First order parameter derivatives}: Here the estimates hold for all $\alpha,\,\beta$ in a relevant parameter ball $B$ defined in \eqref{eq:sizealphabeta}, and we don't distinguish between $\alpha$ and $\beta$ so we only write $\partial_\alpha$. These estimates themselves are not crucial, but they are necessary for establishing the non-singular total derivative matrix in the iteration step at $X=0$ below.
\begin{enumerate}
    \item We control $\|\partial_\alpha U(\cdot,s)+e^{\frac{1}{4}s}\partial_\alpha\kappa\|_{L^2}$ by taking $L^2$-inner product with the equation of $e^{-\frac{1}{4}s}\partial_\alpha U+\partial_\alpha\kappa$ (because we lose the $L^2$-conservation in physical coordinates). And similarly, we control $\|\partial_\alpha\partial_XU(\cdot,s)\|_{L^2}$ and $\|\partial_\alpha\partial_X^8U(\cdot,s)\|_{L^2}$. 
    \item We give a pointwise bound of $\partial_\alpha U(X,s)+e^{\frac{1}{4}s}\partial_\alpha\kappa$ by considering the equation of $e^{-\frac{1}{4}s}\partial_\alpha U+\partial_\alpha\kappa$. We separate the regions $|X|\leq \frac{1}{2}e^{\frac{5}{4}s}$ and $|X|\geq \frac{1}{2}e^{\frac{5}{4}s}$ due to the different decays of $\partial_XU$ in the two regions.
    \item We give spatial decay and temporal growth of $\partial_\alpha\partial_X^n U(X,s)$, $n=0,...,7$ for $|X|\leq\frac{1}{2}e^{\frac{5}{4}s}$. Similarly as without parameter derivatives, we carry out weighted transport estimates and deal with the near field separately.
    \item We give temporal decay of $\partial_\alpha\partial_X^nU(X,s)$, $n=1,...,7$. 
    \item We close the bootstrap assumptions on the first order parameter derivatives of the modulation variables.
\end{enumerate}

\emph{Second order parameter derivatives}:
Again the estimates hold for all $\alpha,\,\beta$ in the parameter ball \eqref{eq:sizealphabeta}, and we don't distinguish among $\partial^2_{\alpha\alpha},\,\partial^2_{\alpha\beta}$ and $\partial_{\beta\beta}^2$. The purpose of these estimates is to give an upper bound of the Hessian matrix in the iteration step. The argument becomes simpler since we only need $L^\infty$-bounds which are less sharp.
\begin{enumerate}
    \item We control $\|\partial_{\alpha\beta}^2U(\cdot,s)+e^{\frac{1}{4}s}\partial_{\alpha\beta}^2\kappa\|_{L^2}$, $\|\partial_{\alpha\beta}^2\partial_XU(\cdot,s)\|_{L^2}$ and $\|\partial_{\alpha\beta}^2\partial_X^7U(\cdot,s)\|_{L^2}$.
    \item We establish the temporal growths of $\|\partial_{\alpha\beta}^2U(\cdot,s)+e^{\frac{1}{4}s}\partial_{\alpha\beta}^2\kappa\|_{L^\infty}$ and $\|\partial_{\alpha\beta}^2\partial_X^nU(\cdot,s)\|_{L^\infty}$, $n=1,...,6$ by transport estimates on all $X$. We are able to do this because the damping terms, even though not always strictly positive, are all $\geq -\frac{5}{4}$, and $\frac{5}{4}\leq \frac{3}{2}$, hence the $e^{\frac{3}{2}s}$ growth of the forcing terms dominate. 
    \item We close the bootstrap assumptions on the second order parameter derivatives of the modulation variables.
\end{enumerate}

\emph{Estimates at $X=0$, and finding $\alpha,\,\beta$}: 
\begin{enumerate}
    \item \emph{Without parameter derivatives}: We first establish the intermediate step, the decays of $\partial_X^2U_{\alpha_n,\beta_n}(0,s),\,\partial_X^3U_{\alpha_n,\beta_n}(0,s)$ for $s_n\leq s\leq s_{n+1}$, in order to establish the (slightly slower) decays of $\partial_X^2U_{\alpha,\beta}(0,s),\,\partial_X^3U_{\alpha,\beta}(0,s)$ for all $s\geq -\log\epsilon$ and all $(\alpha,\beta)\in B$. The intermediate step is done by integrating the ODEs of $\partial_X^2U(0,s)$ and $\partial_X^3U(0,s)$, and then we use the mean value theorem to get the decay for all $s,\,\alpha,\,\beta$. Finally we show $\partial_X^5\widetilde{U}(0,s)$ is small by integrating its ODE. 
    \item \emph{First parameter derivatives}: Here we distinguish between $\alpha$ and $\beta$. We establish upper and lower bounds of $\partial_\alpha\partial_X^2U(0,s),\,\partial_\beta\partial_X^3U(0,s)$, i.e.\ the diagonal entries of the total derivative matrix in the iteration, bounds on $\partial_\alpha\partial_X^3U(0,s),\,\partial_\beta\partial_X^2U(0,s)$, i.e.\ the anti-diagonal entries. Therefore, the total derivative matrix has strictly positive determinant. We achieve these bounds by integrating the corresponding ODEs. Lastly we bound the growth of $\partial_\alpha\partial_X^5U(0,s)$, which is needed for the modulation variables.
    \item    \emph{Finding $\alpha,\,\beta$ by Newton's iteration}: Given $\alpha_n,\,\beta_n$, we treat $(\alpha_{n+1},\beta_{n+1})$ as the root to the system defined by the Taylor expansions \eqref{eq:inductionTayloralpha},\,\eqref{eq:inductionTaylorbeta}. From the previous steps, in a small neighbourhood $B_n$ around $(\alpha_n,\beta_n)$, the functions are bounded, the determinant of the derivatives is strictly positive, and the Hessian matrix is bounded, so by Newton's iteration method we can find the root $(\alpha_{n+1},\beta_{n+1})\in B_n$. Then we close the bootstrap assumptions on the size of $B$ and $B_n$ for $n=0,1,2,...$.
\end{enumerate}

\emph{Pointwise convergence of $U$ to a rescaled $U_2$}: After closing the bootstrap, we prove the pointwise convergence of $U(X,s)$
to $U_2^\nu$, a rescaled version of $U_2$ defined in Theorem \ref{thm:selfsimilar}. The rescaling is for matching
the fifth order spatial derivative at $X = 0$. To do so, we consider the equation for the difference $\widetilde{U}^\nu:=U-U_2^\nu$, then use the Taylor expansion of $\widetilde{U}^\nu$ at $X = 0$ and the (new) Lagrangian
trajectory which is repelling for all $X_0\neq 0$ to propagate to all $X\neq 0$.

\emph{Codimension 2 subset of $H^9(\mathbb{R})$}: Finally we show that the initial data $u_0$ can be taken from a small codimension 2 subset in $H^9(\mathbb{R})$: for each such $u_0$, by coordinate translation and rescaling if necessary, there exist a $\widehat{U}_0$ and unique $\alpha,\,\beta$ as ``second and third order derivative corrections at $X=0$".

\subsection{Paper outline}
In Section \ref{sec:preli} we introduce the unstable Burgers profile $U_2$, carry out the self-similar transformation, impose the constraints on the solution, and write down the equations we need later. In Section \ref{sec:main} we describe the initial datum and the solution by listing the bootstrap assumptions, and we provide a precise statement of our main result, Theorem \ref{thm:main} and Corollary \ref{cor:codimension}, both of which follow from the result in self-similar coordinates, Theorem \ref{thm:selfsimilar}. In Section \ref{sec:forcing} and \ref{sec:paraforce}, we calculate some estimates for forcing terms. In Section \ref{sec:closure}-\ref{sec:2ndparaclosure}, we close the bootstrap assumptions except for those at $X=0$, which are closed in Section \ref{sec:close0}. In Section \ref{sec:findpara} we find the parameters $\alpha,\,\beta$. In Section \ref{sec:proof} we prove Theorem \ref{thm:selfsimilar}, Theorem \ref{thm:main} and Corollary \ref{cor:codimension}.

\subsection*{Acknowledgments}
Since this is a follow up paper from \cite{Yang}, the author would like to thank Tristan Buckmaster again for suggesting the problem and explaining the differences between stable and unstable shocks. The author would also like to thank Gigliola Staffilani for helpful ideas and writing suggestions. The author was partially supported by the National
Science Foundation under Grant No. DMS-1764403, and was later supported by MIT Department of Mathematics under Graduate Student Appreciation Fellowship. 

\section{Self similarity}\label{sec:preli}
\subsection{The first unstable self-similar Burgers profile}
Recall the first unstable self-similar Burgers solution $U_2$ which solves
\begin{align}
-\frac{1}{4}U_2+\big(U_2+\frac{5}{4}X\big)U_2'=0,\quad U_2(0)=0\label{eq:U2},
\end{align}
so that the function
\begin{align*}
u(x,t):=(T_*-t)^\frac{1}{4}U_2\bigg(\frac{x-x_*}{(T_*-t)^\frac{5}{4}}\bigg)
\end{align*}
solves the inviscid Burgers equation and forms a self-similar shock at $(x_*,T_*)$. 

Unlike the stable solution $U_1$, the unstable solution $U_2$ does not have a closed-form expression. However, by \cite{Collot}, Chapter 11 of \cite{Eggers} and elementary calculus, we are still able to obtain the quantitative properties we need, similar to those in \cite{Yang}.

\begin{lemma}[quantitative properties of $U_2$]\label{lem:U2}
The unstable self-similar Burgers solution $U_2\in\mathcal{C}^\infty(\mathbb{R})$ is odd, decreasing, and concave on $(-\infty,0]$, and it satisfies the implicit equation 
\begin{align*}
    X=-U_2(X)-U_2(X)^5.
\end{align*}
At $X=0$, $U_2^{(n)}\neq 0$ only for $n=1,\,5,\,9,...$, in particular,
\begin{align}
    U_2'(0)=-1,\qquad U_2^{(5)}(0)=120.\label{eq:U2(0)}
\end{align}
As $X\to\pm\infty$, $U_2$ has the asymptotic expansion
\begin{align*}
    U_2(X)=-\mathrm{sgn}(X)|X|^\frac{1}{5}+\frac{\mathrm{sgn}(X)}{5}|X|^{-\frac{3}{5}}+O(|X|^{-\frac{7}{5}}).
\end{align*}
The Taylor expansions for $U_2'$ are
\begin{align*}
    \text{for }X\approx 0:\qquad U_2'(X)&=-1+5X^4+O(X^8),\\
    \text{for }X\to\pm\infty:\qquad U_2'(X)&=-\frac{1}{5}|X|^{-\frac{4}{5}}-\frac{4}{25}|X|^{-\frac{8}{5}}+O(|X|^{-\frac{12}{5}}).
\end{align*}
Moreover, we have the following bounds
\begin{align}
    |U_2(X)|\leq (1+X^4)^\frac{1}{20},\qquad |U_2'(X)|\leq (1+X^4)^{-\frac{1}{5}}.\label{eq:U2bounds}
\end{align}
For all $0<l<0.2$,
\begin{align}
     0>U_2'(X)\geq -(1-2l^4)(1+X^4)^{-\frac{1}{5}}\qquad\text{for }|X|\geq l.\label{eq:U2away0}
\end{align}
And for $|X|\geq 100$,
\begin{align}
    -\frac{7}{40}(1+X^4)^{-\frac{1}{5}}>U_2'(X)>-\frac{9}{40}(1+X^4)^{-\frac{1}{5}}.\label{eq:U2far}
\end{align}
\end{lemma}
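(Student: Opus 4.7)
My plan is to derive every claim from the implicit identity
\begin{equation*}
    X + U_2(X) + U_2(X)^5 = 0,
\end{equation*}
which I first establish by noting that $\phi(V) := -V - V^5$ is a smooth odd bijection $\mathbb{R}\to\mathbb{R}$ with $\phi'(V) = -(1+5V^4) < 0$, so $U_2 := \phi^{-1}$ is automatically in $\mathcal{C}^\infty(\mathbb{R})$, odd, with $U_2(0)=0$ and $U_2'(0)=-1$. Differentiating the identity gives $U_2'(1+5U_2^4) = -1$, which substituted into \eqref{eq:U2} together with the identity itself verifies the ODE. Monotonicity follows since $U_2' = -1/(1+5U_2^4) < 0$, hence $U_2 \geq 0$ on $(-\infty,0]$ by oddness, and then $U_2'' = 20 U_2^3 U_2'(1+5U_2^4)^{-2} \leq 0$ on that interval, yielding concavity.

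For the behavior at $X=0$ I would plug the odd ansatz $U_2(X) = \sum_{k\geq 0} a_{4k+1} X^{4k+1}$ into the identity (the vanishing of coefficients at orders $\not\equiv 1 \pmod 4$ is justified by induction, since both $X$ and $U_2^5$ are supported on such orders whenever $U_2$ is), and match orders to obtain $a_1 = -1$, $a_5 = 1$, $a_9 = -5$; thus $U_2^{(5)}(0) = 120$ and the stated Taylor expansion of $U_2'$ at $0$ follows by differentiation. For $|X|\to\infty$ I would substitute $U_2(X) = -\mathrm{sgn}(X)|X|^{1/5} f(|X|^{-4/5})$, reducing the identity to the algebraic equation $f(z)^5 + z f(z) = 1$ with $f(0)=1$, whose analytic solution has series $f(z) = 1 - z/5 + O(z^2)$; unwinding together with $U_2' = -1/(1+5U_2^4)$ produces the claimed expansions.

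The global bounds \eqref{eq:U2bounds} reduce to polynomial inequalities via $W := |U_2(X)|$, $|X| = W + W^5$. The first reads $W^{20} \leq 1 + (W+W^5)^4$, which after expanding $(W+W^5)^4$ and canceling the $W^{20}$ term leaves the manifestly true $1 + W^4 + 4W^8 + 6W^{12} + 4W^{16} \geq 0$. The second, raised to the fifth power, reads $1 + (W+W^5)^4 \leq (1+5W^4)^5$; writing $t := W^4$ this is equivalent to $24t + 246 t^2 + 1244 t^3 + 3121 t^4 + 3124 t^5 \geq 0$, again immediate by termwise comparison.

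The refined bounds \eqref{eq:U2away0} and \eqref{eq:U2far} are where I expect the main technical difficulty. For \eqref{eq:U2away0} the same substitution reduces the claim to showing $D(t) := (1-2l^4)^5(1+5t)^5 - 1 - t(1+t)^4 \geq 0$ for all $t \geq t_l$, where $t_l(1+t_l)^4 = l^4$, uniformly in $l\in(0,0.2)$. A direct computation gives $D'(t) = (1+5t)\bigl[25(1-2l^4)^5(1+5t)^3 - (1+t)^3\bigr] \geq 0$ for $t \geq 0$ (since $25(1-2l^4)^5 > 1$ for $l\leq 0.2$ and $(1+5t)^3\geq(1+t)^3$), so $D$ is nondecreasing; it then suffices to show $D(t_l) \geq 0$, which a Taylor computation gives as $D(t_l) = 14 l^4 + O(l^8)$, positive uniformly in $l\in(0,0.2)$. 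For \eqref{eq:U2far} I would use the asymptotic expansion from the previous paragraph to cover $|X|\geq X_0$ for some large $X_0$ (where the ratio $-U_2'(X)(1+X^4)^{1/5}$ approaches $1/5$, comfortably between $7/40$ and $9/40$), and close the compact window $100\leq|X|\leq X_0$ by a direct monotonicity analysis of this ratio as a function of $W$, using the closed-form parametrization $U_2' = -W/(5X - 4W)$ coming from the identity $1 + 5W^4 = 5X/W - 4$.
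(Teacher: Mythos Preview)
The paper does not actually prove this lemma; it states the result and defers to \cite{Collot}, Chapter~11 of \cite{Eggers}, and ``elementary calculus''. Your proposal therefore supplies considerably more than the paper does, and the overall strategy---deriving everything from the implicit relation $X=-U_2-U_2^5$ via the parametrization $W=|U_2|$, $|X|=W+W^5$, $t=W^4$---is correct and efficient. The smoothness, oddness, monotonicity, concavity, Taylor structure at $0$ (your $\pmod 4$ induction is the right reason), the asymptotics at infinity via $f(z)^5+zf(z)=1$, and the reduction of the two bounds in \eqref{eq:U2bounds} to explicit nonnegative polynomials are all clean and complete. The identity $1+5W^4=(5|X|-4W)/W$ is a nice touch for the large-$|X|$ analysis.

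Two small remarks. First, for \eqref{eq:U2away0} your monotonicity-plus-endpoint argument is sound, but ``$D(t_l)=14l^4+O(l^8)$, positive uniformly in $l\in(0,0.2)$'' needs one more sentence: either bound the $O(l^8)$ remainder explicitly, or simply evaluate $D(t_l)$ at $l=0.2$ numerically (it is about $0.0225>0$) and invoke continuity in $l$ together with the small-$l$ expansion. Second, if you carry out the asymptotic for $U_2'$ at infinity you will find the next coefficient is $-3/25$ rather than the $-4/25$ printed in the statement; this appears to be a typo in the paper and does not affect any downstream use of the lemma (only the leading $-\tfrac15|X|^{-4/5}$ and the bounds \eqref{eq:U2bounds}--\eqref{eq:U2far} are ever invoked).
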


\subsection{Modulated self-similar transformation}
Define
\begin{align}
X:=\frac{x-\xi(t)}{(\tau(t)-t)^\frac{5}{4}},\qquad s:=-\log{\big(\tau(t)-t\big)},\label{eq:selftransform}
\end{align}
and define the self-similar variable $U$ by
\begin{equation}
\begin{aligned}
u(x,t)&=\big(\tau(t)-t\big)^\frac{1}{4}U\Big(\frac{x-\xi(t)}{(\tau(t)-t)^\frac{5}{4}},-\log\big(\tau(t)-t\big)\Big)+\kappa(t)\\
&=e^{-\frac{1}{4}s}U(X,s)+\kappa(t),
\end{aligned}\label{eq:uUrelation}
\end{equation}
where the dynamic modulation variables
$\xi,\,\tau,\kappa:[-\epsilon,T_*]\to\mathbb{R}$ control the shock
location, blowup time, and wave amplitude, respectively. We require that at time $t_0=-\epsilon$,
\begin{align}
    \tau_0:=\tau(-\epsilon)=0,\qquad\xi_0:=\xi(-\epsilon)=0,\qquad\kappa_0:=\kappa(-\epsilon)=u_0(0).\label{eq:modulationinitial}
\end{align}
And we let $T_*$ to be the solution to 
$\tau(T_*)=T_*$. We will design $u_0$ so that $T_*\sim o(\epsilon)$ and $\tau(t)>t$ for all $t\in[-\epsilon,T)$, so that $T_*$ uniquely exists. In the self-similar time $s$, the blowup as $t\to T_*$ corresponds to $s\to+\infty$.

From the self-similar transformation \eqref{eq:selftransform}, we have the identities
\begin{align*}
\tau(t)-t=e^{-s},\quad x=\big(\tau(t)-t\big)^\frac{5}{4}X+\xi(t)=e^{-\frac{5}{4}s}X+\xi(t).
\end{align*}
From \eqref{eq:uUrelation}, we get the Burgers-Hilbert equation in self-similar form
\begin{align}
\big(\partial_s-\frac{1}{4}\big)U+\Big(\frac{U+e^{\frac{1}{4}s}(\kappa-\dot{\xi})}{1-\dot{\tau}}+\frac{5}{4}X\Big)\partial_XU=-\frac{e^{-\frac{3}{4}s}\dot{\kappa}}{1-\dot{\tau}}+\frac{e^{-s}}{1-\dot{\tau}}H[U+e^{\frac{1}{4}s}\kappa].\label{eq:ansatz}
\end{align}
As a consequence of \eqref{eq:uUrelation}, we have
\begin{align*}
\partial_x^ju(x,t)&=e^{(\frac{5}{4}j-\frac{1}{4})s}\partial_X^jU(X,s),\\
\|\partial_x^ju(\cdot,t)\|_{L^2}^2&=e^{(\frac{5}{2}j-\frac{7}{4})s}\|\partial_X^jU(\cdot,s)\|_{L^2}^2.
\end{align*}
In particular, since $\|u(\cdot,t)\|_{L^2}=\|u_0\|_{L^2}$, $\|U(\cdot,s)+e^{\frac{1}{4}s}\kappa\|_{L^2}=e^{\frac{7}{8}s}\|u_0\|_{L^2}$.

\subsection{Equations of $\partial^j_XU$ }
Here we record the equations of $\partial^j_XU$ for $j=1,\dots,9$ by differentiating \eqref{eq:ansatz} repeatedly. For convenience, we introduce the transport speed
\begin{align}
V:=\frac{U+e^{\frac{1}{4}s}(\kappa-\dot{\xi})}{1-\dot{\tau}}+\frac{5}{4}X.\label{eq:speed}
\end{align}
Then we have
\begin{align}
\Big(\partial_s+\frac{5}{4}n-\frac{1}{4}+\frac{(\mathbbm{1}_{n\geq 2} +n)\partial_XU}{1-\dot{\tau}}\Big)\partial_X^nU+V\partial_X^{n+1}U=F^{(n)}_U,\label{eq:eqUderivative}
\end{align}
where 
\begin{align}
F^{(n)}_U=\frac{e^{-s}}{1-\dot{\tau}}H[\partial^n_XU]-\frac{\mathbbm{1}_{\{n\text{ odd},n\geq 2\}}{n \choose \frac{n-1}{2}}}{1-\dot{\tau}}(\partial_X^\frac{n+1}{2}U)^2-\frac{1}{1-\dot{\tau}}\sum_{j=2}^{\lfloor \frac{n}{2}\rfloor}{n+1\choose j}\partial_X^jU\partial_X^{n-j+1}U\label{eq:forcingUderivative},
\end{align}
the last sum is non-zero when $n\geq 4$.

\subsection{Constraints on $U$ and equations of modulation variables}
We impose the following constraints on $U$ which are consistent with values of $U_2$:
\begin{align}
U(0,s)=0,\quad\partial_XU(0,s)=-1,\quad\partial_X^4U(0,s)=0.\label{eq:constraint}
\end{align}
Plugging these constraints in the equations of $U,\,\partial_XU$ and $\partial_X^4U$ and evaluating at $X=0$, we get the equations for the modulation variables
\begin{align}
-e^{\frac{1}{4}s}(\kappa-\dot{\xi})&=-e^{-\frac{3}{4}s}\dot{\kappa}+e^{-s}H[U+e^{\frac{1}{4}s}\kappa](0,s),\label{eq:eqkappadot}\\
\dot{\tau}+e^{\frac{1}{4}s}(\kappa-\dot{\xi})\partial_X^2U(0,s)&=e^{-s}H[\partial_XU](0,s),\label{eq:dottaukappa-xi}\\
e^{\frac{1}{4}s}(\kappa-\dot{\xi})\partial_X^5U(0,s)&=e^{-s}H[\partial_X^4U](0,s)-10\partial_X^2U(0,s)\partial_X^3U(0,s).\label{eq:kappa-xi}
\end{align}

\subsection{Equations of $\partial_X^j\widetilde{U}$}
Let $\widetilde{U}:=U-U_2$. Then
\begin{gather}
    \Big(\partial_s-\frac{1}{4}+\frac{\partial_XU_2}{1-\dot{\tau}}\Big)\widetilde{U}+V\partial_X\widetilde{U}=-\frac{e^{-\frac{3}{4}s}\dot{\kappa}}{1-\dot{\tau}}+\frac{e^{-s}}{1-\dot{\tau}}H[U+e^{\frac{1}{4}s}\kappa]-\partial_XU_2\frac{\dot{\tau}U_2+e^{\frac{1}{4}s}(\kappa-\dot{\xi})}{1-\dot{\tau}}.\label{eq:eqtilde}
\end{gather}    
For equations of the derivatives $\partial_X^n\widetilde{U}$, we split into the case when $n=1$ and higher derivatives. When $n=1$,
\begin{gather}
    \Big(\partial_s+1+\frac{2\partial_X U_2+\partial_X\widetilde{U}}{1-\dot{\tau}}\Big)\partial_X\widetilde{U}+V\partial^2_X\widetilde{U}=F^{(1)}_{\tilde{U}},\label{eq:eq1xtildeU}
\end{gather}
where the forcing term is
\begin{gather}
F^{(1)}_{\tilde{U}}=\frac{e^{-s}}{1-\dot{\tau}}H[\partial_XU]-\frac{\dot{\tau}}{1-\dot{\tau}}\Big[(\partial_XU_2)^2+U_2\partial_X^2U_2\Big]-\frac{1}{1-\dot{\tau}}\Big[e^{\frac{1}{4}s}(\kappa-\dot{\xi})\partial_X^2U_2+\widetilde{U}\partial_X^2U_2\Big]\label{eq:forcing1xtildeU}.
\end{gather}
When $n\geq 2$,
\begin{gather}
    \Big(\partial_s+\frac{5}{4}n-\frac{1}{4}+\frac{n+1}{1-\dot{\tau}}\partial_X U\Big)\partial^n_X\widetilde{U}+V\partial^{n+1}_X\widetilde{U}=F^{(n)}_{\tilde{U}},\label{eq:eqnxtildeU}
 \end{gather}   
where the forcing terms are 
\begin{equation}
    \begin{aligned}
    F_{\tilde{U}}^{(n)}&=\frac{e^{-s}}{1-\dot{\tau}}H[\partial_X^nU]-\frac{\dot{\tau}}{1-\dot{\tau}}\bigg[\sum_{j=0}^{\lfloor \frac{n}{2}\rfloor}  {n+1\choose j}\partial_X^jU_2\partial_X^{n-j+1}U_2+{n\choose \frac{n+1}{2}}\mathbbm{1}_{\{n\text{ odd}\}}\big(\partial_X^\frac{n+1}{2}U_2\big)^2\bigg]\\
&\qquad-\frac{1}{1-\dot{\tau}}\bigg[e^{\frac{1}{4}s}(\kappa-\dot{\xi})\partial_X^{n+1}U_2+\sum_{j=0}^{n-1}{n+1\choose j}\partial_X^j\widetilde{U}\partial_X^{n+1-j}U_2\\
&\qquad+\sum_{j=2}^{\lfloor\frac{n}{2}\rfloor}{n+1 \choose j}\partial_X^j\widetilde{U}\partial_X^{n+1-j}\widetilde{U}+\mathbbm{1}_{\{n\text{ odd}\}}{n\choose \frac{n+1}{2}}\big(\partial_X^\frac{n+1}{2}\widetilde{U}\big)^2\bigg]
    \end{aligned}.\label{eq:forcingnxtildeU}
\end{equation}

\subsection{Equations containing first order parameter derivatives}
In the following, $\partial_\alpha$ can be replaced with $\partial_\beta$.
\begin{align}
\Big(\partial_s-\frac{1}{4}+\frac{\partial_XU}{1-\dot{\tau}}\Big)\partial_\alpha U+V\partial_X\partial_\alpha U=F_{U,\alpha},\label{eq:eqUalpha}
\end{align} 
where 
\begin{equation}
    \begin{aligned}
F_{U,\alpha}&=\partial_\alpha F_U-\frac{e^{\frac{1}{4}s}}{1-\dot{\tau}}\partial_XU\partial_\alpha(\kappa-\dot{\xi})-\frac{\partial_\alpha\dot{\tau}}{(1-\dot{\tau})^2}\partial_XU\big(U+e^{\frac{1}{4}s}(\kappa-\dot{\xi})\big)\\
&=-\frac{e^{-\frac{3}{4}s}}{1-\dot{\tau}}\partial_\alpha\dot{\kappa}-\frac{e^{-\frac{3}{4}s}}{(1-\dot{\tau})^2}\partial_\alpha\dot{\tau}\,\dot{\kappa}+\frac{e^{-s}}{1-\dot{\tau}}H[\partial_\alpha U+e^{\frac{1}{4}s}\partial_\alpha\kappa]+\frac{e^{-s}}{(1-\dot{\tau})^2}\partial_\alpha\dot{\tau}H[U+e^{\frac{1}{4}s}\kappa]\\
&\qquad-\frac{e^{\frac{1}{4}s}}{1-\dot{\tau}}\partial_XU\partial_\alpha(\kappa-\dot{\xi})-\frac{\partial_\alpha\dot{\tau}}{(1-\dot{\tau})^2}\partial_XU\big(U+e^{\frac{1}{4}s}(\kappa-\dot{\xi})\big)
\end{aligned}.\label{eq:forcingUalpha}
\end{equation}
For $n=1,...,8$, we have
\begin{align}
\Big(\partial_s+\frac{5}{4}n-\frac{1}{4}+\frac{n+1}{1-\dot{\tau}}\partial_XU\Big)\partial_\alpha\partial_X^nU+V\partial_\alpha\partial_X^{n+1}U=F_{U,\alpha}^{(n)},\label{eq:eqnxUalpha}
\end{align}
where 
\begin{equation}
    \begin{aligned}
    F_{U,\alpha}^{(n)}&=e^{-s}\partial_\alpha\Big(\frac{1}{1-\dot{\tau}}H[\partial_X^nU]\Big)-\frac{1}{1-\dot{\tau}}\Big\{\partial_\alpha U\partial_X^{n+1}U+\mathbbm{1}_{n\geq 2}G^{(n)}_{1,\alpha}\Big\}\\
&\qquad-\frac{1}{(1-\dot{\tau})^2}\Big\{\partial_\alpha\dot{\tau}U\partial_X^{n+1}U+(n+\mathbbm{1}_{n\geq 2})\partial_\alpha\dot{\tau}\partial_XU\partial_X^nU+\mathbbm{1}_{n\geq 2}G^{(n)}_{2,\alpha}\Big\}
\\
&\qquad-\frac{e^{\frac{1}{4}s}}{1-\dot{\tau}}(\partial_\alpha\kappa-\partial_\alpha\dot{\xi})\partial_X^{n+1}U-\frac{e^{\frac{1}{4}s}}{(1-\dot{\tau})^2}\partial_\alpha\dot{\tau}(\kappa-\dot{\xi})\partial_X^{n+1}U,\label{eq:forcingnxUalpha}
    \end{aligned}
\end{equation}
and
\begin{align*}
\partial_\alpha\Big(\frac{1}{1-\dot{\tau}}H[\partial_X^nU]\Big)&=\frac{1}{1-\dot{\tau}}H[\partial_\alpha\partial_X^nU]+\frac{\partial_\alpha\dot{\tau}}{(1-\dot{\tau})^2}H[\partial_X^nU],\\
G_{1,\alpha}^{(n)}&=\sum_{j=1}^{n-1}{n+1\choose j}\partial_\alpha\partial_X^jU\partial_X^{n-j+1}U,\\
G_{2,\alpha}^{(n)}&=\partial_\alpha\dot{\tau}\bigg[\mathbbm{1}_{n\text{ odd}}{n\choose \frac{n-1}{2}}(\partial_X^\frac{n+1}{2}U)^2+\sum_{j=2}^{\lfloor \frac{n}{2}\rfloor}{n+1\choose j}\partial_X^jU\partial_X^{n-j+1}U\bigg].
\end{align*}

\subsection{Equations containing second order parameter derivatives}
We use mixed derivative $\partial^2_{\alpha\beta}$ to demonstrate, but the same is true for $\partial^2_{\alpha\alpha}$ and $\partial^2_{\beta\beta}$. For $U$,
\begin{align}
\big(\partial_s-\frac{1}{4}+\frac{\partial_XU}{1-\dot{\tau}}\big)\partial_{\alpha\beta}^2U+V\partial_X\partial_{\alpha\beta}^2U=F_{\alpha,\beta},\label{eq:eqUalphabeta}
\end{align}
where 
\begin{equation}
    \begin{aligned}
F_{\alpha,\beta}&=\frac{e^{-s}}{1-\dot{\tau}}H[\partial_{\alpha\beta}^2U+e^{\frac{1}{4}s}\partial_{\alpha\beta}^2\kappa]+\frac{e^{-s}}{(1-\dot{\tau})^2}\partial_\beta\dot{\tau}H[\partial_\alpha U+e^{\frac{1}{4}s}\partial_\alpha\kappa]\\
&\qquad+\frac{e^{-s}}{(1-\dot{\tau})^2}\partial_\alpha\dot{\tau}H[\partial_\beta U+e^{\frac{1}{4}s}\partial_\beta\kappa]
+e^{-s}H[U+e^{\frac{1}{4}s}\kappa]\bigg(\frac{\partial_{\alpha\beta}^2\dot{\tau}}{(1-\dot{\tau})^2}+\frac{2\partial_\alpha\dot{\tau}\partial_\beta\dot{\tau}}{(1-\dot{\tau})^3}\bigg)\\
&\qquad-e^{-\frac{3}{4}s}\bigg[\frac{\partial_{\alpha\beta}^2\kappa}{1-\dot{\tau}}+\frac{\partial_\beta\dot{\tau}\partial_\alpha\dot{\kappa}+\partial_\alpha\dot{\tau}\partial_\beta\dot{\kappa}+\dot{\kappa}\partial_{\alpha\beta}^2\dot{\tau}}{(1-\dot{\tau})^2}+\frac{2\dot{\kappa}\partial_\alpha\dot{\tau}\partial_\beta\dot{\tau}}{(1-\dot{\tau})^3}\bigg]\\
&\qquad-\frac{1}{1-\dot{\tau}}\Big[\partial_\beta\partial_XU\big(\partial_\alpha U+e^{\frac{1}{4}s}\partial_\alpha(\kappa-\dot{\xi})\big)+\partial_\alpha\partial_X U\big(\partial_\beta U+e^{\frac{1}{4}s}\partial_\beta(\kappa-\dot{\xi})\big)\Big]\\
&\qquad-\frac{e^{\frac{1}{4}s}}{1-\dot{\tau}}\partial_{\alpha\beta}^2(\kappa-\dot{\xi})\partial_XU-\frac{2}{(1-\dot{\tau})^3}\partial_\beta\dot{\tau}\partial_\alpha\dot{\tau}\big(U+e^{\frac{1}{4}s}(\kappa-\dot{\xi})\big)\partial_XU\\
&\qquad-\frac{1}{(1-\dot{\tau})^2}\Big[\partial_\beta\dot{\tau}\partial_XU\big(\partial_\alpha U+e^{\frac{1}{4}s}\partial_\alpha(\kappa-\dot{\xi})\big)+\partial_\alpha\dot{\tau}\partial_\beta\partial_XU\big(U+e^{\frac{1}{4}s}(\kappa-\dot{\xi})\big)\\
&\qquad\qquad\qquad\quad+\partial_\beta\dot{\tau}\partial_\alpha\partial_XU\big(U+e^{\frac{1}{4}s}(\kappa-\dot{\xi})\big)+\partial_\alpha\dot{\tau}\partial_XU\big(\partial_\beta U+e^{\frac{1}{4}s}\partial_\beta(\kappa-\dot{\xi})\big)\\
&\qquad\qquad\qquad\quad+\partial^2_{\alpha\beta}\dot{\tau}\partial_XU\big(U+e^{\frac{1}{4}s}(\kappa-\dot{\xi})\big)\Big].
\end{aligned}\label{eq:forcingUalphabeta}
\end{equation}

For $n=1,...,7$, 
\begin{align}
\Big(\partial_s+\frac{5}{4}n-\frac{1}{4}+\frac{n+1}{1-\dot{\tau}}\partial_XU\Big)\partial^2_{\alpha\beta}\partial_X^nU+V\partial_{\alpha\beta}^2\partial_X^{n+1}U=F_{\alpha,\beta}^{(n)},\label{eq:eqnxUalphabeta}
\end{align}
where
\begin{equation}
    \begin{aligned}
    F_{\alpha,\beta}^{(n)}&=e^{-s}\partial_{\alpha\beta}^2\Big(\frac{1}{1-\dot{\tau}}H[\partial_X^nU]\Big)\\
    &\qquad-\frac{1}{1-\dot{\tau}}\Big[\partial_\beta\partial_X^{n+1}U\big(\partial_\alpha U+e^{\frac{1}{4}s}\partial_\alpha(\kappa-\dot{\xi})\big)+\partial_\alpha\partial_X^{n+1}U\big(\partial_\beta U+e^{\frac{1}{4}s}\partial_\beta(\kappa-\dot{\xi})\big)\\
    &\qquad\qquad\qquad+(n+1)\partial_\beta\partial_XU\partial_\alpha\partial_X^nU+\partial_X^{n+1}U\big(\partial_{\alpha\beta}^2U+e^{\frac{1}{4}s}\partial^2_{\alpha\beta}(\kappa-\dot{\xi})\big)+\mathbbm{1}_{n\geq 2}\partial_\beta G_{1,\alpha}^{(n)}\Big]\\
&\qquad-\frac{1}{(1-\dot{\tau})^2}\Big[\partial_\beta\dot{\tau}G_{1,\alpha}^{(n)}+\partial_\beta\dot{\tau}\partial_\alpha\partial_X^{n+1}U\big(U+e^{\frac{1}{4}s}(\kappa-\dot{\xi})\big)+\partial_\alpha\dot{\tau}\partial_\beta\partial_X^{n+1}U\big(U+e^{\frac{1}{4}s}(\kappa-\dot{\xi})\big)\\
&\qquad\qquad\qquad\quad+\partial_{\alpha\beta}^2\dot{\tau}\partial_X^{n+1}U\big(U+e^{\frac{1}{4}s}(\kappa-\dot{\xi})\big)+\partial_\beta\dot{\tau}\partial_X^{n+1}U\big(\partial_\alpha U+e^{\frac{1}{4}s}\partial_\alpha(\kappa-\dot{\xi})\big)\\
&\qquad\qquad\qquad\quad+\partial_\alpha\dot{\tau}\partial_X^{n+1}U\big(\partial_\beta U+e^{\frac{1}{4}s}(\kappa-\dot{\xi})\big)+(n+1)\partial_\beta\dot{\tau}\partial_XU\partial_\alpha\partial_X^nU+\mathbbm{1}_{n\geq 2}\partial_\beta G_{2,\alpha}^{(n)}\\
&\qquad\qquad\qquad\quad+(n+\mathbbm{1}_{n\geq 2})\big(\partial_{\alpha\beta}^2\dot{\tau}\partial_XU\partial_X^nU+\partial_\alpha\dot{\tau}\partial_\beta\partial_XU\partial_X^nU+\partial_\alpha\dot{\tau}\partial_XU\partial_\beta\partial_X^nU\big)\Big]
\\
&\qquad-\frac{2}{(1-\dot{\tau})^3}\Big[\partial_\beta\dot{\tau}\mathbbm{1}_{n\geq 2}G_{2,\alpha}^{(n)}+\partial_\alpha\dot{\tau}\partial_\beta\dot{\tau}\partial_X^{n+1}U\big(U+e^{\frac{1}{4}s}(\kappa-\dot{\xi})\big)\\
&\qquad\qquad\qquad\quad+(n+\mathbbm{1}_{n\geq 2})\partial_\alpha\dot{\tau}\partial_\beta\dot{\tau}\partial_XU\partial_X^nU\Big],
    \end{aligned}\label{eq:forcingnxUalphabeta}
\end{equation}
and
\begin{align*}
\partial_{\alpha\beta}^2\Big(\frac{1}{1-\dot{\tau}}H[\partial_X^nU]\Big)&=\frac{1}{1-\dot{\tau}}H[\partial^2_{\alpha\beta}\partial_X^nU]+\frac{1}{(1-\dot{\tau})^2}\big(\partial_\beta\dot{\tau}H[\partial_\alpha\partial_X^nU]+\partial_\alpha\dot{\tau}H[\partial_\beta\partial_X^nU]\big)\\
&\qquad+\Big(\frac{\partial_{\alpha\beta}^2\dot{\tau}}{(1-\dot{\tau})^2}+\frac{2\partial_\alpha\dot{\tau}\partial_\beta\dot{\tau}}{(1-\dot{\tau})^3}\Big)H[\partial_X^nU],\\
\partial_\beta G_{1,\alpha}^{(n)}&=\sum_{j=1}^{n-1}{n+1\choose j}\big(\partial_{\alpha\beta}^2\partial_X^jU\partial_X^{n-j+1}U+\partial_\alpha\partial_X^jU\partial_\beta\partial_X^{n-j+1}U\big),\\
\partial_\beta G_{2,\alpha}^{(n)}&=\partial_{\alpha\beta}^2\dot{\tau}\Big[\mathbbm{1}_{n\text{ odd}}{n\choose \frac{n-1}{2}}(\partial_X^\frac{n+1}{2}U)^2+\sum_{j=2}^{\lfloor \frac{n}{2}\rfloor}{n+1\choose j}\partial_X^jU\partial_X^{n-j+1}U\Big]\\
&\qquad+\partial_\alpha\dot{\tau}\sum_{j=2}^{n-1}{n+1\choose j}\partial_\beta\partial_X^jU\partial_X^{n-j+1}U.
\end{align*}

\section{Main result}\label{sec:main}
\subsection{Initial data}\label{sec:initialdata}
We take $M>0$ very large to be determined from the proof, and $\epsilon>0$ very small according to $M$.

Let $\chi:\mathbb{R}\to\mathbb{R}$ be an even, smooth bump such that 
\begin{align*}
    \chi(X)=\begin{cases}
    1\quad&\text{if }-1\leq X\leq 1,\\
    0\quad&\text{if }|X|\geq 2,
    \end{cases}
\end{align*}
and almost linear on the intervals $[-2,-1]$ and $[1,2]$. The latter requirement is to make the 2nd and 3rd derivatives of $\chi$ not too big (this is only for convenience and not essential). Let $\widehat{U}_0:\mathbb{R}\to\mathbb{R}$ such that
\begin{align}
   \widehat{U}_0^{(n)}(0)&=0\qquad\text{for }n=0,1,4,5,\label{eq:hatU0vanish0}\\
    |\widehat{U}_0''(0)|&\leq \epsilon,\qquad|\widehat{U}_0^{(3)}(0)|\leq \epsilon, \label{eq:hatU023small}\\
    |\widehat{U}_0(X)|&\leq \epsilon(1+X^4)^\frac{1}{20}\qquad \text{for }|X|\leq \frac{1}{2}\epsilon^{-\frac{5}{4}},\label{eq:hatUmiddle}\\
    |\widehat{U}^{(n)}_0(X)|&\leq \epsilon(1+X^4)^{-\frac{1}{5}}\qquad\text{for }|X|\leq \frac{1}{2}\epsilon^{-\frac{5}{4}}\text{ and }n=1,\dots,8,\label{eq:hatU0nxmiddle}\\
    |\widehat{U}_0'(X)|&\leq \frac{1}{2}\epsilon\qquad\text{for } |X|\geq \frac{1}{2}\epsilon^{-\frac{5}{4}},\label{eq:hatU01xfar}\\
    |\widehat{U}^{(n)}_0(X)|&\leq \frac{1}{2}M^{n^2}\epsilon\qquad\text{for } |X|\geq \frac{1}{2}\epsilon^{-\frac{5}{4}}\text{ and }n=2,\dots,8.\label{eq:hatU0nxfar}
\end{align}
We set the initial data to be
\begin{gather}
    U_{\alpha,\beta}(X,-\log\epsilon)=U_2(X)\chi(2\epsilon^{\frac{5}{4}}X)+\widehat{U}_0(X)+\chi(X)(\alpha X^2+\beta X^3),\label{eq:initialdata}
    \end{gather}
converting back to the physical coordinates,
   \begin{gather} u_0(x)=\epsilon^\frac{1}{4}U_2(\epsilon^{-\frac{5}{4}}x)\chi(2x)+\epsilon^\frac{1}{4}\widehat{U}_0(\epsilon^{-\frac{5}{4}}x)+\epsilon^\frac{1}{4}\chi(\epsilon^{-\frac{5}{4}}x)\big(\alpha(\epsilon^{-\frac{5}{4}}x)^2+\beta(\epsilon^{-\frac{5}{4}}x)^3\big)+\kappa(-\epsilon),\label{eq:initialphysical}
\end{gather}
where $\alpha,\,\beta$ are parameters whose exact values will be determined by an iterative method below, but 
\begin{align}
    (\alpha,\beta)\in B:=\{(\alpha,\beta):    |\alpha|\leq \frac{5}{4}M^{15}\epsilon,|\beta|\leq\frac{3}{2}M^{25}\epsilon\}.\label{eq:sizealphabeta}
\end{align}
Moreover, we make $U_{\alpha,\beta}(X,-\log\epsilon)+\epsilon^{-\frac{1}{4}}\kappa(-\epsilon)$, to have compact support $[-\epsilon^{-\frac{5}{4}},\epsilon^{-\frac{5}{4}}]$, and also\footnote{It is indeed possible to make $\|\partial_XU(\cdot,-\log\epsilon)\|_{L^2}\leq \sqrt{6}$. Note that
\begin{align*}
    \partial_XU(X,-\log\epsilon)=U_2'(X)\chi(2\epsilon^\frac{5}{4}X)+2\epsilon^\frac{5}{4}U_2(X)\chi'(2\epsilon^\frac{5}{4}X)+\widehat{U}_0'(X)+\chi(X)(2\alpha X+3\beta X^3)+\chi'(X)(\alpha X^2+\beta X^3).
\end{align*}
Using the properties of $\widehat{U}_0$, the support of $U(\cdot,\log\epsilon)$ and $\chi$, and \eqref{eq:sizealphabeta}, we have
\begin{align*}
    \|\partial_XU(\cdot,-\log\epsilon)\|_{L^2}&\leq (1+\epsilon)\Big(\int_\mathbb{R}(1+X^4)^{-\frac{2}{5}}\,dX\Big)^\frac{1}{2}+\epsilon\cdot\epsilon^{-\frac{5}{8}}\\
    &\quad+2C_{\chi'}(1+\epsilon)\epsilon^\frac{5}{4}\Big(\int_{\frac{1}{2}\epsilon^{-\frac{5}{4}}\leq|X|\leq \epsilon^{-\frac{5}{4}}}(1+X^4)^\frac{1}{10}\,dX\Big)^\frac{1}{2} +C_\chi M^{25}\epsilon\\
    &\leq 2.2547(1+\epsilon)+\epsilon^\frac{3}{8}+C_{\chi'}\epsilon^\frac{5}{4}\epsilon^{-\frac{7}{8}}+C_\chi M^{25}\epsilon\leq 2.449<\sqrt{6}.
\end{align*}}
\begin{gather}
    \|\partial_XU(\cdot,-\log\epsilon)\|_{L^2}\leq \sqrt{6},\label{eq:L21xUinitial}\\
    -\|\partial_XU(\cdot,-\log\epsilon)\|_{L^\infty}=\min_X\partial_XU(X,-\log\epsilon)=\partial_XU(0,-\log\epsilon)=-1,\\ \|\partial_X^9U(\cdot,-\log\epsilon)\|_{L^2}\leq M^{65},\label{eq:L29xUinitial}\\
    \|U(\cdot,-\log\epsilon)+\epsilon^{-\frac{1}{4}}\kappa_0\|_{L^\infty}\leq \frac{1}{2}M\epsilon^{-\frac{1}{4}}.
\end{gather}
As a consequence, in physical variables,
\begin{gather*}
    \mathrm{supp}\,u_0\subset [-1,1],\qquad \|u_0\|_{L^\infty}\leq \frac{1}{2}M,\qquad\|u_0\|_{L^2}\leq \frac{\sqrt{2}}{2}M,\\
    -\|u_0'\|_{L^\infty}=\min_x u_0'(x)=u_0'(0)=-1.
\end{gather*}

We denote $U_{\alpha,\beta}$ the solution to \eqref{eq:ansatz} with initial data $U_{\alpha,\beta}(\cdot,-\log\epsilon)$. We omit $\alpha,\beta$ when the estimates hold for all $(\alpha,\,\beta)\in B$.

\subsection{Bootstrap assumptions on $U$ and its spatial derivatives}\label{sec:boostrapassumption}
First, due to the $L^2$ conservation in physical variables, we have 
\begin{align*}
    \|U(\cdot,s)+e^{\frac{1}{4}s}\kappa\|_{L^2}=e^{\frac{7}{8}s}\|u_0\|_{L^2}\leq \frac{\sqrt{2}}{2}Me^{\frac{7}{8}s}.
\end{align*}
For pointwise estimates, we need to separate $X$ into near, middle and far fields. Let $l=(\log M)^{-2}$
be a very small near field threshold, and $\frac{1}{2}e^{\frac{5}{4}s}$ be the far field threshold (the power of $s$ comes from the repelling speed of the Lagragian trajectories). Let $\widetilde{U}:=U-U_2$.

Near field assumptions: for $|X|\leq l$,
\begin{align}
|\partial_X^n\widetilde{U}(X,s)|&\leq \epsilon^\frac{1}{5}|X|^{6-n}+\epsilon^\frac{1}{2}\leq 2l^{6-n}\epsilon^\frac{1}{5}\quad n=0,...,5,\label{eq:assumptionnear0-5}\\
|\partial_X^6\widetilde{U}(X,s)|&\leq \epsilon^\frac{1}{5},\qquad|\partial_X^7\widetilde{U}(X,s)|\leq M\epsilon^\frac{1}{5},\qquad
|\partial_X^8\widetilde{U}(X,s)|\leq M^3\epsilon^\frac{1}{5}.\label{eq:assumptionnear6-8}
\end{align}
Middle field assumptions: for $(X,s)$ such that $l\leq |X|\leq \frac{1}{2}e^{\frac{5}{4}s}$,
\begin{align}
|\widetilde{U}(X,s)|&\leq \epsilon^\frac{3}{20}(1+X^4)^\frac{1}{20},\label{eq:assumptionUmiddle}\\
|\partial_X\widetilde{U}(X,s)|&\leq \epsilon^\frac{1}{20}(1+X^4)^{-\frac{1}{5}},\label{eq:assumption1xmiddle}\\
|\partial_X^nU(X,s)|&\leq M^{n^2}(1+X^4)^{-\frac{1}{5}},\qquad n=2,...,8.\label{eq:assumptionnxmiddle}
\end{align}
Far field assumptions: for $(X,s)$ such that $|X|\geq \frac{1}{2}e^{\frac{5}{4}s}$,
\begin{align}
|\partial_XU(X,s)|&\leq 4e^{-s},\label{eq:assumption1xfar}\\
|\partial_X^nU(X,s)|&\leq 2M^{n^2}e^{-s}\qquad n=2,...,8.\label{eq:assumptionnxfar}
\end{align}
Assumptions on $L^2$ norms,\footnote{We want $\|\partial_X^8U\|_{L^\infty}\leq M^{64}$ to be a nontrivial bound, i.e. better than the one from interpolation. So the lowest $M$ power is 69, but an even number makes expressions look better.}
\begin{align}
   \|\partial_XU(\cdot,s)\|_{L^2}\leq \sqrt{7},\qquad \|\partial_X^9U(\cdot,s)\|_{L^2}\leq M^{70}.\label{eq:assumptionL2}
\end{align}
As a consequence of Sobolev interpolation Lemma \ref{lem:Sinterpolation},
\begin{align*}
    \|\partial_X^nU(\cdot,s)\|_{L^2}&\leq \|\partial_XU(\cdot,s)\|_{L^2}^{1-\frac{n-1}{8}}\|\partial_X^9U(\cdot,s)\|_{L^2}^\frac{n-1}{8}\\
    &\leq 7^{\frac{1}{2}-\frac{n-1}{16}}M^\frac{35(n-1)}{4},\qquad n=2,...,8.
\end{align*}
We also have the $L^\infty$-norm assumption
\begin{align}
    \|U(\cdot,s)+e^{\frac{1}{4}s}\kappa\|_{L^\infty}\leq Me^{\frac{1}{4}s},\label{eq:assumptionULinfty}
\end{align}
so that the solution $u$ in physical coordinates is bounded: $\|u(\cdot,t)\|_{L^\infty}\leq M$ for $t\in [-\epsilon,T_*]$.\\
Finally, the assumptions at $X=0$:
\begin{align}
    |\partial_X^2U(0,s)|\leq \epsilon^\frac{1}{10}e^{-\frac{3}{4}s},\qquad |\partial_X^3U(0,s)|\leq M^{27}e^{-s},\qquad 
    |\partial_X^5\widetilde{U}(0,s)|\leq \epsilon^\frac{1}{2}.\label{eq:assumptionx=0}
\end{align}

\begin{remark}
Unlike \cite{Yang}, here we can not obtain
\begin{align*}
    -\|\partial_XU(\cdot,s)\|_{L^\infty}=-1=\partial_XU(0,s).
\end{align*}
The problem comes from the near field. Middle and far fields still hold. Clearly, for $|X|\geq \frac{1}{2}e^{\frac{5}{4}s}$, by \eqref{eq:assumption1xfar},
\begin{align*}
    |\partial_XU(X,s)|\leq 4e^{-s}<1.
\end{align*}
For $l\leq|X|\leq\frac{1}{2}e^{\frac{5}{4}s}$, by \eqref{eq:assumption1xmiddle} and  \eqref{eq:U2away0},
\begin{align*}
    |\partial_XU(X,s)|&\leq |U_2'(X)|+|\partial_X\widetilde{U}(X,s)|\\
    &\leq (1-2l^4+\epsilon^\frac{1}{20})(1+X^4)^{-\frac{1}{5}}\leq (1+X^4)^{-\frac{1}{5}}<1.
\end{align*}
But for $|X|\leq l$, the Taylor expansion near $X=0$ is
\begin{align*}
    \partial_XU(X,s)=-1+\partial_X^2U(0,s)X+\frac{1}{2}\partial_X^3U(0,s)X^2+\frac{1}{24}\partial_X^5U(0,s)X^4+\frac{1}{120}\partial_X^6U(X',s)X^5,
\end{align*}
i.e.\ the linear term does not vanish. Nevertheless, since \eqref{eq:assumptionnear6-8} and
\begin{align*}
    \partial_X^5U(0,s)\geq 120-\epsilon^\frac{1}{2}>119,
\end{align*}
we can obtain
\begin{gather}
    \partial_XU(X,s)\geq -1-\epsilon^\frac{1}{10}e^{-\frac{3}{4}s}l-\frac{1}{2}M^{27}e^{-s}l^2+\frac{119}{48}X^4\geq -1-\epsilon^\frac{1}{10}e^{-\frac{3}{4}s},\\
    \|\partial_XU(\cdot,s)\|_{L^\infty}\leq 1+\epsilon^\frac{1}{10}e^{-\frac{3}{4}s}.\label{eq:conse1xULinfty}
\end{gather}
As $s\to+\infty$, we recover the property that minimum slope is $-1$ at $X=0$.
\end{remark}

\subsection{Bootstrap assumptions on parameter derivatives}
All the assumptions on parameter derivatives hold uniformly for all $(\alpha,\beta)\in B$ where $B$ is defined in \eqref{eq:sizealphabeta}.

For first order parameter derivatives, the only important estimates are at $X=0$, which need to be sharp. But to close the bootstrap, we need to supply bounds for all $X$, but they do not need to be as sharp. We only go to $\partial_\alpha\partial_X^7$, i.e. one $X$ derivative lower than without parameter derivative, because the forcing term of $\partial_\alpha\partial_X^nU$ contains $\partial_X^{n+1}U$.\\
At $X=0$:
\begin{gather}
\begin{aligned}
    \epsilon^\frac{3}{4}e^{\frac{3}{4}s}\leq \partial_\alpha\partial_X^2U(0,s)\leq 4\epsilon^\frac{3}{4}e^{\frac{3}{4}s},\quad |\partial_\alpha\partial_X^3U(0,s)|\leq \epsilon e^{\frac{1}{2}s},\\
|\partial_\beta\partial_X^2U(0,s)|\leq \epsilon e^{\frac{3}{4}s},\quad 4\epsilon^\frac{1}{2}e^{\frac{1}{2}s}\leq \partial_\beta\partial_X^3U(0,s)\leq 8\epsilon^\frac{1}{2}e^{\frac{1}{2}s},
\end{aligned}\label{eq:assumptionjacobian}\\
|\partial_\alpha\partial_X^5U(0,s)|\leq \epsilon^\frac{3}{8}e^{\frac{1}{8}s}.\label{eq:assmumptionpara5x0}
\end{gather}
In the following $\alpha$ can be replaced by $\beta$.\\
Near field: for $|X|\leq l$,
\begin{align}
|\partial_\alpha\partial_X^nU(X,s)|&\leq Ml^\frac{1}{2}\epsilon^\frac{3}{4}e^{\frac{3}{4}s} \qquad n=0,...,6,\label{eq:assumptionparanear0-6}\\
|\partial_\alpha\partial_X^7U(X,s)|&\leq M\epsilon^\frac{3}{4}e^{\frac{3}{4}s}.\label{eq:assumptionparanear7}
\end{align}
Middle field: for $l\leq|X|\leq \frac{1}{2}e^{\frac{5}{4}s}$,
\begin{align}
|\partial_\alpha\partial_X^nU(X,s)|\leq M^{(n+2)^2}\epsilon^\frac{3}{4}e^{\frac{3}{4}s}(1+X^4)^{-\frac{1}{5}},\qquad n=1,...,7.\label{eq:assumptionparanxmiddle}
\end{align}
Far field: for $|X|\geq \frac{1}{2}e^{\frac{5}{4}s}$
\begin{align}
|\partial_\alpha\partial_X^nU(X,s)|\leq 4M^{(n+2)^2}\epsilon^\frac{3}{4}e^{-\frac{1}{4}s},\qquad n=1,...,7.\label{eq:assumptionparanxfar}
\end{align}
On $\partial_\alpha U$, we need to add the $\kappa$ part, 
\begin{align}
    |\partial_\alpha U(X,s)+ e^{\frac{1}{4}s}\partial_\alpha\kappa| \leq\begin{cases}
    M^4\epsilon^\frac{3}{4}e^{\frac{3}{4}s},\quad&\text{if }|X|\leq \frac{1}{2}e^{\frac{5}{4}s},\\
    2M^4\epsilon^\frac{3}{4}e^{\frac{3}{4}s},\quad&\text{if }|X|\geq \frac{1}{2}e^{\frac{5}{4}s}.
    \end{cases} \label{eq:assumptionparaU}
\end{align}
Again we include $L^2$-bounds to deal with the Hilbert transform (but here we do not have the $L^2$ conservation)\footnote{In the proof of $\|e^{-\frac{1}{4}s}\partial_\alpha U+\partial_\alpha\kappa\|_{L^2}$, there is a competition between $e^{\frac{5}{8}s}$ from the ``damping" and $\max\{\|\partial_\alpha\partial_XU\|_{L^2},\|\partial_\alpha U+e^{\frac{1}{4}s}\partial_\alpha\kappa\|_{L^\infty}\}$. The latter cannot grow more slowly than $e^{\frac{5}{8}s}$. Although we can make it arbitrarily close to $\frac{5}{8}$, we choose $\frac{3}{4}$ to leave some room for other estimates.}
\begin{gather}
\|\partial_\alpha U(\cdot,s)+e^{\frac{1}{4}s}\partial_\alpha\kappa\|_{L^2}\leq M^{15}\epsilon^\frac{3}{4}e^s,\label{eq:assumption[araUL2}\\
     \|\partial_\alpha\partial_XU(\cdot, s)\|_{L^2}\leq M^{13}\epsilon^\frac{3}{4}e^{\frac{3}{4}s},\qquad \|\partial_\alpha\partial_X^8U(\cdot,s)\|_{L^2}\leq M^{90}\epsilon^\frac{3}{4}e^{\frac{3}{4}s}.\label{eq:assumptionparaixL2}
\end{gather}
By interpolation, we have
\begin{align*}
    \|\partial_\alpha\partial_X^nU(\cdot,s)\|_{L^2}&\leq \|\partial_\alpha\partial_XU\|_{L^2}^{1-\frac{n-1}{7}} \|\partial_\alpha\partial_X^8U\|_{L^2}^\frac{n-1}{7}\\
    &\leq M^{11n+2}  \epsilon^\frac{3}{4}e^{\frac{3}{4}s},\qquad n=1,...,8.
\end{align*}

Then for second order parameter derivatives, the assumptions are even less sharp. In fact, we only need $L^2$ and $L^\infty$ assumptions. In the following mixed partial $\partial_{\alpha\beta}^2$ can be replaced by $\partial^2_{\alpha\alpha}$ or $\partial_{\beta\beta}^2$. First, $L^\infty$ assumptions:
\begin{align}
    \|\partial^2_{\alpha\beta}\partial_X^nU(\cdot,s)\|_{L^\infty}&\leq M^{(n+5)^2}\epsilon^\frac{3}{2}e^{\frac{3}{2}s},\quad n=1,...,6,\label{eq:assumption2paranx}\\
    \|\partial_{\alpha\beta}^2U+e^{\frac{1}{4}s}\partial_{\alpha\beta}^2\kappa\|_{L^\infty}&\leq M^{25}\epsilon^\frac{3}{2}e^{\frac{3}{2}s}.\label{eq:assumption2paraU}
\end{align}
Then $L^2$ assumptions:
\begin{gather}
    \|\partial_{\alpha\beta}^2U(\cdot,s)+e^{\frac{1}{4}s}\partial_{\alpha\beta}^2\kappa\|_{L^2}\leq M^{38}\epsilon^\frac{3}{2}e^{\frac{7}{4}s},\label{eq:assumption2paraUL2}\\
    \|\partial_{\alpha\beta}^2\partial_XU(\cdot,s)\|_{L^2}\leq M^{37}\epsilon^\frac{3}{2}e^{\frac{3}{2}s},\qquad \|\partial_{\alpha\beta}^2\partial_X^7U(\cdot,s)\|_{L^2}\leq M^{130}\epsilon^\frac{3}{2}e^{\frac{3}{2}s}.\label{eq:assumption2paranxL2}
\end{gather}
So by Sobolev interpolation,
\begin{align*}
    \|\partial_{\alpha\beta}^2\partial_X^nU(\cdot,s)\|&\leq \|\partial_{\alpha\beta}^2\partial_XU\|_{L^2}^{1-\frac{n-1}{6}}\|\partial_{\alpha\beta}^2\partial_X^7U\|_{L^2}^\frac{n-1}{6}\\
    &\leq M^{\frac{31}{2}n+11\frac{1}{2}}\epsilon^\frac{3}{2}e^{\frac{3}{2}s},\qquad
     n=1,...,7.
\end{align*}

\subsection{Iterative assumptions}
For a fixed $\widehat{U}_0$ given above, there is a unique pair $(\alpha,\beta)$ such that the solution $U_{\alpha,\beta}$ corresponding to the initial datum given by \eqref{eq:initialdata} asymptotically converges to $U_2$. To find such $\alpha,\,\beta$, denote $s_n=-\log\epsilon+n$, $n=0,1,2,...$. Initialize 
\begin{align*}
    \alpha_0=-\frac{1}{2}\widehat{U}''(0),\qquad \beta_0=-\frac{1}{6}\widehat{U}^{(3)}(0),
\end{align*}
so that 
\begin{align*}
    \partial_X^2U_{\alpha_0,\beta_0}(0,s_0)=0,\quad \partial_X^3U_{\alpha_0,\beta_0}(0,s_0)=0.
\end{align*}
The iterative assumption is, for each $n=0,1,2,...$, given $\alpha_n,\,\beta_n$ that make
\begin{align}
    \partial_X^2U_{\alpha_n,\beta_n}(0,s_n)=0,\qquad \partial_X^3U_{\alpha_n,\beta_n}(0,s_n)=0,\label{eq:inductassumption}
\end{align}
we can find $\alpha_{n+1},\,\beta_{n+1}$ in the neighbourhood 
\begin{align}
    B_n:=\{(\alpha,\beta):|\alpha-\alpha_n|\leq M^{15}\epsilon^{-\frac{3}{4}}e^{-\frac{7}{4}s_n}+\epsilon^{-\frac{3}{10}}e^{-\frac{3}{2}s_n},\ |\beta-\beta_n|\leq M^{25}\epsilon^{-\frac{1}{2}}e^{-\frac{3}{2}s_n}\}\label{eq:sizealphabetan}
\end{align}
such that 
\begin{align*}
     \partial_X^2U_{\alpha_{n+1},\beta_{n+1}}(0,s_{n+1})=0,\qquad \partial_X^3U_{\alpha_{n+1},\beta_{n+1}}(0,s_{n+1})=0.
\end{align*}
For this step we need to furthermore assume, for $s_n\leq s\leq s_{n+1}$,
\begin{align}
    |\partial_X^2U_{\alpha_n,\beta_n}(0,s)|\leq M^{13\frac{1}{2}}e^{-s},\qquad |\partial_X^3U_{\alpha_n,\beta_n}(0,s)|\leq M^{22}e^{-s}.\label{eq:finerinductassumptioin}
\end{align}

\subsection{Bootstrap assumptions on modulation variables}
Without parameter derivatives,
\begin{align}
    |\kappa-\dot{\xi}|\leq \epsilon^\frac{1}{5}e^{-s},\quad |\dot{\xi}|\leq 2M,\quad |\dot{\tau}|\leq \epsilon^\frac{1}{5}e^{-\frac{3}{4}s},\quad |\dot{\kappa}|\leq M^2e^{\frac{1}{8}s},\qquad |T_*|\leq 2\epsilon^{\frac{39}{20}}.\label{eq:assumptionmodulation}
\end{align}
First order parameter derivatives
\begin{align}
    |\partial_\alpha\dot{\tau}|\leq \epsilon^\frac{1}{2},\quad|\partial_\alpha(\kappa-\dot{\xi})|\leq M\epsilon^\frac{1}{2}e^{-\frac{1}{2}s},\quad |\partial_\alpha\dot{\xi}|\leq M\epsilon^\frac{1}{2},\quad|\partial_\alpha\kappa|\leq \epsilon^\frac{1}{2},\quad |\partial_\alpha\dot{\kappa}|\leq \epsilon^\frac{1}{2}se^{\frac{1}{2}s}.\label{eq:assumptionparamodulation}
\end{align}
Second order parameter derivatives
\begin{equation}
    \begin{gathered}
     |\partial_{\alpha\beta}^2\dot{\tau}|\leq \epsilon e^{\frac{3}{4}s},\quad |\partial_{\alpha\beta}^2\dot{\xi}|\leq 5M\epsilon^\frac{5}{4}e^s,\quad |\partial_{\alpha\beta}^2\dot{\kappa}|\leq 2M\epsilon^\frac{5}{4}e^{2s},\\
    |\partial_{\alpha\beta}^2\kappa|\leq 3M\epsilon^\frac{5}{4}e^s,\quad |\partial_{\alpha\beta}^2(\kappa-\dot{\xi})|\leq M\epsilon^\frac{5}{4}e^{s}.
    \end{gathered}\label{eq:assumption2paramodulation}
\end{equation}
These assumptions are far from being sharp, yet sufficient.

\subsection{Statement of the theorems}
\begin{theorem}[self-similar profile]\label{thm:selfsimilar}
There exists a sufficiently large parameter $M\gg 1$, a sufficiently small parameter $0<\epsilon=\epsilon(M)\ll 1$, such that for each $\widehat{U}_0(X):\mathbb{R}\to\mathbb{R}$ satisfying the conditions \eqref{eq:hatU0vanish0}-\eqref{eq:hatU0nxfar}, we can uniquely find $\alpha,\,\beta$ with
\begin{align*}
    |\alpha|\leq \frac{5}{4}M^{15}\epsilon,\qquad|\beta|\leq \frac{6}{5}M^{25}\epsilon,
\end{align*}
such that the self-similar Burgers-Hilbert equation \eqref{eq:ansatz} with the initial datum
$U_{\alpha,\beta}(X,-\log\epsilon)$ given in Section \ref{sec:initialdata} has a unique solution $U(X,s)\in\mathcal{C}\big([-\log\epsilon,+\infty);\mathcal{C}^8\cap H^9(\mathbb{R})\big)$ that satisfies the assumptions given in Section \ref{sec:boostrapassumption}. Moreover, the limit $\nu:=\lim_{s\to+\infty}\partial_X^5U(0,s)$ exists, and for each $X\in\mathbb{R}$, we have
\begin{align*}
    \lim_{s\to+\infty}U(X,s)=\Big(\frac{\nu}{120}\Big)^{-\frac{1}{4}}U_2\bigg(\Big(\frac{\nu}{120}\Big)^\frac{1}{4}X\bigg):=U_2^\nu(X).
\end{align*}
\end{theorem}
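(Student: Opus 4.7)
The plan is to combine three main pieces: (i) a local-in-$s$ well-posedness and bootstrap argument that, for each fixed $(\alpha,\beta)\in B$, gives a global solution satisfying all the assumptions of Section \ref{sec:boostrapassumption}; (ii) Newton's iteration in $(\alpha,\beta)$ to pin down the unique pair that kills the two unstable modes; and (iii) a post-closure argument to upgrade convergence at $X=0$ to pointwise convergence to a rescaled $U_2^\nu$.

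\textbf{Step 1 (global existence for fixed parameters).} Standard local well-posedness for \eqref{eq:ansatz} in $\mathcal{C}^8\cap H^9$ yields a unique short-time solution starting from the initial datum \eqref{eq:initialdata}. I check directly from the choice of $\widehat{U}_0$, $\chi$ and the size of $B$ that every bootstrap inequality in Section \ref{sec:boostrapassumption} holds strictly at $s=-\log\epsilon$ (with the constants halved, say). Define $s^*=s^*(\alpha,\beta)$ as the supremum of times on which \emph{all} bootstrap assumptions hold. On $[-\log\epsilon,s^*)$, I invoke the closure statements of Sections \ref{sec:closure}--\ref{sec:close0} (forcing estimates, transport estimates along the repelling Lagrangian flow on the middle/far fields, weighted $L^2$ inner products for the top derivatives, and the explicit modulation ODEs \eqref{eq:eqkappadot}--\eqref{eq:kappa-xi}) to upgrade each assumption by a factor bounded strictly away from $1$; the constants $M$ and $\epsilon$ are chosen so that the gain beats the loss coming from forcing terms involving $H[\partial_X^n U]$ (controlled pointwise by interpolating $L^2$-bounds) and from the modulation prefactors $(1-\dot\tau)^{-1}$. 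A standard continuation argument then forces $s^*=+\infty$, giving the global solution $U_{\alpha,\beta}$ on $[-\log\epsilon,\infty)$ for every $(\alpha,\beta)\in B$.

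\textbf{Step 2 (Newton iteration to fix $\alpha,\beta$).} With $s_n=-\log\epsilon+n$, I induct on $n$. Given $(\alpha_n,\beta_n)$ with $\partial_X^2 U_{\alpha_n,\beta_n}(0,s_n)=\partial_X^3 U_{\alpha_n,\beta_n}(0,s_n)=0$ and the finer decay \eqref{eq:finerinductassumptioin}, I define
\begin{align*}
\Phi_n(\alpha,\beta):=\bigl(\partial_X^2 U_{\alpha,\beta}(0,s_{n+1}),\;\partial_X^3 U_{\alpha,\beta}(0,s_{n+1})\bigr),
\end{align*}
whose smoothness in $(\alpha,\beta)$ follows from the parameter-derivative bootstrap bounds. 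Taylor expand $\Phi_n$ around $(\alpha_n,\beta_n)$. The Jacobian $D\Phi_n(\alpha_n,\beta_n)$, whose diagonal entries $\partial_\alpha\partial_X^2 U(0,s_{n+1})$ and $\partial_\beta\partial_X^3 U(0,s_{n+1})$ are bounded below by \eqref{eq:assumptionjacobian} while its anti-diagonal entries are of strictly smaller order, is non-singular with determinant bounded below by a positive constant times $\epsilon^{5/4}e^{5s_{n+1}/4}$. The Hessian is controlled by \eqref{eq:assumption2paranx}. Newton's iteration therefore produces $(\alpha_{n+1},\beta_{n+1})$ inside the ball $B_n$ of \eqref{eq:sizealphabetan} solving $\Phi_n=0$; the geometric ratio of the radii and the quadratic error of Newton's step ensure $\{\alpha_n\},\{\beta_n\}$ are Cauchy, with limits $(\alpha_*,\beta_*)$ lying in the parameter ball \eqref{eq:sizealphabeta}. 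Summing the $B_n$-radii gives the claimed $|\alpha_*|\leq \tfrac{5}{4}M^{15}\epsilon$, $|\beta_*|\leq \tfrac{6}{5}M^{25}\epsilon$. Uniqueness of $(\alpha_*,\beta_*)$ in $B$ follows from the non-singularity of the derivative matrix together with the Hessian bound, run against the trivial iterative assumption $\partial_X^2 U=\partial_X^3 U=0$ at all $s_n$.

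\textbf{Step 3 (pointwise convergence to $U_2^\nu$).} For $(\alpha_*,\beta_*)$, Step 2 combined with the $X=0$ estimates of Section \ref{sec:close0} yields $\partial_X^j U(0,s)\to 0$ for $j=2,3$ and $\partial_X^5 U(0,s)\to \nu$ for some $\nu$ close to $120$ (existence of the limit comes from integrating the ODE for $\partial_X^5 U(0,s)$). Rescale: set $U_2^\nu(X):=(\nu/120)^{-1/4}U_2((\nu/120)^{1/4}X)$, a solution of the rescaled self-similar Burgers equation with $\partial_X^5 U_2^\nu(0)=\nu$. Write $\widetilde U^\nu:=U-U_2^\nu$; at $X=0$ the constraints and the limits above give $\partial_X^j\widetilde U^\nu(0,s)\to 0$ for $j=0,1,2,3,4,5$, hence by the uniform bound on $\partial_X^6\widetilde U^\nu$ and Taylor expansion, $\widetilde U^\nu(X,s)\to 0$ uniformly on any compact interval through $X=0$. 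To propagate to all $X\neq 0$ I run Lagrangian trajectories for the transport operator associated with \eqref{eq:eqtilde} but written with $U_2^\nu$ in place of $U_2$: in the middle and far fields the trajectories are repelling away from $X=0$, so any $X\neq 0$ is reached backwards by a trajectory originating at a small $|X_0|$ at a late time $s_0$ on which smallness is already known; integrating the resulting damped ODE for $\widetilde U^\nu$ along this trajectory with forcing terms that are already known to decay gives $\widetilde U^\nu(X,s)\to 0$.

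\textbf{Main obstacle.} The delicate point is Step 2. The closure in Step 1 must be proven \emph{uniformly} in $(\alpha,\beta)\in B$ and sharply enough at $X=0$ that both (a) the Jacobian $D\Phi_n$ is invertible with a quantitative lower bound on its determinant, and (b) the Hessian remainder is of strictly smaller order, so that Newton's iteration actually contracts on the nested balls $B_n$. Achieving this requires the two-derivative-higher regularity ($H^9$ rather than $H^7$) explained in the remarks, since each parameter derivative loses one derivative in the Hilbert-transform forcing. The rest of the argument (transport along modulated Lagrangian flows, weighted $L^2$ estimates, near/middle/far-field split) is a direct adaptation of \cite{Yang}, now carried out around the unstable profile $U_2$ with controlled parameter dependence.
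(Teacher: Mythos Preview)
Your Steps 1 and 2 match the paper's approach closely and are correct in outline. Step 3, however, has a genuine gap in the mechanism.

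First, the Taylor claim is overstated: from $\partial_X^j\widetilde U^\nu(0,s)\to 0$ for $j\le 5$ together with a uniform bound on $\partial_X^6\widetilde U^\nu$ you only get $\limsup_{s\to\infty}|\widetilde U^\nu(X,s)|\le C M^{36}|X|^6$, not uniform convergence to $0$ on a fixed compact interval. This is not fatal---it is precisely the input the propagation step needs---but it is not what you wrote.

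The real issue is the propagation itself. The transport ODE for $\widetilde U^\nu$ is \emph{not} damped: the zeroth-order coefficient is $-\tfrac14+\partial_X U_2^\nu$, which is $\le -\tfrac14$ everywhere and equals $-\tfrac54$ at $X=0$, so $\widetilde U^\nu$ can grow like $e^{5(s-s_0)/4}$ along trajectories. The argument works only through a quantitative balance: the initial smallness at $(X_0,s_0)$ is of order $|X_0|^6$, the trajectory (which the paper takes with speed $U+\tfrac54 X$, moving the $\kappa-\dot\xi$ and $\dot\tau$ terms into the forcing so that repulsion holds for \emph{all} $X_0\neq 0$ with rate at least $e^{(s-s_0)/4}$) reaches any fixed $X$ in time $O(\log(1/|X_0|))$, and $|X_0|^6\cdot e^{\frac54\cdot O(\log(1/|X_0|))}=|X_0|^{6}\cdot|X_0|^{-11/2}=|X_0|^{1/2}\to 0$. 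The paper implements this by setting $G=e^{-\frac54(s-s_0)}\widetilde U^\nu$, for which the damping $1+\partial_X U_2^\nu$ is nonnegative, and then tracking the range of $X$ covered on the time window $s_0\le s\le s_0-\tfrac{22}{5}\log|X_0|$ before taking first $s_0\to\infty$ and then $X_0\to 0$. Your sketch (``damped ODE'', trajectories from \eqref{eq:eqtilde} with speed $V$) misses both the sign of the damping and the need to rewrite the transport speed; without these, the step as written does not close.
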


\begin{theorem}[main result]\label{thm:main}
There exits a sufficiently large $M\gg 1$ and a sufficiently small $0<\epsilon=\epsilon(M)\ll 1$, such that for smooth initial data $u_0$ at $t_0=-\epsilon$ with minimum initial slope $-1/\epsilon$ given in Section \ref{sec:initialdata} after the self-similar transformation \eqref{eq:selftransform}, there exists a unique solution $u\in \mathcal{C}\big([-\epsilon,T_*);\mathcal{C}^8\cap H^9(\mathbb{R})\big)$ to the Burgers-Hilbert equation \eqref{eq:BH} with the following properties
\begin{enumerate}
    \item The blowup time $|T_*|\leq 2\epsilon^\frac{39}{20}$, hence the lifespan of $u$ is $O(\epsilon)$.
    \item Solution is bounded: $\|u(\cdot,t)\|_{L^\infty}\leq M$ for all $t\in[-\epsilon, T_*]$.
    \item The blowup location $x_*:=\xi(T_*)$ satisfies $|x_*|\leq 3M\epsilon$ (which is small).
    \item Shock formation and rate: as $t\to T_*$,
    \begin{gather*}
         \partial_xu\big(\xi(t),t\big)\to-\infty,\\
         \frac{1}{2(T_*-t)}\leq \|\partial_xu(\cdot,t)\|_{L^\infty}\leq \frac{2}{T_*-t},
    \end{gather*}
    and for any $x\neq x_*$, $\partial_xu(x,t)$ remains finite. 
    \item Shock profile is a cusp: for $t$ sufficiently close to $T_*$ (depending on $x$), 
    \begin{align*}
    |\partial_xu(x,t)|
        \begin{cases}
        \leq 4\quad&\text{if }|x-x_*|>\frac{1}{2}\\
        \sim |x-x_*|^{-\frac{5}{4}}\quad&\text{if }|x-x_*|\leq \frac{1}{2}.
        \end{cases}
    \end{align*}
    In particular, $u(\cdot, T_*)\in C^\frac{1}{5}(\mathbb{R})$ has a cusp singularity at $x_*$.
\end{enumerate}
\end{theorem}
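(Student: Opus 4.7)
The paper advertises Theorem \ref{thm:main} as a direct consequence of Theorem \ref{thm:selfsimilar}: once the self-similar solution $U(X,s)$ exists on $[-\log\epsilon,+\infty)$ and satisfies all the bootstrap bounds of Section \ref{sec:boostrapassumption}, we pull everything back through the modulated self-similar transformation \eqref{eq:selftransform}--\eqref{eq:uUrelation}. My plan is to define the physical blowup time from the modulation variable $\tau$ and then read off each of the five claims from the bootstrap on $U$ together with the modulation bounds \eqref{eq:assumptionmodulation}, with only the final cusp-profile identification requiring a small extra argument.

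\textbf{Properties (1)--(3).} Define $T_*$ as the unique solution of $\tau(T_*)=T_*$. Existence and uniqueness are guaranteed because $(\tau(t)-t)\big|_{t=-\epsilon}=\epsilon>0$ and $\tfrac{d}{dt}(\tau(t)-t)=\dot\tau-1<0$ by \eqref{eq:assumptionmodulation}, so $t\mapsto \tau(t)-t$ is strictly decreasing and has a unique zero; the bound $|T_*|\leq 2\epsilon^{39/20}$ is already part of \eqref{eq:assumptionmodulation}. Property (2) follows from \eqref{eq:uUrelation} and \eqref{eq:assumptionULinfty}, since
\[
\|u(\cdot,t)\|_{L^\infty}\leq e^{-s/4}\|U+e^{s/4}\kappa\|_{L^\infty}\leq M.
\]
Property (3) comes from $|\dot\xi|\leq 2M$ and $\xi(-\epsilon)=0$ integrated over a lifespan of size $\epsilon+|T_*|\leq \tfrac{3}{2}\epsilon$, yielding $|x_*|=|\xi(T_*)|\leq 3M\epsilon$.

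\textbf{Property (4).} Differentiating \eqref{eq:uUrelation} in $x$ gives $\partial_x u(x,t)=e^s\,\partial_X U(X,s)$ with $X=e^{5s/4}(x-\xi(t))$. At $x=\xi(t)$, i.e.\ $X=0$, the modulation constraint \eqref{eq:constraint} yields $\partial_x u(\xi(t),t)=-e^s=-1/(\tau(t)-t)\to-\infty$ as $t\uparrow T_*$. The smallness $|\dot\tau|\leq \epsilon^{1/5}e^{-3s/4}$ combined with $\tau(T_*)=T_*$ implies, through the mean value theorem, $\tau(t)-t=(T_*-t)\bigl(1+O(\epsilon^{1/5})\bigr)$; together with \eqref{eq:conse1xULinfty} (which says the slope is essentially minimised at $X=0$) this produces the sandwich $\tfrac{1}{2(T_*-t)}\leq \|\partial_x u(\cdot,t)\|_{L^\infty}\leq \tfrac{2}{T_*-t}$. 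For fixed $x\neq x_*$, $X=e^{5s/4}(x-\xi)\to\infty$ as $t\uparrow T_*$, so either the middle field estimate \eqref{eq:assumption1xmiddle} or the far field estimate \eqref{eq:assumption1xfar} applies; in both regimes $e^s|\partial_X U(X,s)|$ stays bounded, so $\partial_x u(x,t)$ remains finite.

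\textbf{Property (5) and the main obstacle.} Combining $\partial_x u=e^s\partial_X U$ with \eqref{eq:assumption1xmiddle} and the substitution $X=e^{5s/4}(x-\xi)$ gives $|\partial_x u(x,t)|\lesssim |x-\xi(t)|^{-4/5}$ once $x$ is sufficiently close to $x_*$ and $t$ sufficiently close to $T_*$; letting $t\uparrow T_*$ produces the physical-space cusp bound, and integrating in $x$ yields $u(\cdot,T_*)\in \mathcal{C}^{1/5}(\mathbb{R})$. The full asymptotic profile $u(\cdot,T_*)\sim -\mathrm{sgn}(x-x_*)|x-x_*|^{1/5}$ (up to the rescaling constant $\nu$) is read off by combining the pointwise convergence $U(X,s)\to U_2^\nu(X)$ of Theorem \ref{thm:selfsimilar} with the asymptotic expansion of $U_2$ in Lemma \ref{lem:U2}. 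The one step which is not a direct translation is exchanging the $s\to+\infty$ limit with the $X\to\infty$ asymptotics uniformly for $x$ in a punctured neighbourhood of $x_*$: the convergence in Theorem \ref{thm:selfsimilar} holds on $s$-dependent compact $X$-sets, whereas for fixed $x\neq x_*$ the corresponding $X$ escapes to infinity. I would address this by working in dyadic shells $2^{-k-1}\leq |x-x_*|\leq 2^{-k}$, choosing for each $k$ a time $t_k$ for which $X=e^{5s_k/4}(x-\xi(t_k))$ is comparable to $2^k$ yet still lies inside the compact $X$-window on which Theorem \ref{thm:selfsimilar} provides quantitative convergence; the middle field bootstrap \eqref{eq:assumptionUmiddle} together with the $U_2$-asymptotics then gives an error estimate uniform in $k$, which is exactly what is needed to transfer the self-similar profile to the physical cusp profile.
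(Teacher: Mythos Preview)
Your treatment of properties (1)--(4) matches the paper's proof essentially line for line; the only cosmetic difference is that the paper proves the sandwich $\tfrac{1}{2}\leq \frac{\tau(t)-t}{T_*-t}\leq 2$ via a direct monotonicity argument on $2\tau(t)-t$ and $\tau(t)+t$ rather than the mean value theorem, but the content is the same. You also omit the opening step in the paper's proof, namely that the uniform $L^2$ bounds on $\partial_X U$ and $\partial_X^9 U$ translate into $u\in \mathcal{C}([-\epsilon,T_*);H^9)$ and hence, by local well-posedness and Sobolev embedding, into the claimed regularity and uniqueness; this is routine but worth mentioning.

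For property (5) you take a detour that the paper avoids entirely. You propose to obtain the lower bound in $|\partial_x u|\sim |x-x_*|^{-4/5}$ by invoking the pointwise convergence $U(X,s)\to U_2^\nu(X)$ from Theorem \ref{thm:selfsimilar}, and you correctly flag the limit-exchange problem this creates (fixed $x$ corresponds to $X\to\infty$). The paper sidesteps this completely: it never uses the pointwise convergence for the cusp profile. Instead it uses the middle-field bootstrap \eqref{eq:assumption1xmiddle}, which already says $|\partial_X U-\,U_2'|\leq \epsilon^{1/20}(1+X^4)^{-1/5}$ uniformly in $s$, together with the quantitative bounds on $U_2'$ from Lemma \ref{lem:U2} (in particular \eqref{eq:U2away0} and the asymptotic expansion, which together give $|U_2'(X)|\sim (1+X^4)^{-1/5}$). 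This yields the two-sided estimate $|\partial_X U(X,s)|\sim (1+X^4)^{-1/5}$ directly on the whole middle field, uniformly in $s$, and the translation to $|\partial_x u|\sim |x-x_*|^{-4/5}$ is then immediate. Your dyadic-shell strategy could presumably be made to work, but it is unnecessary and the ``obstacle'' you identify simply does not arise in the paper's argument.
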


\begin{corollary}[codimension 2 of initial data]
\label{cor:codimension}
The set of initial data such that the conclusion in Theorem \ref{thm:main} holds is a codimension 2 subset of $H^9(\mathbb{R})$.
\end{corollary}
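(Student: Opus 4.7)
The plan is to exhibit, in a neighborhood of any good initial datum from Theorem~\ref{thm:main}, a canonical decomposition of $H^9$-initial data into an infinite-dimensional baseline $\widehat{U}_0$ together with two scalar parameters $(\alpha,\beta)$, so that the good set is the graph $(\alpha,\beta)=\bigl(\alpha_*(\widehat{U}_0),\beta_*(\widehat{U}_0)\bigr)$ of the map provided by Theorem~\ref{thm:selfsimilar}.

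Given $u_0\in H^9(\mathbb{R})$ close to a good $\bar u_0$, I would first translate so that the minimum of $u_0'$ lies at $x=0$ and rescale in $x$ so that $u_0'(0)=-1/\epsilon$; since these symmetries of \eqref{eq:BH} preserve the shock-forming property, they only produce directions tangent to the good set. Passing to self-similar coordinates at $s=-\log\epsilon$ via \eqref{eq:selftransform}--\eqref{eq:uUrelation} with $\tau_0=0$, $\xi_0=0$, $\kappa_0=u_0(0)$, we obtain $U(X,-\log\epsilon)$ with $U(0)=0$ and $\partial_X U(0)=-1$. Define
\begin{align*}
\alpha(u_0):=\tfrac{1}{2}\partial_X^2 U(0,-\log\epsilon),\qquad \beta(u_0):=\tfrac{1}{6}\partial_X^3 U(0,-\log\epsilon),
\end{align*}
and
\begin{align*}
\widehat{U}_0(X):=U(X,-\log\epsilon)-U_2(X)\chi(2\epsilon^{5/4}X)-\chi(X)(\alpha X^2+\beta X^3),
\end{align*}
so that $\widehat{U}_0^{(n)}(0)=0$ for $n=0,1,2,3$ automatically. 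The remaining conditions \eqref{eq:hatU0vanish0}--\eqref{eq:hatU0nxfar} are open in $H^9$ and hence persist for all $u_0$ sufficiently close to $\bar u_0$; in particular $(\alpha,\beta)\in B$. Theorem~\ref{thm:selfsimilar} applied with this $\widehat{U}_0$ yields a unique pair $\bigl(\alpha_*(\widehat{U}_0),\beta_*(\widehat{U}_0)\bigr)$, and $u_0$ produces the shock of Theorem~\ref{thm:main} if and only if the two smooth scalar functionals
\begin{align*}
\Phi_1(u_0):=\alpha(u_0)-\alpha_*\bigl(\widehat{U}_0(u_0)\bigr),\qquad \Phi_2(u_0):=\beta(u_0)-\beta_*\bigl(\widehat{U}_0(u_0)\bigr)
\end{align*}
both vanish. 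The differential of $(\Phi_1,\Phi_2):H^9\to\mathbb{R}^2$ at $\bar u_0$ is surjective: varying $u_0$ along the two-dimensional subspace spanned by $\chi(x)x^2$ and $\chi(x)x^3$ moves $(\alpha,\beta)$ by an invertible linear map while leaving $\widehat{U}_0$ fixed, and hence moves $(\Phi_1,\Phi_2)$ nondegenerately. By the implicit function theorem the good set is a local codimension-$2$ submanifold of $H^9(\mathbb{R})$.

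The main obstacle is the regularity of the map $\widehat{U}_0\mapsto(\alpha_*,\beta_*)$: Theorem~\ref{thm:selfsimilar} provides existence and uniqueness but not explicit continuity. This continuity should follow by revisiting the Newton iteration in Section~\ref{sec:findpara}: the neighborhood sizes $B_n$ in \eqref{eq:sizealphabetan} and the non-degenerate Jacobian estimate \eqref{eq:assumptionjacobian} are both stable under small $H^9$-perturbations of the baseline $\widehat{U}_0$, and the bootstrap on parameter derivatives of Section~\ref{sec:close0} provides uniform control on how each iterate $(\alpha_n,\beta_n)$ depends on $\widehat{U}_0$, yielding Lipschitz continuity of the limit. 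With this in hand $\Phi_1,\Phi_2$ are smooth and the codimension-$2$ conclusion follows.
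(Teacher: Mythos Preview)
Your overall strategy---decompose nearby data into a baseline $\widehat{U}_0$ plus two scalar parameters $(\alpha,\beta)$, then invoke uniqueness of $(\alpha_*,\beta_*)$ from Theorem~\ref{thm:selfsimilar}---matches the paper's approach. But there is a genuine gap in your normalizations.

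You translate to the location of the minimum of $u_0'$ (which forces $u_0''(0)=0$) and rescale to set $u_0'(0)=-1/\epsilon$. However, the modulation constraint \eqref{eq:constraint} is $\partial_X^4U(0,s)=0$, not $\partial_X^2U(0,s)=0$; the paper in fact remarks that the minimum-slope location plays no role. More importantly, the conditions in \eqref{eq:hatU0vanish0} require $\widehat{U}_0^{(4)}(0)=0$ and $\widehat{U}_0^{(5)}(0)=0$. With your decomposition one computes
\[
\widehat{U}_0^{(4)}(0)=\partial_X^4U(0,-\log\epsilon),\qquad \widehat{U}_0^{(5)}(0)=\partial_X^5U(0,-\log\epsilon)-120,
\]
and neither vanishes for a generic perturbation. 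Your claim that ``the remaining conditions \eqref{eq:hatU0vanish0}--\eqref{eq:hatU0nxfar} are open in $H^9$'' is therefore false: \eqref{eq:hatU0vanish0} contains two equality constraints that do not persist under perturbation.

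The paper's fix is exactly to absorb these two constraints into symmetries: for a perturbation $v_0$ one first solves $v_0^{(4)}(x_0)=0$ for a nearby $x_0$ (using that $v_0^{(5)}(0)\approx 120\epsilon^{-6}\neq 0$), translates $x\mapsto x+x_0$, and then applies the Burgers--Hilbert scaling symmetry $v\mapsto \mu^{-1}v(\mu\,\cdot\,)$ with $\mu=(120\epsilon^{-6}/v_0^{(5)}(x_0))^{1/4}$ to normalize the fifth derivative. Both moves are tangent to the good set, and after them your decomposition produces a $\widehat{U}_0$ satisfying all of \eqref{eq:hatU0vanish0}--\eqref{eq:hatU0nxfar}. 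With this correction your implicit-function-theorem argument (and your remark on continuity of $\widehat{U}_0\mapsto(\alpha_*,\beta_*)$ via the uniform Newton-iteration bounds) is actually more detailed than the paper's own, rather terse, proof.
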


\section{Forcing bounds without parameter derivatives}\label{sec:forcing}
\subsection{Hilbert transform terms}\label{sec:Hilbertbound}
We first bound $H[U+e^{\frac{1}{4}s}\kappa]$. We consider the standard near-middle-far field separation:
\begin{align*}
H[U+e^{\frac{1}{4}s}\kappa](X,s)&=\frac{1}{\pi}\bigg[\lim_{\delta\to 0+}\int_{\delta\leq |X-Y|\leq 1}\frac{U(Y,s)-U(X,s)}{X-Y}\,dY\\
&\qquad+\int_{1\leq |X-Y|\leq \frac{1}{2}e^{\frac{5}{4}s}}
+\int_{|X-Y|\geq \frac{1}{2}e^{\frac{5}{4}s}}\frac{U(Y,s)+e^{\frac{1}{4}s}\kappa}{X-Y}\,dY\bigg]\\
&\lesssim 1+Me^{\frac{1}{4}s}(\frac{5}{4}s-\log 2)+\|U+e^\frac{s}{4}\kappa\|_{L^2}\Big(\int_{|X-Y|\geq \frac{1}{2}e^{\frac{5}{4}s}}\frac{1}{|X-Y|^2}\,dY\Big)^\frac{1}{2}\\
&\lesssim Mse^{\frac{1}{4}s}+Me^{\frac{1}{4}s}\lesssim Me^{\frac{3}{8}s}.\numberthis \label{eq:HilbertU}
\end{align*}
By interpolation,
\begin{align*}
    \|H[\partial_X^n U](\cdot,s)\|_{L^\infty} &\lesssim \|H[\partial_X U](\cdot,s)\|_{L^2}^\frac{17-2n}{16}\|H[\partial_X^9U](\cdot,s)\|_{L^2}^\frac{2n-1}{16}\\
    &\lesssim \|\partial_XU(\cdot,s)\|_{L^2}^\frac{17-2n}{16}\|\partial_X^9(\cdot,s)\|_{L^2}^\frac{2n-1}{16}\lesssim M^\frac{35(2n-1)}{8},\qquad n=1,...,8.\numberthis\label{eq:HilbertnxLinfty}
\end{align*}
Now we need spatial and temporal decays in the middle field and far field respectively. We use  \eqref{eq:assumption1xmiddle}, \eqref{eq:assumption1xfar}, \eqref{eq:assumptionnxmiddle} and \eqref{eq:assumptionnxfar}. For the middle field $l\leq|X|\leq \frac{1}{2}e^{\frac{5}{4}s}$,
\begin{align*}
H[\partial_XU](X,s)&=\frac{1}{\pi}\bigg[\int_{|X-Y|<(1+X^4)^{-\frac{1}{5}}}\frac{\partial_XU(Y,s)-\partial_XU(X,s)}{X-Y}\,dY\\
&\qquad+\int_{(1+X^4)^{-\frac{1}{5}}\leq |X-Y|\leq 1}\frac{\partial_XU(Y,s)}{X-Y}\,dY+\int_{|X-Y|> 1}\frac{\partial_XU(Y,s)}{X-Y}\,dY\bigg]\\
&=I_{\text{near}}+I_{\text{middle}}+I_\text{far}.
\end{align*}
Since $\|\partial_X^2U(\cdot,s)\|_{L^\infty}\leq M^4$, 
\begin{align*}
|I_\text{near}|\lesssim \|\partial_X^2U(\cdot,s)\|_{L^\infty}(1+X^4)^{-\frac{1}{5}}\lesssim M^4 (1+X^4)^{-\frac{1}{5}}.
\end{align*}
For $I_\text{middle}$, same argument as \cite{Yang},
\begin{align*}
|I_\text{middle}|\lesssim (1+X^4)^{-\frac{1}{5}}\log(1+X^4).
\end{align*}
For $I_\text{far}$, split into cases as \cite{Yang}. 
In all three cases we get
\begin{align*}
|I_\text{far}|\lesssim (1+X^4)^{-\frac{1}{8}}+(1+X^4)^{-\frac{1}{5}}\log(1+X^4).
\end{align*}
Putting the three regions together,
\begin{align}
|H[\partial_XU](X,s)|\lesssim M^4(1+X^4)^{-\frac{1}{5}}+(1+X^4)^{-\frac{1}{8}}+(1+X^4)^{-\frac{1}{5}}\log(1+X^4).\label{eq:Hilbert1xmiddle}
\end{align}
For the far field $|X|\geq \frac{1}{2}e^{\frac{5}{4}s}$, 
\begin{align*}
H[\partial_XU](X,s)&=\frac{1}{\pi}\bigg[\int_{|X-Y|<e^{-s}}\frac{\partial_XU(Y,s)-\partial_XU(X,s)}{X-Y}\,dY\\
&\qquad+\int_{e^{-s}\leq |X-Y|\leq 1}\frac{\partial_XU(Y,s)}{X-Y}\,dY+\int_{|X-Y|> 1}\frac{\partial_XU(Y,s)}{X-Y}\,dY\bigg]\\
&=\mathit{II}_{\text{near}}+\mathit{II}_{\text{middle}}+\mathit{II}_\text{far}\\
&\lesssim M^4 e^{-s}+se^{-s}+\max\{e^{-\frac{3}{8}s},e^{-\frac{5}{8}s},se^{-s}\}\lesssim e^{-\frac{3}{8}s}.\numberthis\label{eq:Hilbert1xfar}
\end{align*}
Using the same idea, for higher order derivatives $n=2,...,7$, we have
\begin{align}
    |H[\partial_X^nU](X,s)|\lesssim\begin{cases}
    &M^{\max\{(n+1)^2,\frac{35(n-1)}{4}\}}(1+X^4)^{-\frac{1}{5}}+M^{n^2}(1+X^4)^{-\frac{1}{5}}\log(1+X^4)\\
    &\qquad\qquad\qquad\text{if }l\leq|X|\leq\frac{1}{2}e^{\frac{5}{4}s},\\
    &M^{n^2}e^{-\frac{3}{8}s}\qquad\text{if }|X|\geq \frac{1}{2}e^{\frac{5}{4}s}.
    \end{cases} \label{eq:Hilbertnx}
\end{align}

The estimate for $H[\partial_X^8U]$ is a little different, since we only make an assumption on $\|\partial_X^9U\|_{L^2}$ but not the $L^\infty$-norm. In this case, we use Sobolev embedding (Morrey's inequality) $H^1(\mathbb{R})\subset \mathcal{C}^{0,\frac{1}{2}}(\mathbb{R})$ to deal with the near field. When $l\leq |X|\leq\frac{1}{2}e^{\frac{5}{4}s}$,
\begin{align*}
&\Big|\int_{|X-Y|<(1+X^4)^{-\frac{1}{5}}}\frac{\partial^8_XU(Y,s)-\partial_X^8U(X,s)}{X-Y}\,dY\Big|\\
\leq& \int_{|X-Y|<(1+X^4)^{-\frac{1}{5}}}\frac{|\partial^8_XU(Y,s)-\partial_X^8U(X,s)|}{|X-Y|^\frac{1}{2}}\frac{1}{|X-Y|^\frac{1}{2}}\,dY\\
\leq& \|\partial_X^8U(\cdot,s)\|_{\mathcal{C}^{0,\frac{1}{2}}}\int_{|X-Y|<(1+X^4)^{-\frac{1}{5}}}\frac{1}{|X-Y|^\frac{1}{2}}\,dY\\
\lesssim & (\|\partial_X^8U(\cdot,s)\|_{L^2}+\|\partial_X^9U(\cdot,s)\|_{L^2})(1+X^4)^{-\frac{1}{10}}\\
\lesssim & (M^{\frac{7}{8}\cdot 70}+M^{70})(1+X^4)^{-\frac{1}{10}}\lesssim M^{70}(1+X^4)^{-\frac{1}{10}}.
\end{align*}
When $|X|\geq \frac{1}{2}e^{\frac{5}{4}s}$, instead of $|X-Y|<(1+X^4)^{-\frac{1}{5}}$ we use
$|X-Y|<e^{-s}$ as the region for the near field integral. Using the same argument, the near field integral can be bounded by 
\begin{align*}
\int_{|X-Y|<e^{-s}}\Big|\frac{\partial^8_XU(Y,s)-\partial^8_XU(X,s)}{X-Y}\Big|\,dY\lesssim
M^{70} e^{-\frac{1}{2}s}.
\end{align*}
Combining with the (unchanged) middle field and far field integrals, we get
\begin{align*}
|H[\partial_X^8U](X,s)|&\lesssim M^{70}(1+X^4)^{-\frac{1}{10}}+M^{64}(1+X^4)^{-\frac{1}{5}}\log(1+X^4)+M^{61\frac{1}{4}}(1+X^4)^{-\frac{1}{8}}\\
&\lesssim M^{70}(1+X^4)^{-\frac{1}{10}}\qquad\text{for }l\leq |X|\leq \frac{1}{2}e^{\frac{5}{4}s},\numberthis\label{eq:Hilbert8xmiddle}\\
|H[\partial_X^8U](X,s)|&\lesssim M^{70} e^{-\frac{1}{2}s}+M^{64}se^{-s}+M^{64}e^{-\frac{3}{8}s}\\
&\lesssim M^{70}e^{-\frac{1}{2}s}+M^{64} e^{-\frac{3}{8}s}\qquad\text{for }|X|\geq \frac{1}{2}e^{\frac{5}{4}s}.\numberthis\label{eq:Hilbert8xfar}
\end{align*} 
The slightly weaker bounds do not affect the decay of $\partial_X^8U(X,s)$.

\subsection{$F_U$}
By \eqref{eq:eqkappadot}, 
\begin{align*}
    F_{U}(X,s)&=\frac{1}{1-\dot{\tau}}\Big\{-e^{\frac{1}{4}s}(\kappa-\dot{\xi})-e^{-s}H[U+e^{\frac{1}{4}s}\kappa](0,s)+e^{-s}H[U+e^{\frac{1}{4}s}\kappa](X,s)\\
    |F_U(X,s)|&\lesssim \epsilon^\frac{1}{5}e^{-\frac{3}{4}s}+Me^{-\frac{5}{8}s}\lesssim Me^{-\frac{5}{8}s}.\numberthis\label{eq:forcingU}
\end{align*}

\subsection{Near field}
Take $n=6$ in \eqref{eq:eqnxtildeU}, we have
\begin{align*}
F_{\tilde{U}}^{(6)}=&\frac{e^{-s}}{1-\dot{\tau}}H[\partial_X^6U]-\frac{\dot{\tau}}{1-\dot{\tau}}\sum_{j=0}^3 {7\choose j}\partial_X^jU_2\partial_X^{7-j}U_2\\
&-\frac{1}{1-\dot{\tau}}\Big[e^\frac{s}{4}(\kappa-\dot{\xi})\partial_X^{7}U_2+\sum_{j=0}^{5}{7\choose j}\partial_X^j\widetilde{U}\partial_X^{7}U_2+\sum_{j=2}^3{7\choose j}\partial_X^j\widetilde{U}\partial_X^{7-j}\widetilde{U} \Big].
\end{align*}
We bound the first term by
\begin{align*}
\frac{e^{-s}}{1-\dot{\tau}}\big|H[\partial_X^6U](X,s)\big|\leq \frac{e^{-s}}{1-\dot{\tau}}\|H[\partial_X^6U](\cdot,s)\|_{L^\infty}\lesssim M^{48\frac{1}{8}}e^{-s}.
\end{align*}
Since $|\partial_X^jU_2(X)|\leq C=C(U_2)$ for $|X|\leq l$ (near 0), and $|\dot{\tau}|\leq \epsilon^\frac{1}{5}e^{-\frac{3}{4}s}$, we can bound terms with $\dot{\tau}$ by
\begin{align*}
\Big|\frac{\dot{\tau}}{1-\dot{\tau}}\sum_{j=0}^3 {7 \choose j}\partial_X^jU_2\partial_X^{7-j}U_2\Big|\lesssim \epsilon^\frac{1}{5} e^{-\frac{3}{4}s}.
\end{align*}
To bound terms without $\dot{\tau}$, we use the bootstrap assumption \eqref{eq:assumptionnear0-5}
and $|\kappa-\dot{\xi}|\leq \epsilon^\frac{1}{5}e^{-s}$. Then, the second line in the sum of $F_{\tilde{U}}^{(6)}$ can be bounded by (up to a constant factor)
\begin{align*}
e^{\frac{1}{4}s}\epsilon^\frac{1}{5}e^{-s}+\epsilon^\frac{1}{5}(l+\cdots+l^6)+\epsilon^\frac{2}{5}l^5\lesssim \epsilon^\frac{1}{5}l+\epsilon^\frac{1}{5}e^{-\frac{3}{4}s}.
\end{align*}
Putting all three terms together, we get
\begin{align*}
|F_{\tilde{U}}^{(6)}|\lesssim M^{48\frac{1}{8}}e^{-s}+\epsilon^\frac{1}{5}e^{-\frac{3}{4}s}+\epsilon^\frac{1}{5}l+\epsilon^\frac{1}{5}e^{-\frac{3}{4}s}\lesssim \epsilon^\frac{1}{5}l,
\end{align*}
the second inequality uses the choice that $l=(\log M)^{-1}$ (or $-2$) and $\epsilon$ sufficiently small according to $M$.

Applying the same procedure to $F_{\tilde{U}}^{(7)},\,F_{\tilde{U}}^{(8)}$ using the additional bootstrap assumptions \eqref{eq:assumptionnear6-8} and corresponding bounds \eqref{eq:HilbertnxLinfty} for $H[\partial_X^7U],\,H[\partial_X^8U]$, we get
\begin{align*}
|F_{\tilde{U}}^{(7)}|\lesssim \epsilon^\frac{1}{5},\quad |F_{\tilde{U}}^{(8)}|\lesssim M\epsilon^\frac{1}{5}.
\end{align*}
\subsection{Middle field} For $l\leq |X|\leq \frac{1}{2}e^{\frac{5}{4}s}$,
we first  consider $F_{\tilde{U}}$:
\begin{align*}
F_{\tilde{U}}(X,s)&=-\frac{e^{\frac{1}{4}s}}{1-\dot{\tau}}(\kappa-\dot{\xi})-\frac{e^{-s}}{1-\dot{\tau}}H[U+e^\frac{s}{4}\kappa](0,s)+\frac{e^{-s}}{1-\dot{\tau}}H[U+e^{\frac{1}{4}s}\kappa](X,s)\\&\qquad-\frac{\dot{\tau}}{1-\dot{\tau}}U_2(X)\partial_XU_2(X)-\frac{e^{\frac{1}{4}s}}{1-\dot{\tau}}(\kappa-\dot{\xi})\partial_XU_2(X)
\\
&=-\frac{e^{\frac{1}{4}s}}{1-\dot{\tau}}(\kappa-\dot{\xi})\big(1+\partial_XU_2(X)\big)+\frac{e^{-s}}{1-\dot{\tau}}\big(H[U+e^{\frac{1}{4}s}\kappa](X,s)-H[U+e^{\frac{1}{4}s}\kappa](0,s)\big)\\
&\qquad-\frac{\dot{\tau}}{1-\dot{\tau}}U_2(X)\partial_XU_2(X)\\
&\lesssim e^{\frac{1}{4}s}\epsilon^\frac{1}{5}e^{-s}+e^{-s}Me^{\frac{3}{8}s}+\epsilon^\frac{1}{5}e^{-\frac{3}{4}s}(1+X^4)^{\frac{1}{20}-\frac{1}{5}}\lesssim Me^{-\frac{5}{8}s}.
\end{align*}
Then we bound $F^{(1)}_{\tilde{U}}$, by \eqref{eq:Hilbert1xmiddle} and Lemma \ref{lem:U2},
\begin{align*}
F^{(1)}_{\tilde{U}}&=\frac{e^{-s}}{1-\dot{\tau}}H[\partial_XU]-\frac{\dot{\tau}}{1-\dot{\tau}}\big[(\partial_XU_2)^2+U_2\partial_X^2U_2\big]
-\frac{1}{1-\dot{\tau}}\big[e^{\frac{1}{4}s}(\kappa-\dot{\xi})\partial_X^2U_2+\widetilde{U}\partial_X^2U_2\big]
\\
&\lesssim e^{-s}(1+X^4)^{-\frac{1}{5}}\big(M^4 +\log(1+X^4)\big)+e^{-\frac{5}{8}s}(1+X^4)^{-\frac{1}{5}}+\epsilon^\frac{3}{20}(1+X^4)^{-\frac{2}{5}}.
\end{align*}
Similarly, we have
\begin{align*}
F^{(2)}_U=\frac{e^{-s}}{1-\dot{\tau}}H[\partial_X^2U]
&\lesssim e^{-s}\big[M^9(1+X^4)^{-\frac{1}{5}}+M^4(1+X^4)^{-\frac{1}{5}}\log(1+X^4)+M^{\frac{35}{4}}(1+X_4)^{-\frac{1}{8}}\big]\\
&\lesssim (M^9 e^{-s}+M^\frac{35}{4}e^{-\frac{5}{8}s})(1+X^4)^{-\frac{1}{5}}.\numberthis\label{eq:F2Umiddle}
\end{align*}
For $n=3,...,8$, by \eqref{eq:assumptionnxmiddle}\footnote{Here is the reason why we choose $n^2$ in the power of $M$. Suppose 
\begin{align*}
|\partial_X^nU(X,s)|\leq M^{a(n)}(1+X^4)^{-\frac{1}{5}}\qquad\text{for } 4\leq n\leq 8,
\end{align*}
then from later weighted estimate, we can close bootstrap if
\begin{align*}
2a(\frac{n+1}{2})\mathbbm{1}_{n\text{ odd}}+\sum_{j=2}^{\lfloor\frac{n}{2}\rfloor}a(j)+a(n-j+1)< a(n).
\end{align*}
So $a(n)=n^2$ is good.},
\begin{align*}
F^{(n)}_U&\lesssim_n \big(\|\partial_X^{n+1}U\|_{L^\infty}e^{-s}+\|\partial_X^nU\|_{L^\infty}e^{-\frac{5}{8}s}+\|\partial_X^nU\|_{L^2}e^{-\frac{5}{8}s}\big)(1+X^4)^{-\frac{1}{5}}\\
&\qquad+\frac{1}{1-\dot{\tau}}\Big[\mathbbm{1}_{n\text{ odd}}{n\choose \frac{n-1}{2}}\|\partial_X^\frac{n+1}{2}U\|_{L^\infty}^2+\sum_{j=2}^{\lfloor \frac{n}{2}\rfloor}\|\partial_X^jU\|_{L^\infty}\|\partial_X^{n-j+1}U\|_{L^\infty}  \Big](1+X^4)^{-\frac{2}{5}}\\
&\lesssim(M^{(n+1)^2}e^{-s}+M^{n^2}e^{-\frac{5}{8}s}+M^{\frac{35(n-1)}{4}}e^{-\frac{5}{8}s})(1+X^4)^{-\frac{1}{5}}\\
&\qquad+\Big(\mathbbm{1}_{n\text{ odd}}M^\frac{(n+1)^2}{2}+\sum_{j=2}^{\lfloor\frac{n}{2}\rfloor}M^{j^2+(n-j+1)^2}\Big)(1+X^4)^{-\frac{2}{5}},\numberthis\label{eq:FnUmiddle}
\end{align*}
where the $n$-dependence comes from binomial coefficients. 

\subsection{Far field}
For $|X|\geq \frac{1}{2}e^{\frac{5}{4}s}$, using the same idea, 
\begin{align}
|F_{U}^{(1)}|&\lesssim e^{-(1+\frac{3}{8})s}\lesssim e^{-\frac{5}{8}s},\label{eq:F1xUfar}\\
|F_{U}^{(2)}|&\lesssim M^4e^{-(1+\frac{3}{8}s)}\lesssim M^4e^{-\frac{5}{8}s},\nonumber\\
F_U^{(n)}&\lesssim_n M^{n^2}e^{-\frac{5}{8}s}+\sum_{j=2}^{\lfloor\frac{n+1}{2}\rfloor}M^{j^2+(n-j+1)^2}e^{-2s}\qquad{n=3,...,7},\nonumber\\
F_U^{(8)}&\lesssim M^{70}e^{-\frac{5}{8}s}+\sum_{j=2}^{4}M^{j^2+(9-j)^2}e^{-2s}.\label{eq:FnxUfar}
\end{align}

\section{Forcing bounds with parameter derivatives}\label{sec:paraforce}
\subsection{Hilbert transform terms}
We first bound $H[\partial_\alpha U+e^{\frac{1}{4}s}\partial_\alpha\kappa]$. We use the same strategy used for $H[U+e^{\frac{1}{4}s}\kappa]$ in Section \ref{sec:Hilbertbound}, i.e.\ the usual near-middle-far field decomposition, and obtain
\begin{align}
    \|H[\partial_\alpha U+e^{\frac{1}{4}s}\partial_\alpha\kappa]\|_{L^\infty}\lesssim M^4\epsilon^\frac{3}{4}e^{\frac{7}{8}s}.\label{eq:HilbertparaU}
\end{align}
Moreover, at $X=0$,
\begin{align*}
\big|H[\partial_\alpha U+e^{\frac{1}{4}s}\partial_\alpha\kappa](0,s)\big|&\lesssim \|\partial_\alpha\partial_XU\|_{L^\infty}+\|\partial_\alpha U+e^{\frac{1}{4}s}\partial_\alpha\kappa\|_{L^\infty}\int_{1<|X-Y|< e^{\frac{5}{4}s}}\frac{1}{|X-Y|}\,dY\\
&\qquad+\|\partial_\alpha U+e^{\frac{1}{4}s}\partial_\alpha\kappa\|_{L^2}\Big(\int_{|X-Y|\geq e^{\frac{5}{4}s}}\frac{1}{|X-Y|^2}\,dY\Big)^\frac{1}{2}\\
&\lesssim M^9\epsilon^\frac{3}{4}e^{\frac{3}{4}s}+M^4\epsilon^\frac{3}{4}se^{\frac{3}{4}s}+M^{15}\epsilon^\frac{3}{4}e^se^{-\frac{5}{8}s}\lesssim M^9\epsilon^\frac{3}{4}se^{\frac{3}{4}s}.\numberthis\label{eq:HilbertparaUat0}
\end{align*}
We then use interpolation to obtain
\begin{align*}
    \|H[\partial_\alpha\partial_X^nU](\cdot,s)\|_{L^\infty}&\lesssim \|\partial_\alpha\partial_XU(\cdot,s)\|_{L^2}^{1-\frac{2n-1}{14}}\|\partial_\alpha\partial_X^8U(\cdot,s)\|_{L^2}^\frac{2n-1}{14}\\
    &\lesssim M^{11n+\frac{15}{2}}\epsilon^\frac{3}{4}e^{\frac{3}{4}s}\qquad n=1,...,7.\numberthis\label{eq:HilbertparanxULinfty}
\end{align*}

\subsection{Middle field for first order parameter derivatives}
For $n=1,...,6$
\begin{align*}
\big|H[\partial_\alpha\partial_X^nU](X,s)\big|
&\lesssim M^{(n+3)^2}(1+X^4)^{-\frac{1}{5}}\epsilon^\frac{3}{4}e^{\frac{3}{4}s}+\big(M^{(n+2)^2}+M^{10\frac{5}{7}n+\frac{16}{7}}\big)e^{\frac{3}{8}s}(1+X^4)^{-\frac{1}{5}}\epsilon^\frac{3}{4}e^{\frac{3}{4}s},
\end{align*}
and 
\begin{align*}
    \big|H[\partial_\alpha\partial_X^7U](X,s)\big|&\lesssim M^{90}(1+X^4)^{-\frac{1}{10}}\epsilon^\frac{3}{4}e^{\frac{3}{4}s}+M^{81}e^{\frac{3}{8}s}(1+X^4)^{-\frac{1}{5}}\epsilon^\frac{3}{4}e^{\frac{3}{4}s}.
\end{align*}
So for $F_{U,\alpha}^{(1)}$ with the weight, we have
\begin{align*}
|(1+X^4)^\frac{1}{5}F_{U,\alpha}^{(1)}|&\lesssim M^{16}\epsilon^\frac{3}{4}e^{-\frac{1}{4}s}+(M^9+M^{13})\epsilon^\frac{3}{4}e^{\frac{1}{8}s}+\epsilon^\frac{1}{2}(M^4e^{-s}+Me^{-\frac{5}{8}s}+\sqrt{7}e^{-\frac{5}{8}s})\\
&\quad+M^4(M^4\epsilon^\frac{3}{4}e^{\frac{3}{4}s}+M\epsilon^\frac{1}{2}e^{\frac{1}{4}s})+\epsilon^\frac{1}{2}(1+X^4)^{-\frac{1}{5}}+\epsilon^\frac{1}{2}M^4(2Me^{\frac{1}{4}s}+\epsilon^\frac{1}{5}e^{-\frac{3}{4}s})\\
&\lesssim M^8\epsilon^\frac{3}{4}e^{\frac{3}{4}s}+\epsilon^\frac{1}{2}(1+X^4)^{-\frac{1}{5}}+\text{non-dominant terms}.
\end{align*}
We can use the Hilbert bounds above, \eqref{eq:assumptionparanxmiddle} and \eqref{eq:assumptionnxmiddle} to estimate $F_{U,\alpha}^{(n)}$ for $n=2,...,7$ all together (the Hilbert part is different for $n=7$ but we omit the separate calculation because it is non-dominant)
\begin{align*}
|(1+X^4)^\frac{1}{5}F_{U,\alpha}^{(n)}|&\lesssim_n M^{(n+3)^2}\epsilon^\frac{3}{4}e^{-\frac{1}{4}s}+\big(M^{(n+2)^2}+M^{10\frac{5}{7}n+\frac{16}{7}}\big)\epsilon^\frac{3}{4}e^{\frac{1}{8}s}\\
&\qquad+\epsilon^\frac{1}{2}\big(M^{(n+1)^2}e^{-s}+M^{n^2}e^{-\frac{5}{8}s}+M^{\frac{35(n-1)}{4}}e^{-\frac{5}{8}s}\big)\\
&\qquad+M^{(n+1)^2}\big(M^4 \epsilon^\frac{3}{4}e^{\frac{3}{4}s}+M\epsilon^\frac{1}{2}e^{\frac{1}{4}s}\big)+\epsilon^\frac{1}{2}M^{(n+1)^2}(2Me^{\frac{1}{4}s}+\epsilon^\frac{1}{5}e^{-\frac{3}{4}s})\\
&\qquad+(1+X^4)^{-\frac{1}{5}}\epsilon^\frac{3}{4}e^{\frac{3}{4}s}\sum_{j=1}^{n-1}M^{(j+2)^2+(n-j+1)^2}+\epsilon^\frac{1}{2}(1+X^4)^{-\frac{1}{5}}\sum_{j=1}^{\lfloor \frac{n+1}{2}\rfloor} M^{j^2+(n-j+1)^2}\\
&\lesssim_n \big(M^{(n+1)^2+4}+\sum_{j=1}^{n-1} M^{(j+2)^2+(n-j+1)^2}\big)\epsilon^\frac{3}{4}e^{\frac{3}{4}s}+\text{non-dominant terms}.\numberthis\label{eq:forcingparamiddle}
\end{align*}
The $M$ powers are below $(n+2)^2$. 

\subsection{Far field for first order parameter derivatives}
First we estimate the Hilbert transforms
\begin{align*}
\big|H[\partial_\alpha\partial_X^nU](X,s)\big|&\lesssim M^{(n+2)^2}\epsilon^\frac{3}{4}e^{\frac{3}{8}s},\qquad n=1,...,6,\\
\big|H[\partial_\alpha\partial_X^7U](X,s)\big|&\lesssim M^{90}\epsilon^\frac{3}{4}e^{\frac{1}{4}s}+M^{81}\epsilon^\frac{3}{4}e^{\frac{3}{8}s},
\end{align*}
following the same argument as before without parameter derivative. Then
\begin{align*}
|e^s F_{U,\alpha}^{(1)}|&\lesssim M^9\epsilon^\frac{3}{4}e^{\frac{3}{8}s}+\epsilon^\frac{1}{2}e^{-\frac{3}{8}s}+M^4(M^4\epsilon^\frac{3}{4}e^{\frac{3}{4}s}+M\epsilon^\frac{1}{2}e^{\frac{1}{4}s})+\epsilon^\frac{1}{2}e^{-s}+\epsilon^\frac{1}{2}M^4(2Me^{\frac{1}{4}s}+\epsilon^\frac{1}{5}e^{-\frac{3}{4}s}),\\
|e^sF^{(n)}_{U,\alpha}|&\lesssim_n M^{(n+2)^2}e^{\frac{3}{8}s}+\epsilon^\frac{1}{2}M^{n^2}e^{-\frac{3}{8}s}+M^{(n+1)^2}\big(M^4 \epsilon^\frac{3}{4}e^{\frac{3}{4}s}+M\epsilon^\frac{1}{2}e^{\frac{1}{4}s}\big)\\
&\quad+\epsilon^\frac{1}{2}M^{(n+1)^2}(2Me^\frac{1}{4}+\epsilon^\frac{1}{5}e^{-\frac{3}{4}s})+\sum_{j=1}^{n-1}M^{(j+2)^2+(n-j+1)^2}\epsilon^\frac{3}{4}e^{\frac{3}{4}s}+\epsilon^\frac{1}{2}\sum_{j=1}^{\lfloor\frac{n+1}{2}\rfloor}M^{j^2+(n-j+1)^2}e^{-s}.
\end{align*}
We only need to look at dominant terms. Then for $n=1,...,7$,
\begin{align}
|e^sF_{U,\alpha}^{(n)}|\lesssim_n \big(M^{4+(n+1)^2}+\sum_{j=1}^{n-1}M^{(j+2)^2+(n-j+1)^2}\big)\epsilon^\frac{3}{4}e^{\frac{3}{4}s}.\label{eq:forcingparafar}
\end{align}

\subsection{Forcing for second order parameter derivatives}
We first use interpolation to bound Hilbert transform terms. We have
\begin{align*}
    \|H[\partial_{\alpha\beta}^2\partial_X^nU]\|_{L^\infty}&\lesssim \|\partial_{\alpha\beta}^2\partial_XU\|_{L^2}^{\frac{11}{12}-\frac{1}{6}n}\|\partial_{\alpha\beta}^2\partial_X^7U\|_{L^2}^{\frac{1}{6}n+\frac{1}{12}}\\
    &\lesssim M^{\frac{31}{2}n+44\frac{3}{4}}\epsilon^\frac{3}{2}e^{\frac{3}{2}s},\qquad n=1,...,6.
\end{align*}
And using the near-middle-far decomposition, we have (the $\log$ term in the middle field dominates)
\begin{align*}
\|H[\partial_{\alpha\beta}^2U+e^{\frac{1}{4}s}\partial_{\alpha\beta}^2\kappa]\|_{L^\infty}\lesssim M^{25}\epsilon^\frac{3}{2}se^{\frac{3}{2}s}.
\end{align*}

\section{Closure of bootstrap without parameter derivatives}\label{sec:closure}
\subsection{$L^2$ estimates}
\begin{proposition}[uniform-in-$s$ $L^2$ bound of $\partial_XU$] We have
\begin{align*}
    \|\partial_XU(\cdot,s)\|_{L^2}\leq \sqrt{7}.
\end{align*}
\end{proposition}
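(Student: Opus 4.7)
My plan is to derive an exact energy identity for $E(s):=\|\partial_X U(\cdot,s)\|_{L^2}^2$, then bound the single surviving nonlinear contribution---a cubic $L^3$-type integral---by splitting space into the standard near, middle, and far fields. The key point is that the Hilbert term will cancel out (it is antisymmetric in $L^2$), so the ODE for $E$ comes entirely from the Burgers structure.

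I would start from the $n=1$ specialization of \eqref{eq:eqUderivative}, namely
\begin{align*}
\Big(\partial_s+1+\tfrac{\partial_XU}{1-\dot\tau}\Big)\partial_XU+V\partial_X^2U=\tfrac{e^{-s}}{1-\dot\tau}H[\partial_XU],
\end{align*}
take the $L^2$ inner product with $\partial_XU$, and integrate the transport term by parts using $\partial_XV=\tfrac{\partial_XU}{1-\dot\tau}+\tfrac{5}{4}$. The Hilbert inner product $\int H[\partial_XU]\,\partial_XU\,dX$ vanishes by Plancherel and the antisymmetry of the Hilbert multiplier $-i\,\mathrm{sgn}(\xi)$. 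After combining the damping with the boundary-free integration by parts, the anti-symmetric cubic pieces cancel by a factor of one-half, and one arrives at the clean identity
\begin{align*}
\tfrac{d}{ds}E+\tfrac{3}{4}E=-\tfrac{1}{1-\dot\tau}\int_\mathbb{R}(\partial_XU)^3\,dX.
\end{align*}

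The crux is therefore to bound $\int(\partial_XU)^3\,dX$ uniformly by a constant strictly below $\tfrac{21}{4}$, which is what is needed so that the equilibrium of the Gronwall ODE lies below $7$. A naive estimate $|\int(\partial_XU)^3|\le \|\partial_XU\|_{L^\infty}E\le (1+O(\epsilon))E$ would only give negative effective damping of size $-\tfrac14+O(\epsilon)$ and would not close. Instead I would use the three-region decomposition from the bootstrap: on $|X|\le l$ one uses $|\partial_XU|\le 1+\epsilon^{1/10}e^{-3s/4}$ from \eqref{eq:conse1xULinfty} together with the measure $2l$, picking up $O(l)$; on $l\le|X|\le\tfrac12 e^{5s/4}$ one combines \eqref{eq:U2away0} and \eqref{eq:assumption1xmiddle} to get $|\partial_XU|\le(1+\epsilon^{1/20})(1+X^4)^{-1/5}$, so the contribution is controlled by $2\int_0^\infty(1+X^4)^{-3/5}\,dX$, which is an explicit finite constant; on $|X|\ge\tfrac12 e^{5s/4}$ one uses \eqref{eq:assumption1xfar} as an $L^\infty$ factor and the bootstrap $E\le 7$ as the $L^2$ factor, gaining a decaying $O(e^{-s})$.

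Combining these, $\bigl|\int(\partial_XU)^3\,dX\bigr|\le C_0+o(1)$ for an explicit $C_0$ comfortably smaller than $\tfrac{21}{4}$, so the bootstrap estimate $|\dot\tau|\le\epsilon^{1/5}e^{-3s/4}$ yields $\tfrac{d}{ds}E+\tfrac{3}{4}E\le C_0(1+O(\epsilon))$. Gronwall then gives $E(s)\le E(-\log\epsilon)\,e^{-3(s+\log\epsilon)/4}+\tfrac{4}{3}C_0(1+O(\epsilon))$, and together with the initial bound \eqref{eq:L21xUinitial} of $E(-\log\epsilon)\le 6$ this stays strictly below $7$, closing the bootstrap with margin. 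The main obstacle is the tightness of the cubic estimate: any slack in the middle-field pointwise bound translates directly into the equilibrium, so one must use the sharp $(1+X^4)^{-1/5}$ weight there and not merely $\|\partial_XU\|_{L^\infty}\le 1+O(\epsilon)$.
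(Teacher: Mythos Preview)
Your proposal is correct and follows essentially the same route as the paper: derive the exact energy identity $\tfrac{d}{ds}E+\tfrac{3}{4}E=-\tfrac{1}{1-\dot\tau}\int(\partial_XU)^3\,dX$ (the Hilbert term vanishes), bound the cubic by splitting space and using the sharp $(1+X^4)^{-1/5}$ profile so that $\int_\mathbb{R}(1+X^4)^{-3/5}\,dX\approx 3.1$ controls the middle-field part, then close by Gronwall against the initial value $E(-\log\epsilon)\le 6$. The only cosmetic difference is that the paper merges your near and middle fields into a single region $|X|\le\tfrac12 e^{5s/4}$ using the uniform bound $|\partial_XU|\le(1+\epsilon^{17/20})(1+X^4)^{-1/5}$ there, which already absorbs the near-field contribution.
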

\begin{proof}
Consider \eqref{eq:eqUderivative}. Take $L^2$ inner product with $\partial_XU$ itself, we get
\begin{gather*}
\frac{1}{2}\frac{d}{ds}\|\partial_XU\|_{L^2}^2+\|\partial_XU\|_{L^2}^2+\frac{1}{1-\dot{\tau}}\int_\mathbb{R}(\partial_XU)^3\,dX-\frac{1}{2(1-\dot{\tau})}\int_\mathbb{R}(\partial_XU)^3\,dX-\frac{5}{8}\|\partial_XU\|_{L^2}^2=0\\
\frac{d}{ds}\|\partial_XU\|_{L^2}^2+\frac{3}{4}\|\partial_XU\|_{L^2}^2=-\frac{1}{1-\dot{\tau}}\int_\mathbb{R}(\partial_XU)^3.\,dX\numberthis\label{eq:1xinner}
\end{gather*}
For $|X|\leq \frac{1}{2}e^{\frac{5}{4}s}$, by \eqref{eq:conse1xULinfty}
\begin{align*}
|\partial_XU(X,s)|\leq (1+\epsilon^\frac{17}{20})(1+X^4)^{-\frac{1}{5}},
\end{align*}
and for $|X|\geq \frac{1}{2}e^{\frac{5}{4}s}$ we have \eqref{eq:assumption1xfar}, so the right hand side of \eqref{eq:1xinner} is
\begin{align*}
\frac{1}{1-\dot{\tau}}\Big|\int_\mathbb{R}(\partial_XU)^3\,dX\Big|&\leq (1+2\epsilon^\frac{1}{5})(1+\epsilon^\frac{17}{20})\int_{|X|\leq \frac{1}{2}e^{\frac{5}{4}s}}(1+X^4)^{-\frac{3}{5}}\,dX\\
&\qquad+4(1+2\epsilon^\frac{1}{5})\int_{|X|\geq \frac{1}{2}e^{\frac{5}{4}s}}e^{-s}(\partial_XU)^2\,dX\\
&\leq 4,
\end{align*}
because
\begin{align*}
\int_\mathbb{R}(1+X^4)^{-\frac{3}{5}}\,dX=\frac{2\Gamma(\frac{7}{20})\Gamma(\frac{5}{4})}{\Gamma(\frac{3}{5})}\approx 3.09944.
\end{align*}
So
\begin{align*}
\frac{d}{ds}\|\partial_XU\|_{L^2}^2+\frac{5}{8}\|\partial_XU\|_{L^2}^2&\leq 4\\
\|\partial_XU(\cdot,s)\|_{L^2}^2&\leq \|\partial_XU(\cdot,-\log\epsilon)\|_{L^2}^2e^{-\frac{5}{8}(s+\log\epsilon)}+4\int_{-\log\epsilon}^se^{-\frac{5}{8}(s-s')}\,ds'\\
&=\frac{32}{5}+\big(\|\partial_XU(\cdot,-\log\epsilon)\|_{L^2}^2-\frac{32}{5}\big)e^{-\frac{5}{8}(s+\log\epsilon)}.
\end{align*}
Since $\|\partial_XU(\cdot,-\log\epsilon)\|_{L^2}^2\leq 6<\frac{32}{5}$, then the right hand side is increasing and tend to $\frac{32}{5}$ as $s\to \infty$. Then $\|\partial_XU(\cdot,s)\|_{L^2}^2\leq \frac{32}{5}<7$.
\end{proof}

\begin{proposition}[Uniform-in-$s$ $L^2$ bound of $\partial_X^9U$]
We have
\begin{align*}
    \|\partial_X^9U(\cdot,s)\|_{L^2}\leq M^{70}.
\end{align*}
\end{proposition}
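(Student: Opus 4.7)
The approach mirrors the $L^2$-estimate for $\partial_X U$ just carried out: pair the equation \eqref{eq:eqUderivative} with $n=9$ against $\partial_X^9 U$, integrate the transport term by parts, and use the bootstrap bounds to dominate the forcing. Explicitly, the equation becomes
\begin{align*}
\Big(\partial_s+11+\frac{10\,\partial_X U}{1-\dot{\tau}}\Big)\partial_X^9 U+V\partial_X^{10}U=F_U^{(9)},
\end{align*}
and pairing with $\partial_X^9 U$ in $L^2$, the integration-by-parts identity $\int V\partial_X^{10}U\,\partial_X^9U\,dX=-\frac{1}{2}\int\partial_X V\,(\partial_X^9 U)^2\,dX$ with $\partial_X V=\partial_X U/(1-\dot{\tau})+5/4$ yields
\begin{align*}
\frac{1}{2}\frac{d}{ds}\|\partial_X^9 U\|_{L^2}^2+\frac{83}{8}\|\partial_X^9 U\|_{L^2}^2+\frac{19}{2(1-\dot\tau)}\int_\mathbb{R}\partial_X U\,(\partial_X^9 U)^2\,dX=\int_\mathbb{R}F_U^{(9)}\,\partial_X^9 U\,dX.
\end{align*}
As in the $n=1$ proof, the Hilbert piece $\frac{e^{-s}}{1-\dot{\tau}}H[\partial_X^9 U]$ of $F_U^{(9)}$ contributes zero to the inner product by skew-symmetry of $H$.

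The delicate step is showing that the net damping is still positive after absorbing the indefinite-sign cubic term $\int\partial_X U\,(\partial_X^9 U)^2\,dX$. Using the consequence \eqref{eq:conse1xULinfty} that $\|\partial_X U\|_{L^\infty}\leq 1+\epsilon^{1/10}e^{-\frac{3}{4}s}$ together with the modulation bound $|\dot{\tau}|\leq \epsilon^{1/5}e^{-3s/4}$ from \eqref{eq:assumptionmodulation}, I get
\begin{align*}
\Big|\frac{19}{2(1-\dot\tau)}\int\partial_X U\,(\partial_X^9 U)^2\,dX\Big|\leq \frac{19}{2}\bigl(1+O(\epsilon^{1/10})\bigr)\|\partial_X^9 U\|_{L^2}^2,
\end{align*}
so the effective damping coefficient is $\frac{83}{8}-\frac{19}{2}+o(1)=\frac{7}{8}-o(1)\geq\frac{3}{4}$ for $\epsilon$ small. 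This is the main obstacle; the margin $\frac{7}{8}$ is tight, but positive.

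For the remaining quadratic forcing terms $(\partial_X^5 U)^2$ and $\partial_X^j U\,\partial_X^{10-j}U$ for $j=2,3,4$, I would bound each by $L^\infty\cdot L^2$ using the bootstrap $L^\infty$ bounds from \eqref{eq:assumptionnxmiddle}--\eqref{eq:assumptionnxfar} (giving $\|\partial_X^n U\|_{L^\infty}\leq M^{n^2}$ for $n=2,\dots,5$) together with the Sobolev interpolation $\|\partial_X^n U\|_{L^2}\leq 7^{\frac{1}{2}-\frac{n-1}{16}}M^{35(n-1)/4}$ stated in \eqref{eq:assumptionL2}. The dominant contribution comes from $\|\partial_X^2 U\|_{L^\infty}\|\partial_X^8 U\|_{L^2}\lesssim M^{4}\cdot M^{245/4}$, and all other pairings are smaller, so $\|F_U^{(9)}\|_{L^2}\lesssim M^{66}$.

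Putting this together gives $\frac{d}{ds}\|\partial_X^9 U\|_{L^2}+\frac{3}{4}\|\partial_X^9 U\|_{L^2}\lesssim M^{66}$. Gronwall with the initial bound $\|\partial_X^9 U(\cdot,-\log\epsilon)\|_{L^2}\leq M^{65}$ from \eqref{eq:L29xUinitial} then yields
\begin{align*}
\|\partial_X^9 U(\cdot,s)\|_{L^2}\leq M^{65}e^{-\frac{3}{4}(s+\log\epsilon)}+CM^{66}\leq M^{70}
\end{align*}
for $M$ sufficiently large, strictly improving the bootstrap.
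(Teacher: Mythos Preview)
Your proof is correct and follows essentially the same route as the paper: the same $L^2$ energy identity, the same use of $\|\partial_X U\|_{L^\infty}\leq 1+o(1)$ to absorb the cubic term and extract damping $\geq\tfrac{3}{4}$, and the same $L^\infty$--$L^2$ H\"older bounds on the quadratic forcing (the paper records the leading power as $M^{65\frac{1}{4}}$ rather than your $M^{66}$, but this is immaterial). The only cosmetic difference is that you make the skew-symmetry of $H$ explicit when dropping the Hilbert contribution, which the paper leaves implicit.
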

\begin{proof}
Taking $L^2$ inner product of \eqref{eq:eqUderivative} with $\partial_X^9U$ and integrating by parts, we have
\begin{align*}
\frac{1}{2}\frac{d}{ds}\|\partial_X^9U\|_{L^2}^2+11\|\partial_X^9U\|_{L^2}^2+\frac{10}{1-\dot{\tau}}\int_\mathbb{R}\partial_XU(\partial_X^9U)^2\,dX-&\frac{5}{8}\|\partial_X^9U\|_{L^2}^2\\
&-\frac{1}{2(1-\dot{\tau})}\int_\mathbb{R}\partial_XU(\partial_X^9U)^2\,dX\\
=-\frac{126}{1-\dot{\tau}}\int_\mathbb{R}(\partial_X^5U)^2\partial_X^9U\,dX-\frac{1}{1-\dot{\tau}}\sum_{j=2}^4{10\choose j}&\int_\mathbb{R}\partial_X^jU\partial_X^{10-j}U\partial_X^9U\,dX,\\
\frac{1}{2}\frac{d}{ds}\|\partial_X^9U\|_{L^2}^2+10\frac{3}{8}\|\partial_X^9U\|_{L^2}^2+\frac{19}{2(1-\dot{\tau})}\int_\mathbb{R}\partial_XU(\partial_X^9U)^2\,dX&=\text{right hand side above}.
\end{align*}
One way to bound the right hand side is to use $L^\infty$-$L^2$-$L^2$ H\"older inequality, and we get
\begin{align*}
    \frac{1}{2}\frac{d}{ds}\|\partial_X^9U\|_{L^2}^2+\frac{3}{4}\|\partial_X^9U\|_{L^2}^2&\leq C(M^{25+\frac{1}{2}\cdot 70}+M^{4+\frac{7}{8}\cdot 70}+M^{9+\frac{3}{4}\cdot 70} +M^{16+\frac{5}{8}\cdot 70})\|\partial_X^9U\|_{L^2}\\
    \frac{d}{ds}\|\partial_X^9U\|_{L^2}+\frac{3}{4}\|\partial_X^9U\|_{L^2}&\leq CM^{65\frac{1}{4}} \\
    \|\partial_X^9U(\cdot,s)\|_{L^2}&\leq \|\partial_X^9U(\cdot,-\log\epsilon)\|_{L^2}e^{-\frac{3}{4}(s+\log\epsilon)}+CM^{65\frac{1}{4}}\int_{-\log\epsilon}^se^{-\frac{3}{4}(s-s')}\,ds'\\
    &=\frac{4}{3}CM^{65\frac{1}{4}}+\big(\|\partial_X^9U(\cdot,-\log\epsilon)\|_{L^2}-\frac{4}{3}CM^{65\frac{1}{4}}\big)e^{-\frac{3}{4}(s+\log\epsilon)}\\
    &\leq \frac{4}{3}CM^{65\frac{1}{4}}\leq \frac{1}{2}M^{70},
\end{align*}
since $\|\partial_X^9U(\cdot,-\log\epsilon)\|_{L^2}\leq M^{65}< \frac{4}{3}CM^{65\frac{1}{4}}$.
\end{proof}

\subsection{Lagrangian trajectory}
Let $\Phi:\mathbb{R}\times[s_0,\infty)\to\mathbb{R}$ be the Lagrangian trajectory of $U$, i.e. for each starting point $X_0$, $\Phi(X_0,s)$ is the position of the particle at time $s$ such that
\begin{equation}
\begin{aligned}
\frac{d}{ds}\Phi(X_0,s)&=V\circ \Phi(X_0,s),\\
\Phi(X_0,s_0)&=X_0,
\end{aligned}\label{eq:trajectory}
\end{equation}
where $V$ is the transport speed defined in \eqref{eq:speed}.

\begin{lemma}[lower bound on transport speed]
For $|X_0|\geq l$, we have
\begin{align*}
|\Phi(X_0,s)|\geq |X_0|e^{\frac{1}{5}(s-s_0)}.
\end{align*}
In other words, once a particle is at least away from 0 by a distance $l$, it will escape to infinity exponentially fast. 
\label{lem:lowerLagrangian}
\end{lemma}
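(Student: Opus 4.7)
The plan is to establish the pointwise repelling bound $V(X,s)/X\geq \tfrac{1}{5}$ on $|X|\geq l$, after which the Gronwall inequality applied to $\frac{d\Phi}{ds}=V(\Phi,s)$ delivers the stated exponential escape estimate.

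First I would compute $V(0,s)$ directly. Using the constraint $U(0,s)=0$ from \eqref{eq:constraint} and the definition \eqref{eq:speed},
\begin{align*}
V(0,s)=\frac{e^{s/4}(\kappa-\dot\xi)}{1-\dot\tau},
\end{align*}
so the modulation bootstrap \eqref{eq:assumptionmodulation} gives $|V(0,s)|\lesssim \epsilon^{1/5}e^{-3s/4}$. Second, I would derive a uniform lower bound on $\partial_X V=\frac{\partial_X U}{1-\dot\tau}+\frac{5}{4}$. The global slope control $\|\partial_X U\|_{L^\infty}\leq 1+\epsilon^{1/10}e^{-3s/4}$ established in \eqref{eq:conse1xULinfty}, combined with $|\dot\tau|\leq\epsilon^{1/5}e^{-3s/4}$, yields
\begin{align*}
\partial_X V\;\geq\;\frac{1}{4}-C\epsilon^{1/10}e^{-3s/4}
\end{align*}
for a universal $C$, uniformly in $(X,s)$.

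Combining the two ingredients through the mean value theorem, for every $|X|\geq l$ there exists $\xi$ between $0$ and $X$ with
\begin{align*}
\frac{V(X,s)}{X}=\frac{V(0,s)}{X}+\partial_X V(\xi,s)\;\geq\;\frac{1}{4}-\frac{|V(0,s)|}{l}-C\epsilon^{1/10}e^{-3s/4}\;\geq\;\frac{1}{5},
\end{align*}
where in the last step I use $l=(\log M)^{-2}$ and choose $\epsilon$ small enough (relative to $M$) so that $\epsilon^{1/5}(\log M)^2+C\epsilon^{1/10}<\tfrac{1}{20}$. Note the bound is strictly positive, so $V$ shares the sign of $X$ on $|X|\geq l$.

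To conclude, I would run a short continuity argument on $\Phi(X_0,\cdot)$: starting from $X_0\geq l$, the positivity of $V$ on $[l,\infty)$ forces $\Phi(X_0,\cdot)$ to be increasing, hence to remain in $[X_0,\infty)\subset[l,\infty)$, so the repelling estimate persists and $\frac{d}{ds}\Phi\geq \tfrac{1}{5}\Phi$, which integrates to $\Phi(X_0,s)\geq X_0 e^{(s-s_0)/5}$. The case $X_0\leq -l$ is symmetric: $V\leq 0$ on $(-\infty,-l]$ keeps $\Phi$ decreasing and negative, and $\frac{d}{ds}\Phi\leq \tfrac{1}{5}\Phi$ yields $|\Phi(X_0,s)|\geq |X_0|e^{(s-s_0)/5}$. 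The only real work is bookkeeping the smallness of the $V(0,s)$ and $\partial_X V$ error terms against the thresholds $l$ and the gap $\tfrac{1}{4}-\tfrac{1}{5}$, which is routine once $\epsilon$ is fixed sufficiently small compared to $M$.
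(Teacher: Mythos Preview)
Your proof is correct and follows essentially the same idea as the paper: both arguments use the mean value theorem together with the global bound \eqref{eq:conse1xULinfty} on $\partial_XU$ and the smallness of $e^{s/4}(\kappa-\dot\xi)$ to show $V(X,s)/X\geq\tfrac15$ on $|X|\geq l$, then integrate. The only cosmetic difference is that you apply the mean value theorem directly to $V$ (bounding $\partial_XV\geq\tfrac14-\text{small}$), whereas the paper applies it to $U$ (bounding $|U|\leq(1+\text{small})|X|$) and then adds the $\tfrac54X$ piece by hand; your packaging is arguably a bit cleaner, and your sign/continuity argument for $\Phi$ is more explicit than the paper's $\tfrac12\frac{d}{ds}\Phi^2\geq\tfrac15\Phi^2$ shortcut.
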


\begin{proof}
By the mean value theorem and \eqref{eq:conse1xULinfty},
we have
\begin{align*}
|U(X,s)|\leq |U(0,s)|+ \|\partial_XU(\cdot,s)\|_{L^\infty}|X|\leq (1+\epsilon^\frac{17}{20})|X|.
\end{align*}
By \eqref{eq:eqkappadot}, \eqref{eq:assumptionx=0}, we have
\begin{align*}
e^{\frac{1}{4}s}(\kappa-\dot{\xi})&=e^{-\frac{3}{4}s}\dot{\kappa}-e^{-s}H[U+e^{\frac{1}{4}s}\kappa](0,s)\\
&=\frac{e^{-s}H[\partial_X^4U](0,s)-10\partial_X^2U(0,s)\partial_X^3U(0,s)}{\partial_X^5U(0,s)}\\
&\leq \frac{Ce^{-s}M^{30\frac{5}{8}}+10\epsilon^\frac{1}{10}e^{-\frac{3}{4}s}\cdot M^{27}e^{-s}}{120-\epsilon^\frac{1}{2}}\\
&\leq CM^{30\frac{5}{8}}e^{-s}\leq \frac{1}{10}l.
\end{align*}
Then for $|X|\geq l$, we have
\begin{align*}
|V(X,s)|&\geq \frac{5}{4}|X|-(1+2\epsilon^\frac{3}{4})(1+\epsilon^\frac{17}{20})|X|-(1+2\epsilon^\frac{3}{4})\frac{1}{10}|X|\\
&=\frac{5}{4}|X|-(1+2\epsilon^\frac{3}{4})(1+\epsilon^\frac{17}{20}+\frac{1}{10})|X|\\
&\geq \frac{1}{5}|X|.
\end{align*}
Hence,
\begin{align*}
\frac{1}{2}\frac{d}{ds}\Phi^2(X_0,s)&=\big(V\circ\Phi(X_0,s)\big)\Phi(X_0,s)\geq \frac{1}{5}\Phi^2(X_0,s)\\
\implies\qquad|\Phi(X_0,s)|^2&\geq|X_0|^2e^{\frac{2}{5}(s-s_0)}\\
|\Phi(X_0,s)|&\geq|X_0|e^{\frac{1}{5}(s-s_0)}.
\end{align*}
\end{proof}

\begin{lemma}[upper bound on Lagrangian trajectory]
For all $(X_0,s_0)$,
we have
\begin{align*}
|\Phi(X_0,s)|\leq (|X_0|+3Me^{\frac{1}{4}s_0})e^{\frac{5}{4}(s-s_0)}.
\end{align*}\label{lem:upperLagrangian}
\end{lemma}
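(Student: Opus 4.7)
The plan is to apply an integrating factor to the Lagrangian ODE \eqref{eq:trajectory} in order to cancel the $\frac{5}{4}X$ part of the transport speed, and then reduce the problem to an elementary ODE inequality driven by the bootstrap bounds on $U+e^{\frac{1}{4}s}\kappa$, $\dot\xi$ and $\dot\tau$. Concretely, set $f(s):=e^{-\frac{5}{4}s}\Phi(X_0,s)$. Then by \eqref{eq:trajectory} and the definition \eqref{eq:speed} of $V$,
\begin{align*}
f'(s)=e^{-\frac{5}{4}s}\Bigl[V(\Phi(X_0,s),s)-\tfrac{5}{4}\Phi(X_0,s)\Bigr]=\frac{e^{-\frac{5}{4}s}}{1-\dot\tau(s)}\Bigl[U(\Phi(X_0,s),s)+e^{\frac{1}{4}s}(\kappa(s)-\dot\xi(s))\Bigr],
\end{align*}
so that the growth rate $\frac{5}{4}$ has been exactly absorbed into the prefactor.

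The next step is to bound the right-hand side using the bootstrap assumptions. From \eqref{eq:assumptionULinfty} and the bound $|\dot\xi|\leq 2M$ in \eqref{eq:assumptionmodulation},
\begin{align*}
\bigl|U(X,s)+e^{\frac{1}{4}s}(\kappa-\dot\xi)\bigr|\leq \|U(\cdot,s)+e^{\frac{1}{4}s}\kappa\|_{L^\infty}+e^{\frac{1}{4}s}|\dot\xi|\leq 3Me^{\frac{1}{4}s}.
\end{align*}
Combined with $|\dot\tau|\leq\epsilon^{1/5}e^{-\frac{3}{4}s}$, which gives $(1-\dot\tau)^{-1}\leq 1+O(\epsilon)$ throughout $s\geq s_0\geq-\log\epsilon$, I get the pointwise bound $|f'(s)|\leq 3M(1+O(\epsilon))e^{-s}$.

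Integrating from $s_0$ to $s$ and using $f(s_0)=e^{-\frac{5}{4}s_0}X_0$,
\begin{align*}
|f(s)|\leq e^{-\frac{5}{4}s_0}|X_0|+3M(1+O(\epsilon))\bigl(e^{-s_0}-e^{-s}\bigr)\leq e^{-\frac{5}{4}s_0}|X_0|+3Me^{-s_0},
\end{align*}
where for the final inequality I use that $e^{-s_0}-e^{-s}<e^{-s_0}$ leaves room to absorb the $1+O(\epsilon)$ multiplier once $\epsilon$ is small. Multiplying back by $e^{\frac{5}{4}s}$ yields $|\Phi(X_0,s)|\leq e^{\frac{5}{4}(s-s_0)}|X_0|+3Me^{\frac{5}{4}(s-s_0)+\frac{1}{4}s_0}$, which is exactly the claimed bound.

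There is no real obstacle: once $\frac{5}{4}X$ is removed by the integrating factor $e^{-\frac{5}{4}s}$, the residual transport speed is controlled uniformly by $3Me^{\frac{1}{4}s}$, and the $e^{-s}$ decay after dividing out $e^{\frac{5}{4}s}$ makes the time integral convergent. The only mild care is in keeping constants: one needs $s_0\geq -\log\epsilon$ so that both $\dot\tau$ and $\kappa-\dot\xi$ contribute only a harmless $O(\epsilon)$ correction which is then swallowed by the strict inequality $1-e^{-(s-s_0)}<1$ produced by the time integration.
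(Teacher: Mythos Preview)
Your proof is correct and follows essentially the same route as the paper: multiply the Lagrangian ODE by the integrating factor $e^{-\frac{5}{4}s}$, bound the residual speed via $\|U+e^{\frac{1}{4}s}\kappa\|_{L^\infty}\leq Me^{\frac{1}{4}s}$ and $|\dot\xi|\leq 2M$, integrate in $s$, and absorb the $(1-\dot\tau)^{-1}=1+O(\epsilon)$ factor using the slack from $1-e^{-(s-s_0)}<1$.
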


\begin{proof}
Multiplying the first equation of \eqref{eq:trajectory} by the integrating factor $e^{-\frac{5}{4}s}$ and plugging in the expression for $V$, we get
\begin{align*}
\frac{d}{ds}\big(e^{-\frac{5}{4}s}\Phi(X_0,s)\big)&=\frac{1}{1-\dot{\tau}}\Big(e^{-\frac{5}{4}s}U\circ\Phi(X_0,s)+e^{-s}\kappa-e^{-s}\dot{\xi}\Big)\\
e^{-\frac{5}{4}s}\Phi(X_0,s)&=e^{-\frac{5}{4}s_0}X_0+\frac{1}{1-\dot{\tau}}\int_{s_0}^se^{-s'}\Big(e^{-\frac{1}{4}s'}U\circ\Phi(X_0,s')+\kappa\Big)-e^{-s'}\dot{\xi}\,ds',\\
|\Phi(X_0,s)-X_0e^{\frac{5}{4}(s-s_0)}|&\leq 3e^{\frac{5}{4}s}\int_{s_0}^s Me^{-s'}\,ds'\\
&=3Me^{\frac{5}{4}s}(e^{-s_0}-e^{-s})\\
&=3Me^{\frac{1}{4}s_0}(1-e^{-(s_0-s)})e^{\frac{5}{4}(s-s_0)}\leq 2Me^{\frac{1}{4}s_0}e^{\frac{5}{4}(s-s_0)}.
\end{align*}
By triangle inequality, the proof is complete.
\end{proof}

\subsection{$L^\infty$ estimates of $U$}
\begin{proposition}
The $L^\infty$-norm of $e^{-\frac{1}{4}s}U(\cdot,s)+\kappa$ is bounded uniformly in $s$. More precisely,
\begin{align*}
\|e^{-\frac{1}{4}s}U(\cdot,s)+\kappa\|_{L^\infty}\leq M,\quad\text{i.e.}\quad
\|U(\cdot,s)+e^\frac{s}{4}\kappa\|_{L^\infty}\leq Me^{\frac{1}{4}s}.
\end{align*}
\end{proposition}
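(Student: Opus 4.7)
The plan is to derive a transport equation for $W := e^{-s/4}U + \kappa$ (which is just $u(x,t)$ re-expressed in the self-similar frame) and propagate a pointwise bound along the Lagrangian trajectories $\Phi$ of \eqref{eq:trajectory}.

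First, I would rewrite \eqref{eq:ansatz} in terms of $W$. Since $\kappa$ depends on $s$ only through $t(s) = \tau(t) - e^{-s}$, the chain rule gives
\begin{align*}
\partial_s\kappa\big|_X = \dot\kappa \cdot \frac{dt}{ds} = \frac{e^{-s}\dot\kappa}{1-\dot\tau}.
\end{align*}
Multiplying \eqref{eq:ansatz} by $e^{-s/4}$ and adding $\partial_s\kappa$ to both sides, the $-e^{-3s/4}\dot\kappa/(1-\dot\tau)$ source on the right \emph{exactly cancels}, leaving the pure transport equation
\begin{align*}
\partial_s W + V\,\partial_X W = \frac{e^{-5s/4}}{1-\dot\tau}\,H[U+e^{s/4}\kappa].
\end{align*}

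Second, I would evaluate this along $\Phi(X_0,\cdot)$, turning the PDE into the ODE $\tfrac{d}{ds}(W\circ\Phi) = \mathrm{RHS}\circ\Phi$. Applying the Hilbert bound \eqref{eq:HilbertU} (which already uses only the bootstrap ingredients $\|U+e^{s/4}\kappa\|_{L^2}\lesssim M e^{7s/8}$ and \eqref{eq:assumptionULinfty}) yields $\|H[U+e^{s/4}\kappa]\|_{L^\infty}\lesssim M e^{3s/8}$, and the modulation bootstrap $|\dot\tau|\le \epsilon^{1/5}e^{-3s/4}\le \tfrac12$ controls the prefactor, so
\begin{align*}
|\mathrm{RHS}\circ\Phi(X_0,s)| \lesssim M e^{-7s/8},
\end{align*}
which is integrable on $[s_0,\infty)$ with $s_0:=-\log\epsilon$.

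Third, I would integrate from $s_0$ to $s$ and combine with the initial bound from Section~\ref{sec:initialdata}, namely $\|W(\cdot,s_0)\|_{L^\infty}=\|\epsilon^{1/4}U(\cdot,-\log\epsilon)+\kappa_0\|_{L^\infty}\le M/2$, giving
\begin{align*}
|W(\Phi(X_0,s),s)| \le \tfrac{M}{2} + CM\int_{s_0}^{s}e^{-7s'/8}\,ds' \le \tfrac{M}{2} + CM\epsilon^{7/8}.
\end{align*}
Because \eqref{eq:trajectory} admits unique global forward solutions for every starting point $X_0\in\mathbb{R}$, the map $X_0\mapsto \Phi(X_0,s)$ is a bijection of $\mathbb{R}$ at each fixed $s$, so the above pointwise estimate passes to an $L^\infty$ bound in $X$, closing the bootstrap as $\|W(\cdot,s)\|_{L^\infty}\le M/2 + CM\epsilon^{7/8}<M$ for $\epsilon$ sufficiently small.

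The only delicate point is the cancellation of the $\dot\kappa$ source when passing from the $U$-equation to the $W$-equation: without it the right-hand side would carry a factor of $e^{-s}\dot\kappa/(1-\dot\tau)$ of size $\sim M^2 e^{-7s/8}$ superposed on a genuine growth obstruction, whereas with the cancellation the forcing is universally integrable and picks up the small factor $\epsilon^{7/8}$. Everything else, notably the surjectivity and Lipschitz regularity of the Lagrangian flow, is standard and already packaged in Lemmas~\ref{lem:lowerLagrangian}--\ref{lem:upperLagrangian}.
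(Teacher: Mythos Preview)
Your proof is correct and follows essentially the same approach as the paper: derive the pure transport equation \eqref{eq:U+kappa} for $W=e^{-s/4}U+\kappa$, bound the forcing via \eqref{eq:HilbertU}, and integrate along the Lagrangian trajectories from $s_0=-\log\epsilon$ using the initial bound $\|W(\cdot,s_0)\|_{L^\infty}\le M/2$. Your explicit explanation of the $\dot\kappa$ cancellation and the surjectivity of the flow map are nice clarifications that the paper leaves implicit.
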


\begin{proof}
We consider the equation for $e^{-\frac{1}{4}s}U(\cdot,s)+\kappa$ 
\begin{align}
    \partial_s(e^{-\frac{1}{4}s}U+\kappa)+V\partial_X(e^{-\frac{1}{4}s}U+\kappa)=\frac{e^{-\frac{5}{4}s}}{1-\dot{\tau}}H[U+e^{\frac{1}{4}s}\kappa].\label{eq:U+kappa}
\end{align}
Recall that $|H[U+e^\frac{s}{4}\kappa](X,s)|\lesssim Me^{\frac{3}{8}s}$, so we can bound the forcing term on the right hand side
\begin{align*}
\Big|\frac{e^{-\frac{5}{4}s}}{1-\dot{\tau}}H[U+e^\frac{s}{4}\kappa]\Big|\lesssim Me^{-\frac{7}{8}s}.
\end{align*}
Composing \eqref{eq:U+kappa} with the Lagrangian trajectory, we have
\begin{align*}
    |(e^{-\frac{1}{4}s}U+\kappa)\circ\Phi(X_0,s)|&\leq |e^{\frac{1}{4}\log{\epsilon}}U(X_0,-\log{\epsilon})+\kappa(0)|\\
    &\qquad+\int_{-\log{\epsilon}}^s\frac{e^{-\frac{5}{4}s'}}{1-\dot{\tau}}\big|H[U+e^\frac{s'}{4}\kappa]\circ\Phi(X_0,s')\big|\,ds'\\
    &\leq \frac{M}{2}+C\int_{-\log{\epsilon}}^\infty Me^{-\frac{7}{8}s'}\,ds'\\
    &\leq \frac{M}{2}+CM\epsilon^\frac{7}{8}\leq \frac{3}{4}M 
\end{align*}
for all $X_0$. We thus close the assumption.
\end{proof}

\subsection{Pointwise estimates: near field}
\begin{proposition}[$\partial_X^n\widetilde{U}$ for $n=6,7,8$] 
For all $(X,s)$ such that $|X|\leq l$, we have
\begin{align*}
    |\partial_X^6\widetilde{U}(X,s)|\leq \epsilon^\frac{1}{5},\quad |\partial_X^7\widetilde{U}(X,s)|\leq M\epsilon^\frac{1}{5},\quad |\partial_X^8\widetilde{U}(X,s)|\leq M^3\epsilon^\frac{1}{5}.
\end{align*}
\end{proposition}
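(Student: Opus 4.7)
The strategy is a Duhamel estimate along the Lagrangian flow $\Phi$. Given $(X,s)$ with $|X|\leq l$, let $X_0$ denote the unique point such that $\Phi(X_0,s)=X$. The first observation is that the entire backward trajectory lies in the near field: by the contrapositive of Lemma \ref{lem:lowerLagrangian}, any trajectory launched from $|Y|\geq l$ at time $s_0=-\log\epsilon$ satisfies $|\Phi(Y,s')|\geq l\,e^{(s'-s_0)/5}>l$ at all later times $s'\geq s_0$, so $|X|\leq l$ forces $|X_0|\leq l$ and $|\Phi(X_0,s')|\leq l$ for every $s'\in[s_0,s]$.

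Composing \eqref{eq:eqnxtildeU} with $\Phi(X_0,\cdot)$ absorbs the $V\partial_X^{n+1}\widetilde{U}$ term and produces the ODE
\begin{align*}
\frac{d}{ds}\bigl(\partial_X^n\widetilde{U}\circ\Phi\bigr)+D_n(s)\,\bigl(\partial_X^n\widetilde{U}\circ\Phi\bigr)=F_{\tilde{U}}^{(n)}\circ\Phi,\qquad D_n:=\frac{5n-1}{4}+\frac{n+1}{1-\dot\tau}\partial_X U\circ\Phi,
\end{align*}
whence the Duhamel representation
\begin{align*}
\partial_X^n\widetilde{U}(X,s)=e^{-\int_{s_0}^s D_n}\,\partial_X^n\widetilde{U}(X_0,s_0)+\int_{s_0}^s e^{-\int_{s'}^s D_n}\,F_{\tilde{U}}^{(n)}\!\circ\Phi(X_0,s')\,ds'.
\end{align*}

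The initial datum satisfies $\widetilde{U}(X,s_0)=\widehat{U}_0(X)+\alpha X^2+\beta X^3$ on $|X|\leq 1$, so by \eqref{eq:hatU0nxmiddle}, $|\partial_X^n\widetilde{U}(X_0,s_0)|\leq\epsilon$ for $n=6,7,8$. The damping is positive with margin: on $|X|\leq l$ the Taylor expansion of $\partial_X U$ around $X=0$ together with \eqref{eq:conse1xULinfty}, \eqref{eq:assumptionx=0}, and the modulation bound $|\dot\tau|\leq\epsilon^{1/5}e^{-3s/4}$ gives $\partial_X U\circ\Phi\geq -1-\epsilon^{1/10}e^{-3s/4}$, hence
\begin{align*}
D_n\geq\frac{5n-1}{4}-(n+1)(1+C\epsilon^{1/5})=\frac{n-5}{4}-O(\epsilon^{1/10}),
\end{align*}
which is at least $1/8,\,3/8,\,5/8$ for $n=6,7,8$ respectively, once $\epsilon$ is small compared to $M$.

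Combining this positive damping with the near-field forcing bounds $|F_{\tilde{U}}^{(6)}|\lesssim\epsilon^{1/5}l$, $|F_{\tilde{U}}^{(7)}|\lesssim\epsilon^{1/5}$, $|F_{\tilde{U}}^{(8)}|\lesssim M\epsilon^{1/5}$ established in the preceding subsection, Duhamel yields
\begin{align*}
|\partial_X^6\widetilde{U}(X,s)|\leq\epsilon+C\epsilon^{1/5}l,\qquad |\partial_X^7\widetilde{U}(X,s)|\leq\epsilon+C\epsilon^{1/5},\qquad |\partial_X^8\widetilde{U}(X,s)|\leq\epsilon+CM\epsilon^{1/5},
\end{align*}
each of which is strictly below the target $\epsilon^{1/5}$, $M\epsilon^{1/5}$, $M^3\epsilon^{1/5}$ for $M$ large and $\epsilon$ small, closing the bootstrap. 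The only subtle step is verifying the sign of $D_n$: the damping $\frac{5n-1}{4}$ must beat the destabilizing $-(n+1)$ coming from $\partial_X U\approx -1$, with just enough slack that the near-field perturbations from \eqref{eq:assumptionx=0} and \eqref{eq:conse1xULinfty} do not destroy positivity, and this is precisely why we need $n\geq 6$ (the critical index $n=5$ gives exactly zero damping, consistent with the non-decay of $\partial_X^5\widetilde U(0,s)$).
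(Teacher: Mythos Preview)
Your proof is correct and follows essentially the same approach as the paper: a Duhamel estimate along Lagrangian trajectories, using that the backward trajectory stays in the near field, the positive damping $D_n\geq\frac{n-5}{4}-o(1)$, the initial data bound $|\partial_X^n\widetilde U(\cdot,-\log\epsilon)|\leq\epsilon$, and the near-field forcing bounds from the preceding subsection. Your explicit justification that the backward trajectory remains in $|X|\leq l$ and your remark on why $n\geq6$ is the threshold are both welcome additions.
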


\begin{proof}
We begin with $\partial_X^6\widetilde{U}$. The damping is
\begin{align*}
D_{\tilde{U}}^{(6)}=\frac{29}{4}+\frac{7\partial_XU}{1-\dot{\tau}}\geq \frac{29}{4}-7(1+2\epsilon^\frac{3}{4})(1+\epsilon^\frac{17}{20}) \geq \frac{1}{8}.
\end{align*}
Observe that if $|\Phi(X_0,s)|\leq l$ then $|X_0|\leq l$ with $s_0=-\log\epsilon$. Composing the equation with the Lagragian trajectory and using the forcing bound $|F_{\tilde{U}}^{(6)}|\lesssim \epsilon^\frac{1}{5}l$, we get
\begin{align*}
|\partial^6_X\widetilde{U}\circ \Phi(X_0,s)|\leq |\partial_X^6\widetilde{U}(X_0,-\log\epsilon)|e^{-\frac{1}{16}(s+\log\epsilon)}+C\epsilon^\frac{1}{5}l\int_{s_0}^se^{-\frac{1}{16}(s-s')}\,ds'.
\end{align*}
We then use the initial condition that when $|X|\leq l\ll 1$,
\begin{align*}
    |\partial_X^6\widetilde{U}(X,-\log\epsilon)|&=\big|\partial_X^6\big(\widehat{U}_0(X)+\alpha X^2+\beta X^3\big)\big|\\
    &=|\partial_X^6\widehat{U}_0(X)|\leq \epsilon,
\end{align*}
to obtain
\begin{align*}
|\partial_X^6\widetilde{U}\circ\Phi(X_0,s)|\leq \epsilon+C\epsilon^\frac{1}{5}l\leq \frac{3}{4}\epsilon^\frac{1}{5},
\end{align*}
where we used $l=(\log M)^{-2}$ and take $M$ sufficiently large. We carry out the similar procedures to $\partial_X^7\widetilde{U}$ and $\partial_X^8\widetilde{U}$. The damping terms are bounded from below by
\begin{align*}
|D_{\tilde{U}}^{(7)}|=\frac{5}{4}\cdot 6+1 -8(1+2\epsilon^\frac{3}{4})(1+\epsilon^\frac{17}{20})\geq \frac{1}{4},\\
|D_{\tilde{U}}^{(7)}|=\frac{5}{4}\cdot 7+1-9(1+2\epsilon^\frac{3}{4})(1+\epsilon^\frac{17}{20})\geq \frac{1}{2},
\end{align*}
and the initial conditions are both $\leq \epsilon$.
\end{proof}

\begin{proposition}[$\partial_X^n\widetilde{U}$ for $n=0,...,5$]
For $|X|\leq l$, we have
\begin{align*}
    |\partial_X^n\widetilde{U}(X,s)|\leq \epsilon^\frac{1}{5}|X|^{6-n}+\epsilon^\frac{1}{2}\leq 2l^{6-n}\epsilon^\frac{1}{5},\qquad\text{for }n=0,...,5.
\end{align*}
\end{proposition}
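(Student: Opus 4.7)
The plan is to obtain the lower order near field bounds directly from Taylor expansion around $X=0$, using the constraints \eqref{eq:constraint}, the bootstrap assumptions \eqref{eq:assumptionx=0} at $X=0$, and the just-established bound $|\partial_X^6\widetilde{U}(X,s)|\leq\epsilon^{1/5}$ on $|X|\leq l$. Since $U_2$ is odd with $U_2^{(n)}(0)=0$ except at $n=1,5,9,\dots$ and $U_2'(0)=-1$, the constraints $U(0,s)=0$, $\partial_XU(0,s)=-1$, $\partial_X^4U(0,s)=0$ translate into
\begin{align*}
\widetilde{U}(0,s)=\partial_X\widetilde{U}(0,s)=\partial_X^4\widetilde{U}(0,s)=0,
\end{align*}
while the bootstrap assumptions at $X=0$ give $|\partial_X^2\widetilde{U}(0,s)|\leq\epsilon^{1/10}e^{-3s/4}$, $|\partial_X^3\widetilde{U}(0,s)|\leq M^{27}e^{-s}$, and $|\partial_X^5\widetilde{U}(0,s)|\leq\epsilon^{1/2}$.

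For each $n\in\{0,1,\dots,5\}$, I would write the Lagrange-remainder Taylor expansion of $\partial_X^n\widetilde{U}(\cdot,s)$ around $0$ to order $6-n$:
\begin{align*}
\partial_X^n\widetilde{U}(X,s)=\sum_{k=0}^{5-n}\frac{X^k}{k!}\,\partial_X^{n+k}\widetilde{U}(0,s)+\frac{X^{6-n}}{(6-n)!}\,\partial_X^6\widetilde{U}(X',s),
\end{align*}
for some $X'$ between $0$ and $X$. The leading remainder term is controlled by the newly proved near field bound on $\partial_X^6\widetilde{U}$ and contributes at most $\tfrac{1}{(6-n)!}\epsilon^{1/5}|X|^{6-n}$. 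The two vanishing derivatives at $X=0$ (the $k$ such that $n+k\in\{0,1,4\}$) drop out entirely, and each of the three remaining polynomial terms, coming from $\partial_X^2\widetilde{U}(0,s)$, $\partial_X^3\widetilde{U}(0,s)$, $\partial_X^5\widetilde{U}(0,s)$, is absorbed into $\epsilon^{1/2}$: since $s\geq -\log\epsilon$ and $|X|\leq l=(\log M)^{-2}\ll 1$, we have
\begin{align*}
\epsilon^{1/10}e^{-3s/4}|X|^{k_1}\leq \epsilon^{17/20},\qquad M^{27}e^{-s}|X|^{k_2}\leq M^{27}\epsilon,\qquad \epsilon^{1/2}|X|^{k_3}\leq \epsilon^{1/2},
\end{align*}
for the relevant nonnegative exponents $k_1,k_2,k_3$, and each is bounded by $\epsilon^{1/2}$ once $\epsilon$ is taken small enough depending on $M$. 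Summing gives $|\partial_X^n\widetilde{U}(X,s)|\leq \epsilon^{1/5}|X|^{6-n}+\epsilon^{1/2}$. The second inequality $\epsilon^{1/5}|X|^{6-n}+\epsilon^{1/2}\leq 2l^{6-n}\epsilon^{1/5}$ on $|X|\leq l$ is immediate for $\epsilon$ small, using $\epsilon^{1/2}\leq l^{6-n}\epsilon^{1/5}$ which holds because $l^{6-n}\geq l^6$ is a fixed positive number while $\epsilon^{3/10}\to 0$.

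The main point is bookkeeping rather than obstacle: one just needs to check that all three polynomial contributions from the nontrivial derivatives at $0$ fall below the universal threshold $\epsilon^{1/2}$. The only place where care is required is the term $M^{27}e^{-s}|X|$ appearing in the expansions for $n\leq 2$; it is bounded by $M^{27}\epsilon\cdot l$, which is $\leq\epsilon^{1/2}$ only once $\epsilon$ is chosen small enough depending on $M$, consistent with the standing choice of $\epsilon=\epsilon(M)$. No differential equation, no Lagrangian trajectory, and no forcing estimate is needed here: everything follows from the two previous propositions (the near field bound on $\partial_X^6\widetilde{U}$ and the bootstrap assumption at $X=0$) plus elementary Taylor's theorem.
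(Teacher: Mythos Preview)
Your proof is correct and uses essentially the same approach as the paper: the paper applies the fundamental theorem of calculus iteratively from $n=5$ down to $n=0$, which is exactly the Taylor expansion with Lagrange remainder that you write out in one step. The bookkeeping of which derivatives at $0$ vanish and which are controlled by the bootstrap assumptions \eqref{eq:assumptionx=0} matches, and the absorption of the $M^{27}e^{-s}$ and $\epsilon^{1/10}e^{-3s/4}$ terms into $\epsilon^{1/2}$ via the choice $\epsilon=\epsilon(M)$ is precisely what the paper does as well.
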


\begin{proof}
Using the fundamental theorem of calculus, for $|X|\leq l$
\begin{align*}
\partial_X^5\widetilde{U}(X,s)=\partial_X^5\widetilde{U}(0,s)+\int_0^X\partial_X^6\widetilde{U}(Y,s)\,ds\leq \epsilon^\frac{1}{2}+l\epsilon^\frac{1}{5}\leq \frac{5}{4}l\epsilon^\frac{1}{5}.
\end{align*}
Using the fundamental theorem of calculus repeatedly, we get
\begin{align*}
\partial_X^4\widetilde{U}(X,s)&=\partial_X^4\widetilde{U}(0,s)+\int_0^X\partial_X^5\widetilde{U}(Y,s)\,ds\leq \frac{5}{4}l^2\epsilon^\frac{1}{5},\\
\partial_X^3\widetilde{U}(X,s)&=\partial_X^3\widetilde{U}(0,s)+\int_0^X\partial_X^4\widetilde{U}(Y,s)\,ds\leq 
M^{27} e^{-s}+
\frac{5}{4}l^3\epsilon^\frac{1}{5}\leq \frac{3}{2}l^3\epsilon^\frac{1}{5},\\
\partial_X^2\widetilde{U}(X,s)&=\partial_X^2\widetilde{U}(0,s)+\int_0^X\partial_X^3\widetilde{U}(Y,s)\,ds\leq \epsilon^\frac{1}{10} e^{-\frac{3}{4}s}+
\frac{3}{2}l^4\epsilon^\frac{1}{5}\leq \frac{7}{4}l^4\epsilon^\frac{1}{5},\\
\partial_X\widetilde{U}(X,s)&=\partial_X\widetilde{U}(0,s)+\int_0^X\partial_X^2\widetilde{U}(Y,s)\,ds\leq \frac{7}{4}l^5\epsilon^\frac{1}{5},\\
\widetilde{U}(X,s)&=\widetilde{U}(0,s)+\int_0^X\partial_X\widetilde{U}(Y,s)\,ds\leq \frac{7}{4}l^6\epsilon^\frac{1}{5}.
\end{align*}
\end{proof}

\subsection{Pointwise estimates: middle field}
We introduce the weight $(1+X^4)^\mu$ for $\mu\neq 0$. Suppose we have a transport equation for function $f(X,s)$
\begin{align*}
\partial_sf+D_f f+V\partial_X f=F_f
\end{align*}
where $D_f$ and $F_f$ denote the damping and forcing terms respectively. We consider $$g(X,s):=(1+X^4)^\mu f(X,s)$$ for some $\mu\in\mathbb{R}$. Then 
\begin{align*}
\partial_sg+(\underbracket[0.55pt]{D_f-\frac{4\mu X^3}{1+X^4}V}_{:=D_{f,\mathrm{w}}})g+V\partial_Xg=\underbracket[0.55pt]{(1+X^4)^\mu F_f}_{:=F_{f,\mathrm{w}}},
\end{align*}
where the ``w" stands for ``weighted".
\begin{proposition}[weighted estimate of $\widetilde{U}$]\label{prop:weitedUtilde}
For $(X,s)$ such that $l\leq |X|\leq \frac{1}{2}e^{\frac{5}{4}s}$, we have
\begin{align*}
    |\widetilde{U}(X,s)|\leq \epsilon^\frac{3}{20}(1+X^4)^\frac{1}{20}.
\end{align*}
\end{proposition}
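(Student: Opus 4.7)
The plan is to apply the weighted transport scheme just introduced above the statement to $\widetilde U$ with the weight $(1+X^4)^{-1/20}$ matching the desired spatial growth. Set $g(X,s):=(1+X^4)^{-1/20}\widetilde U(X,s)$; the proposition is equivalent to $|g|\leq \epsilon^{3/20}$ throughout the middle field. Multiplying \eqref{eq:eqtilde} by $(1+X^4)^{-1/20}$ puts $g$ into the transport form
\begin{equation*}
\partial_s g+D_{\tilde U,w}\,g+V\,\partial_X g=F_{\tilde U,w},
\end{equation*}
with
\begin{equation*}
D_{\tilde U,w}=-\tfrac14+\tfrac{\partial_X U_2}{1-\dot\tau}+\tfrac{X^3V}{5(1+X^4)},\qquad F_{\tilde U,w}=(1+X^4)^{-1/20}F_{\tilde U}.
\end{equation*}

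The weight exponent $-1/20$ is tuned so that the $\tfrac54 X$ piece of $V$ exactly cancels the $-1/4$:
\begin{equation*}
-\tfrac14+\tfrac{X^3}{5(1+X^4)}\cdot\tfrac54 X=-\tfrac{1}{4(1+X^4)}.
\end{equation*}
Substituting the rest of $V$ and bounding $U_2$, $U_2'$, $\widetilde U$ and $\kappa-\dot\xi$ via Lemma \ref{lem:U2}, \eqref{eq:assumptionUmiddle} and \eqref{eq:assumptionmodulation}, I expect $D_{\tilde U,w}\geq -C(1+X^4)^{-1/5}$ for a universal $C$. This is not pointwise nonnegative, but Lemma \ref{lem:lowerLagrangian} makes its integral along the Lagrangian trajectory $\Phi$ tame: since $|\Phi(X_0,s')|\geq|X_0|e^{(s'-s_0)/5}\geq l\,e^{(s'-s_0)/5}$,
\begin{equation*}
\int_{s_0}^s |D_{\tilde U,w}|\circ\Phi\,ds'\lesssim |X_0|^{-4/5}\leq l^{-4/5}=(\log M)^{8/5}.
\end{equation*}
The resulting Grönwall factor $\exp(C(\log M)^{8/5})$ is subpolynomial in $M$ and will be absorbed by any fixed positive power of $\epsilon$.

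Composing with $\Phi$, the trajectory through $(X,s)$ either reaches $s_0=-\log\epsilon$ at some $(X_0,s_0)$ with $|X_0|\in[l,\tfrac12\epsilon^{-5/4}]$ (Case~1), or enters the middle field through $|X|=l$ at some time $s_*\in(s_0,s]$ (Case~2). In Case~1, \eqref{eq:initialdata} together with \eqref{eq:hatUmiddle}, \eqref{eq:hatU023small} and \eqref{eq:sizealphabeta} gives
\begin{equation*}
|\widetilde U(X_0,s_0)|\leq |\widehat U_0(X_0)|+|\chi(X_0)(\alpha X_0^2+\beta X_0^3)|\lesssim \epsilon(1+X_0^4)^{1/20}+M^{25}\epsilon\,\mathbbm{1}_{|X_0|\leq 2},
\end{equation*}
so $|g(X_0,s_0)|\lesssim M^{25}\epsilon$. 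In Case~2, the $n=0$ near-field bound \eqref{eq:assumptionnear0-5} gives $|g(\pm l,s_*)|\leq 2\epsilon^{1/5}$. For the forcing I use the middle-field estimate $|F_{\tilde U}|\lesssim Me^{-5s/8}$ from Section~\ref{sec:forcing}, contributing at most $M\epsilon^{5/8}\exp(C(\log M)^{8/5})$ via Duhamel. Summing,
\begin{equation*}
|g(X,s)|\leq\big(M^{25}\epsilon+\epsilon^{1/5}+M\epsilon^{5/8}\big)\exp\!\big(C(\log M)^{8/5}\big)\leq \tfrac12\epsilon^{3/20}
\end{equation*}
once $\epsilon$ is small enough depending on $M$, which is exactly the claim.

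The main technical obstacle is the damping: $D_{\tilde U,w}$ is of order $-1$ near $|X|=l$, so pointwise negativity would doom a naive Grönwall argument. The rescue is the repelling-trajectory estimate of Lemma \ref{lem:lowerLagrangian}, which makes $(1+\Phi^4)^{-1/5}$ decay like $e^{-4(s'-s_0)/25}$ and therefore keeps $\int|D_{\tilde U,w}|$ finite, of size $l^{-4/5}$ that is subpolynomial in $M$ and easily absorbed by $\epsilon$. A secondary point is the clean two-case Lagrangian origin split, which uses monotonicity of $|\Phi|$ on $|X|\geq l$ from the same lemma; in Case~2 the $\epsilon^{1/5}$ gain from the near-field bootstrap (as opposed to only $O(1)$) is what lets the middle-field bound close.
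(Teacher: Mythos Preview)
Your approach is essentially the paper's: same weight $(1+X^4)^{-1/20}$, same damping cancellation, same forcing bound $Me^{-5s/8}$, same two-case split on the Lagrangian origin. The only substantive difference is that you bound the damping integral by $Cl^{-4/5}$ via $(1+\Phi^4)^{-1/5}\leq|\Phi|^{-4/5}$, whereas the paper splits the time interval to get the sharper bound $10\log(1/l)$; your bound still suffices because the Gr\"onwall factor is absorbed by the $\epsilon^{1/20}$ slack between the $\epsilon^{1/5}$ input and the $\epsilon^{3/20}$ target.

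One correction: your factor $\exp\!\big(C(\log M)^{8/5}\big)$ is \emph{not} subpolynomial in $M$ (compare $(\log M)^{8/5}$ with $k\log M$ for large $M$); it is superpolynomial. This misstatement is harmless here since, as you correctly say next, $\epsilon$ is chosen after $M$ and any $M$-dependent constant is swallowed by a fixed power of $\epsilon$---but the ``subpolynomial'' claim itself is false.
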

\begin{proof}
We take
$f=\widetilde{U}$ and $\mu=-\frac{1}{20}$ in the aforementioned framework. From \eqref{eq:eqtilde},
\begin{align*}
    D_{\tilde{U}}=-\frac{1}{4}+\frac{\partial_XU_2}{1-\dot{\tau}},
\end{align*}
So the new damping for $g=(1+X^4)^{-\frac{1}{20}}\widetilde{U}$ is
\begin{align*}
D_{\widetilde{U},\mathrm{w}}&=-\frac{1}{4}+\frac{\partial_XU_2}{1-\dot{\tau}}+\frac{X^4}{4(1+X^4)}+\frac{X^3}{5(1+X^4)}\frac{U+e^{\frac{1}{4}s}(\kappa-\dot{\xi})}{1-\dot{\tau}}\\
&=-\frac{1}{4(1+X^4)}+\frac{\partial_X U_2}{1-\dot{\tau}}+\frac{X^3}{5(1+X^4)}\frac{U+e^{\frac{1}{4}s}(\kappa-\dot\xi)}{1-\dot{\tau}},\\
|D_{\widetilde{U},\mathrm{w}}|&\leq \frac{1}{4}(1+X^4)^{-1}+(1+2\epsilon^\frac{1}{5})(1+X^4)^{-\frac{1}{5}}\\
&\qquad+\frac{1}{5}(1+X^4)^{-\frac{1}{4}}(1+2\epsilon^\frac{1}{5})\Big((1+\epsilon^\frac{3}{20})(1+X^4)^\frac{1}{20}+\epsilon^\frac{1}{5}e^{-\frac{3}{4}s}\Big)\\
\big(e^{-\frac{3}{4}s}\leq 2(1+X^4)^{-\frac{3}{20}}\big)\quad &\leq 2(1+X^4)^{-\frac{1}{5}}.
\end{align*}
Since $F_{\tilde{U}}\lesssim Me^{-\frac{5}{8}s}$ from \eqref{eq:forcingU},
\begin{align*}
|(1+X^4)^{-\frac{1}{20}}\widetilde{U}\circ\Phi(X_0,s)|&\leq (1+X_0^4)^{-\frac{1}{20}}|\widetilde{U}(X_0,s_0)|\exp{\Big(\int_{s_0}^s 2\big(1+X_0^4e^{\frac{4}{5}(s'-s_0)}\big)^{-\frac{1}{5}}ds'\Big)}\\
&+CM\int_{s_0}^s e^{-\frac{5}{8}s'}\big(1+X_0^4e^{\frac{4}{5}(s'-s_0)}\big)^{-\frac{1}{20}}\exp{\Big(\int_{s'}^s 2\big(1+X_0^4e^{\frac{4}{5}(s''-s_0)}\big)^{-\frac{1}{5}}ds'' \Big)}ds'.
\end{align*}
For $|X_0|\geq l$ (note that $0<l\ll 1$), 
\begin{align*}
\int_{s_0}^s\big(1+X_0^4e^{\frac{4}{5}(s'-s_0)}\big)^{-\frac{1}{5}}\,ds'&\leq \int_{s_0}^s\big(1+l^4e^{\frac{4}{5}(s'-s_0)}\big)^{-\frac{1}{5}}\,ds'\\
&\leq \int_{s_0}^{s_0+5\log\frac{1}{l}}1\,ds,+\int_{s_0+5\log\frac{1}{l}}^sl^{-\frac{4}{5}}e^{-\frac{4}{25}(s'-s_0)}\,ds'\\
&\leq 5\log\frac{1}{l}+\frac{25}{4}\leq 10\log\frac{1}{l},
\end{align*}
if $l$ is sufficiently small (corresponding to $M$ being sufficiently large). Same bound holds if we replace $ds'$ by $ds''$ and replace the lower bound $s_0$ by $s'$.
From the initial data \eqref{eq:initialdata}, we have
\begin{align*}
    \big|\widetilde{U}_{\alpha,\beta}(X,-\log\epsilon)\big|&=\big| U_2(X)\chi(2\epsilon^\frac{5}{4}X)-U_2(X)+\widehat{U}_0(X)+\chi(X)(\alpha X^2+\beta X^3)\big|\\
    &=\big|\widehat{U}_0(X)+\chi(X)(\alpha X^2 + \beta X^3)\big|\\
    &\leq \epsilon(1+X^4)^\frac{1}{20}+\chi(X)\big(\frac{4}{5}M^{15}\epsilon X^2+\frac{6}{5}M^{25}\epsilon X^3\big)\\
    &\leq \frac{1}{2}\epsilon^\frac{1}{5}(1+X^4)^\frac{1}{20}+\frac{1}{2}\epsilon^\frac{1}{5}(1+X^4)^\frac{1}{20}\mathbbm{1}_{|X|\leq 2}\\
    &\leq \epsilon^\frac{1}{5}(1+X^4)^\frac{1}{20}\qquad \text{if } l\leq |X|\leq \frac{1}{2}\epsilon^{-\frac{5}{4}},
\end{align*}
where the second equality comes from $\chi(2\epsilon^\frac{5}{4}X)=1$ when $|X|\leq \frac{1}{2}\epsilon^{-\frac{5}{4}}$ and $|\widetilde{U}(l,s)|\leq 2l^6\epsilon^\frac{1}{5}$. Then we get
\begin{align*}
|(1+X^4)^{-\frac{1}{20}}\widetilde{U}\circ\Phi(X_0,s)|&\leq 2l^6 \epsilon^\frac{1}{5}l^{-20}+CMl^{-20}\int_{s_0}^se^{-\frac{5}{8}s'}\big(1+l^4e^{\frac{4}{5}(s'-s_0)}\big)^{-\frac{1}{20}}\,ds'\\
&\leq 2\epsilon^\frac{1}{5}l^{-14}+CMl^{-20}\epsilon^\frac{5}{8}\leq \epsilon^\frac{3}{20},
\end{align*}
which closes the bootstrap.
\end{proof}

\begin{proposition}[weighted estimate of $\partial_X\widetilde{U}$]
For $(X,s)$ such that $l\leq |X|\leq \frac{1}{2}e^{\frac{5}{4}s}$, we have
\begin{align*}
    |\partial_X\widetilde{U}(X,s)|\leq \epsilon^\frac{1}{20}(1+X^4)^{-\frac{1}{5}}.
\end{align*}
\end{proposition}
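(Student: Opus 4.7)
The plan is to follow exactly the weighted transport framework set up in Proposition \ref{prop:weitedUtilde}, now applied to $\partial_X\widetilde{U}$ with weight $(1+X^4)^{1/5}$ instead of $(1+X^4)^{-1/20}$. Set $g(X,s):=(1+X^4)^{1/5}\partial_X\widetilde{U}(X,s)$. From equation \eqref{eq:eq1xtildeU} and the general weighted damping formula, $g$ satisfies $\partial_s g+D_{w}g+V\partial_X g=F_w$, where
\begin{align*}
D_{w}&=1+\frac{2\partial_XU_2+\partial_X\widetilde{U}}{1-\dot{\tau}}-\frac{X^4}{1+X^4}-\frac{4X^3}{5(1+X^4)}\cdot\frac{U+e^{\frac{1}{4}s}(\kappa-\dot\xi)}{1-\dot{\tau}},
\end{align*}
and $F_w=(1+X^4)^{1/5}F^{(1)}_{\tilde U}$. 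The first task is a uniform bound $|D_{w}(X,s)|\le 4(1+X^4)^{-1/5}$. Writing $\frac{1}{1+X^4}=(1+X^4)^{-1/5}\cdot(1+X^4)^{-4/5}\le (1+X^4)^{-1/5}$ absorbs the $1-\tfrac{X^4}{1+X^4}$ contribution; the $2\partial_XU_2$ term is handled by \eqref{eq:U2bounds}; the $\partial_X\widetilde{U}$ term by the bootstrap \eqref{eq:assumption1xmiddle}; and the transport-velocity term by $|X^3/(1+X^4)|\le (1+X^4)^{-1/4}$ together with the $L^\infty$ bound $|U|\le (1+\epsilon^{3/20})(1+X^4)^{1/20}$ coming from Proposition \ref{prop:weitedUtilde}, so that $(1+X^4)^{-1/4+1/20}=(1+X^4)^{-1/5}$. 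The small contribution of $e^{s/4}(\kappa-\dot\xi)$ is controlled by the modulation bound \eqref{eq:assumptionmodulation}. Summing the constants gives $|D_{w}|\le 4(1+X^4)^{-1/5}$.

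The second task is to bound the weighted forcing. The middle-field estimate for $F^{(1)}_{\tilde U}$ derived earlier gives, after multiplication by $(1+X^4)^{1/5}$ and using $\log(1+X^4)\lesssim s$ in $|X|\le\tfrac12e^{5s/4}$,
\begin{align*}
|F_w(X,s)|\lesssim M^4 e^{-s}+s e^{-s}+e^{-\frac{5}{8}s}+\epsilon^{3/20}(1+X^4)^{-1/5}.
\end{align*}
Composing with the Lagrangian trajectory $\Phi(X_0,s)$ with $|X_0|\ge l$ and applying Lemma \ref{lem:lowerLagrangian} yields $(1+\Phi^4)^{-1/5}\le (1+X_0^4 e^{4(s-s_0)/5})^{-1/5}$, so exactly as in the computation at the end of the proof of Proposition \ref{prop:weitedUtilde}, $\int_{s_0}^s|D_{w}\circ\Phi|\,ds''\le 8\log(1/l)+\tfrac{25}{4}\le C\log(1/l)$. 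Therefore Duhamel along characteristics produces the exponential factor $e^{C\log(1/l)}=l^{-C}$ on both the initial data and the time-integrated forcing terms.

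For the initial data, the formula \eqref{eq:initialdata} gives $\partial_X\widetilde{U}(X,-\log\epsilon)=\widehat{U}_0'(X)+\chi(X)(2\alpha X+3\beta X^2)+\chi'(X)(\alpha X^2+\beta X^3)$ for $l\le|X|\le\tfrac12\epsilon^{-5/4}$. Using \eqref{eq:hatU0nxmiddle} and the parameter ball \eqref{eq:sizealphabeta}, together with the fact that $\chi$ is supported in $|X|\le 2$ where $(1+X^4)^{1/5}$ is bounded, we obtain $|g(X_0,s_0)|\le C M^{25}\epsilon$. Integrating the forcing in time, the $e^{-s}$, $se^{-s}$ and $e^{-5s/8}$ terms contribute $O(M^4\epsilon)$, $O(\epsilon\log(1/\epsilon))$, $O(\epsilon^{5/8})$ respectively, while the $\epsilon^{3/20}(1+\Phi^4)^{-1/5}$ term contributes $O(\epsilon^{3/20}\log(1/l))$ by the same trajectory-integral bound. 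Multiplying everything by $l^{-C}=(\log M)^{2C}$ and taking $\epsilon$ small enough with respect to $M$, the total is bounded by $\tfrac12\epsilon^{1/20}$, which closes the bootstrap strictly.

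The main obstacle is that $D_w$ is not pointwise positive — near $|X|\sim l$ the combination $1+2\partial_XU_2\approx -1$ makes $D_w$ strongly negative, so one cannot get any damping for free. The resolution, already exhibited in Proposition \ref{prop:weitedUtilde}, is that the bad region is traversed in time $\lesssim\log(1/l)$ thanks to the exponential repelling in Lemma \ref{lem:lowerLagrangian}; thus only the polynomial factor $l^{-C}=(\log M)^{2C}$ appears, which is absorbed by the $\epsilon$-powers in the forcing and initial data. One only needs to verify that the initial-data power $\epsilon$ (together with the $M^{25}$ from the parameter ball) beats $\epsilon^{1/20}(\log M)^{2C}$, which is true for $\epsilon$ sufficiently small depending on $M$.
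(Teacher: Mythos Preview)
Your approach is essentially identical to the paper's: weight $(1+X^4)^{1/5}$, bound $|D_w|\le 4(1+X^4)^{-1/5}$, integrate along trajectories with the $l^{-C}$ loss, and absorb everything into $\epsilon^{1/20}$. One case is missing, however: not every $(X,s)$ in the middle field lies on a trajectory with $s_0=-\log\epsilon$; some trajectories emanate from $X_0=\pm l$ at a later time $s_0>-\log\epsilon$, and for those you must take the ``initial'' value from the near-field bootstrap \eqref{eq:assumptionnear0-5}, namely $|\partial_X\widetilde{U}(\pm l,s_0)|\le 2l^5\epsilon^{1/5}$, which after multiplication by $(1+l^4)^{1/5}l^{-C}$ is still $\le\tfrac12\epsilon^{1/20}$. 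The paper handles this case explicitly; with that addition your argument is complete.
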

\begin{proof}
We first bound damping
\begin{align*}
D_{\partial_X\widetilde{U},\mathrm{w}}&=1+\frac{2\partial_XU_2+\partial_X\widetilde{U}}{1-\dot{\tau}}-\frac{\frac{4}{5}X^3}{1+X^4}\Big(\frac{U+e^\frac{s}{4}(\kappa-\dot{\xi})}{1-\dot{\tau}}+\frac{5}{4}X\Big)\\
&=\frac{1}{1+X^4}+\frac{\partial_XU_2+\partial_XU}{1-\dot{\tau}}-\frac{4X^3}{5(1-\dot{\tau})(1+X^4)}\Big(U+e^\frac{s}{4}(\kappa-\dot{\xi})\Big),\\
|D_{\partial_X\widetilde{U},\mathrm{w}}| &\leq (1+X^4)^{-\frac{1}{5}}+(1+2\epsilon^\frac{1}{5})(2+\epsilon^\frac{1}{20})(1+X^4)^{-\frac{1}{5}}\\
&\qquad+\frac{4}{5}(1+2\epsilon^\frac{1}{5})(1+X^4)^{-\frac{1}{4}}\Big((1+\epsilon^\frac{3}{20})(1+X^4)^\frac{1}{20}+CM^{30\frac{5}{8}}e^{-s}\Big)\\
&\leq 4(1+X^4)^{-\frac{1}{5}}.
\end{align*}
With the forcing bound on $F_{\tilde{U}}^{(1)}$ and bounds on $\Phi(X_0,s)$, we have
\begin{align*}
|(1+X^4)^\frac{1}{5}F^{(1)}_{\tilde{U}}|&\lesssim e^{-s}\big(M^4 +\log(1+X^4)\big)+e^{-\frac{5}{8}s}+\epsilon^\frac{3}{20}(1+X^4)^{-\frac{1}{5}},\\
|(1+X^4)^\frac{1}{5}F^{(1)}_{\tilde{U}}\circ\Phi(X_0,s)|&\lesssim M^4 e^{-s}+e^{-s}\log\big[1+(|X_0|+2Me^{\frac{1}{4}s_0})^4e^{5(s-s_0)}\big]+e^{-\frac{5}{8}s}+\epsilon^\frac{3}{20}\big(1+X_0^4e^{\frac{4}{5}(s-s_0)}\big)^{-\frac{1}{5}}\\
(|X_0|\leq \frac{1}{2}e^{\frac{5}{4}s_0})\qquad &\lesssim M^4 e^{-s}+4e^{-s}\log(\frac{1}{2}e^{\frac{5}{4}s_0}+2Me^{\frac{1}{4}s_0})+5(s-s_0)e^{-s}\\
&\qquad+e^{-\frac{5}{8}s}+\epsilon^\frac{3}{20}\big(1+X_0^4e^{\frac{4}{5}(s-s_0)}\big)^{-\frac{1}{5}}\\
&\lesssim M^4 e^{-s}+s_0e^{-s}\log M+5(s-s_0)e^{-s}+e^{-\frac{5}{8}s}+\epsilon^\frac{3}{20}\big(1+X_0^4e^{\frac{4}{5}(s-s_0)}\big)^{-\frac{1}{5}}\\
(s_0\geq -\log\epsilon)\qquad &\lesssim e^{-\frac{5}{8}s}+\epsilon^\frac{3}{20}\big(1+X_0^4e^{\frac{4}{5}(s-s_0)}\big)^{-\frac{1}{5}},
\end{align*}
if we choose $\epsilon$ sufficiently small to absorb the factors in $M$ and $s$. Since from the calculation in the proof of Proposition \ref{prop:weitedUtilde},
\begin{align*}
\int_{s_0}^s(1+X^4_0e^{\frac{4}{5}(s'-s_0)})^{-\frac{1}{5}}\,ds'\leq 10\log\frac{1}{l},
\end{align*}
the contribution from the forcing term is
\begin{align*}
&\lesssim \int_{s_0}^s \Big(e^{-\frac{5}{8}s'}+\epsilon^\frac{3}{20}(1+l^4e^{\frac{4}{5}(s'-s_0)}\big)^{-\frac{1}{5}}\Big) \exp\Big(\int_{s'}^s4\big(1+X_0^4e^{\frac{4}{5}(s''-s_0)}\big)^{-\frac{1}{5}}\,ds''\Big)ds'\\
&\lesssim l^{-40}\int_{s_0}^s e^{-\frac{5}{8}s'}+\epsilon^\frac{3}{20}(1+l^4e^{\frac{4}{5}(s'-s_0)}\big)^{-\frac{1}{5}}\,ds'\\
&\lesssim l^{-40}\epsilon^\frac{5}{8}+10l^{-40}\log\frac{1}{l}\epsilon^\frac{3}{20}.
\end{align*}
The initial data when $l\leq |X|\leq \frac{1}{2}\epsilon^{-\frac{5}{4}}$, by previous calculations, is
\begin{align*}
    \partial_X\widetilde{U}_{\alpha,\beta}(X,-\log\epsilon)&=\widehat{U}'_0(X)+\begin{cases}
    0,\quad&\text{if }|X|\geq 2,\\
    2\alpha X+3\beta X^2,\quad&\text{if }l\leq |X|\leq 1,\\
    \chi'(X)(\alpha X^2+\beta X^3)+\chi(X)(2\alpha X+3\beta X^2),\quad&\text{if }1\leq |X|\leq 2,
    \end{cases}\\
    |\partial_X\widetilde{U}_{\alpha,\beta}(X,-\log\epsilon)|&\leq \epsilon(1+X^4)^{-\frac{1}{5}}+\begin{cases}
    0,\quad&\text{if }|X|\geq 2,\\
    \frac{5}{2}M^{15}\epsilon |X|+\frac{18}{5}M^{25}\epsilon X^2,\quad&\text{if }l\leq |X|\leq 1,\\
    C_\chi M^{25}\epsilon |X|^3,\quad&\text{if }1\leq |X|\leq 2,
    \end{cases}
\end{align*}
where we have used \eqref{eq:sizealphabeta}. Then for $1\leq |X_0|\leq 2$, the value along its trajectory satisfies
\begin{align*}
    |(1+X^4)^\frac{1}{5}\partial_X\widetilde{U}\circ \Phi(X_0,s)|&\leq \big(\epsilon+(1+X_0^4)^\frac{1}{5}C_\chi M^{25}\epsilon|X_0|^3\big)\exp\Big(\int_{s_0}^s 4\big(1+X_0^4e^{-\frac{4}{5}(s'-s_0)}\big)^{-\frac{1}{5}}\,ds'\Big)\\
    &\qquad +Cl^{-40}(\epsilon^\frac{5}{8}+\log\frac{1}{l}\epsilon^\frac{3}{20})\\
    &\leq \epsilon(1 + C_\chi M^{25})l^{-40}+Cl^{-40}(\epsilon^\frac{5}{8}+\log\frac{1}{l}\epsilon^\frac{3}{20})    \leq \epsilon^\frac{1}{10},
\end{align*}
where the value of $C_\chi$ changes from line to line.

The cases when $|X_0|\geq 2$ and $l\leq |X_0|\leq 1$ are easier, and we also have
\begin{align*}
    |(1+X^4)^\frac{1}{5}\partial_X\widetilde{U}\circ\Phi(X_0,s)|\leq \epsilon^\frac{1}{10}.
\end{align*}
For those $X$ on a trajectory crossing $X_0=l$ for some $s_0>-\log\epsilon$, we use the near field estimate \eqref{eq:assumptionnear0-5} to get
\begin{align*}
    |(1+X^4)^\frac{1}{5}\partial_X\widetilde{U}\circ\Phi(X_0,s)|&\leq |(1+X_0^4)^\frac{1}{5}\partial_X\widetilde{U}(X_0,s_0)|\exp\Big(\int_{s_0}^s 4\big(1+X_0^4e^{-\frac{4}{5}(s'-s_0)}\big)^{-\frac{1}{5}}\,ds'\Big)\\
&\qquad +Cl^{-40}(\epsilon^\frac{5}{8}+\log\frac{1}{l}\epsilon^\frac{3}{20})\\
&\leq 2\epsilon^\frac{1}{5}l^5(1+l^4)^\frac{1}{5}l^{-40}+Cl^{-40}(\epsilon^\frac{5}{8}+\log\frac{1}{l}\epsilon^\frac{3}{20})\leq \epsilon^\frac{1}{10}.
\end{align*}
Hence, for all cases, we have
\begin{align*}
    |\partial_X\widetilde{U}(X,s)|\leq \epsilon^\frac{1}{10}(1+X^4)^{-\frac{1}{5}}\leq \frac{1}{2}\epsilon^\frac{1}{20}(1+X^4)^{-\frac{1}{5}},
\end{align*}
so we close the bootstrap.
\end{proof}

\begin{proposition}[weighted estimate of $\partial_X^n U$ for $n=2,...,8$]
For $(X,s)$ such that $|X|\leq \frac{1}{2}e^{\frac{5}{4}s}$, we have
\begin{align*}
    |\partial_X^nU(X,s)|\leq M^{n^2}(1+X^4)^{-\frac{1}{5}},\quad\text{where }n=2,...,8.
\end{align*}
\end{proposition}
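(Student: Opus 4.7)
My plan is to apply the weighted-transport framework used for $\widetilde U$ and $\partial_X\widetilde U$ directly to $\partial_X^nU$ with weight $\mu=\tfrac15$. Setting $g_n:=(1+X^4)^{1/5}\partial_X^nU$, equation \eqref{eq:eqUderivative} becomes $\partial_s g_n+D_{n,\mathrm{w}}\,g_n+V\partial_X g_n=(1+X^4)^{1/5}F^{(n)}_U$ with
\begin{align*}
D_{n,\mathrm{w}}=\frac{5n-1}{4}-\frac{X^4}{1+X^4}+\frac{(n+1)\partial_XU}{1-\dot\tau}-\frac{4X^3}{5(1+X^4)}\cdot\frac{U+e^{s/4}(\kappa-\dot\xi)}{1-\dot\tau}.
\end{align*}
The leading two terms satisfy $\tfrac{5n-1}{4}-\tfrac{X^4}{1+X^4}\geq\tfrac{5n-5}{4}\geq\tfrac54$ for $n\geq2$. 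Using $|X^3|\leq(1+X^4)^{3/4}$ together with the already-closed bounds on $\partial_XU$, $U$, $\kappa-\dot\xi$, and $\dot\tau$, the remaining two contributions are bounded in absolute value by $[(n+1)+\tfrac45](1+X^4)^{-1/5}(1+O(\epsilon^{1/20}))$. Composing with a Lagrangian trajectory $\Phi(X_0,s)$ starting at $|X_0|\geq l$, Lemma \ref{lem:lowerLagrangian} gives $|\Phi|\geq|X_0|e^{(s-s_0)/5}$, so the ``bad'' part integrates to at most $C_n\log(1/l)$ along the trajectory, producing a total decay factor $l^{-C_n}e^{-\frac{5n-5}{4}(s-s_0)}$.

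For the forcing, I will multiply \eqref{eq:FnUmiddle} by $(1+X^4)^{1/5}$ and separate the terms into two groups. The temporally small group $\{M^{(n+1)^2}e^{-s},\,M^{n^2}e^{-5s/8},\,M^{35(n-1)/4}e^{-5s/8}\}$ is absorbed by taking $\epsilon=\epsilon(M)$ sufficiently small. The remaining quadratic cross terms $\sum_{j=2}^{\lfloor n/2\rfloor}M^{j^2+(n-j+1)^2}(1+X^4)^{-1/5}+\mathbbm{1}_{n\text{ odd}}M^{(n+1)^2/2}(1+X^4)^{-1/5}$ have all $M$-exponents strictly less than $n^2$ (the combinatorial inequality motivating the choice $a(n)=n^2$ in the footnote), which I will verify case-by-case for $n\in\{2,\dots,8\}$: e.g.\ for $n=8$ the max is $4+49=53<64$. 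For initial data in the middle field, \eqref{eq:initialdata} combined with Lemma \ref{lem:U2}, \eqref{eq:hatU0nxmiddle}, and \eqref{eq:sizealphabeta} gives $(1+X_0^4)^{1/5}|\partial_X^nU_{\alpha,\beta}(X_0,-\log\epsilon)|\leq C_n$, independent of $M$ (the $\chi(X)(\alpha X^2+\beta X^3)$ piece is supported on $|X|\leq2$ and only carries $M^{25}\epsilon$). For trajectories entering the middle field through $|X|=l$ at some later time $s_1$, the near-field bounds \eqref{eq:assumptionnear0-5}--\eqref{eq:assumptionnear6-8} together with $|U_2^{(n)}(l)|\leq C_n$ give $|g_n(\pm l,s_1)|\leq C_n$, and Duhamel restarts from there.

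Putting everything together via Duhamel, $|g_n\circ\Phi(X_0,s)|\lesssim_n l^{-C_n}e^{-\frac{5n-5}{4}(s-s_0)}+M^{a_n}(\log M)^{2C_n+1}$ with $a_n<n^2$, which is bounded by $\tfrac12M^{n^2}$ for $M$ large; for the near field $|X|\leq l$ the statement reduces directly to $|\partial_X^nU|\leq|U_2^{(n)}|+|\partial_X^n\widetilde U|\leq C_n\leq M^{n^2}(1+X^4)^{-1/5}$ via the already-closed near-field bounds. The main obstacle is the $n=8$ case: the Hilbert bound \eqref{eq:Hilbert8xmiddle} only provides spatial decay $(1+X^4)^{-1/10}$, so the weighted forcing contains a term $M^{70}e^{-s}(1+X^4)^{1/10}$ whose space factor grows. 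Composed with $\Phi(X_0,s')$ and integrated against the rapid $e^{-s'}$ decay, however, the upper trajectory bound from Lemma \ref{lem:upperLagrangian} collapses this to $\lesssim M^{70}\epsilon^{1/2}$, which is again $\epsilon$-small; the remaining bookkeeping is a direct generalization of the $\partial_X\widetilde U$ proof.
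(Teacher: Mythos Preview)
Your proposal is correct and follows essentially the same approach as the paper: the same weighted transport framework with $\mu=\tfrac15$, the same damping lower bound $\tfrac{5}{4}(n-1)-C_n(1+X^4)^{-1/5}$ yielding the factor $l^{-C_n}e^{-\frac{5(n-1)}{4}(s-s_0)}$ along trajectories, the same forcing split into temporally small Hilbert pieces and quadratic cross terms whose $M$-exponents are strictly below $n^2$, and the same two-case treatment of initial data (fresh start at $s_0=-\log\epsilon$ versus entering through $|X|=l$). Your explicit handling of the weaker $(1+X^4)^{-1/10}$ Hilbert decay at $n=8$ via the trajectory upper bound, and your separate near-field reduction for $|X|\le l$, are minor clarifications that the paper leaves implicit but are consistent with its argument.
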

\begin{proof}
Denote $D_{n,\mathrm{w}}$ the damping for $(1+X^4)^\frac{1}{5}\partial_X^nU$, then
\begin{align*}
D_{n,\mathrm{w}}&=\frac{5}{4}n-\frac{1}{4}+\frac{n+1}{1-\dot{\tau}}\partial_XU-\frac{4X^3}{5(1+X^4)}\Big[\frac{U+e^\frac{s}{4}(\kappa-\dot{\xi})}{1-\dot{\tau}}+\frac{5}{4}X\Big]\\
&=\frac{5}{4}n-\frac{1}{4}+\frac{n+1}{1-\dot{\tau}}\partial_XU-\frac{X^4}{1+X^4}-\frac{4X^3}{5(1+X^4)}\frac{U+e^\frac{s}{4}(\kappa-\dot{\xi})}{1-\dot{\tau}}\\
&\geq \frac{5}{4}(n-1)-(1+2\epsilon^\frac{1}{5})(n+1)(1+\epsilon^\frac{1}{20})(1+X^4)^{-\frac{1}{5}}-\frac{1}{1+X^4}\\
&\qquad-\frac{4}{5}(1+2\epsilon^\frac{1}{5})(1+X^4)^{-\frac{1}{4}}\big[(1+\epsilon^\frac{3}{20})(1+X^4)^\frac{1}{20}+CM^{30\frac{5}{8}}e^{-s}   \big]\\
&\geq \frac{5}{4}(n-1)-\frac{11}{10}(n+3)(1+X^4)^{-\frac{1}{5}}.
\end{align*}
By previous calculations,
\begin{align*}
    \int_{s_0}^s-D_{n,\mathrm{w}}\circ\Phi(X_0,s')\,ds'&\leq -\frac{5}{4}(n-1)(s-s_0)+11(n+3)\log\frac{1}{l}.
\end{align*}
For $n=2$, we have
\begin{align*}
    (1+X^4)^\frac{1}{5}F_U^{(2)}\lesssim M^9 e^{-s}+M^\frac{35}{4}e^{-\frac{5}{8}s},
\end{align*}
so forcing contribution
\begin{align*}
\leq C\int_{s_0}^s(M^9 e^{-s'}+M^\frac{35}{4})e^{-\frac{5}{8}s'}l^{-55}e^{-\frac{5}{4}(s-s')}\,ds'\leq CM^9 l^{-55} e^{-s}+CM^\frac{35}{4} l^{-55}e^{-\frac{5}{8}s},
\end{align*}
where this constant $C$ does not depend on the fact that the order of derivative is 2 (it contains the $1/\pi$, $1/(1-\dot{\tau})$ and log factor absorption). Note the initial data when $l\leq |X|\leq \frac{1}{2}\epsilon^{-\frac{5}{4}}$,
\begin{align*}
    \partial_X^2U(X,-\log\epsilon)=U_2^{(2)}(X) + \widehat{U}_0^{(2)}(X)+\begin{cases}
    0,\quad&\text{if }|X|\geq 2,\\
    2\alpha+6\beta X,\quad&\text{if }l\leq |X|\leq 1,\\
    \frac{d^2}{dX^2}\big[\chi(X)(\alpha X^2+\beta X^3)\big],\quad&\text{if }1\leq |X|\leq 2,
    \end{cases}
\end{align*}
where the second derivative of the product is
\begin{align*}
    \frac{d^2}{dX^2}\big[\chi(X)(\alpha X^2+\beta X^3)\big]=\chi''(X)(\alpha X^2+\beta X^3)+2\chi'(X)(2\alpha X+3\beta X^2)+\chi(X)(2\alpha+6\beta X).
\end{align*}
The hardest case is when $1\leq |X|\leq 2$, where we have
\begin{align*}
    |\partial_X^2U(X,-\log\epsilon)|\leq C_2(1+X^4)^{-\frac{9}{20}}+ \epsilon(1+X^4)^{-\frac{1}{5}}+C_\chi M^{25}\epsilon,\\
    \implies\qquad (1+X^4)^\frac{1}{5}|\partial_X^2U(X,-\log\epsilon)|\leq C_2(1+X^4)^{-\frac{1}{4}}+\epsilon+C_\chi\epsilon.
\end{align*}
Combining forcing with the damped initial data part, we get
\begin{align*}
    |(1+X^4)^\frac{1}{5}\partial_X^2U\circ\Phi(X_0,s)|&\leq \big(C_2(1+X_0^4)^{-\frac{1}{4}}+\epsilon+C_\chi\epsilon\big)e^{-\frac{5}{4}(s-s_0)}l^{-55}+CM^9 l^{-55}\epsilon+CM^\frac{35}{4} l^{-55}\epsilon^\frac{5}{8}\\
    &\leq \frac{1}{2}M^4.
\end{align*}
For the case when $|X|\geq 2$, the initial data contribution only has $C_2(1+X_0^4)^{-\frac{1}{4}}+\epsilon$ term, and for the case when $l\leq |X|\leq 1$, the $C_\chi$ takes on a different (constant) value. For those $X=\Phi(l,s_0)$ for $s_0>-\log\epsilon$, we use the near field estimate evaluated at $X_0=l$ and obtain
\begin{align*}
    |(1+X^4)^\frac{1}{5}\partial_X^2U\circ \Phi(l,s)|&\leq (1+l^4)^\frac{1}{5}\big(2l^4\epsilon^\frac{1}{5}+|U_2''(l)|\big)e^{-\frac{5}{4}(s-s_0)}l^{-55}+CM^9l^{-55}\epsilon+CM^\frac{35}{4}l^{-55}\epsilon^\frac{5}{8}\\
    &\leq \frac{1}{2}M^4.
\end{align*}
So in all cases, we have
\begin{align*}
    |\partial_X^2U(X,s)|\leq \frac{1}{2}M^4(1+X^4)^{-\frac{1}{5}}.
\end{align*}
For $n=3,...,8$, we can do them all together
\begin{align*}
    |(1+X^4)^\frac{1}{5}F_{U}^{(n)}|&\lesssim_n M^{(n+1)^2}e^{-s}+M^{n^2}e^{-\frac{5}{8}s}+M^{\frac{35(n-1)}{4}}e^{-\frac{5}{8}s}\\
    &\qquad+(\mathbbm{1}_{n\text{ odd}}M^{\frac{(n+1)^2}{2}}+\sum_{j=2}^{\lfloor \frac{n}{2}\rfloor}M^{j^2+(n-j+1)^2})(1+X^4)^{-\frac{1}{5}}.
\end{align*}
So the forcing contribution is 
\begin{align*}
    &\int_{s_0}^s(1+X^4)^\frac{1}{5}F_U^{(n)}\circ\Phi(X_0,s')\exp\Big(\int_{s'}^s-D_{n,\mathrm{w}}\circ\Phi(X_0,s'')\,ds''\Big)ds'\\
\leq& \int_{s_0}^s(1+X^4)^\frac{1}{5}F_U^{(n)}\circ\Phi(X_0,s')e^{-\frac{5}{4}(n-1)(s-s')}l^{-11(n+3)}\,ds'\\
\lesssim&_n M^{(n+1)^2}l^{-11(n+3)}e^{-s}+\big(M^{n^2}+M^{\frac{35(n-1)}{4}}\big)l^{-11(n+3)}e^{-\frac{5}{8}s}\\
&\quad+l^{-11(n+3)}(\mathbbm{1}_{n\text{ odd}}M^{\frac{(n+1)^2}{2}}+\sum_{j=2}^{\lfloor \frac{n}{2}\rfloor}M^{j^2+(n-j+1)^2})\int_{s_0}^s\big(1+l^4e^{\frac{4}{5}(s-s_0)}\big)^{-\frac{1}{5}}e^{-\frac{5}{4}(n-1)(s-s')}\,ds'\\
\lesssim&_n M^{(n+1)^2}l^{-11(n+3)}e^{-s}+(M^{n^2}+M^{\frac{35(n-1)}{4}})l^{-11(n+3)}e^{-\frac{5}{8}s}\\
&\quad+l^{-11(n+3)}(\mathbbm{1}_{n\text{ odd}}M^{\frac{(n+1)^2}{2}}+\sum_{j=2}^{\lfloor \frac{n}{2}\rfloor}M^{j^2+(n-j+1)^2})(e^{-\frac{5}{4}(n-1)(s-s_0)}l^{-\frac{25}{4}(n-1)}+l^{-\frac{4}{5}}e^{-\frac{4}{25}s}).
\end{align*}
The initial data for $l\leq |X|\leq \frac{1}{2}\epsilon^{-\frac{5}{4}}$ is
\begin{align*}
    \partial_X^nU(X,-\log\epsilon)&=U_2^{(n)}(X)+\widehat{U}_0^{(n)}(X)+\frac{d^n}{dX^n}\big[\chi(X)(\alpha X^2+\beta X^3)]\mathbbm{1}_{|X|\leq 2},\\
    |\partial_X^nU(X,-\log\epsilon)|&\leq C_n(1+X^4)^{\frac{1}{20}-\frac{1}{4}n}+\epsilon(1+X^4)^{-\frac{1}{5}}+C_\chi M^{25}\epsilon\mathbbm{1}_{|X|\leq 2},\\
    (1+X^4)^\frac{1}{5}|\partial_X^nU(X,-\log\epsilon)|&\leq C_n(1+X^4)^{-\frac{1}{4}(n-1)}+\epsilon+C_\chi M^{25}\epsilon\mathbbm{1}_{|X|\leq 2}.
\end{align*}
So the initial data contribution for those $X=\Phi(X_0,-\log\epsilon)$ for some $l\leq |X_0|\leq \frac{1}{2}\epsilon^{-\frac{5}{4}}$ is
\begin{align*}
    (1+X_0^4)^\frac{1}{5}|\partial_X^nU(X_0,-\log\epsilon)\exp\Big(\int_{-\log\epsilon}^s-D_{n,\mathrm{w}}\circ\Phi(X_0,s')\,ds'\Big)\\
    \leq \big(C_n(1+X^4)^{-\frac{1}{4}(n-1)}+\epsilon+C_\chi M^{25}\epsilon\mathbbm{1}_{|X|\leq 2}\big)e^{-\frac{5}{4}(n-1)(s+\log\epsilon)}l^{-11(n+3)},
\end{align*}
and the initial data contribution for those $X=\Phi(l,s_0)$ for some $s_0>-\log\epsilon$ is
\begin{align*}
    &\lesssim_n (1+l^4)^\frac{1}{5}\big(l^{6-n}\epsilon^\frac{1}{5}+|U_2^{(n)}(l)|\big)e^{-\frac{5}{4}(n-1)(s-s_0)}l^{-11(n+3)}\qquad n=3,...,6,\\
    &\lesssim_n (1+l^4)^\frac{1}{5}\big(M\epsilon^\frac{1}{5}+|U_2^{(7)}(l)|\big)e^{-\frac{5}{4}(n-1)(s-s_0)}l^{-11(n+3)}\qquad n=7,\\
     &\lesssim_n (1+l^4)^\frac{1}{5}\big(M^3\epsilon^\frac{1}{5}+|U_2^{(8)}(l)|\big)e^{-\frac{5}{4}(n-1)(s-s_0)}l^{-11(n+3)}\qquad n=8.
\end{align*}
Combined with the forcing contribution, we are able to absorb the $l$ factors and get
\begin{align*}
    |\partial_X^nU(X,s)|\leq \frac{1}{2}M^{n^2}(1+X^4)^{-\frac{1}{5}}.
\end{align*}
\end{proof}

\subsection{Pointwise estimates: far field}
\begin{proposition}[temporal decay of $\partial_XU$] 
For $(X,s)$ such that $|X|\geq \frac{1}{2}e^{\frac{5}{4}s}$,
\begin{align*}
    |\partial_XU(X,s)|\leq 4e^{-s}.
\end{align*}
\end{proposition}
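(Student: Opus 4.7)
The plan is to control $e^s\partial_XU$ by transport along the Lagrangian trajectory $\Phi(X_0,s)$ from \eqref{eq:trajectory}. Setting $g := e^s\partial_XU$, equation \eqref{eq:eqUderivative} with $n=1$ (whose damping coefficient is $1+\partial_XU/(1-\dot\tau)$) becomes, after multiplying by $e^s$,
\begin{align*}
\partial_s g + V\partial_X g + \frac{\partial_XU}{1-\dot\tau}\,g = e^s F_U^{(1)}.
\end{align*}
Composing with $\Phi(X_0,s)$ reduces this to a linear ODE along the characteristic, which I would solve with the integrating factor $\mu(s;s_0) := \exp\big(\int_{s_0}^s \frac{(\partial_XU)\circ\Phi}{1-\dot\tau}\,ds'\big)$.

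Next, I need to identify the appropriate starting time $s_0$ for each backward trajectory terminating at $(X,s)$ in the far field. A direct computation gives $\frac{d}{ds}(e^{-5s/4}\Phi) = \frac{e^{-5s/4}(U\circ\Phi+e^{s/4}(\kappa-\dot\xi))}{1-\dot\tau}$, and combining this with \eqref{eq:assumptionULinfty} and \eqref{eq:assumptionmodulation} yields $|e^{-5s/4}\Phi(X_0,s)-e^{-5s_0/4}X_0|\lesssim Me^{-s_0}$. Therefore either (a) the trajectory originates at $s_0=-\log\epsilon$ with $|X_0|$ essentially $\geq \tfrac12\epsilon^{-5/4}$, or (b) it enters the far field at some intermediate time $s_0>-\log\epsilon$ with $|X_0|=\tfrac12 e^{5s_0/4}$. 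In case (a), I would bound the initial data using \eqref{eq:hatU01xfar}, the derivative of $U_2(X)\chi(2\epsilon^{5/4}X)$ via \eqref{eq:U2bounds} at $|X|\sim\epsilon^{-5/4}$, and the fact that $\chi(X)(\alpha X^2+\beta X^3)$ vanishes for $|X|\geq 2$, obtaining $|\partial_XU(X_0,-\log\epsilon)|\lesssim\epsilon$, so $|g\circ\Phi|(s_0)\lesssim 1$. In case (b), the middle-field bootstrap \eqref{eq:assumption1xmiddle} together with \eqref{eq:U2bounds} gives $|\partial_XU(X_0,s_0)|\leq(1+\epsilon^{1/20})(1+X_0^4)^{-1/5}\lesssim e^{-s_0}$, again yielding $|g\circ\Phi|(s_0)\lesssim 1$.

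For the exponential factor, I would use the bootstrap assumption \eqref{eq:assumption1xfar} itself along the forward trajectory (which stays in the far field since $|\Phi|$ grows at rate $\approx e^{5s/4}$), together with the modulation bound in \eqref{eq:assumptionmodulation}, to estimate
\begin{align*}
\int_{s_0}^s\Big|\frac{(\partial_XU)\circ\Phi}{1-\dot\tau}\Big|\,ds' \leq (1+2\epsilon^{1/5})\int_{s_0}^s 4e^{-s'}\,ds' \leq 8\epsilon,
\end{align*}
so that $\mu(s;s_0),\,\mu(s;s_0)^{-1}\leq e^{8\epsilon}\leq 2$. For the forcing, I would use the sharper far-field bound $|F_U^{(1)}|\lesssim e^{-11s/8}$ from \eqref{eq:F1xUfar} to get $\int_{s_0}^s |e^{s'}F_U^{(1)}|\circ\Phi\,ds'\lesssim\int_{s_0}^s e^{-3s'/8}\,ds'\lesssim\epsilon^{3/8}$. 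Putting the three pieces into the integrating-factor formula gives $|e^s\partial_XU(X,s)|\leq 2C+2C\epsilon^{3/8}\leq 4$ for $\epsilon$ sufficiently small, closing the bootstrap.

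The main obstacle I expect is the apparent circularity of using the very bound $|\partial_XU|\leq 4e^{-s}$ that we are trying to close inside the integrating factor $\mu$. This is not truly obstructive because $\int e^{-s'}\,ds'$ only contributes $O(\epsilon)$ to the exponent, so $\mu^{\pm 1}$ is within a factor $2$ of $1$ and does not consume the slack between the derived constant and the target constant $4$. A secondary, technical point is justifying the trajectory-origin dichotomy rigorously, i.e.\ ensuring that backward trajectories from the far field do not retreat deeply into the middle field; the explicit $O(Me^{-s_0})$ deviation estimate for $e^{-5s/4}\Phi$ handles this cleanly.
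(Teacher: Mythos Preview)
Your approach is essentially identical to the paper's: multiply \eqref{eq:eqUderivative} with $n=1$ by $e^s$, compose with the Lagrangian trajectory, control the integrating factor using the bootstrap assumption \eqref{eq:assumption1xfar} itself, bound the forcing via \eqref{eq:F1xUfar}, and split into the two starting-time cases (initial time versus boundary crossing at $|X_0|=\tfrac12 e^{5s_0/4}$).

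The one place your writeup does not close as written is the final constant. In case~(a) a careful accounting of the initial data (the paper uses the choice of $\chi$ with $\chi'\approx 1$, giving $C_\chi=3$) yields $e^{s_0}|\partial_XU(X_0,s_0)|\leq\tfrac{7}{2}$, not merely ``$\lesssim 1$''. If you then multiply by your throwaway bound $\mu^{-1}\leq 2$ you obtain $7$, which does not fit under the target $4$. The remedy is already present in your own argument: keep the sharp estimate $\mu^{-1}\leq e^{8\epsilon}$ (the paper writes $e^{5\epsilon}\leq\tfrac{17}{16}$) rather than relaxing to $2$, and then $\tfrac{7}{2}\cdot\tfrac{17}{16}+C\epsilon^{3/8}<4$ closes. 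In case~(b) the paper invokes the asymptotic \eqref{eq:U2far} for $U_2'$ to get the constant $\tfrac{5}{24}$ rather than your cruder \eqref{eq:U2bounds}, but your bound still gives $(1+\epsilon^{1/20})2^{4/5}\approx 1.74$, which is more than sufficient.
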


\begin{proof}
 Consider
\begin{align*}
\Big(\partial_s+\frac{\partial_XU}{1-\dot{\tau}}\Big)(e^s\partial_XU)+V\partial_X(e^s\partial_XU)=\frac{1}{1-\dot{\tau}}H[\partial_XU].
\end{align*}
The damping term satisfies
\begin{align*}
|D_{1,s}|=\frac{1}{1-\dot{\tau}}|\partial_XU|\leq 5e^{-s},
\end{align*}
using the bootstrap assumption. Then the damping contribution is
\begin{align*}
\exp\Big(\int_{s_0}^s-D_{1,s}\circ\Phi(X_0,s')\,ds'\Big)&\leq \exp\Big(\int_{s_0}^s5e^{-s'}\,ds'\Big)\leq e^{5e^{-s_0}}\leq e^{5\epsilon}\leq 1+3\cdot 5\epsilon\leq \frac{17}{16}.
\end{align*}
Moreover,
\begin{align*}
|F_{1,s}|=\frac{1}{1-\dot{\tau}}\big|H[\partial_XU]\big|\lesssim e^{-\frac{3}{8}s}.
\end{align*}
The initial data for $|X|\geq \frac{1}{2}\epsilon^{-\frac{5}{4}}$ is
\begin{align*}
    \partial_XU(X,-\log\epsilon)&=\widehat{U}_0'(X)+\begin{cases}
    0,\quad&\text{if }|X|\geq \epsilon^{-\frac{5}{4}},\\
    U_2'(X)\chi(2\epsilon^\frac{5}{4}X)+2\epsilon^\frac{5}{4} U_2(X)\chi'(2\epsilon^\frac{5}{4}X),\quad&\text{if }\frac{1}{2}\epsilon^{-\frac{5}{4}}\leq |X|\leq \epsilon^{-\frac{5}{4}},
    \end{cases}\\
   |\partial_XU(X,-\log\epsilon)|&\leq\frac{1}{2} \epsilon+C_\chi\epsilon\mathbbm{1}_{\frac{1}{2}\epsilon^{-\frac{5}{4}}\leq |X|\leq \epsilon^{-\frac{5}{4}}}\leq \frac{7}{2}\epsilon,
\end{align*}
where, by choosing $\chi$ carefully (almost decay linearly, i.e.\ $\chi'\approx 1$), we can make $C_\chi=3$. 

If $X=\Phi(\pm \frac{1}{2}e^{\frac{5}{4}s_0},s_0)$ for some $s_0>-\log\epsilon$, then we use the middle field estimate and Lemma \ref{lem:U2} to get
\begin{align*}
    |\partial_XU(\pm\frac{1}{2}e^{\frac{5}{4}s_0},s_0)|\leq (\epsilon^\frac{1}{20}+\frac{5}{24})2^\frac{4}{5}e^{-s_0}\leq \frac{7}{2}e^{-s_0}.
\end{align*}
Hence,
\begin{align*}
|e^s\partial_XU\circ\Phi(X_0,s)|&\leq \frac{7}{2}\cdot\frac{17}{16}+C\int_{s_0}^se^{-\frac{3}{8}s'}\,ds'\leq 3\frac{21}{32}+C\epsilon^\frac{3}{8}\leq 3\frac{3}{4},\\
\implies\qquad|\partial_XU(X,s)|&\leq 3\frac{3}{4}e^{-s}.
\end{align*}
\end{proof}

\begin{proposition}[Temporal decay of $\partial_X^nU$, $n=2,...,8$]
For $(X,s)$ such that $|X|\geq \frac{1}{2}e^{\frac{5}{4}s}$,
\begin{align*}
|\partial_X^nU(X,s)|\leq 2M^{n^2}e^{-s},\qquad n=2,...,8.
\end{align*}
\end{proposition}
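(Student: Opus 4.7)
The approach mirrors the $n=1$ case proved just above. Introduce $g_n := e^s \partial_X^n U$; by \eqref{eq:eqUderivative} it satisfies
\[
\partial_s g_n + D_{n,s}\, g_n + V\, \partial_X g_n = e^s F_U^{(n)}, \qquad D_{n,s} := \tfrac{5}{4}(n-1) + \frac{(n+1)\partial_XU}{1-\dot{\tau}}.
\]
The key structural difference from $n=1$ is that for $n\geq 2$ the constant $\tfrac{5}{4}(n-1)>0$ is an honest damping, while the $\partial_XU$ piece is a harmless $O(e^{-s})$ perturbation in the far field by the just-proved bound $|\partial_XU|\leq 4e^{-s}$. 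The plan is to compose with a Lagrangian trajectory $\Phi(X_0,\cdot)$ ending at $(X,s)$ and control the initial-data and forcing contributions separately, exploiting the integrating-factor estimate
\[
\exp\Big(-\int_{s_0}^{s} D_{n,s}\circ\Phi\,ds'\Big) \leq \tfrac{17}{16}\,e^{-\frac{5}{4}(n-1)(s-s_0)}.
\]

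For the initial-data term I would split by starting point. If the trajectory has been in the far field since $s_0=-\log\epsilon$, then \eqref{eq:hatU0nxfar} together with a short Leibniz calculation on $U_2(X)\chi(2\epsilon^{5/4}X)$ in the transition layer $\tfrac{1}{2}\epsilon^{-5/4}\leq|X|\leq\epsilon^{-5/4}$ (using the asymptotic decay of $U_2^{(n)}$ from Lemma~\ref{lem:U2}) yields $e^{s_0}|\partial_X^nU(X_0,s_0)|\leq \tfrac{1}{2}M^{n^2}+O(\epsilon)$. Otherwise the trajectory enters the far field at some $s_0>-\log\epsilon$ via $X_0 = \pm\tfrac{1}{2}e^{\frac{5}{4}s_0}$, and the already-closed middle-field bound \eqref{eq:assumptionnxmiddle} gives $|\partial_X^n U(X_0,s_0)| \leq M^{n^2}(1+X_0^4)^{-1/5} \leq 2^{4/5}M^{n^2}e^{-s_0}$. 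Either way, after multiplying by the damping factor, the initial-data contribution to $|g_n\circ\Phi(X_0,s)|$ is at most $\tfrac{17}{16}\cdot 2^{4/5}M^{n^2} \leq \tfrac{15}{8}M^{n^2}$.

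For the forcing, rather than invoking the loose bound \eqref{eq:FnxUfar} directly I would use the sharper Hilbert far-field estimate $|H[\partial_X^nU]|\lesssim_n M^{n^2}e^{-\frac{3}{8}s}$ from \eqref{eq:Hilbertnx} (replaced by the slightly weaker $M^{70}e^{-\frac{1}{2}s}+M^{64}e^{-\frac{3}{8}s}$ when $n=8$, by \eqref{eq:Hilbert8xfar}), which makes the $\frac{e^{-s}}{1-\dot{\tau}}H[\partial_X^nU]$ piece $O(M^{n^2}e^{-\frac{11}{8}s})$. Combined with the temporal decay of the product terms in the far field, this yields
\[
|e^s F_U^{(n)}|\lesssim_n M^{n^2}e^{-\frac{3}{8}s}+\sum_{j=2}^{\lfloor(n+1)/2\rfloor}M^{j^2+(n-j+1)^2}e^{-s}.
\]
Integrating against the damping factor bounds the total forcing contribution by $\lesssim_n M^{n^2}\epsilon^{3/8}+ C_n M^{n^2-1}\epsilon$, which is $\leq \tfrac{1}{8}M^{n^2}$ once $\epsilon=\epsilon(M)$ is taken small enough. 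Summing the two contributions gives $|g_n\circ\Phi(X_0,s)|\leq 2M^{n^2}$, closing the bootstrap. The main obstacle will be the $M$-exponent bookkeeping: one must verify $j^2+(n-j+1)^2<n^2$ for every relevant $j$ (exactly the inequality motivating the choice $a(n)=n^2$ in the middle-field analysis, as recorded in the earlier footnote), and for $n=8$ one must further check that the weaker Morrey-based Hilbert bound bringing in a factor $M^{70}$ does not swamp the $M^{64}$ budget; but the $\epsilon^{1/2}$-smallness coming from the Hilbert integration easily absorbs the $M^6$ deficit.
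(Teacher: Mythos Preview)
Your proposal is correct and follows essentially the same approach as the paper: introduce $e^s\partial_X^nU$, bound the damping from below in the far field using $|\partial_XU|\leq 4e^{-s}$, split the trajectory origin into the two cases $s_0=-\log\epsilon$ and $X_0=\pm\tfrac{1}{2}e^{\frac{5}{4}s_0}$, and control the forcing via \eqref{eq:Hilbertnx}--\eqref{eq:Hilbert8xfar}. The only cosmetic difference is that the paper bounds the damping crudely by $D_{n,s}\geq n-1$ rather than keeping the sharper $\tfrac{5}{4}(n-1)$ with your $\tfrac{17}{16}$ multiplicative loss, and thereby arrives at $\tfrac{15}{8}M^{n^2}$ directly.
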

\begin{proof}
Consider the equation
\begin{align*}
\Big(\partial_s+\frac{5}{4}(n-1)+\frac{n+1}{1-\dot{\tau}}\partial_XU\Big)(e^s\partial_X^nU)+V\partial_X^{n+1}(e^s\partial_X^nU)=e^sF_U^{(n)}.
\end{align*}
The damping is
\begin{align*}
D_{n,s}=\frac{5}{4}(n-1)+\frac{n+1}{1-\dot{\tau}}\partial_XU\geq \frac{5}{4}(n-1)-4(n+1)(1+2\epsilon^\frac{1}{5})\epsilon\geq n-1.
\end{align*}
For forcing, we have
\begin{align*}
F_{n,s}&\lesssim_n M^{n^2}e^{-\frac{3}{8}s}+\sum_{j=2}^{\lfloor \frac{n+1}{2}\rfloor}M^{j^2+(n-j+1)^2}e^{-s},\\
\int_{s_0}^sF_{n,s}\circ\Phi(X_0,s')e^{-(n-1)(s-s')}\,ds'&\lesssim_n (M^{n^2}+\sum_{j=2}^{\lfloor\frac{n+1}{2}\rfloor}M^{j^2+(n-j+1)^2})e^{-\frac{3}{8}s_0}.
\end{align*}
The initial data is
\begin{align*}
    \partial_X^nU(X,-\log\epsilon)&=\widehat{U}_0^{(n)}(X)+\frac{d^n}{dX^n}\big[\chi(2\epsilon^\frac{5}{4}X)U_2(X)\big],\\
    |\partial_X^nU(X,-\log\epsilon)|&\leq \frac{1}{2}M^{n^2}\epsilon+C_\chi\epsilon.
\end{align*}
In the other case, we have
\begin{align*}
    |\partial_X^nU(\pm\frac{1}{2}e^{\frac{5}{4}s_0},s_0)|\leq M^{n^2} 2^\frac{4}{5}e^{-s_0}.
\end{align*}
Hence,
\begin{align*}
    |e^s\partial_X^nU\circ\Phi(X_0,s)|&\leq \max\{\frac{1}{2}M^{n^2}+C_\chi,2^\frac{4}{5}M^{n^2}\}+C_n(M^{n^2}+\sum_{j=2}^{\lfloor\frac{n+1}{2}\rfloor}M^{j^2+(n-j+1)^2})e^{-\frac{3}{8}s_0}\\
    &\leq \frac{15}{8}M^{n^2},\\
    \implies\qquad|\partial_X^nU(X,s)|&\leq \frac{15}{8}M^{n^2}e^{-s}.
\end{align*}
\end{proof}

\subsection{Modulation variables}\label{sec:modulation}
We first note that 
\begin{align*}
    |\kappa(t)|=|u\big(\xi(t)\big)|\leq M,
\end{align*}
so as a consequence \eqref{eq:kappa-xi},
\begin{align*}
|\dot{\xi}|\leq \epsilon^\frac{1}{5}e^{-s}+M\leq \frac{3}{2}M <2M.
\end{align*}

To prove the bound for $|\kappa-\dot{\xi}|$, we use \eqref{eq:dottaukappa-xi}. We have
\begin{align*}
\partial_X^5U(0,s)\geq \partial_X^5U_2(0)-|\partial_X^5\widetilde{U}(0,s)|\geq 120-\epsilon^\frac{1}{2}\geq 119,
\end{align*}
and similarly $\partial_X^5U(0,s)\leq 121$. Hence, by \eqref{eq:kappa-xi},
\begin{align*}
    |\kappa-\dot{\xi}|&\lesssim e^{-\frac{1}{4}s}(M^{\frac{7\cdot 35}{8}}e^{-s}+\epsilon^\frac{1}{10}e^{-\frac{3}{4}s}M^{27} e^{-s})\leq\frac{3}{4}\epsilon^\frac{1}{5}e^{-s}<\epsilon^\frac{1}{5}e^{-s}.
\end{align*}
To prove the bound for $|\dot{\tau}|$, we use \eqref{eq:dottaukappa-xi} to get
\begin{align*}
    |\dot{\tau}|\leq CM^\frac{35}{8}e^{-s}+e^{\frac{1}{4}s}\epsilon^\frac{1}{5}e^{-s}\epsilon^\frac{1}{10}e^{-\frac{3}{4}s}\leq \frac{3}{4}\epsilon^\frac{1}{5}e^{-\frac{3}{4}s}<\epsilon^\frac{1}{5}e^{-\frac{3}{4}s}.
\end{align*}
To prove the bound for $|\dot{\kappa}|$, we use \eqref{eq:eqkappadot} to get
\begin{align*}
    \dot{\kappa}&=e^s(\kappa-\dot{\xi})+e^{-\frac{1}{4}s}H[U+e^{\frac{1}{4}s}\kappa](0,s),\\
    |\dot{\kappa}|&\leq \epsilon^\frac{1}{5}+CMe^{\frac{1}{8}s}\leq \frac{1}{2}M^2e^{\frac{1}{8}s}.
\end{align*}

Finally, we prove the assumption on $T_*$. By the
fundamental theorem of calculus, the assumption that $|T_*|\leq 2\epsilon^{\frac{39}{20}}$, and $\tau(-\epsilon)=0$, we
get 
\begin{align*}
    |\tau(t)|\leq |\tau(-\epsilon)|+\int_{-\epsilon}^{T_*}|\dot{\tau}(t')|\,dt'\leq (2\epsilon^{1+\frac{19}{20}}+\epsilon)\epsilon^{\frac{1}{5}+\frac{3}{4}}\leq \frac{3}{2}\epsilon^{\frac{39}{20}}.
\end{align*}
Now consider $h(t):=t-\tau(t)$. We have
\begin{gather*}
    h(-\epsilon)=-\epsilon,\qquad h(T_*)=0, \qquad \dot{h}=1-\dot{\tau}>0.
\end{gather*}
Hence, we must have $h(t)<0$, which implies $t<\tau(t)$ for all $t\in[-\epsilon,T_*)$.

Note that $\tau(-\epsilon)=0$, $\tau(T_*)=T_*$ is equivalent to
\begin{align*}
    \int_{-\epsilon}^{T_*}\big(1-\dot{\tau}(t)\,dt=\epsilon.
\end{align*}
Since $|\dot{\tau}|\leq \epsilon^\frac{19}{20}$, we have
\begin{align*}
    (1-\epsilon^\frac{19}{20})(T_*+\epsilon)&=\int_{-\epsilon}^{T_*}(1-\epsilon^\frac{19}{20})\,dt\leq \epsilon\\
    T_*&\leq\frac{\epsilon^{\frac{39}{20}}}{1-\epsilon^\frac{19}{20}}\leq \frac{3}{2}\epsilon^{\frac{39}{20}}.
\end{align*}

\section{Closure of bootstrap with first order parameter derivatives}
In this section, all the bounds are uniform in the parameter ball $B$ defined as \eqref{eq:sizealphabeta}.
\subsection{$L^2$ estimates}
\begin{proposition}[$\partial_\alpha U$]
We have \begin{align*}
    \|\partial_\alpha U(\cdot,s)+e^{\frac{1}{4}s}\partial_\alpha\kappa\|_{L^2}\leq M^{15}\epsilon^\frac{3}{4}e^s,
\end{align*}
and the same result holds if we replace $\alpha$ by $\beta$.
\end{proposition}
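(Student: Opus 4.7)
The plan is to rephrase the target as a bound on $\widetilde W := e^{-s/4}\partial_\alpha U + \partial_\alpha\kappa$, for which the desired inequality becomes $\|\widetilde W(\cdot,s)\|_{L^2}\leq M^{15}\epsilon^{3/4}e^{3s/4}$, and then close it by a standard $L^2$ energy plus Grönwall argument. First I would derive the transport equation satisfied by $\widetilde W$: combining \eqref{eq:eqUalpha} with the chain rule $d\kappa/ds=e^{-s}\dot\kappa/(1-\dot\tau)$ yields
\begin{align*}
\partial_s\widetilde W+\frac{\partial_XU}{1-\dot\tau}\,\widetilde W+V\partial_X\widetilde W=\frac{\partial_XU}{1-\dot\tau}\partial_\alpha\kappa+e^{-s/4}\widetilde F,
\end{align*}
where $\widetilde F$ contains the four terms of $F_{U,\alpha}$ in \eqref{eq:forcingUalpha} other than $-e^{-3s/4}\partial_\alpha\dot\kappa/(1-\dot\tau)$ and $-e^{-3s/4}\dot\kappa\,\partial_\alpha\dot\tau/(1-\dot\tau)^2$; the latter two cancel exactly against the contributions obtained from differentiating $\partial_\alpha\kappa$ in $s$. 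This cancellation is precisely the reason to use $\widetilde W$ in place of $\partial_\alpha U$.

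Next, I take the $L^2$-inner product of the equation with $\widetilde W$ and integrate by parts the transport term using $\partial_XV=\partial_XU/(1-\dot\tau)+5/4$, obtaining
\begin{align*}
\tfrac{1}{2}\tfrac{d}{ds}\|\widetilde W\|_{L^2}^2 = \tfrac{5}{8}\|\widetilde W\|_{L^2}^2-\tfrac{1}{2}\!\int_\mathbb{R}\tfrac{\partial_XU}{1-\dot\tau}\widetilde W^2\,dX+\!\int_\mathbb{R}\widetilde W\bigl(\tfrac{\partial_XU}{1-\dot\tau}\partial_\alpha\kappa+e^{-s/4}\widetilde F\bigr)dX.
\end{align*}
The forcing piece is controlled by Cauchy-Schwarz together with: the modulation bootstrap \eqref{eq:assumptionparamodulation} (in particular $|\partial_\alpha\kappa|\leq\epsilon^{1/2}$, $|\partial_\alpha\dot\tau|\leq\epsilon^{1/2}$, $|\partial_\alpha(\kappa-\dot\xi)|\leq M\epsilon^{1/2}e^{-s/2}$); the $L^2$-isometry of the Hilbert transform with the self-consistent bootstrap $\|\partial_\alpha U+e^{s/4}\partial_\alpha\kappa\|_{L^2}\leq M^{15}\epsilon^{3/4}e^s$; the $L^2$ conservation $\|U+e^{s/4}\kappa\|_{L^2}\leq(\sqrt 2/2)Me^{7s/8}$; and $\|\partial_XU\|_{L^2}\leq\sqrt 7$ from the preceding proposition. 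Each contributes at most a forcing of the form $CM^{15}\epsilon^{3/4}e^{3s/4}\|\widetilde W\|_{L^2}$ up to non-dominant pieces.

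The main obstacle is the damping/growth competition. The naive bound $\|\partial_XU\|_{L^\infty}\leq 1+o(1)$ yields at most $\tfrac{1}{2}\|\widetilde W\|_{L^2}^2$ from the $\partial_XU$-integral, giving an effective coefficient $\tfrac{5}{8}+\tfrac{1}{2}$ and a growth rate $\tfrac{9}{8}$ for $\|\widetilde W\|_{L^2}$, which exceeds the target $\tfrac{3}{4}$. To recover an effective rate arbitrarily close to the sharp $\tfrac{5}{8}$, I would split $-\tfrac{1}{2}\!\int\tfrac{\partial_XU}{1-\dot\tau}\widetilde W^2$ into near field $|X|\leq l$ and middle-plus-far field $|X|\geq l$. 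In the latter, \eqref{eq:U2away0} and the bootstrap \eqref{eq:assumption1xmiddle}, \eqref{eq:assumption1xfar} give $|\partial_XU|\leq(1-2l^4+\epsilon^{1/20})(1+X^4)^{-1/5}$, so the integrand already decays spatially; in the near field, since $\widetilde W(0,s)=\partial_\alpha\kappa=O(\epsilon^{1/2})$, the pointwise bound \eqref{eq:assumptionparaU} on $\widetilde W=e^{-s/4}(\partial_\alpha U+e^{s/4}\partial_\alpha\kappa)$ forces $|\widetilde W|\leq M^4\epsilon^{3/4}e^{s/2}$ on the interval of length $2l$, giving a contribution $\lesssim lM^8\epsilon^{3/2}e^s$ which is absorbed into the forcing. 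Hence the effective growth coefficient is $\tfrac{5}{8}+O(l^4)+o(1)<\tfrac{3}{4}$, and Grönwall with initial condition $\partial_\alpha U(\cdot,-\log\epsilon)=\chi(X)X^2$, $\partial_\alpha\kappa(-\epsilon)=0$ (since $u_0(0)$ is independent of $(\alpha,\beta)$ because $\chi(0)(\alpha\cdot 0+\beta\cdot 0)=0$) gives $\|\widetilde W(\cdot,-\log\epsilon)\|_{L^2}=\epsilon^{1/4}\|\chi(X)X^2\|_{L^2}\ll M^{15}$, closing the bootstrap with a factor strictly less than $M^{15}$. The proof for $\partial_\beta$ is identical with $\chi(X)X^2$ replaced by $\chi(X)X^3$.
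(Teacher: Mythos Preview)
Your setup is sound, but the key step where you handle the term
\[
-\tfrac{1}{2}\int_{\mathbb{R}}\frac{\partial_XU}{1-\dot\tau}\,\widetilde W^2\,dX
\]
is wrong. On $|X|\geq l$ the bound $|\partial_XU|\leq(1-2l^4+\epsilon^{1/20})(1+X^4)^{-1/5}$ gives at best
\[
\Big|\tfrac{1}{2}\int_{|X|\geq l}\frac{\partial_XU}{1-\dot\tau}\widetilde W^2\Big|\leq\tfrac{1}{2}(1-2l^4+\epsilon^{1/20})(1+2\epsilon^{1/5})\|\widetilde W\|_{L^2}^2,
\]
since you have no spatial decay on $\widetilde W$. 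This is $\approx\tfrac{1}{2}\|\widetilde W\|_{L^2}^2$, not $O(l^4)\|\widetilde W\|_{L^2}^2$. Your ``effective growth coefficient'' is therefore $\approx\tfrac{9}{8}$, not $\tfrac{5}{8}+O(l^4)$, and the Gr\"onwall step fails.

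The fix is exactly the idea you already use on the near field: invoke the pointwise bootstrap \eqref{eq:assumptionparaU} \emph{globally}, not just for $|X|\leq l$. Writing
\[
\Big|\tfrac{1}{2}\int_{\mathbb{R}}\frac{\partial_XU}{1-\dot\tau}\widetilde W^2\Big|\leq\tfrac{1}{2}(1+2\epsilon^{1/5})\|\widetilde W\|_{L^\infty}\|\partial_XU\|_{L^2}\|\widetilde W\|_{L^2}\lesssim M^4\epsilon^{3/4}e^{s/2}\|\widetilde W\|_{L^2}
\]
turns this term into forcing rather than growth; the homogeneous rate stays $\tfrac{5}{8}<\tfrac{3}{4}$, and Gr\"onwall closes comfortably. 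This is precisely the mechanism the paper uses: it does not integrate by parts on the $U$-part of $V$, so the analogous term appears as $II_1=\frac{1}{1-\dot\tau}\int\partial_XU\,\widetilde W^2$ and is bounded the same way, while the paper's $I_1$ is controlled using $\|\partial_\alpha\partial_XU\|_{L^2}$. Either route works, but neither gives you $\tfrac{5}{8}+O(l^4)$ from spatial decay of $\partial_XU$ alone.
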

\begin{proof}
Consider the equation
\begin{align}
\partial_s(e^{-\frac{1}{4}s}\partial_\alpha U+\partial_\alpha\kappa)+e^{-\frac{1}{4}s}V\partial_\alpha\partial_XU+e^{-\frac{1}{4}s}\partial_\alpha V\partial_XU\\
=\frac{e^{-s}}{1-\dot{\tau}}H[e^{-\frac{1}{4}s}\partial_\alpha U+\partial_\alpha\kappa]+\frac{e^{-s}}{(1-\dot{\tau})^2}\partial_\alpha\dot{\tau}H[e^{-\frac{1}{4}s}U+\kappa],\label{eq:eqpartialalphaU}
\end{align}
where 
\begin{align*}
\partial_\alpha V=\frac{1}{1-\dot{\tau}}(\partial_\alpha U+e^{\frac{1}{4}s}\partial_\alpha\kappa)-\frac{e^{\frac{1}{4}s}}{1-\dot{\tau}}\partial_\alpha\dot{\xi}+\frac{\partial_\alpha\dot{\tau}}{(1-\dot{\tau})^2}(U+e^{\frac{1}{4}s}\kappa-e^{\frac{1}{4}s}\dot{\xi}).
\end{align*}
We take $L^2$ inner product with $e^{-\frac{1}{4}s}\partial_\alpha U+\partial_\alpha\kappa$. Note that
\begin{align*}
&\int_\mathbb{R}e^{-\frac{1}{4}s}V\partial_\alpha\partial_XU(e^{-\frac{1}{4}s}\partial_\alpha U+\partial_\alpha\kappa)\,dX\\
=&\frac{1}{1-\dot{\tau}}\int_\mathbb{R}(e^{-\frac{1}{4}s}U+\kappa-\dot{\xi})\partial_\alpha\partial_XU(e^{-\frac{1}{4}s}\partial_\alpha U+\partial_\alpha\kappa)\,dX+\frac{5}{4}\int_\mathbb{R}Xe^{-\frac{1}{4}s}\partial_\alpha\partial_XU(e^{-\frac{1}{4}s}\partial_\alpha U+\partial_\alpha\kappa)\,dX\\
=&I_1+I_2.
\end{align*}
We will move $I_1$ to the right hand side. Using the $L^\infty$-$L^2$-$L^2$ H\"older's inequality, \eqref{eq:assumptionULinfty} and \eqref{eq:assumptionmodulation}, we get
\begin{align*}
|I_1|&\leq (1+2\epsilon^\frac{1}{5}e^{-\frac{3}{4}s})(M+M+\epsilon^\frac{1}{5}e^{-s})\|\partial_\alpha\partial_XU\|_{L^2}\|e^{-\frac{1}{4}s}\partial_\alpha U+\partial_\alpha\kappa\|_{L^2}\\
&\lesssim M^{14}\epsilon^\frac{3}{4}e^{\frac{3}{4}s}\|e^{-\frac{1}{4}s}\partial_\alpha U+\partial_\alpha\kappa\|_{L^2}.
\end{align*}
For $I_2$, although we will not prove any spatial decay for $e^{-\frac{1}{4}s}\partial_\alpha U+\partial_\alpha\kappa$, since this term is a priori $L^2$-integrable, combined with $\|\partial_\alpha\partial_XU\|$ (which is also $L^2$-integrable), the decay must beat $X$ as $X\to\infty$, so we can integrate by parts to get
\begin{align*}
I_2=-\frac{5}{8}\|e^{-\frac{1}{4}s}\partial_\alpha U+\partial_\alpha\kappa\|_{L^2}^2.
\end{align*}
Then we calculate
\begin{align*}
\int_\mathbb{R}e^{-\frac{1}{4}s}\partial_\alpha V\partial_XU(e^{-\frac{1}{4}s}\partial_\alpha U+\partial_\alpha\kappa)\,dX=&\frac{1}{1-\dot{\tau}}\int_\mathbb{R}\partial_XU(e^{-\frac{1}{4}s}\partial_\alpha U+\partial_\alpha\kappa)^2\,dX\\
&-\frac{\partial_\alpha\dot{\xi}}{1-\dot{\tau}}\int_\mathbb{R}\partial_XU(e^{-\frac{1}{4}s}\partial_\alpha U+\partial_\alpha\kappa)\,dX\\
&+\frac{\partial_\alpha\dot{\tau}}{(1-\dot{\tau})^2}\int_\mathbb{R}(e^{-\frac{1}{4}s}U+\kappa-\dot{\xi})\partial_XU(e^{-\frac{1}{4}s}\partial_\alpha U+\partial_\alpha\kappa)\,dX\\
=&\mathit{II}_1+\mathit{II}_2+\mathit{II}_3.
\end{align*}
We will move all three terms to the right hand side. We have
\begin{align*}
|\mathit{II}_1|&\leq (1+2\epsilon^\frac{1}{5}e^{-\frac{3}{4}s})\|e^{-\frac{1}{4}s}\partial_\alpha U+\partial_\alpha\kappa\|_{L^\infty}\|\partial_XU\|_{L^2}\|e^{-\frac{1}{4}s}\partial_\alpha U+\partial_\alpha\kappa\|_{L^2}\\
&\lesssim M^4\epsilon^\frac{3}{4}e^{\frac{1}{2}s}\|e^{-\frac{1}{4}s}\partial_\alpha U+\partial_\alpha\kappa\|_{L^2},\\
|\mathit{II}_2|&\leq M\epsilon^\frac{1}{2}(1+2\epsilon^\frac{1}{5}e^{-\frac{3}{4}s})\|\partial_XU\|_{L^2}\|e^{-\frac{1}{4}s}\partial_\alpha U+\partial_\alpha\kappa\|_{L^2}\\
&\lesssim M\epsilon^\frac{1}{2}\|e^{-\frac{1}{4}s}\partial_\alpha U+\partial_\alpha\kappa\|_{L^2},\\
|\mathit{II}_3|&\leq \epsilon^\frac{1}{2}(1+2\epsilon^\frac{1}{5}e^{-\frac{3}{4}s})^2(M+M+\epsilon^\frac{1}{5}e^{-s})\|\partial_XU\|_{L^2}\|e^{-\frac{1}{4}s}\partial_\alpha U+\partial_\alpha\kappa\|_{L^2}\\
&\lesssim M\epsilon^\frac{1}{2}\|e^{-\frac{1}{4}s}\partial_\alpha U+\partial_\alpha\kappa\|_{L^2}.
\end{align*}
For the inner products with the two Hilbert transform terms on the right hand side of \eqref{eq:eqpartialalphaU}, the first inner product vanishes, and we bound the second by
\begin{align*}
&\Big|\frac{e^{-s}\partial_\alpha\dot{\tau}}{(1-\dot{\tau})^2}\int_\mathbb{R} H[e^{-\frac{1}{4}s}U+\kappa](e^{-\frac{1}{4}s}\partial_\alpha U+\partial_\alpha\kappa)\,dX\Big|\\
\leq&e^{-s}(1+2\epsilon^\frac{1}{5}e^{-\frac{3}{4}s})^2\epsilon^\frac{1}{2}\|e^{-\frac{1}{4}s}U+\kappa\|_{L^2}\|e^{-\frac{1}{4}s}\partial_\alpha U+\partial_\alpha\kappa\|_{L^2}\\
\lesssim & M\epsilon^\frac{1}{2}e^{-\frac{3}{8}s}\|e^{-\frac{1}{4}s}\partial_\alpha U+\partial_\alpha\kappa\|_{L^2},
\end{align*}
where we use the assumptions of $\dot{\tau}$ and $\partial_\alpha\dot{\tau}$ in \eqref{eq:assumptionmodulation} and \eqref{eq:assumptionparamodulation}.\\
Collecting all the terms, we have
\begin{align*}
&\frac{1}{2}\frac{d}{ds}\|e^{-\frac{1}{4}s}\partial_\alpha U+\partial_\alpha\kappa\|_{L^2}^2-\frac{5}{8}\|e^{-\frac{1}{4}s}\partial_\alpha U+\partial_\alpha\kappa\|_{L^2}^2
\lesssim M^{14}\epsilon^\frac{3}{4}e^{\frac{3}{4}s}\|e^{-\frac{1}{4}s}\partial_\alpha U+\partial_\alpha\kappa\|_{L^2}.
\end{align*}
For the initial data, We fix $\kappa_0$, and make $\widehat{U}_0(X)=-\kappa_0$ for $|X|\geq \epsilon^{-\frac{5}{4}}$. So $\partial_\alpha$ at $s=-\log\epsilon$ is only $X^2\chi(X)$, so the $L^2$-norm is $O(1)$. So
\begin{align*}
    \|\epsilon^\frac{1}{4}\partial_\alpha U(\cdot,-\log\epsilon)\|_{L^2} \leq \epsilon^\frac{1}{4}\|\chi(X)X^2\|_{L^2}\lesssim \epsilon^\frac{1}{4}.
\end{align*}
Hence,
\begin{align*}
\|e^{-\frac{1}{4}s}\partial_\alpha U+\partial_\alpha\kappa\|_{L^2}&\leq C\epsilon^\frac{1}{4}e^{\frac{5}{8}(s+\log\epsilon)}+CM^{14}\epsilon^\frac{3}{4}\int_{-\log\epsilon}^se^{\frac{5}{8}(s-s')+\frac{3}{4}s'}\,ds'\\
&\leq CM^{14}\epsilon^\frac{3}{4}e^{\frac{3}{4}s}\leq\frac{1}{2}M^{15}\epsilon^\frac{3}{4}e^{\frac{3}{4}s}.
\end{align*}
\end{proof}

\begin{proposition}[$\partial_\alpha\partial_X U$]
We have 
\begin{align*}
    \|\partial_\alpha\partial_XU(\cdot,s)\|_{L^2}\leq M^{13}\epsilon^\frac{3}{4}e^{\frac{3}{4}s},
\end{align*}
and the same result holds if we replace $\alpha$ by $\beta$.
\end{proposition}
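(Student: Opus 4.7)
The plan is to repeat the $L^2$ energy method used for the preceding proposition, but now applied directly to $f:=\partial_\alpha\partial_X U$, which is in $L^2$ a priori and so needs no $\partial_\alpha\kappa$ correction. Take \eqref{eq:eqnxUalpha} at $n=1$,
\[
\partial_s f + \Bigl(1 + \frac{2\partial_X U}{1-\dot\tau}\Bigr) f + V\partial_X f = F_{U,\alpha}^{(1)},
\]
test against $f$ in $L^2$, and integrate the transport term by parts using $\partial_X V = \frac{\partial_X U}{1-\dot\tau}+\frac{5}{4}$ to obtain the identity
\[
\frac{1}{2}\frac{d}{ds}\|f\|_{L^2}^2 + \int_{\mathbb{R}}\Bigl(\frac{3}{8} + \frac{3\partial_X U}{2(1-\dot\tau)}\Bigr)f^2\,dX = \langle F_{U,\alpha}^{(1)},f\rangle.
\]

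For the forcing I read off $F_{U,\alpha}^{(1)}$ from \eqref{eq:forcingnxUalpha}. The dominant contribution to $\|F_{U,\alpha}^{(1)}\|_{L^2}$ comes from the bilinear piece $\partial_\alpha U\,\partial_X^2 U/(1-\dot\tau)$: splitting $\partial_\alpha U = (\partial_\alpha U + e^{s/4}\partial_\alpha\kappa) - e^{s/4}\partial_\alpha\kappa$ and invoking \eqref{eq:assumptionparaU}, \eqref{eq:assumptionparamodulation}, together with the Sobolev interpolation bound $\|\partial_X^2 U\|_{L^2}\lesssim M^{35/4}$ coming from \eqref{eq:assumptionL2}, yields $\|\partial_\alpha U\,\partial_X^2 U\|_{L^2}\lesssim M^{51/4}\epsilon^{3/4}e^{3s/4}$. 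The Hilbert piece $e^{-s}H[\partial_\alpha\partial_X U]/(1-\dot\tau)$ is harmless by the $L^2$ isometry of $H$, and the remaining modulation terms containing $\partial_\alpha\dot\tau$ or $e^{s/4}(\partial_\alpha\kappa-\partial_\alpha\dot\xi)\partial_X^2 U$ are strictly smaller. Cauchy--Schwarz therefore bounds the right side of the energy identity by $CM^{51/4}\epsilon^{3/4}e^{3s/4}\|f\|_{L^2}$.

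The main obstacle is the damping integrand $\frac{3}{8} + \frac{3\partial_X U}{2(1-\dot\tau)}$, which attains $-\frac{9}{8}$ where $\partial_X U\approx -1$: a naive pointwise bound via $\|\partial_X U\|_{L^\infty}\leq 1+\epsilon^{1/10}e^{-3s/4}$ only yields homogeneous growth $e^{9s/8}$, which overshoots the target $e^{3s/4}$. To sharpen it I would exploit the vanishing $f(0,s)=0$, obtained by $\partial_\alpha$-differentiating the modulation constraint $\partial_X U(0,s)=-1$; expanding $f(X,s)=X\,\partial_\alpha\partial_X^2 U(0,s)+O(X^2)$ near the origin and using \eqref{eq:assumptionjacobian}, the portion of $\int|\partial_X U|\,f^2\,dX$ on $|X|\leq l$ is $O(l^3\epsilon^{3/2}e^{3s/2})$ and negligible; on $l\leq|X|\leq\frac{1}{2}e^{5s/4}$ the middle-field bound \eqref{eq:assumption1xmiddle} and \eqref{eq:U2away0} improve to $|\partial_X U|\leq(1-2l^4+\epsilon^{1/20})(1+X^4)^{-1/5}$; and on the far field $|\partial_X U|\leq 4e^{-s}$ from \eqref{eq:assumption1xfar} is negligible. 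Together these produce a strictly positive effective damping constant, so the resulting inequality $\frac{d}{ds}\|f\|_{L^2}+c\|f\|_{L^2}\lesssim M^{51/4}\epsilon^{3/4}e^{3s/4}$ integrates with the initial bound $\|f(\cdot,-\log\epsilon)\|_{L^2}\lesssim 1$, which follows from $\partial_\alpha\partial_X U(X,-\log\epsilon)=2X\chi(X)+X^2\chi'(X)$, to give the claimed $\|f\|_{L^2}\leq\frac{1}{2}M^{13}\epsilon^{3/4}e^{3s/4}$.
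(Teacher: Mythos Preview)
Your setup and the forcing estimate are fine and match the paper. The gap is in the damping argument. The claim that the three region bounds ``together produce a strictly positive effective damping constant'' is wrong: on $l\leq|X|\lesssim 1$ the improvement you quote, $|\partial_XU|\leq(1-2l^4+\epsilon^{1/20})(1+X^4)^{-1/5}$, is only an $O(l^4)$ gain and still leaves $\frac{3}{8}+\frac{3\partial_XU}{2(1-\dot\tau)}\approx -\frac{9}{8}$ there. The Taylor expansion and vanishing $f(0,s)=0$ only control $\int_{|X|\leq l}f^2$; they say nothing about $\int_{l\leq|X|\lesssim 1}f^2$, which is the region where the damping coefficient is most negative. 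So you cannot get a differential inequality with positive damping this way.

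The paper does not try to rescue the damping. Instead it keeps only the constant $\frac{3}{8}$ as damping and moves the entire integral $\frac{3}{2(1-\dot\tau)}\int\partial_XU\,f^2$ to the right-hand side, where it is estimated as a forcing term using the \emph{pointwise} bootstrap assumption on $f=\partial_\alpha\partial_XU$ itself. From \eqref{eq:assumptionparanxmiddle} one has $|\partial_XU\cdot f|\lesssim M^9\epsilon^{3/4}e^{3s/4}(1+X^4)^{-2/5}$ on the middle field, which lies in $L^2$ uniformly, and the far-field contribution is negligible; hence
\[
\Bigl|\int_{\mathbb R}\partial_XU\,f^2\,dX\Bigr|\lesssim M^9\epsilon^{3/4}e^{3s/4}\|f\|_{L^2}.
\]
With damping $\frac{3}{8}>0$ this integrates to the desired bound. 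The point you are missing is that this is a bootstrap closure: the $L^\infty$-type decay assumption on $\partial_\alpha\partial_XU$ (which is closed elsewhere by Lagrangian trajectory estimates) is what saves the $L^2$ estimate, not any spectral/Hardy argument at $X=0$.
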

\begin{proof}
We take $L^2$-inner product of \eqref{eq:eqnxUalpha} with $n=1$ with $\partial_\alpha\partial_XU$. Similar as before, integration by parts yields
\begin{align*}
\int_\mathbb{R}V\partial_\alpha\partial_X^2U\partial_\alpha\partial_XU\,dX=-\frac{1}{2(1-\dot{\tau})}\int_\mathbb{R}\partial_XU(\partial_\alpha\partial_XU)^2\,dX-\frac{5}{8}\|\partial_\alpha\partial_XU\|_{L^2}^2,
\end{align*}
and the forcing part is
\begin{align*}
\int_\mathbb{R}F^{(1)}_{U,\alpha}\partial_\alpha\partial_XU\,dX=&\frac{e^{-s}}{(1-\dot{\tau})^2}\partial_\alpha\dot{\tau}\int_\mathbb{R}H[\partial_XU]\partial_\alpha\partial_XU\,dX\\
&-\frac{1}{1-\dot{\tau}}\int_\mathbb{R}\partial_X^2U\partial_\alpha\partial_XU(\partial_\alpha U+e^{\frac{1}{4}s}\partial_\alpha\kappa-e^{\frac{1}{4}s}\partial_\alpha\dot{\xi})\,dX\\
&-\frac{\partial_\alpha\dot{\tau}}{(1-\dot{\tau})^2}\int_\mathbb{R}(\partial_XU)^2\partial_\alpha\partial_XU\,dX\\
&-\frac{\partial_\alpha\dot{\tau}}{(1-\dot{\tau})^2}\int_\mathbb{R}(U+e^{\frac{1}{4}s}\kappa-e^{\frac{1}{4}s}\dot{\xi})\partial_X^2U\partial_\alpha\partial_XU\,dX\\
\lesssim &\big(e^{-s}\epsilon^\frac{1}{2}+M^{\frac{35}{4}}(M^4 \epsilon^\frac{3}{4}e^{\frac{3}{4}s}+M\epsilon^\frac{1}{2}e^{-\frac{1}{4}s})+\epsilon^\frac{1}{2}+\epsilon^\frac{1}{2}M^{\frac{35}{4}+1}e^{\frac{1}{4}s}\big)\|\partial_\alpha\partial_XU\|_{L^2}\\
\lesssim& M^{12\frac{3}{4}}\epsilon^\frac{3}{4}e^{\frac{3}{4}s}\|\partial_\alpha\partial_XU\|_{L^2}. 
\end{align*}
Combining all terms we have
\begin{align*}
\frac{1}{2}\frac{d}{ds}\|\partial_\alpha\partial_XU\|_{L^2}^2+\frac{3}{8}\|\partial_\alpha\partial_XU\|_{L^2}^2+\frac{3}{2(1-\dot{\tau})}\int_\mathbb{R}\partial_XU(\partial_\alpha\partial_XU)^2\,dX\lesssim M^{12\frac{3}{4}}\epsilon^\frac{3}{4}e^{\frac{3}{4}s}\|\partial_\alpha\partial_XU\|_{L^2}. 
\end{align*}
We will move the last term on the left hand side to the right. We split the integral
\begin{align*}
\Big|\frac{3}{2(1-\dot{\tau})}\int_\mathbb{R}\partial_XU(\partial_\alpha\partial_XU)^2\,dX\Big|&\lesssim \Big|\int_{|X|\leq \frac{1}{2}e^{\frac{5}{4}s}}+\int_{|X|>\frac{1}{2}e^{\frac{5}{4}s}}\partial_XU(\partial_\alpha\partial_XU)^2\,dX\Big|\\
&\lesssim \Big[M^9\epsilon^\frac{3}{4}e^{\frac{3}{4}s}\Big(\int_{|X|\leq\frac{1}{2}e^{\frac{5}{4}s}}(1+X^4)^{-\frac{2}{5}}\Big)^\frac{1}{2}+M^9\epsilon^\frac{3}{4}e^{-\frac{1}{4}s}\Big]\|\partial_\alpha\partial_XU\|_{L^2}\\
&\lesssim M^9\epsilon^\frac{3}{4}e^{\frac{3}{4}s}\|\partial_\alpha\partial_XU\|_{L^2}.
\end{align*}
Putting all the terms together, note that $\|\partial_\alpha\partial_XU(\cdot,-\log\epsilon)\|_{L^2}\sim O(1)$,
\begin{align*}
\frac{d}{ds}\|\partial_\alpha\partial_XU\|_{L^2}+\frac{3}{8}\|\partial_\alpha\partial_XU\|_{L^2}&\leq CM^{12\frac{3}{4}}\epsilon^\frac{3}{4}e^{\frac{3}{4}s}\\
\|\partial_\alpha\partial_XU(\cdot,s)\|_{L^2}&\leq \|\partial_\alpha\partial_X(\cdot,-\log\epsilon)\|_{L^2}e^{-\frac{3}{8}(s+\log\epsilon)}+CM^{12\frac{3}{4}}\epsilon^\frac{3}{4}\int_{-\log\epsilon}^se^{-\frac{3}{8}(s-s')}e^{\frac{3}{4}s'}\,ds'\\
&\leq CM^{12\frac{3}{4}}\epsilon^\frac{3}{4}e^{\frac{3}{4}s}\leq \frac{3}{4}M^{13}\epsilon^\frac{3}{4}e^{\frac{3}{4}s}.
\end{align*}
\end{proof}

\begin{proposition}[$\partial_\alpha\partial_X^8U$]
We have
\begin{align*}
    \|\partial_\alpha\partial_X^8U(\cdot,s)\|_{L^2}\leq M^{90}\epsilon^\frac{3}{4}e^{\frac{3}{4}s},
\end{align*}
and the same result holds if we replace $\alpha$ by $\beta$.
\end{proposition}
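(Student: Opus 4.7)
Following the two preceding propositions, I would take the $L^2$-inner product of equation \eqref{eq:eqnxUalpha} with $n=8$ against $\partial_\alpha\partial_X^8U$. Integration by parts on the transport term (using $\partial_XV=\frac{\partial_XU}{1-\dot\tau}+\frac{5}{4}$) yields
\begin{align*}
\int_\mathbb{R}V\partial_\alpha\partial_X^9U\,\partial_\alpha\partial_X^8U\,dX=-\frac{1}{2(1-\dot\tau)}\int_\mathbb{R}\partial_XU(\partial_\alpha\partial_X^8U)^2\,dX-\frac{5}{8}\|\partial_\alpha\partial_X^8U\|_{L^2}^2,
\end{align*}
and combining with the explicit damping $\frac{39}{4}+\frac{9}{1-\dot\tau}\partial_XU$ from \eqref{eq:eqnxUalpha} leaves the net coefficient $\frac{73}{8}+\frac{17}{2(1-\dot\tau)}\partial_XU$ in front of $(\partial_\alpha\partial_X^8U)^2$. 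Invoking \eqref{eq:conse1xULinfty} (so $\|\partial_XU\|_{L^\infty}\leq 1+\epsilon^{1/10}e^{-3s/4}$) together with $|\dot\tau|\leq\epsilon^{1/5}e^{-3s/4}$, this coefficient remains $\geq \frac{1}{2}$ for $\epsilon$ small, so an effective damping rate of essentially $\frac{1}{2}$ is retained.

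A crucial simplification on the RHS is that the leading Hilbert contribution $\frac{e^{-s}}{1-\dot\tau}\int_\mathbb{R}H[\partial_\alpha\partial_X^8U]\,\partial_\alpha\partial_X^8U\,dX$ vanishes identically by skew-adjointness of $H$ on $L^2$. The remaining Hilbert piece in \eqref{eq:forcingnxUalpha} carries the small prefactor $\partial_\alpha\dot\tau$ and pairs $H[\partial_X^8U]$ against $\partial_\alpha\partial_X^8U$; using the $L^2$-isometry $\|H[\partial_X^8U]\|_{L^2}=\|\partial_X^8U\|_{L^2}\lesssim M^{61\frac{1}{4}}$ it is negligible. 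For the remaining forcing terms, the dangerous factors are the $\partial_X^9U$ occurrences (bounded only in $L^2$ by $M^{70}$) and the sum $G_{1,\alpha}^{(8)}=\sum_{j=1}^{7}\binom{9}{j}\partial_\alpha\partial_X^jU\,\partial_X^{9-j}U$. I would apply an $L^\infty$-$L^2$-$L^2$ H\"older split term by term, placing the $L^2$ weights on whichever factors have only $L^2$ a priori bounds and using the bootstrap $L^\infty$ assumptions on the others, with intermediate $L^2$ norms $\|\partial_\alpha\partial_X^jU\|_{L^2}\lesssim M^{11j+2}\epsilon^{3/4}e^{3s/4}$ supplied by the interpolation in Section \ref{sec:boostrapassumption}. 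A term-by-term count identifies the worst contribution at $j=7$ (pairing $\|\partial_X^2U\|_{L^\infty}\lesssim M^4$ with $\|\partial_\alpha\partial_X^7U\|_{L^2}\lesssim M^{79}\epsilon^{3/4}e^{3s/4}$), of size $M^{83}\epsilon^{3/4}e^{3s/4}\|\partial_\alpha\partial_X^8U\|_{L^2}$, which is well below the $M^{90}$ target.

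Collecting everything produces
\begin{align*}
\frac{d}{ds}\|\partial_\alpha\partial_X^8U\|_{L^2}+\frac{1}{2}\|\partial_\alpha\partial_X^8U\|_{L^2}\leq CM^{83}\epsilon^{3/4}e^{3s/4}.
\end{align*}
Since $\partial_\alpha\partial_X^8U(\cdot,-\log\epsilon)=\partial_X^8[X^2\chi(X)]$ (or the $\beta$ analogue) is supported in $1\leq|X|\leq 2$ with $L^2$-norm of order $1$, Gronwall's inequality yields $\|\partial_\alpha\partial_X^8U(\cdot,s)\|_{L^2}\leq\frac{4}{5}CM^{83}\epsilon^{3/4}e^{3s/4}\leq\frac{1}{2}M^{90}\epsilon^{3/4}e^{3s/4}$, closing the bootstrap. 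The main subtlety is the cubic integral $\frac{17}{2(1-\dot\tau)}\int_\mathbb{R}\partial_XU(\partial_\alpha\partial_X^8U)^2\,dX$: it carries the ``wrong sign'' relative to the $\frac{73}{8}$ damping, and only the sharp pointwise bound \eqref{eq:conse1xULinfty} --- itself a consequence of closing several other bootstrap steps first --- keeps the net damping strictly positive. Once this is secured, the remainder is a direct energy estimate in the same spirit as the preceding two propositions.
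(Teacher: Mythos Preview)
Your proposal is correct and follows essentially the same approach as the paper: take the $L^2$-inner product of \eqref{eq:eqnxUalpha} with $n=8$, integrate by parts on the transport term to obtain the net damping $\frac{73}{8}+\frac{17}{2(1-\dot\tau)}\partial_XU\geq\frac{1}{2}$, use skew-adjointness of $H$ to kill the principal Hilbert term, and bound the remaining forcing via $L^\infty$-$L^2$-$L^2$ H\"older followed by Gronwall.

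The only noteworthy difference is in how you split the $G_{1,\alpha}^{(8)}$ sum. The paper places the $L^\infty$ norm on $\partial_\alpha\partial_X^jU$ (using the pointwise bootstrap $M^{(j+2)^2}\epsilon^{3/4}e^{3s/4}$) and the $L^2$ norm on $\partial_X^{9-j}U$ (interpolated as $M^{35(8-j)/4}$), which produces the dominant contribution $M^{89\frac{3}{4}}$ at $j=7$. You reverse the roles, putting $L^\infty$ on $\partial_X^{9-j}U$ and $L^2$ on $\partial_\alpha\partial_X^jU$ via the interpolation $M^{11j+2}\epsilon^{3/4}e^{3s/4}$, which yields the slightly sharper $M^{83}$. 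Both choices sit comfortably below $M^{90}$, so either closes the bootstrap.
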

\begin{proof}
Consider equation \eqref{eq:eqnxUalpha} with $n=8$.
Take $L^2$ inner product with $\partial_\alpha\partial_X^8U$, integrate by part on left hand side, we get
\begin{align*}
\frac{1}{2}\frac{d}{ds}\|\partial_\alpha\partial_X^8U\|_{L^2}^2+\frac{73}{8}\|\partial_\alpha\partial_X^8U\|_{L^2}^2+\frac{17}{2(1-\dot{\tau})}\int_\mathbb{R}\partial_XU(\partial_\alpha\partial_X^8U)^2\,dX=\text{right hand side},
\end{align*}
where right hand side can be bounded by H\"older's inequality,
\begin{align*}
\text{RHS}&\lesssim e^{-s}\epsilon^\frac{1}{2}M^{61\frac{1}{4}}\|\partial_\alpha\partial_X^8U\|_{L^2}+(M^4 \epsilon^\frac{3}{4}e^{\frac{3}{4}s}+M\epsilon^\frac{1}{2}e^{-\frac{1}{4}s})M^{70}\|\partial_\alpha\partial_X^8U\|_{L^2}\\
&\quad+\|\partial_\alpha\partial_X^8U\|_{L^2}\sum_{j=1}^7{9\choose j}M^{(j+2)^2}\epsilon^\frac{3}{4}e^{\frac{3}{4}s}M^{70(1-\frac{j}{8})}+\epsilon^\frac{1}{2}M^{70}(2Me^{\frac{1}{4}s}+\epsilon^\frac{1}{5}e^{-\frac{3}{4}s})\|\partial_\alpha\partial_X^8U\|_{L^2}\\
&\quad+9\epsilon^\frac{1}{2}M^{61\frac{1}{4}}\|\partial_\alpha\partial_X^8U\|_{L^2}+\epsilon^\frac{1}{2}\|\partial_\alpha\partial_X^8U\|_{L^2}\sum_{j=2}^4{9\choose j}M^{j^2+70(1-\frac{j}{8})}\\
&\lesssim M^{89\frac{3}{4}}\epsilon^\frac{3}{4}e^{\frac{3}{4}s}\|\partial_\alpha\partial_X^8U\|_{L^2}.
\end{align*}
Note that the two terms on left hand side
\begin{align*}
\frac{73}{8}\|\partial_\alpha\partial_X^8U\|_{L^2}^2+\frac{17}{2(1-\dot{\tau})}\int_\mathbb{R}\partial_XU(\partial_\alpha\partial_X^8U)^2\,dX&\geq \big(\frac{73}{8}-\frac{17}{2}(1+2\epsilon^\frac{1}{5})(1+\epsilon^\frac{17}{20})\big)\|\partial_\alpha\partial_X^8U\|_{L^2}^2\\
&\geq \frac{1}{2}\|\partial_\alpha\partial_X^8U\|_{L^2}^2.
\end{align*}
So
\begin{align*}
\frac{1}{2}\frac{d}{ds}\|\partial_\alpha\partial_X^8U\|_{L^2}^2+\frac{1}{2}\|\partial_\alpha\partial_X^8U\|_{L^2}^2&\lesssim M^{89\frac{3}{4}}\epsilon^\frac{3}{4}e^{\frac{3}{4}s}\|\partial_\alpha\partial_X^8U\|_{L^2}\\
\frac{d}{ds}\|\partial_\alpha\partial_X^8U\|_{L^2}+\frac{1}{2}\|\partial_\alpha\partial_X^8U\|_{L^2}&\lesssim  M^{89\frac{3}{4}}\epsilon^\frac{3}{4}e^{\frac{3}{4}s}\\
\|\partial_\alpha\partial_X^8U\|_{L^2}&\leq \|\partial_\alpha\partial_X^8U(\cdot,-\log\epsilon)\|_{L^2}e^{-\frac{1}{2}(s+\log\epsilon)}\\
&\qquad+CM^{89\frac{3}{4}}\int_{-\log\epsilon}^s\epsilon^\frac{3}{4}e^{\frac{3}{4}s'}e^{-\frac{1}{2}(s-s')}\,ds'\\
&\leq CM^{89\frac{3}{4}}\epsilon^\frac{3}{4}e^{\frac{3}{4}s}\leq \frac{3}{4}M^{90}\epsilon^\frac{3}{4}e^{\frac{3}{4}s}.
\end{align*}
\end{proof}

\subsection{$L^\infty$ estimates of $\partial_\alpha U$}
\begin{proposition}
We have
\begin{align*}
    |\partial_\alpha U(X,s)+ e^{\frac{1}{4}s}\partial_\alpha\kappa| \leq\begin{cases}
    M^4\epsilon^\frac{3}{4}e^{\frac{3}{4}s},\quad&\text{if }|X|\leq \frac{1}{2}e^{\frac{5}{4}s},\\
    2M^4\epsilon^\frac{3}{4}e^{\frac{3}{4}s},\quad&\text{if }|X|\geq \frac{1}{2}e^{\frac{5}{4}s}.
    \end{cases} 
\end{align*}
\end{proposition}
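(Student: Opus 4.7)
The plan is to run a Lagrangian transport estimate on the equation \eqref{eq:eqpartialalphaU} for $W := e^{-s/4}\partial_\alpha U + \partial_\alpha\kappa$, which after pulling the term $\tfrac{\partial_X U}{1-\dot\tau}W$ to the left as a damping reads
\[
\partial_s W + \tfrac{\partial_X U}{1-\dot\tau}W + V\,\partial_X W = F^\sharp,
\]
where $F^\sharp$ collects the Hilbert transform terms on the right-hand side of \eqref{eq:eqpartialalphaU} together with the remaining pieces of $e^{-s/4}\partial_\alpha V\,\partial_XU$ involving $\partial_\alpha\dot\xi$ and $\partial_\alpha\dot\tau$. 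Composing with the Lagrangian trajectory $\Phi(X_0,s)$ from $s_0 = -\log\epsilon$ gives
\[
W\circ\Phi(X_0,s) = W(X_0,s_0)\,\mathcal E(s_0,s) + \int_{s_0}^s F^\sharp\circ\Phi(X_0,s')\,\mathcal E(s',s)\,ds',\quad \mathcal E(s',s) := \exp\Bigl(-\int_{s'}^s \tfrac{\partial_X U\circ\Phi}{1-\dot\tau}\,ds''\Bigr),
\]
and multiplying by $e^{s/4}$ returns the desired bound on $\partial_\alpha U + e^{s/4}\partial_\alpha\kappa$. The initial data reduces to $W(X_0,s_0) = \epsilon^{1/4}\chi(X_0)X_0^2$ because $\partial_\alpha\kappa(-\epsilon) = 0$ (the $\alpha$-part of \eqref{eq:initialphysical} vanishes at $x=0$), and $F^\sharp$ is bounded in $L^\infty$ via the Hilbert estimate \eqref{eq:HilbertparaU} together with the modulation assumptions \eqref{eq:assumptionmodulation}, \eqref{eq:assumptionparamodulation}.

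The analysis splits according to whether the evaluation point lies in the middle field $|X|\leq \tfrac{1}{2}e^{5s/4}$ or the far field; since $|\Phi|$ is essentially non-decreasing in $s$, a far field point is traced back either to the initial time or to the moving boundary $|X| = \tfrac{1}{2}e^{5s/4}$ at some time $s_1\geq s_0$, and likewise for middle field points. On the far field portion of the trajectory, \eqref{eq:assumption1xfar} gives $|\partial_X U|\leq 4e^{-s}$, so $\mathcal E \leq 1 + C\epsilon$ and the contribution reduces to the initial or boundary value. On the middle field portion with $|\Phi|\geq l$, combining $|\partial_X U|\leq (1+\epsilon^{17/20})(1+X^4)^{-1/5}$ (from \eqref{eq:assumption1xmiddle} and Lemma \ref{lem:U2}) with the repelling estimate Lemma \ref{lem:lowerLagrangian} bounds $\int|\partial_X U\circ\Phi|\,ds'$ by $O(\log(1/l))$, so $\mathcal E\leq l^{-C}$. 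Together with $|W(X_0,s_0)|\leq 4$ and the $L^\infty$ bound on $F^\sharp$, this gives the target estimate with strict improvement since $l^{-C}$ is strictly below $M^4\epsilon^{3/4}e^{s/2}$ for $\epsilon$ chosen sufficiently small depending on $M$.

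The main obstacle is a trajectory that spends an initial time interval $[s_0,s_*]$ in the near field $|\Phi|\leq l$, where $|\partial_X U|\approx 1$ forces the damping factor $\mathcal E(s_0,s_*)$ to grow at rate close to $1$. Using the linearization $V(\Phi,s)\approx \tfrac{1}{4}\Phi$ near the origin (which follows from $\partial_X U(0,s) = -1$ and $|\kappa-\dot\xi|\leq \epsilon^{1/5}e^{-s}$), the escape time satisfies $s_* - s_0 \sim 4\log(l/|X_0|)$, so $\mathcal E(s_0,s_*)\lesssim (l/|X_0|)^{4}$; but the initial datum $|W(X_0,s_0)|\leq \epsilon^{1/4}X_0^2$ vanishes quadratically at $X_0 = 0$, so the product is $\lesssim \epsilon^{1/4}l^4|X_0|^{-2}l^{-C}$ after tacking on the subsequent middle-field factor. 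Translating back via $|X_0|\gtrsim |X|e^{-(s-s_0)/4}$ and multiplying by $e^{s/4}$ yields a contribution of order $\epsilon^{1/4}l^{4-C}|X|^{-2}e^{3(s-s_0)/4+s/4}$, which, with $l=(\log M)^{-2}$, $s_0=-\log\epsilon$, and $\epsilon$ small, stays strictly below $M^4\epsilon^{3/4}e^{3s/4}$ uniformly in $(X,s)$ in the middle field; an entirely analogous estimate handles the forcing integral. The forcing contribution in the far field, together with the almost-trivial damping factor $1+O(\epsilon)$ there, produces at worst a factor of $2$ relative to the middle-field bound, giving the stated looser bound for $|X|\geq \tfrac{1}{2}e^{5s/4}$. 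Finally, for the special trajectory through the origin we use directly $W(0,s) = \partial_\alpha\kappa(s)$, so that $|\partial_\alpha U(0,s)+e^{s/4}\partial_\alpha\kappa(s)| = e^{s/4}|\partial_\alpha\kappa(s)|\leq \epsilon^{1/2}e^{s/4}$ by \eqref{eq:assumptionparamodulation}, well within the target.
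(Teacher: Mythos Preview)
Your overall strategy—transport the quantity $W=e^{-s/4}\partial_\alpha U+\partial_\alpha\kappa$ along Lagrangian trajectories—is exactly what the paper does. The middle-field and far-field portions of your argument match the paper's, and the special trajectory $X=0$ is handled correctly. The difference lies in how you treat the near field $|X|\le l$, and there your argument has a genuine gap.

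The paper never traces a trajectory through the near field. For $|X|\le l$ it simply invokes the near-field bootstrap assumption \eqref{eq:assumptionparanear0-6} on $\partial_\alpha U$ (together with $|\partial_\alpha\kappa|\le\epsilon^{1/2}$), and for $|X|\ge l$ it starts trajectories either at the initial time with $|X_0|\ge l$ or at the crossing time of $|X|=l$, again feeding in \eqref{eq:assumptionparanear0-6} as boundary data. You instead try to run the transport directly through the near field, exploiting the quadratic vanishing of $W(X_0,-\log\epsilon)=\epsilon^{1/4}\chi(X_0)X_0^2$ against the near-field growth factor $\mathcal E\approx e^{s_*-s_0}\approx(l/|X_0|)^4$. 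That compensation does work for the \emph{initial data} contribution, and your computation there (modulo the spurious $|X|^{-2}$ factor, which should simply be $l^{-2}$) is essentially right.

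The gap is the forcing integral on the near-field portion. The dominant forcing term there is $\tfrac{\partial_X U}{1-\dot\tau}\partial_\alpha\dot\xi$, which is of size $\sim\epsilon^{1/2}$ in the near field (since $\partial_X U\approx -1$ and $|\partial_\alpha\dot\xi|\le M\epsilon^{1/2}$) and does \emph{not} vanish at $X=0$. Integrating against $\mathcal E(s',s_*)\approx e^{s_*-s'}$ over $[s_0,s_*]$ gives a contribution $\sim M\epsilon^{1/2}e^{s_*-s_0}$, and $s_*-s_0$ can be as large as $s+\log\epsilon$ (take $|X|$ just above $l$, so $s_*\approx s$). After the middle-field factor $l^{-C}$ and the final $e^{s/4}$, this yields $\sim M\epsilon^{3/2}l^{-C}e^{5s/4}$, which dominates the target $M^4\epsilon^{3/4}e^{3s/4}$ by a factor of $\epsilon^{3/4}l^{-C}e^{s/2}\to\infty$. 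Your claim that ``an entirely analogous estimate handles the forcing integral'' is therefore incorrect: the analogy breaks because the initial data carries the factor $X_0^2$ while the forcing does not. This is precisely why the paper routes around the near field via the bootstrap assumption \eqref{eq:assumptionparanear0-6} rather than through it.
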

\begin{proof}
For $|X|\leq l$, we use the near field bound \eqref{eq:assumptionparanear0-6} to get
\begin{align*}
    |\partial_\alpha U(X,s)+e^{\frac{1}{4}s}\partial_\alpha\kappa|\leq Ml^\frac{1}{2}\epsilon^\frac{3}{4}e^{\frac{3}{4}s}+e^{\frac{1}{4}s}\epsilon^\frac{1}{2}\leq \frac{1}{2}M^4\epsilon^\frac{3}{4}e^{\frac{3}{4}s}.
\end{align*}
For $|X|\geq l$, we consider the following equation for $e^{-\frac{1}{4}s}\partial_\alpha U+\partial_\alpha\kappa$
\begin{align*}
    \big(\partial_s+\frac{\partial_XU}{1-\dot{\tau}}\big)\big(e^{-\frac{1}{4}s}\partial_\alpha U+\partial_\alpha\kappa\big)+V\partial_X\big(e^{-\frac{1}{4}s}\partial_\alpha U+\partial_\alpha\kappa)=F_{U,\alpha,s},
\end{align*}
where 
\begin{align*}
    F_{U,\alpha,s}&=-\frac{e^{-s}\partial_\alpha\dot{\tau}}{(1-\dot{\tau})^2}\dot{\kappa}+\frac{e^{-\frac{5}{4}s}}{1-\dot{\tau}}H[\partial_\alpha U+e^{\frac{1}{4}s}\partial_\alpha\kappa]+\frac{e^{-\frac{5}{4}s}\partial_\alpha\dot{\tau}}{(1-\dot{\tau})^2}H[U+e^{\frac{1}{4}s}\kappa]+\frac{\partial_XU}{1-\dot{\tau}}\partial_\alpha\dot{\xi}\\
    &\qquad-\frac{\partial_\alpha\dot{\tau}}{(1-\dot{\tau})^2}\partial_XU(e^{-\frac{1}{4}s}U+\kappa-\dot{\xi}).
\end{align*}
The damping term can be bounded by
\begin{align*}
    \Big|\frac{\partial_XU}{1-\dot{\tau}}\Big|&\leq \begin{cases}
    (1+X^4)^{-\frac{1}{5}},\quad&\text{if }l\leq |X|\leq \frac{1}{2}e^{\frac{5}{4}s},\\
    5e^{-s},\quad&\text{if }|X|\geq \frac{1}{2}e^{\frac{5}{4}s}.
    \end{cases}\\
    \implies\qquad \Big|\int_{s_0}^s \frac{\partial_XU}{1-\dot{\tau}}\circ\Phi(X_0,s')\,ds'\Big|&\leq \begin{cases}
    10\log\frac{1}{l},\quad&\text{if }l\leq |X|\leq\frac{1}{2}e^{\frac{5}{4}s},\\
    5e^{-s_0},\quad&\text{if }|X|\geq \frac{1}{2}e^{\frac{5}{4}s}.
    \end{cases}
\end{align*}
The last term on the first line can be bounded by
\begin{align*}
    \Big|\frac{\partial_XU}{1-\dot{\tau}}\partial_\alpha\dot{\xi}\Big|\lesssim \begin{cases}
    M\epsilon^\frac{1}{2}(1+X^4)^{-\frac{1}{5}},\quad&\text{if }l\leq |X|\leq \frac{1}{2}e^{\frac{5}{4}s},\\
    M\epsilon^\frac{1}{2}e^{-s},\quad&\text{if }|X|\geq \frac{1}{2}e^{\frac{5}{4}s},
    \end{cases}
\end{align*}
and the term on the second line can be bounded by
\begin{align*}
    \Big|\frac{\partial_\alpha\dot{\tau}}{(1-\dot{\tau})^2}\partial_XU(e^{-\frac{1}{4}s}U+\kappa-\dot{\xi})\Big|&\lesssim\begin{cases}
    \epsilon^\frac{1}{2}(1+X^4)^{-\frac{1}{5}}\big(e^{-\frac{1}{4}s}(1+X^4)^\frac{1}{20}+\epsilon^\frac{1}{5}e^{-s}\big),\quad&\text{if }l\leq |X|\leq \frac{1}{2}e^{\frac{5}{4}s},\\
    M\epsilon^\frac{1}{2}e^{-s},\quad&\text{if }|X|\geq \frac{1}{2}e^{\frac{5}{4}s},
    \end{cases}\\
    &\lesssim\begin{cases}
    \epsilon^\frac{1}{2}e^{-\frac{1}{4}s}(1+X^4)^{-\frac{3}{20}},\quad&\text{if }l\leq |X|\leq \frac{1}{2}e^{\frac{5}{4}s},\\
    M\epsilon^\frac{1}{2}e^{-s},\quad&\text{if }|X|\geq \frac{1}{2}e^{\frac{5}{4}s}.
    \end{cases} 
\end{align*}
So we can bound $F_{U,\alpha,s}$ by
\begin{align*}
    |F_{U,\alpha,s}|\lesssim M^4\epsilon^\frac{3}{4}e^{-\frac{3}{8}s}+\begin{cases}
    M\epsilon^\frac{1}{2}(1+X^4)^{-\frac{1}{5}}+\epsilon^\frac{1}{2}e^{-\frac{1}{4}s}(1+X^4)^{-\frac{3}{20}},\quad&\text{if }l\leq |X|\leq \frac{1}{2}e^{\frac{5}{4}s},\\
    M\epsilon^\frac{1}{2}e^{-s},\quad&\text{if }|X|\geq \frac{1}{2}e^{\frac{5}{4}s}.
    \end{cases} 
\end{align*}
We first consider the case when $|X|\leq \frac{1}{2}e^{\frac{5}{4}s}$. If $X_0\geq l$, then
\begin{align*}
    |(e^{-\frac{1}{4}s}\partial_\alpha U+\partial_\alpha\kappa)\circ\Phi(X_0,s)|&\lesssim |\epsilon^\frac{1}{4}\partial_\alpha U(X_0,-\log\epsilon)|l^{-10}\\
    &\qquad+M^4l^{-10}\epsilon^{\frac{3}{4}+\frac{3}{8}}+M\epsilon^\frac{1}{2}l^{-10}\log\frac{1}{l}+\epsilon^{\frac{1}{2}+\frac{1}{4}}l^{-10}\log\frac{1}{l}\\
    &\lesssim \epsilon^\frac{1}{4}l^{-10}+M^4l^{-10}\epsilon^\frac{9}{8}+M\epsilon^\frac{1}{2}l^{-10}\log\frac{1}{l},\\
    \implies\qquad|\partial_\alpha U(X,s)+e^{\frac{1}{4}s}\partial_\alpha\kappa|&\lesssim l^{-10}\epsilon^\frac{1}{4}e^{\frac{3}{4}s}+M^4l^{-10}\epsilon^\frac{9}{8}e^{\frac{1}{4}s}+Ml^{-10}\log\frac{1}{l}\epsilon^\frac{1}{2}e^{\frac{1}{4}s}\leq \frac{1}{2}M^4\epsilon^\frac{3}{4}e^{\frac{3}{4}s}.
\end{align*}
If $X_0=l$ for some $s_0>-\log\epsilon$, then the initial data contribution becomes
\begin{align*}
    (Ml^\frac{1}{2}\epsilon^\frac{3}{4}e^{\frac{1}{2}s}+\epsilon^\frac{3}{4})l^{-10}\lesssim Ml^{-9\frac{1}{2}}\epsilon^\frac{3}{4}e^{\frac{1}{2}s},
\end{align*}
which is dominated by $M^4\epsilon^\frac{3}{4}e^{\frac{3}{4}s}$. So we are done.

Then we consider the case when $|X|\geq \frac{1}{2}e^{\frac{5}{4}s}$. If $X_0\geq \frac{1}{2}e^{\frac{5}{4}s}$, then
\begin{align*}
    |(e^{-\frac{1}{4}s}\partial_\alpha U+\partial_\alpha\kappa)\circ\Phi(X_0,s)|\lesssim \int_{s_0}^s(M^4\epsilon^\frac{3}{4}e^{-\frac{3}{8}s'}+ M\epsilon^\frac{1}{2}e^{-s'}e^{5e^{-s'}})\,ds'\lesssim M^4\epsilon^{\frac{3}{4}+\frac{3}{8}}.
\end{align*}
If $X_0=\pm\frac{1}{2}e^{\frac{5}{4}s_0}$ for some $s_0>-\log\epsilon$, then we have the additional initial data contribution
\begin{align*}
    |(e^{-\frac{1}{4}s_0}\partial_\alpha U+\partial_\alpha\kappa)(\pm\frac{1}{2}e^{\frac{5}{4}s_0},s_0)|e^{5e^{-s_0}}\leq \frac{3}{2}M^4\epsilon^\frac{3}{4}e^{\frac{1}{2}s_0}.
\end{align*}
In both cases, we can establish
\begin{align*}
    |\partial_\alpha U(X,s)+e^{\frac{1}{4}s}\partial_\alpha\kappa|\leq \frac{7}{4}M^4\epsilon^\frac{3}{4}e^{\frac{1}{4}s}.
\end{align*}
\end{proof}

\subsection{Pointwise estimate: near field}
\begin{proposition}[$\partial_\alpha\partial_X^7U$, $\partial_\alpha\partial_X^6U$]
For all $|X|\leq l$, we have
\begin{align*}
    |\partial_\alpha\partial_X^7U(X,s)|\leq M\epsilon^\frac{3}{4}e^{\frac{3}{4}s},\qquad |\partial_\alpha\partial_X^6U(X,s)|\leq Ml^\frac{1}{2}\epsilon^\frac{3}{4}e^{\frac{3}{4}s}.
\end{align*}
\end{proposition}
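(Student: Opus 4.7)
The plan is to model the argument after the near-field estimates for $\partial_X^n\widetilde U$ with $n=6,7,8$ proved above, now applied to $\partial_\alpha\partial_X^7 U$ via its transport equation \eqref{eq:eqnxUalpha} with $n=7$. The damping is
\[
D_\alpha^{(7)} = \tfrac{5\cdot 7}{4} - \tfrac{1}{4} + \tfrac{8}{1-\dot\tau}\partial_X U \geq \tfrac{34}{4} - 8(1+2\epsilon^{1/5}e^{-3s/4})(1+\epsilon^{1/10}e^{-3s/4}) \geq \tfrac{1}{4},
\]
strictly positive by \eqref{eq:conse1xULinfty}. The crucial feature of the initial datum \eqref{eq:initialdata} is that $\partial_\alpha U_{\alpha,\beta}(X,-\log\epsilon) = \chi(X)X^2$ is a polynomial of degree $2$ for $|X|\leq l<1$, and $\partial_\beta U_{\alpha,\beta}(X,-\log\epsilon) = \chi(X)X^3$ is a polynomial of degree $3$ there; hence $\partial_\alpha\partial_X^n U(X,-\log\epsilon) = 0$ for $|X|\leq l$ and $n\geq 3$, and analogously for $\partial_\beta$ when $n\geq 4$. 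In particular, the initial data of both quantities under consideration vanish in the near field.

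First, I compose \eqref{eq:eqnxUalpha} with $n=7$ along the Lagrangian trajectory $\Phi(X_0,s)$. Since the transport speed $V$ is repelling away from $X=0$ (Lemma \ref{lem:lowerLagrangian}), any trajectory satisfying $|\Phi(X_0,s)|\leq l$ must originate from $|X_0|\leq l$, so the vanishing initial data applies. The forcing $F_{U,\alpha}^{(7)}$ is estimated term by term from \eqref{eq:forcingnxUalpha}: the Hilbert contribution is controlled by the $L^\infty$-interpolation \eqref{eq:HilbertparanxULinfty}, giving $e^{-s}\|H[\partial_\alpha\partial_X^7U]\|_{L^\infty}\lesssim M^{84\tfrac{1}{2}}\epsilon^{3/4}e^{-s/4}$, and the product terms are bounded by the near-field bootstrap assumptions \eqref{eq:assumptionnear6-8}, \eqref{eq:assumptionULinfty}, the parameter-derivative assumptions \eqref{eq:assumptionparaU}, \eqref{eq:assumptionparanear0-6} at lower orders in $\partial_X$, and the modulation bounds \eqref{eq:assumptionmodulation}, \eqref{eq:assumptionparamodulation}. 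Integrating along $\Phi(X_0,\cdot)$ from $s_0=-\log\epsilon$ with zero initial data yields
\[
|\partial_\alpha\partial_X^7 U\circ\Phi(X_0,s)| \leq C\int_{-\log\epsilon}^s e^{-\tfrac{1}{4}(s-s')} |F_{U,\alpha}^{(7)}|\circ\Phi(X_0,s')\,ds' \leq \tfrac{3}{4}M\epsilon^{3/4}e^{3s/4},
\]
provided $\epsilon$ is sufficiently small in terms of $M$.

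For $\partial_\alpha\partial_X^6 U$, I apply the fundamental theorem of calculus from $X=0$:
\[
\partial_\alpha\partial_X^6 U(X,s) = \partial_\alpha\partial_X^6 U(0,s) + \int_0^X \partial_\alpha\partial_X^7 U(Y,s)\,dY.
\]
Over $|X|\leq l$, the integral is bounded by $l\cdot M\epsilon^{3/4}e^{3s/4}\ll Ml^{1/2}\epsilon^{3/4}e^{3s/4}$, so it suffices to control the boundary value. This I obtain from the scalar ODE derived by evaluating \eqref{eq:eqnxUalpha} with $n=6$ at $X=0$:
\[
\tfrac{d}{ds}\partial_\alpha\partial_X^6 U(0,s) + D_\alpha^{(6)}(0,s)\partial_\alpha\partial_X^6 U(0,s) = F_{U,\alpha}^{(6)}(0,s) - V(0,s)\partial_\alpha\partial_X^7 U(0,s),
\]
where $D_\alpha^{(6)}(0,s) = \tfrac{29}{4} + 7\partial_XU(0,s)/(1-\dot\tau) \geq \tfrac{1}{8}$ is positive, $V(0,s) = e^{s/4}(\kappa-\dot\xi)/(1-\dot\tau)$ is tiny by \eqref{eq:assumptionmodulation}, and the initial value is $0$. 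Integrating with the same forcing-control bootstrap bounds yields $|\partial_\alpha\partial_X^6 U(0,s)|\leq \tfrac{1}{2}Ml^{1/2}\epsilon^{3/4}e^{3s/4}$, closing the bound.

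The main obstacle is the intricate forcing expression $F_{U,\alpha}^{(7)}$ in \eqref{eq:forcingnxUalpha}: it is a lengthy sum of quadratic products between derivatives of $U$, $\partial_\alpha U$, and the modulation variables, and each summand must be checked against the correct bootstrap bound. The potentially problematic Hilbert contribution, carrying the large $M$-power $M^{84\tfrac{1}{2}}$ from interpolation, is rendered subdominant only because of the $e^{-s/4}$ prefactor, using $s\geq -\log\epsilon$ and $\epsilon\ll M^{-\mathrm{poly}}$. The identical argument applies verbatim with $\partial_\alpha$ replaced by $\partial_\beta$ since both initial data $\chi(X)X^2$ and $\chi(X)X^3$ have vanishing sixth and seventh $X$-derivatives in the near field.
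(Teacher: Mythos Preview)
Your argument for $\partial_\alpha\partial_X^7U$ matches the paper's proof: transport equation \eqref{eq:eqnxUalpha} with $n=7$ along Lagrangian trajectories in the near field, positive damping $D_\alpha^{(7)}\geq\tfrac14$, vanishing initial data, and forcing dominated by $Ml^{1/2}\epsilon^{3/4}e^{3s/4}$.

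For $\partial_\alpha\partial_X^6U$ you take a genuinely different route. The paper treats $n=6$ exactly as it treats $n=7$: it runs the transport equation \eqref{eq:eqnxUalpha} with $n=6$ directly on the near field, observes $D_\alpha^{(6)}\geq\tfrac18$, bounds the forcing by $|F_{U,\alpha}^{(6)}|\lesssim M^5\epsilon^{1/5}\epsilon^{3/4}e^{3s/4}$, and integrates from vanishing initial data to get $\leq\tfrac34 Ml^{1/2}\epsilon^{3/4}e^{3s/4}$. You instead integrate in $X$ from $0$ using the $n=7$ bound just established, and then control the boundary value $\partial_\alpha\partial_X^6U(0,s)$ via the ODE obtained by evaluating \eqref{eq:eqnxUalpha} at $X=0$. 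Your approach is correct (the constraints $\partial_\alpha U(0,s)=\partial_\alpha\partial_XU(0,s)=\partial_\alpha\partial_X^4U(0,s)=0$ together with \eqref{eq:assumptionjacobian}, \eqref{eq:assmumptionpara5x0} and the values of $\partial_X^jU(0,s)$ make $F_{U,\alpha}^{(6)}(0,s)$ small, and $|V(0,s)|\lesssim\epsilon^{1/5}e^{-3s/4}$), but it is slightly more involved: you need both an ODE argument at a point and the FTC step, whereas the paper gets the whole near field in one transport estimate. The paper's uniform treatment has the advantage of reusing the same template for $n=6$ and $n=7$ without any special analysis at $X=0$; your variant has the modest advantage that the ODE at $X=0$ exploits the exact constraint values and could in principle yield sharper constants there, though that is not needed here.
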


\begin{proof}
We first work on $\partial_\alpha\partial_X^7U$. We use \eqref{eq:eqnxUalpha} and will compose it with the Lagrangian trajectories. Note that we must have $|X_0|\leq l$. The damping is
\begin{align*}
D^{(7)}_\alpha=\frac{35}{4}-\frac{1}{4}+\frac{8}{1-\dot{\tau}}\partial_XU\geq \frac{34}{4}-8(1+2\epsilon^\frac{1}{5}e^{-\frac{3}{4}s})(1+\epsilon^\frac{17}{20})\geq \frac{1}{4}.
\end{align*}
We also have the forcing
\begin{align*}
|F^{(7)}_{U,\alpha}|
\lesssim M^7\epsilon^\frac{19}{20}e^{\frac{3}{4}s}+Ml^\frac{1}{2}\epsilon^\frac{3}{4}e^{\frac{3}{4}s}.
\end{align*}
So
\begin{align*}
|\partial_\alpha\partial_X^7U\circ\Phi(X_0,s)|&\leq |\partial_\alpha\partial^7_XU(X_0,-\log\epsilon)|e^{-\frac{1}{4}(s+\log\epsilon)}+C(M^7\epsilon^\frac{19}{20}+Ml^\frac{1}{2}\epsilon^\frac{3}{4})\int_{-\log\epsilon}^se^{\frac{3}{4}s'}e^{-\frac{1}{4}(s-s')}\,ds'\\
&\lesssim (M^7\epsilon^\frac{19}{20}+Ml^\frac{1}{2}\epsilon^\frac{3}{4})e^{\frac{3}{4}s}\leq \frac{3}{4}M\epsilon^\frac{3}{4}e^{\frac{3}{4}s}.
\end{align*}
 
We use the same idea for $\partial_\alpha\partial_X^6U$, in which case
\begin{align*}
D^{(6)}_\alpha=\frac{29}{4}+\frac{7}{1-\dot{\tau}}\partial_XU\geq \frac{29}{4}-7(1+2\epsilon^\frac{1}{5})(1+\epsilon^\frac{17}{20})\geq \frac{1}{8},
\end{align*}
and
\begin{align*}
    F^{(6)}_{U,\alpha}\lesssim M^5\epsilon^\frac{3}{4}e^{\frac{3}{4}s}\epsilon^\frac{1}{5},
\end{align*}
so
\begin{align*}
    |\partial_\alpha\partial_X^6U\circ\Phi(X_0,s)|&\leq |\partial_\alpha\partial_X^6U(X_0,-\log\epsilon)|e^{-\frac{1}{8}(s+\log\epsilon)}+\int_{-\log\epsilon}^s CM^5\epsilon^\frac{3}{4}e^{\frac{3}{4}s'}\epsilon^\frac{1}{5}e^{-\frac{1}{8}(s-s')}\,ds'\\
&\lesssim M^5\epsilon^\frac{1}{5}\epsilon^\frac{3}{4}e^{\frac{3}{4}s}
\leq \frac{3}{4}Ml^\frac{1}{2}\epsilon^\frac{3}{4}e^{\frac{3}{4}s}.
\end{align*}
\end{proof}


\begin{proposition}[$\partial_\alpha\partial_X^nU$ for $n=0,...,5$]
For $|X|\leq l$, we have
\begin{align*}
    |\partial_\alpha\partial_X^n U(X,s)|\leq Ml^\frac{1}{2}\epsilon^\frac{3}{4}e^{\frac{3}{4}s}, \qquad n=0,...,5. 
\end{align*}
\end{proposition}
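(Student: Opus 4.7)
The plan is a descending-induction argument on the interval $|X|\leq l$, propagating the bound just proved for $n=6$ down to $n=0,\dots,5$ via the fundamental theorem of calculus in $X$, anchored at $X=0$ by a combination of the modulation constraints and the bootstrap assumptions at the origin. Differentiating the constraints \eqref{eq:constraint} in the parameter $\alpha$ yields the vanishing identities
\[
\partial_\alpha U(0,s)=\partial_\alpha\partial_XU(0,s)=\partial_\alpha\partial_X^4U(0,s)=0,
\]
while \eqref{eq:assumptionjacobian} and \eqref{eq:assmumptionpara5x0} supply
\[
|\partial_\alpha\partial_X^2U(0,s)|\leq 4\epsilon^{3/4}e^{3s/4},\quad |\partial_\alpha\partial_X^3U(0,s)|\leq \epsilon e^{s/2},\quad |\partial_\alpha\partial_X^5U(0,s)|\leq \epsilon^{3/8}e^{s/8},
\]
and the analogous bounds with $\partial_\beta$ follow from the symmetric entries of \eqref{eq:assumptionjacobian}.

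First I would treat $n=5$. The FTC gives
\[
|\partial_\alpha\partial_X^5U(X,s)|\leq |\partial_\alpha\partial_X^5U(0,s)|+l\sup_{|Y|\leq l}|\partial_\alpha\partial_X^6U(Y,s)|\leq \epsilon^{3/8}e^{s/8}+Ml^{3/2}\epsilon^{3/4}e^{3s/4}.
\]
The second term gains a factor $l=(\log M)^{-2}\ll 1$ over the target $Ml^{1/2}\epsilon^{3/4}e^{3s/4}$. For the first term, since $s\geq -\log\epsilon$ forces $e^{5s/8}\geq \epsilon^{-5/8}$, the ratio to the target is at most $\epsilon^{1/4}/(Ml^{1/2})\ll 1$ for $\epsilon$ small in $M$. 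The case $n=4$ is immediate because the base value vanishes, leaving only $l\cdot Ml^{1/2}\epsilon^{3/4}e^{3s/4}$, which is strictly better than the target by a factor of $l$.

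Iterating downward, the case $n=3$ uses $|\partial_\alpha\partial_X^3U(0,s)|\leq \epsilon e^{s/2}$; since $e^{s/4}\geq\epsilon^{-1/4}$, the ratio of this base term to the target is bounded by $\epsilon^{1/2}/(Ml^{1/2})$, which is negligible, and the integrated contribution is again suppressed by $l$. The case $n=2$ has base value already of the sharp form $4\epsilon^{3/4}e^{3s/4}$, dominated by the target once $Ml^{1/2}\geq 16$, which is compatible with $l=(\log M)^{-2}$ and $M$ taken sufficiently large. Finally, for $n=1$ and $n=0$ the base values vanish by the previous identities, so each integration merely accrues an extra factor of $l$, and the estimates close trivially. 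In all five cases the right-hand side is at most $\tfrac{3}{4}Ml^{1/2}\epsilon^{3/4}e^{3s/4}$, strictly improving the bootstrap assumption \eqref{eq:assumptionparanear0-6}.

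The argument poses no genuine analytic obstacle, as the equation \eqref{eq:eqnxUalpha} is bypassed entirely in favor of integration in $X$; the work is purely bookkeeping. The only subtlety is that the base values at $X=0$ for $n=3$ and $n=5$ do not a priori enjoy the full target decay rate $\epsilon^{3/4}e^{3s/4}$, so one must invoke $s\geq -\log\epsilon$ to trade $\epsilon$-smallness for smallness of the exponential prefactors, exactly as in the analogous cascade \eqref{eq:assumptionnear0-5} carried out in the no-parameter-derivative case. The $\partial_\beta$ statement is completely parallel, with the roles of diagonal and anti-diagonal entries of the Jacobian in \eqref{eq:assumptionjacobian} swapped.
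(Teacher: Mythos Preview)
Your proposal is correct and follows essentially the same approach as the paper: use the fundamental theorem of calculus (mean value theorem) on $|X|\leq l$, anchor at $X=0$ via the constraints \eqref{eq:constraint} and the pointwise assumptions \eqref{eq:assumptionjacobian}, \eqref{eq:assmumptionpara5x0}, and control the integrated term by the bound at level $n+1$. The paper handles each $n$ separately, invoking the bootstrap assumption \eqref{eq:assumptionparanear0-6} at level $n+1$ rather than a descending induction from the just-proved $n=6$ case, but since the bound at each level is the same $Ml^{1/2}\epsilon^{3/4}e^{3s/4}$, the two computations are arithmetically identical.
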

\begin{proof}
We start with $\partial_\alpha U(X,s)$. This can be done by the mean value theorem (or Taylor expansion)
\begin{align*}
|\partial_\alpha U(X,s)|\leq |\partial_\alpha U(0,s)|+l\sup_{|X|\leq l}|\partial_\alpha\partial_X U(X,s)|\leq Ml^\frac{3}{2}\epsilon^\frac{3}{4}e^{\frac{3}{4}s}\leq \frac{1}{2}Ml^\frac{1}{2}\epsilon^\frac{3}{4}e^{\frac{3}{4}s}.
\end{align*}
Same argument for $\partial_\alpha\partial_XU$ and $\partial_\alpha\partial_X^4U$ due to the constraint of vanishing at $X=0$.

For $\partial_\alpha\partial_X^2U$, same idea but we use the assumption 
\begin{align*}
\frac{1}{2}\epsilon^\frac{3}{4}e^{\frac{3}{4}s}\leq \partial_\alpha\partial_X^2U(0,s)\leq 4\epsilon^\frac{3}{4}e^{\frac{3}{4}s},\qquad
|\partial_\beta\partial_X^2U(0,s)|\leq \epsilon e^{\frac{3}{4}s}.
\end{align*}
So (the $\partial_\beta$ is even easier)
\begin{align*}
|\partial_\alpha\partial_X^2U(X,s)|\leq 4\epsilon^\frac{3}{4}e^{\frac{3}{4}s}+lMl^\frac{1}{2}\epsilon^\frac{3}{4}e^{\frac{3}{4}s}\leq 2Ml^\frac{3}{2}\epsilon^\frac{3}{4}e^{\frac{3}{4}s}\leq \frac{1}{2}Ml^\frac{1}{2}\epsilon^\frac{3}{4}e^{\frac{3}{4}s}.
\end{align*}
For $\partial_\beta\partial_X^3U$, recall the assumption
\begin{align*}
|\partial_\alpha\partial_X^3U(0,s)|\leq \epsilon e^{\frac{1}{2}s},\qquad
\frac{1}{2}\epsilon^\frac{1}{2}e^{\frac{1}{2}s}\leq \partial_\beta\partial_X^3U(0,s)\leq 8\epsilon^\frac{1}{2}e^{\frac{1}{2}s}.
\end{align*}
So ($\partial_\alpha$ is easier in this case)
\begin{align*}
|\partial_\beta\partial_X^3U(X,s)|\leq 8\epsilon^\frac{1}{2}e^{\frac{1}{2}s}+lMl^\frac{1}{2}\epsilon^\frac{3}{4}e^{\frac{3}{4}s}\leq \frac{1}{2}Ml^\frac{1}{2}\epsilon^\frac{3}{4}e^{\frac{3}{4}s}.
\end{align*}
For $\partial_\alpha\partial_X^5U$, we use the assumption (note that $\partial_\alpha\partial_X^5\widetilde{U}=\partial_\alpha\partial_X^5 U$)
\begin{align*}
|\partial_\alpha\partial_X^5\widetilde{U}(0,s)|\leq \epsilon^\frac{3}{8}e^{\frac{1}{8}s},
\end{align*}
and same for $\partial_\beta$. Then
\begin{align*}
|\partial_\alpha\partial_X^5U(X,s)|\leq \epsilon^\frac{3}{8}e^{\frac{1}{8}s}+lMl^\frac{1}{2}\epsilon^\frac{3}{4}e^{\frac{3}{4}s}\leq \frac{1}{2}Ml^\frac{1}{2}\epsilon^\frac{3}{4}e^{\frac{3}{4}s}.
\end{align*}
\end{proof}

\subsection{Pointwise estimate: middle field}
We note that for $l\leq|X|\leq \frac{1}{2}e^{\frac{5}{4}s}$, since 
\begin{align*}
    |\partial_\alpha U(X,s)+e^{\frac{1}{4}s}\partial_\alpha\kappa|\leq M^4\epsilon^\frac{3}{4}e^{\frac{3}{4}s},\qquad |\partial_\alpha\kappa|\leq \epsilon^\frac{1}{2},
\end{align*}
We have (same for $\partial_\beta U$)
\begin{align*}
    |\partial_\alpha U(X,s)|\leq \frac{9}{8}M^4\epsilon^\frac{3}{4}e^{\frac{3}{4}s}.
\end{align*}
\begin{proposition}[$\partial_\alpha\partial_X^nU$, $n=1,...,7$]
For $(X,s)$ such that $l\leq |X|\leq \frac{1}{2}e^{\frac{5}{4}s}$, we have
\begin{align*}
    |\partial_\alpha\partial_X^nU(X,s)|\leq M^{(n+2)^2}\epsilon^\frac{3}{4}e^{\frac{3}{4}s}(1+X^4)^{-\frac{1}{5}},\qquad n=1,...,7.
\end{align*}
\end{proposition}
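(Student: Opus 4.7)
The plan is to follow the same weighted transport template used above for $\partial_X^n U$ in the middle field, applied now to $g:=(1+X^4)^{1/5}\partial_\alpha\partial_X^n U$. Starting from \eqref{eq:eqnxUalpha}, inserting the weight produces the modified damping
\[
D_{n,\alpha,\mathrm w}=\frac{5n-1}{4}+\frac{n+1}{1-\dot\tau}\partial_XU-\frac{4X^3}{5(1+X^4)}\Big[\frac{U+e^{s/4}(\kappa-\dot\xi)}{1-\dot\tau}+\frac{5}{4}X\Big],
\]
and exactly as in the proof of the corresponding middle-field estimate for $\partial_X^n U$, one bounds $|\partial_XU|\leq (1+\epsilon^{17/20})(1+X^4)^{-1/5}$ on $|X|\geq l$, $|U|\leq (1+\epsilon^{3/20})(1+X^4)^{1/20}$, and $|\kappa-\dot\xi|\lesssim \epsilon^{1/5}e^{-s}$, to obtain the coercive lower bound
\[
D_{n,\alpha,\mathrm w}\geq \tfrac{5}{4}(n-1)-\tfrac{11}{10}(n+3)(1+X^4)^{-1/5}.
\]

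Next, the forcing is already controlled by \eqref{eq:forcingparamiddle}, which identifies
\[
\big(M^{(n+1)^2+4}+\sum_{j=1}^{n-1}M^{(j+2)^2+(n-j+1)^2}\big)\epsilon^{3/4}e^{3s/4}
\]
as the dominant size, strictly below $M^{(n+2)^2}\epsilon^{3/4}e^{3s/4}$ in $M$-power. Composing with the Lagrangian trajectory $\Phi(X_0,s)$ and using the two trajectory bounds (Lemma \ref{lem:lowerLagrangian} and Lemma \ref{lem:upperLagrangian}), the integral of $-D_{n,\alpha,\mathrm w}\circ\Phi$ from $s_0$ to $s$ produces a damping factor $e^{-5(n-1)(s-s_0)/4}$ together with a benign $l^{-11(n+3)}$ blow-up from the weight integral, exactly as in the no-parameter case. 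The forcing contribution is then
\[
\int_{s_0}^s(1+X^4)^{1/5}F^{(n)}_{U,\alpha}\circ\Phi\;\exp\!\Big(\int_{s'}^s\!-D_{n,\alpha,\mathrm w}\circ\Phi\,ds''\Big)\,ds'
\;\lesssim_n\; M^{(n+2)^2-1}l^{-11(n+3)}\epsilon^{3/4}e^{3s/4},
\]
which is absorbed into $\frac{1}{2}M^{(n+2)^2}\epsilon^{3/4}e^{3s/4}$ once $\epsilon$ is small enough in $M$ to neutralise the $l^{-11(n+3)}=(\log M)^{22(n+3)}$ prefactor.

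For the initial-data contribution there are two cases. If the trajectory starts at some $|X_0|\geq l$ at $s_0=-\log\epsilon$, then from the initial datum \eqref{eq:initialdata} we have
\[
|\partial_\alpha\partial_X^n U(X_0,-\log\epsilon)|\lesssim \chi\text{-}\text{derivatives of }X^2\text{ or }X^3,
\]
which is at most $C_n$ on $|X_0|\le 2$ and vanishes for $|X_0|>2$; multiplying by the weight and by the damping factor $e^{-5(n-1)(s+\log\epsilon)/4}l^{-11(n+3)}$ and noting $\epsilon^{3/4}e^{3s/4}\geq \epsilon^{3/2}$ gives a contribution dwarfed by the target. If instead the trajectory enters the middle field through the boundary $|X_0|=l$ at some later time $s_0>-\log\epsilon$, I use the near-field bootstrap \eqref{eq:assumptionparanear0-6}--\eqref{eq:assumptionparanear7} at $(l,s_0)$, namely $|\partial_\alpha\partial_X^n U(l,s_0)|\leq M\epsilon^{3/4}e^{3s_0/4}$ (or $Ml^{1/2}$ for $n\leq 6$), multiplied by $(1+l^4)^{1/5}e^{-5(n-1)(s-s_0)/4}l^{-11(n+3)}$, which again sits well below $\frac{1}{2}M^{(n+2)^2}\epsilon^{3/4}e^{3s/4}$.

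The main obstacle is bookkeeping: ensuring that every factor of $l^{-C(n)}=(\log M)^{2C(n)}$ coming from the integral of the weighted damping, and every polynomial factor in $M$ coming from the forcing, sits strictly below the hierarchy $M^{(n+2)^2}$ fixed in the bootstrap. This is the same reason $M^{n^2}$ was chosen in the no-parameter case, and the upgrade to $M^{(n+2)^2}$ here is exactly what accommodates the loss of one power of $M^4$ from the extra $\partial_XU$-type factors in $F^{(n)}_{U,\alpha}$ (through terms like $M^{(n+1)^2}\cdot M^4$) and the $L^2$-interpolation bound for $\partial_\alpha\partial_X^{n+1}U$. Once these inequalities are verified term-by-term, the bootstrap closes uniformly in $(\alpha,\beta)\in B$.
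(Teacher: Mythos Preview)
Your proposal is correct and follows essentially the same approach as the paper: weight the transport equation \eqref{eq:eqnxUalpha} by $(1+X^4)^{1/5}$, bound the modified damping below by $\tfrac{5}{4}(n-1)-C(n)(1+X^4)^{-1/5}$, integrate along Lagrangian trajectories using the repelling property, plug in the forcing bound \eqref{eq:forcingparamiddle}, and treat the two initial-data cases (starting at $s_0=-\log\epsilon$ with $|X_0|\geq l$, or entering from the near field at $|X_0|=l$). The only differences from the paper's write-up are cosmetic numerical constants (your $\tfrac{11}{10}(n+3)$ and $l^{-11(n+3)}$ versus the paper's $(n+3)$ and $l^{-10(n+3)}$), which do not affect the argument.
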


\begin{proof}
The calculations are very similar to the case without parameter derivative. As what has been done before, for $n\geq 1$,
\begin{align*}
D^{(n)}_{\alpha,\mathrm{w}}&=\frac{5}{4}n-\frac{1}{4}+\frac{n+1}{1-\dot{\tau}}\partial_XU-\frac{X^4}{1+X^4}-\frac{4X^3}{5(1+X^4)}\frac{U+e^{\frac{1}{4}s}(\kappa-\dot{\xi})}{1-\dot{\tau}}\\
&\geq \frac{5}{4}(n-1)-(n+3)(1+X^4)^{-\frac{1}{5}}.
\end{align*}
The subscript $\alpha$ refers to $\partial_\alpha$, ``w" refers to weighted, the superscript refers to $\partial_X^nU$. We have seen before that 
\begin{align*}
\int_{s_0}^s-D_{\alpha,\mathrm{w}}^{(n)}\circ\Phi(X_0,s')\,ds'\leq -\frac{5}{4}(n-1)+10(n+3)\log\frac{1}{l}.
\end{align*}
Forcing contribution is  
\begin{align*}
&\int_{s_0}^s(1+X^4)^\frac{1}{5}F_{U,\alpha}^{(n)}\circ\Phi(X_0,s')\exp\Big(\int_{s'}^s-D_{\alpha,\mathrm{w}}^{(n)}\circ(X_0,s'')\,ds''\Big)ds'\\
\lesssim_n &\Big(M^{(n+1)^2+4}+\sum_{j=1}^{n-1}M^{(j+2)^2+(n-j+1)^2}\Big)\epsilon^\frac{3}{4}\int_{s_0}^se^{\frac{3}{4}s'}e^{-\frac{5}{4}(n-1)(s-s')}l^{-10(n+3)}\,ds'\\
\lesssim_n& M^{(n+1)^2+4}\epsilon^\frac{3}{4}l^{-10(n+3)}e^{\frac{3}{4}s}.
\end{align*}
Hence,
\begin{align*}
|(1+X^4)^\frac{1}{5}\partial_\alpha\partial_X^nU\circ\Phi(X_0,s)|&\leq |(1+X_0^4)^\frac{1}{5}\partial_\alpha\partial_X^nU(X_0,s_0)|l^{-10(n+3)}e^{-\frac{5}{4}(n-1)(s-s_0)}\\
&\qquad +C_nM^{(n+1)^2+4}l^{-10(n+3)}\epsilon^\frac{3}{4}e^{\frac{3}{4}s}.
\end{align*}
Initial data part vanishes for $n\geq 4$, or for $|X_0|\geq 2$. For $n=1,\,2,\,3$, in the case when $l\leq |X_0|\leq 2$, we can simply treat  \begin{align*}
|(1+X_0^4)^\frac{1}{5}\partial_\alpha\partial^n_XU(X_0,-\log\epsilon)|\leq C_n.
\end{align*}
If $X_0=l$ for $s_0>-\log\epsilon$, we use the near field estimates 
\eqref{eq:assumptionparanear0-6},\,\eqref{eq:assumptionparanear7}. In all cases, we can obtain 
\begin{align*}
|(1+X^4)^\frac{1}{5}\partial_\alpha\partial_X^nU\circ\Phi(X_0,s)|\leq \frac{3}{4}M^{(n+2)^2}\epsilon^\frac{3}{4}e^{\frac{3}{4}s}.
\end{align*}
\end{proof}

\subsection{Pointwise estimate: far field}
We consider equations for $e^s\partial_\alpha\partial_X^nU$. 
\begin{align*}
\Big(\partial_s+\frac{5}{4}(n-1)+\frac{n+1}{1-\dot{\tau}}\partial_XU\Big)(e^s\partial_\alpha\partial_X^nU)+V(e^s\partial_\alpha\partial_X^{n+1}U)=e^sF^{(n)}_{U,\alpha}.
\end{align*}
\begin{proposition}[$\partial_\alpha\partial_XU$]
For $(X,s)$ such that $|X|\geq \frac{1}{2}e^{\frac{5}{4}s}$, we have
\begin{align*}
    |\partial_\alpha\partial_XU(X,s)|\leq 4M^9\epsilon^\frac{3}{4}e^{-\frac{1}{4}s}.
\end{align*}
\end{proposition}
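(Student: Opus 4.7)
The plan is to mimic the far-field closure carried out without parameter derivatives (the proof for $|\partial_XU|\leq 4e^{-s}$), adapted to the parameter-derivative setting via the equations~\eqref{eq:eqnxUalpha} with $n=1$ and the forcing bound~\eqref{eq:forcingparafar}. Specifically, I would rewrite~\eqref{eq:eqnxUalpha} at $n=1$ in the form
\begin{align*}
\Big(\partial_s+\frac{2}{1-\dot{\tau}}\partial_XU\Big)(e^s\partial_\alpha\partial_XU)+V\partial_X(e^s\partial_\alpha\partial_XU)=e^sF^{(1)}_{U,\alpha},
\end{align*}
compose it with the Lagrangian trajectory $\Phi(X_0,s)$ starting from the far field at some time $s_0\geq -\log\epsilon$, and integrate.

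First I would control the damping. In the far field, bootstrap assumption~\eqref{eq:assumption1xfar} gives $|\partial_XU|\leq 4e^{-s}$, and by Lemma~\ref{lem:lowerLagrangian} any trajectory that sits in the far field at time $s_0$ remains in (an even more) far field for $s\geq s_0$; hence $|(\frac{2}{1-\dot{\tau}}\partial_XU)\circ\Phi|\leq 10e^{-s}$, and the accumulated damping factor is bounded by $\exp\bigl(\int_{s_0}^\infty 10e^{-s'}\,ds'\bigr)\leq e^{10\epsilon}\leq \tfrac{17}{16}$. Then, by~\eqref{eq:forcingparafar} with $n=1$, $|e^sF^{(1)}_{U,\alpha}|\lesssim M^8\epsilon^{3/4}e^{3s/4}$, so the forcing contribution along the trajectory is bounded by $\tfrac{17}{16}\cdot CM^8\epsilon^{3/4}\int_{s_0}^{s}e^{3s'/4}\,ds'\leq \tfrac{4}{3}CM^8\epsilon^{3/4}e^{3s/4}$.

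Next I handle the endpoint data in two cases, exactly as in the analogous propositions without parameter derivatives. If $X_0=\Phi(X_0,-\log\epsilon)$ with $|X_0|\geq\tfrac{1}{2}\epsilon^{-5/4}$, then by the explicit initial datum~\eqref{eq:initialdata} we have $\partial_\alpha U_{\alpha,\beta}(X,-\log\epsilon)=\chi(X)X^2$, so $\partial_\alpha\partial_XU(X,-\log\epsilon)$ is supported in $|X|\leq 2\ll \tfrac{1}{2}\epsilon^{-5/4}$ and the endpoint contribution vanishes. If instead $X_0=\pm\tfrac{1}{2}e^{\frac{5}{4}s_0}$ for some $s_0>-\log\epsilon$ (trajectory enters the far field from the middle field), the middle-field assumption~\eqref{eq:assumptionparanxmiddle} with $n=1$ gives
\begin{align*}
|e^{s_0}\partial_\alpha\partial_XU(X_0,s_0)|\leq e^{s_0}M^9\epsilon^{3/4}e^{3s_0/4}(1+X_0^4)^{-1/5}\leq 2^{4/5}M^9\epsilon^{3/4}e^{3s_0/4}.
\end{align*}
Combining the endpoint and forcing contributions with the damping factor $\tfrac{17}{16}$, I get
\begin{align*}
|e^s\partial_\alpha\partial_XU\circ\Phi(X_0,s)|\leq \tfrac{17}{16}\bigl(2^{4/5}M^9+\tfrac{4}{3}CM^8\bigr)\epsilon^{3/4}e^{3s/4}\leq 4M^9\epsilon^{3/4}e^{3s/4},
\end{align*}
after taking $M$ large; dividing by $e^s$ yields the claimed bound. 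The only delicate point is ensuring the constants absorb cleanly into the factor $4$; this is routine provided $M$ is chosen large enough, since the middle-field endpoint contribution dominates and has coefficient $2^{4/5}<4$.
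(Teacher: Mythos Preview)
Your proposal is correct and follows essentially the same approach as the paper: multiply by $e^s$, compose with the Lagrangian trajectory, bound the (sign-indefinite) damping factor by a constant close to $1$ using $|\partial_XU|\leq 4e^{-s}$ in the far field, use the forcing bound~\eqref{eq:forcingparafar}, and split the endpoint data into the two cases (vanishing initial datum for $|X_0|\geq\tfrac12\epsilon^{-5/4}$, and the middle-field bound~\eqref{eq:assumptionparanxmiddle} at the threshold). One small remark: your appeal to Lemma~\ref{lem:lowerLagrangian} does not by itself show the trajectory stays in the far field (the lemma only gives growth $e^{\frac{1}{5}(s-s_0)}$, slower than the threshold $e^{\frac{5}{4}s}$); the correct justification is simply that you trace back along the trajectory until it first exits the far field, so by construction the portion you integrate over lies entirely in the far field.
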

\begin{proof}
In this case it is still negative damping,  
\begin{align*}
-D^{(1)}_{\alpha,s}=-\frac{2}{1-\dot{\tau}}\partial_XU\leq 2(1+2\epsilon^\frac{1}{5}e^{-\frac{3}{4}s})4e^{-s}\leq 9e^{-s},
\end{align*}
where the subscript $s$ standards for weighted with $e^s$. Similar as before,
\begin{align*}
\exp\Big(\int_{s_0}^s-D_{\alpha,s}^{(1)}\circ\Phi(X_0,s')\,ds'\Big)\leq e^{-9s_0}\leq 1+2\cdot9\epsilon\leq \frac{5}{4}.
\end{align*}
Hence,
\begin{align*}
|e^s\partial_\alpha\partial_XU\circ\Phi(X_0,s)|&\leq \frac{5}{4}|e^{s_0}\partial_\alpha\partial_XU(X_0,s_0)|+\frac{5}{4}C_1M^8\epsilon^\frac{3}{4}\int_{s_0}^s  e^{\frac{3}{4}s'}\,ds'\\
&\leq \frac{5}{4}|e^{s_0}\partial_\alpha\partial_XU(X_0,s_0)|+\frac{5}{3}C_1M^8\epsilon^\frac{3}{4}e^{\frac{3}{4}s}.
\end{align*}
If $X=\Phi(X_0,-\log\epsilon)$ for $|X_0|\geq \frac{1}{2}\epsilon^{-\frac{5}{4}}$, then 
\begin{align*}
\partial_\alpha\partial_XU(X_0,-\log\epsilon)=0.
\end{align*}
If $X=\Phi(\pm\frac{1}{2}e^{\frac{5}{4}s_0},s_0)$ for $s_0>-\log\epsilon$, then we use the middle field estimate
\begin{align*}
|\partial_\alpha\partial_XU(\pm\frac{1}{2}e^{\frac{5}{4}s_0})|&\leq M^9\big(1+(\frac{1}{2}e^{\frac{5}{4}s_0})^4\big)^{-\frac{1}{5}}\leq M^92^\frac{4}{5}e^{-s_0},\\
e^{s_0}|\partial_\alpha\partial_XU(\pm\frac{1}{2}e^{\frac{5}{4}s_0})|&\leq 2^\frac{4}{5}M^9.
\end{align*}
In both cases, we have
\begin{align*}
    |\partial_\alpha\partial_XU(X,s)|\leq 4M^9\epsilon^\frac{3}{4}e^{-\frac{1}{4}s}.
\end{align*}
\end{proof}

\begin{proposition}[$\partial_\alpha\partial_X^nU$, $n=2,...,7$]
For $(X,s)$ such that $|X|\geq\frac{1}{2}e^{\frac{5}{4}s}$, we have
\begin{align*}
    |\partial_\alpha\partial_X^nU(X,s)|\leq 4M^{(n+2)^2}\epsilon^\frac{3}{4}e^{-\frac{1}{4}s},\qquad n=2,...,7.
\end{align*}
\end{proposition}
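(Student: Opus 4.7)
The plan is to mirror the argument used for $n=1$ but exploit the fact that for $n\geq 2$ the constant part of the damping $\tfrac{5}{4}(n-1)$ is strictly positive, which turns the integrating factor into a genuine exponential decay rather than a small blow-up. Concretely, I work with the equation
$$\Big(\partial_s+\tfrac{5}{4}(n-1)+\tfrac{n+1}{1-\dot{\tau}}\partial_XU\Big)(e^s\partial_\alpha\partial_X^nU)+V\partial_X(e^s\partial_\alpha\partial_X^nU)=e^sF^{(n)}_{U,\alpha},$$
compose with the Lagrangian trajectory $\Phi(X_0,\cdot)$, and use Duhamel's formula. In the far field, $|\partial_XU|\leq 4e^{-s}$ by \eqref{eq:assumption1xfar}, so the net damping satisfies $D^{(n)}_{\alpha,s}\geq \tfrac{5}{4}(n-1)-5(n+1)e^{-s}$, and the integrating factor along any far-field piece of trajectory is bounded by
$$\exp\Bigl(\int_{s_0}^s -D^{(n)}_{\alpha,s}\circ\Phi(X_0,s')\,ds'\Bigr)\leq \tfrac{5}{4}\,e^{-\frac{5}{4}(n-1)(s-s_0)},$$
exactly as the factor $5/4$ appeared in the $n=1$ case.

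For the Duhamel contribution I quote the forcing bound \eqref{eq:forcingparafar}, $|e^sF^{(n)}_{U,\alpha}|\lesssim_n\bigl(M^{4+(n+1)^2}+\sum_{j=1}^{n-1}M^{(j+2)^2+(n-j+1)^2}\bigr)\epsilon^{3/4}e^{3s/4}$. Since $\tfrac{5}{4}(n-1)>\tfrac{3}{4}$ for $n\geq 2$, the integral $\int_{s_0}^s e^{3s'/4}e^{-\frac{5}{4}(n-1)(s-s')}ds'$ is uniformly bounded by a constant multiple of $e^{3s/4}$. A direct check shows $4+(n+1)^2<(n+2)^2$ for all $n\geq 2$, and $(j+2)^2+(n-j+1)^2<(n+2)^2$ for all $1\leq j\leq n-1$ and $n\leq 7$ (the maximum over $j$ of $(j+2)^2+(n-j+1)^2$ is attained at $j=n-1$, giving $(n+1)^2+4$). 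Hence the forcing contribution is at most $\tfrac{1}{2}M^{(n+2)^2}\epsilon^{3/4}e^{3s/4}$ once $M$ is large enough to absorb the $n$-dependent constants.

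The initial-data contribution splits into two cases depending on whether $\Phi(X_0,\cdot)$ was already in the far field at $s_0=-\log\epsilon$ or entered the far field at some later $s_0>-\log\epsilon$ through the boundary $X_0=\pm\tfrac{1}{2}e^{5s_0/4}$. In the first case $|X_0|\geq\tfrac{1}{2}\epsilon^{-5/4}$, and by the construction of the initial data in Section \ref{sec:initialdata} the correction $\chi(X)(\alpha X^2+\beta X^3)$ is supported on $|X|\leq 2\ll\tfrac{1}{2}\epsilon^{-5/4}$, so $\partial_\alpha\partial_X^nU(X_0,-\log\epsilon)=0$. In the second case the middle-field bound \eqref{eq:assumptionparanxmiddle} gives
$$e^{s_0}|\partial_\alpha\partial_X^nU(X_0,s_0)|\leq 2^{4/5}M^{(n+2)^2}\epsilon^{3/4}e^{3s_0/4},$$
and propagating by the damping factor costs $e^{-\frac{5}{4}(n-1)(s-s_0)}\leq e^{-\frac{3}{4}(s-s_0)}$, so the initial-data term is bounded by $\tfrac{5}{4}\cdot 2^{4/5}M^{(n+2)^2}\epsilon^{3/4}e^{3s/4}$. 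Adding this to the forcing estimate and dividing by $e^s$ yields $|\partial_\alpha\partial_X^nU(X,s)|\leq 4M^{(n+2)^2}\epsilon^{3/4}e^{-s/4}$, closing the bootstrap.

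The only nonroutine point is the combinatorial check that the $M$-exponents in the forcing all lie strictly below $(n+2)^2$; this is the same phenomenon that was engineered into the choice of the weighted middle-field assumption \eqref{eq:assumptionparanxmiddle} and is confirmed by a finite inspection for $n=2,\dots,7$. Everything else is a direct transcription of the $n=1$ argument, simplified by the fact that the damping now works in our favor instead of against it.
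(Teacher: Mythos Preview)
Your argument is correct and follows essentially the same route as the paper: weight by $e^s$, compose with the Lagrangian trajectory, use the positive damping $\tfrac{5}{4}(n-1)+\tfrac{n+1}{1-\dot\tau}\partial_XU$ together with the far-field bound $|\partial_XU|\leq 4e^{-s}$, invoke the forcing estimate \eqref{eq:forcingparafar}, and handle the two initial-data cases (vanishing at $s_0=-\log\epsilon$, or matching the middle-field bound at the boundary $X_0=\pm\tfrac12 e^{5s_0/4}$). The paper simply bounds the damping below by $n-1$ rather than tracking $\tfrac{5}{4}(n-1)$ with a multiplicative $\tfrac{5}{4}$ loss, and is less explicit about the $M$-exponent check; your extra care there is fine and changes nothing substantive.
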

\begin{proof}
In this case, damping is positive, 
\begin{align*}
D_{\alpha,s}^{(n)}\geq \frac{5}{4}(n-1)-(n+1)(1+2\epsilon^\frac{1}{5}e^{-\frac{3}{4}s})\cdot 4e^{-s} \geq n-1.
\end{align*}
Then
\begin{align*}
|e^s\partial_\alpha\partial_X^nU\circ\Phi(X_0,s)|&\leq |e^{s_0}\partial_\alpha\partial_X^nU(X_0,s_0)|e^{-(n-1)(s-s_0)}\\
&+C_n(M^{4+(n+1)^2}+\sum_{j=1}^{n-1}M^{(j+2)^2+(n-j+1)^2})\epsilon^\frac{3}{4}\int_{s_0}^se^{\frac{3}{4}s'}e^{-(n-1)(s-s')}\,ds'\\
&\leq |e^{s_0}\partial_\alpha\partial_X^nU(X_0,s_0)|e^{-(n-1)(s-s_0)}+C'_nM^{(n+1)^2+4}\epsilon^\frac{3}{4}e^{\frac{3}{4}s}.
\end{align*}
Same as the $\partial_\alpha\partial_XU$ case, either the initial data contribution vanishes, or from the middle field estimate
\begin{align*}
\big|\partial_\alpha\partial_X^nU\big(\pm\frac{1}{2}e^{\frac{5}{4}s_0}\big)\big|\leq M^{(n+2)^2}2^\frac{4}{5}e^{-s_0}.
\end{align*}
Hence, we have
\begin{align*}
e^s|\partial_\alpha\partial_X^nU\circ\Phi(X_0,s)|&\leq 3M^{(n+2)^2}\epsilon^\frac{3}{4}e^{\frac{3}{4}s}\\
|\partial_\alpha\partial_X^nU(X,s)|&\leq 3M^{(n+2)^2}\epsilon^\frac{3}{4}e^{-\frac{1}{4}s}.
\end{align*}
\end{proof}

\subsection{Modulation variables}
Take $\partial_\alpha$ to the equations of modulation variables, we get
\begin{align*}
-e^{\frac{1}{4}s}\partial_\alpha(\kappa-\dot{\xi})&=e^{-\frac{3}{4}s}\partial_\alpha\dot{\kappa}+e^{-s}H[\partial_\alpha U+e^{\frac{1}{4}s}\partial_\alpha\kappa](0,s),\\
e^{-s}H[\partial_\alpha\partial_XU](0,s)&=\partial_\alpha\dot{\tau}+e^{\frac{1}{4}s}\partial_\alpha(\kappa-\dot{\xi})\partial_X^2U(0,s)+e^{\frac{1}{4}s}(\kappa-\dot{\xi})\partial_\alpha\partial_X^2U(0,s),\\
e^{\frac{1}{4}s}\partial_\alpha(\kappa-\dot{\xi})\partial_X^5U(0,s)&+e^{\frac{1}{4}s}(\kappa-\dot{\xi})\partial_\alpha\partial_X^5U(0,s)\\
&=e^{-s}H[\partial_\alpha\partial_X^4U](0,s)-10\partial_\alpha\partial_X^2U(0,s)\partial_X^3U(0,s)
-10\partial_X^2U(0,s)\partial_\alpha\partial_X^3U(0,s).
\end{align*}
We look at the third equation first. Here we need to treat $\alpha$ and $\beta$ differently.
\begin{align*}
|e^{\frac{1}{4}s}\partial_\alpha(\kappa-\dot{\xi})\partial_X^5U(0,s)|&\lesssim \epsilon^{\frac{1}{5}+\frac{3}{8}}e^{(\frac{1}{4}-1+\frac{1}{8})s}+M^{51\frac{1}{2}}\epsilon^\frac{3}{4}e^{-\frac{1}{4}s}+\epsilon^\frac{3}{4}e^{\frac{3}{4}s}M^{27} e^{-s}+\epsilon^\frac{1}{10}e^{-\frac{3}{4}s}\epsilon e^{\frac{1}{2}s}\\
&\lesssim M^{51\frac{1}{2}}\epsilon^\frac{3}{4}e^{-\frac{1}{4}s}.
\end{align*}
For $\beta$, we have 
\begin{align*}
|e^{\frac{1}{4}s}\partial_\beta(\kappa-\dot{\xi})\partial_X^5U(0,a)|&\lesssim \epsilon^{\frac{1}{5}+\frac{3}{8}}e^{(\frac{1}{4}-1+\frac{1}{8})s}+M^{51\frac{1}{2}}\epsilon^\frac{3}{4}e^{-\frac{1}{4}s}+\epsilon e^{\frac{3}{4}s} M^{27}e^{-s}+\epsilon^\frac{1}{10}e^{-\frac{3}{4}s}\epsilon^\frac{1}{2}e^{\frac{1}{2}s}\\
&\lesssim M^{51\frac{1}{2}}\epsilon^\frac{3}{4}e^{-\frac{1}{4}s}+\epsilon^\frac{3}{5}e^{-\frac{1}{4}s}
\end{align*}
with a different constant. So for both $\alpha$ and $\beta$, since $\partial_X^5U(0,s)\geq 119$, we have (we use $\alpha$ to demonstrate)
\begin{align*}
|\partial_\alpha(\kappa-\dot{\xi})|\lesssim M^{51\frac{1}{2}}\epsilon^\frac{3}{4}e^{-\frac{1}{2}s}+\epsilon^\frac{3}{5}e^{-\frac{1}{2}s}\leq \frac{3}{4}M\epsilon^\frac{1}{2}e^{-\frac{1}{2}s}.
\end{align*}

Now we look at the second equation, 
\begin{align*}
|\partial_\alpha\dot{\tau}|&\leq M\epsilon^\frac{1}{2}e^{-\frac{1}{4}s}\epsilon^\frac{1}{10}e^{-\frac{3}{4}s}+4\epsilon^\frac{3}{4}e^{\frac{3}{4}s}\epsilon^\frac{1}{5}e^{-\frac{3}{4}s}+CM^{18\frac{1}{2}}\epsilon^\frac{3}{4} e^{-\frac{1}{4}s}\\
&\lesssim \epsilon^{\frac{3}{4}+\frac{1}{5}}\leq \frac{1}{2}\epsilon^\frac{1}{2}.
\end{align*}

Finally we look at the first equation
\begin{align*}
|\partial_\alpha\dot{\kappa}|\leq e^{\frac{1}{2}s}M\epsilon^\frac{1}{2}e^{-\frac{1}{2}s}+Ce^{-\frac{1}{4}s}M^9\epsilon^\frac{3}{4}se^{\frac{3}{4}s}\leq \frac{3}{4}\epsilon^\frac{1}{2}se^{\frac{1}{2}s}.
\end{align*}
Note that $\kappa(-\log\epsilon)$ corresponds to $U(0,-\log\epsilon)$, which is independent of $\alpha,\,\beta$, so
\begin{align*}
\partial_\alpha\kappa(s)&=\int_{-\log\epsilon}^s\partial_\alpha\dot{\kappa}\frac{dt}{ds}\,ds',\\
|\partial_\alpha\kappa|&\leq \int_{-\log\epsilon}^\infty \epsilon^\frac{1}{2}s'e^{\frac{1}{2}s'}\frac{e^{-s'}}{1-\dot{\tau}}\,ds'\leq 2(1+2\epsilon^\frac{1}{5}e^{-\frac{3}{4}s})\epsilon^\frac{1}{2}(-\log\epsilon+2)\epsilon^\frac{1}{2}\leq \frac{1}{2}\epsilon^\frac{1}{2}.
\end{align*}
Then taking the sum/difference, we get
\begin{align*}
|\partial_\alpha\dot{\xi}|\leq M\epsilon^\frac{1}{2}e^{-\frac{1}{2}s}+\epsilon^\frac{1}{2}\leq \frac{1}{2}M\epsilon^\frac{1}{2}.
\end{align*}

\section{Closure of bootstrap with second order parameter derivatives}\label{sec:2ndparaclosure}
In this section, we use $\partial^2_{\alpha\beta}$ to represent $\partial^2_{\alpha\alpha},\, \partial^2_{\beta\beta}$ and $\partial^2_{\alpha\beta}$. And the estimates hold for all $(\alpha,\beta)\in B$.
\subsection{$L^2$ estimates}
\begin{proposition}[$\partial^2_{\alpha\beta}U$]
We have
\begin{align*}
    \|\partial_{\alpha\beta}^2U(\cdot,s)+e^{\frac{1}{4}s}\partial_{\alpha\beta}^2\kappa\|_{L^2}\leq M^{38}\epsilon^\frac{3}{2}e^{\frac{7}{4}s}.
\end{align*}
\end{proposition}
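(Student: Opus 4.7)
The plan is to adapt the first-order $L^2$ argument for $\partial_\alpha U+e^{\frac{1}{4}s}\partial_\alpha\kappa$ (Section~\ref{sec:closure}) to one more parameter derivative. Set $W:=e^{-\frac{1}{4}s}\partial_{\alpha\beta}^2U+\partial_{\alpha\beta}^2\kappa$, so the claim reduces to $\|W\|_{L^2}\leq\tfrac{1}{2}M^{38}\epsilon^{3/2}e^{\frac{3}{2}s}$. From \eqref{eq:eqUalphabeta} combined with the chain-rule identity $\partial_s\partial_{\alpha\beta}^2\kappa=\partial_{\alpha\beta}^2\!\bigl(\dot\kappa\cdot\tfrac{e^{-s}}{1-\dot\tau}\bigr)$, the four constant-in-$X$ pieces $\partial_{\alpha\beta}^2\dot\kappa$, $\partial_\alpha\dot\tau\partial_\beta\dot\kappa+\partial_\beta\dot\tau\partial_\alpha\dot\kappa$, $\dot\kappa\partial_{\alpha\beta}^2\dot\tau$, and $\dot\kappa\partial_\alpha\dot\tau\partial_\beta\dot\tau$ produced on the two sides cancel exactly, mirroring the cancellation of $\partial_\alpha\dot\kappa$ in the derivation of \eqref{eq:eqpartialalphaU}. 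The resulting equation for $W$ has right-hand side consisting of Hilbert-transform pieces plus mixed first-order products of the form $\partial_\alpha\partial_XU\cdot\bigl(\partial_\beta U+e^{\frac{1}{4}s}\partial_\beta(\kappa-\dot\xi)\bigr)$ and symmetric in $\alpha\leftrightarrow\beta$, together with the modulation-only contribution $e^{\frac{1}{4}s}\partial_{\alpha\beta}^2(\kappa-\dot\xi)\partial_XU$ and triple products weighted by $\partial_\alpha\dot\tau\partial_\beta\dot\tau$.

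I then take the $L^2$-inner product with $W$ and decompose the transport term $\int e^{-\frac{1}{4}s}V\,\partial_X\partial_{\alpha\beta}^2U\cdot W\,dX=I_1+I_2$ following the first-order template: $I_1$, from the $(U+e^{\frac{1}{4}s}(\kappa-\dot\xi))/(1-\dot\tau)$ piece of $V$, is moved to the right-hand side, while $I_2$, from the $\tfrac{5}{4}X$ piece, integrates by parts to yield the damping $-\tfrac{5}{8}\|W\|_{L^2}^2$. The dominant forcing term is $I_1$, bounded by
\begin{align*}
\|e^{-\frac{1}{4}s}U+\kappa-\dot\xi\|_{L^\infty}\cdot\|\partial_{\alpha\beta}^2\partial_XU\|_{L^2}\cdot\|W\|_{L^2}\lesssim M\cdot M^{37}\epsilon^{3/2}e^{\frac{3}{2}s}\cdot\|W\|_{L^2}
\end{align*}
via \eqref{eq:assumption2paranxL2}, which sets the power $M^{38}$. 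The Hilbert principal term $\tfrac{e^{-s}}{1-\dot\tau}\int H[W]\,W\,dX$ vanishes by skew-adjointness (since $H[\partial_{\alpha\beta}^2\kappa]=0$, so $H[W]=e^{-\frac{1}{4}s}H[\partial_{\alpha\beta}^2U]\in L^2$), and the subleading Hilbert pieces weighted by $\partial_\alpha\dot\tau,\partial_\beta\dot\tau,\partial_{\alpha\beta}^2\dot\tau$ are controlled via \eqref{eq:HilbertparaU}, \eqref{eq:assumption[araUL2}, \eqref{eq:assumptionparamodulation}, \eqref{eq:assumption2paramodulation}. The quadratic cross products pair as $\|\partial_\alpha\partial_XU\|_{L^2}\cdot\|\partial_\beta U+e^{\frac{1}{4}s}\partial_\beta\kappa\|_{L^\infty}\lesssim M^{17}\epsilon^{3/2}e^{\frac{5}{4}s}$, and the modulation-only terms contribute $\lesssim M\epsilon^{5/4}e^s$, both strictly subdominant to $I_1$.

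Collecting the bounds gives
\begin{align*}
\tfrac{d}{ds}\|W\|_{L^2}\leq \tfrac{5}{8}\|W\|_{L^2}+CM^{37}\epsilon^{3/2}e^{\frac{3}{2}s}.
\end{align*}
Because \eqref{eq:initialdata} is affine in $(\alpha,\beta)$ and $\kappa(-\epsilon)$ is chosen independent of $(\alpha,\beta)$, $W(\cdot,-\log\epsilon)=0$, so Gr\"onwall yields $\|W\|_{L^2}\leq\tfrac{8}{7}CM^{37}\epsilon^{3/2}e^{\frac{3}{2}s}\leq\tfrac{1}{2}M^{38}\epsilon^{3/2}e^{\frac{3}{2}s}$, and multiplying by $e^{\frac{1}{4}s}$ closes the proposition. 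The main obstacle is combinatorial rather than analytic: \eqref{eq:forcingUalphabeta} carries well over a dozen distinct terms, and each must be paired so that the $L^2$ slot lands on a quantity with a closed bootstrap while the $L^\infty$ slot uses the pointwise bounds. The chain-rule cancellation of the constant-in-$X$ pieces is the only non-routine algebraic step, and it is strictly parallel to the first-order argument.
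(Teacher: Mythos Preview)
Your approach matches the paper's exactly: both derive the transport equation for $W=e^{-\frac{1}{4}s}\partial_{\alpha\beta}^2U+\partial_{\alpha\beta}^2\kappa$, pair with $W$ in $L^2$, extract the $-\tfrac{5}{8}\|W\|_{L^2}^2$ from integration by parts on the $\tfrac{5}{4}X$ piece of $V$, bound the remaining terms (the principal Hilbert contribution vanishing by skew-adjointness), and apply Gr\"onwall from zero initial data. One bookkeeping slip to flag: your own estimate $|I_1|\lesssim M\cdot M^{37}=M^{38}$ is one power of $M$ higher than the $CM^{37}$ you then write in the collected inequality (the paper's terse proof shows the same apparent discrepancy), and as stated this would not close to $\tfrac12 M^{38}$; the easy fix is to integrate $I_1$ by parts once more to get $I_1=-\tfrac{1}{2(1-\dot\tau)}\int\partial_XU\,W^2$, which contributes only an $O(1)$ perturbation to the anti-damping rather than an $M^{38}$ forcing.
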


\begin{proof}
We consider the equation for $e^{-\frac{1}{4}s}\partial_{\alpha\beta}^2U+\partial_{\alpha\beta}^2\kappa$
\begin{align*}
\partial_s(e^{-\frac{1}{4}s}\partial_{\alpha\beta}^2U+\partial_{\alpha\beta}^2\kappa)+e^{-\frac{1}{4}s}V\partial_{\alpha\beta}^2\partial_XU+e^{-\frac{1}{4}s}\partial_\beta V\partial_\alpha\partial_XU+e^{-\frac{1}{4}s}\partial_\alpha V\partial_\beta\partial_XU+e^{-\frac{1}{4}s}\partial_{\alpha\beta}^2V\partial_XU\\
=\frac{e^{-s}}{1-\dot{\tau}}H[e^{-\frac{1}{4}s}\partial_{\alpha\beta}^2U+\partial_{\alpha\beta}^2\kappa]+\frac{e^{-s}}{(1-\dot{\tau})^2}\partial_\beta\dot{\tau}H[e^{-\frac{1}{4}s}\partial_\alpha U+\partial_\alpha\kappa]\\
+\frac{e^{-s}}{(1-\dot{\tau})^2}\partial_\alpha\dot{\tau}H[e^{-\frac{1}{4}s}\partial_\beta U+\partial_\beta\kappa]+\frac{2e^{-s}}{(1-\dot{\tau})^3}\partial_\alpha\dot{\tau}\partial_\beta\dot{\tau}H[e^{-\frac{1}{4}s}U+\kappa],
\end{align*}
where 
\begin{align*}
\partial_\alpha V&=\frac{1}{1-\dot{\tau}}\big(\partial_\alpha U+e^{\frac{1}{4}s}\partial_\alpha(\kappa-\dot{\xi})\big)+\frac{\partial_\alpha\dot{\tau}}{(1-\dot{\tau})^2}\big(U+e^{\frac{1}{4}s}(\kappa-\dot{\xi})\big),\\
\partial_{\alpha\beta}^2V&=\frac{1}{1-\dot{\tau}}\big(\partial_{\alpha\beta}^2U+e^{\frac{1}{4}s}\partial_{\alpha\beta}^2(\kappa-\dot{\xi})\big)+\frac{\partial_\beta\dot{\tau}}{(1-\dot{\tau})^2}\big(\partial_\alpha U+e^{\frac{1}{4}s}\partial_\alpha(\kappa-\dot{\xi})\big)\\
&\quad +\frac{\partial_\alpha\dot{\tau}}{(1-\dot{\tau})^2}(\partial_\beta U+e^{\frac{1}{4}s}\partial_\beta(\kappa-\dot{\xi})\big)+\frac{\partial_{\alpha\beta}^2\dot{\tau}}{(1-\dot{\tau})^2}\big(U+e^{\frac{1}{4}s}(\kappa-\dot{\xi})\big)+\frac{2\partial_\alpha\dot{\tau}\partial_\beta\dot{\tau}}{(1-\dot{\tau})^3}\big(U+e^{\frac{1}{4}s}(\kappa-\dot{\xi})\big).
\end{align*}
We take $L^2$ inner product, integrate by parts, and we get
\begin{align*}
\frac{1}{2}\frac{d}{ds}\|e^{-\frac{1}{4}s}\partial_{\alpha\beta}^2U+\partial_{\alpha\beta}^2\kappa\|_{L^2}^2-\frac{5}{8}\|e^{-\frac{1}{4}s}\partial_{\alpha\beta}^2U+\partial_{\alpha\beta}^2\kappa\|_{L^2}^2\lesssim M^{37}\epsilon^\frac{3}{2}e^{\frac{3}{2}s}\|e^{-\frac{1}{4}s}\partial_{\alpha\beta}^2U+\partial_{\alpha\beta}^2\kappa\|_{L^2}.
\end{align*}
The initial data part vanishes, so
\begin{align*}
\frac{d}{ds}\|e^{-\frac{1}{4}s}\partial_{\alpha\beta}^2U+\partial_{\alpha\beta}^2\kappa\|_{L^2}-\frac{5}{8}\|e^{-\frac{1}{4}s}\partial_{\alpha\beta}^2U+\partial_{\alpha\beta}^2\kappa\|_{L^2}&\lesssim M^{37}\epsilon^\frac{3}{2}e^{\frac{3}{2}s}\\
\|e^{-\frac{1}{4}s}\partial_{\alpha\beta}^2U+\partial_{\alpha\beta}^2\kappa\|_{L^2}&\lesssim M^{37}\epsilon^\frac{3}{2}\int_{-\log\epsilon}^s e^{\frac{3}{2}s'}e^{\frac{5}{8}(s-s')}\,ds'\\
&\lesssim M^{37}\epsilon^\frac{3}{2}e^{\frac{3}{2}s}\leq \frac{1}{2}M^{38}\epsilon^\frac{3}{2}e^{\frac{3}{2}s}.
\end{align*}
\end{proof}

\begin{proposition}[$\partial_{\alpha\beta}^2\partial_XU$]
We have
\begin{align*}
    \|\partial_{\alpha\beta}^2\partial_XU(\cdot,s)\|_{L^2}\leq M^{37}\epsilon^\frac{3}{2}e^{\frac{3}{2}s}.
\end{align*}
\end{proposition}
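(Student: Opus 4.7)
The plan is to mimic the $L^2$ estimate for $\partial_\alpha\partial_XU$ proved in the previous section, now applied to \eqref{eq:eqnxUalphabeta} with $n=1$. I first take the $L^2$ inner product of \eqref{eq:eqnxUalphabeta} with $\partial_{\alpha\beta}^2\partial_XU$ itself. Integration by parts on the transport term yields
\begin{equation*}
\int_\mathbb{R} V\,\partial_{\alpha\beta}^2\partial_X^2U\cdot\partial_{\alpha\beta}^2\partial_XU\,dX=-\frac{5}{8}\|\partial_{\alpha\beta}^2\partial_XU\|_{L^2}^2-\frac{1}{2(1-\dot{\tau})}\int_\mathbb{R}\partial_XU\,(\partial_{\alpha\beta}^2\partial_XU)^2\,dX,
\end{equation*}
so combined with the explicit damping coefficient $1+\frac{2\partial_XU}{1-\dot{\tau}}$ one obtains
\begin{equation*}
\frac{1}{2}\frac{d}{ds}\|\partial_{\alpha\beta}^2\partial_XU\|_{L^2}^2+\frac{3}{8}\|\partial_{\alpha\beta}^2\partial_XU\|_{L^2}^2+\frac{3}{2(1-\dot{\tau})}\int_\mathbb{R}\partial_XU\,(\partial_{\alpha\beta}^2\partial_XU)^2\,dX=\int_\mathbb{R}F^{(1)}_{\alpha,\beta}\,\partial_{\alpha\beta}^2\partial_XU\,dX.
\end{equation*}

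The cubic term is moved to the right-hand side and absorbed via the same split $|X|\lessgtr \frac{1}{2}e^{\frac{5}{4}s}$ used for $\partial_\alpha\partial_XU$: by \eqref{eq:conse1xULinfty} in the middle region and \eqref{eq:assumption1xfar} in the far region, an $L^\infty$--$L^2$--$L^2$ H\"older yields a bound $\lesssim M^9\|\partial_{\alpha\beta}^2\partial_XU\|_{L^2}^2$, which is harmless up to shifting the coefficient of the damping term. For the forcing integral, I read off each term of \eqref{eq:forcingnxUalphabeta} with $n=1$ and control it by a single $L^\infty$--$L^2$ H\"older: the bilinear products $\partial_\beta\partial_X^2U\,(\partial_\alpha U+e^{\frac{1}{4}s}\partial_\alpha(\kappa-\dot{\xi}))$ and its $\alpha\leftrightarrow\beta$ symmetric are bounded using \eqref{eq:assumptionparanxmiddle}--\eqref{eq:assumptionparanxfar}, \eqref{eq:assumptionparaU}, and \eqref{eq:assumptionparamodulation}; the Hilbert contributions are handled via Plancherel plus \eqref{eq:assumption2paraUL2} and the first-order $L^2$ assumptions \eqref{eq:assumption[araUL2}--\eqref{eq:assumptionparaixL2} combined with \eqref{eq:assumptionparamodulation}--\eqref{eq:assumption2paramodulation}; and the $\partial_{\alpha\beta}^2(\kappa-\dot{\xi})\partial_X^2U$ term uses \eqref{eq:assumption2paramodulation}. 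Tracking the largest $M$ powers, the dominant contribution comes from $\partial_\beta\partial_X^2U\cdot e^{\frac{1}{4}s}\partial_\alpha\dot{\xi}$ and from $\partial_X^2U\cdot e^{\frac{1}{4}s}\partial_{\alpha\beta}^2(\kappa-\dot{\xi})$, each of order $\lesssim M^{36}\epsilon^{3/2}e^{\frac{3}{2}s}\|\partial_{\alpha\beta}^2\partial_XU\|_{L^2}$.

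Because $U_{\alpha,\beta}(X,-\log\epsilon)$ is affine in $(\alpha,\beta)$ by \eqref{eq:initialdata}, and $\kappa(-\epsilon)=u_0(0)$ is independent of $(\alpha,\beta)$ thanks to \eqref{eq:hatU0vanish0} and $\chi(0)\cdot 0=0$, the initial condition is $\partial_{\alpha\beta}^2\partial_XU(\cdot,-\log\epsilon)\equiv 0$. Dividing by $\|\partial_{\alpha\beta}^2\partial_XU\|_{L^2}$ and applying Gr\"onwall,
\begin{equation*}
\|\partial_{\alpha\beta}^2\partial_XU(\cdot,s)\|_{L^2}\leq CM^{36}\epsilon^{\frac{3}{2}}\int_{-\log\epsilon}^s e^{\frac{3}{2}s'}e^{-\frac{3}{8}(s-s')}\,ds'\leq \tfrac{8}{15}CM^{36}\epsilon^{\frac{3}{2}}e^{\frac{3}{2}s}\leq \tfrac{3}{4}M^{37}\epsilon^{\frac{3}{2}}e^{\frac{3}{2}s},
\end{equation*}
which closes the bootstrap.

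The main obstacle, as in the $\partial_\alpha\partial_X^8U$ estimate, is purely combinatorial: the forcing \eqref{eq:forcingnxUalphabeta} has many summands, and one must verify that no cross-product of bootstrap norms (first order in $(\alpha,\beta)$ times the modulation derivatives or a derivative of $U$) produces a power of $M$ beyond $M^{36}$. The $M^{37}$ budget is tight and is the reason the gap exponent $\frac{3}{8}$ survives in the Gr\"onwall integral; were the forcing to scale as $M^{37}$ or worse we would need a sharper treatment of the cubic term involving $\partial_XU$, but the spatial decays \eqref{eq:assumption1xmiddle}--\eqref{eq:assumption1xfar} make that unnecessary here.
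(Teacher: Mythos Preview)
Your overall structure is right, but the treatment of the cubic term $\frac{3}{2(1-\dot\tau)}\int\partial_XU(\partial_{\alpha\beta}^2\partial_XU)^2\,dX$ contains a real error. An $L^\infty$--$L^2$--$L^2$ H\"older with $\partial_XU$ in $L^\infty$ produces a coefficient of order $1+\epsilon^{1/10}e^{-3s/4}$ by \eqref{eq:conse1xULinfty} and \eqref{eq:assumption1xfar}, not $M^9$; the $M^9$ presumably leaked in from the first-order argument, where it came from the middle-field spatial decay \eqref{eq:assumptionparanxmiddle} of $\partial_\alpha\partial_XU$ --- a bound you do \emph{not} have for $\partial_{\alpha\beta}^2\partial_XU$. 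Taken literally, a cubic bound $\lesssim M^9\|\cdot\|_{L^2}^2$ shifts the damping by $-\tfrac{3}{2}M^9$, and Gr\"onwall then yields growth $\sim e^{\frac{3}{2}M^9 s}$, far outpacing the $e^{3s/2}$ forcing; the bootstrap would not close. With the correct coefficient $\lesssim 1$ the damping only shifts to roughly $-\tfrac{9}{8}$, and since $\tfrac{3}{2}>\tfrac{9}{8}$ Gr\"onwall still closes --- but then your kernel $e^{-\frac{3}{8}(s-s')}$ must be replaced by $e^{\frac{9}{8}(s-s')}$, so the displayed computation is inconsistent.

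The paper avoids this altogether by placing the factors differently: it bounds the cubic term via $\|\partial_XU\|_{L^2}\|\partial_{\alpha\beta}^2\partial_XU\|_{L^\infty}\|\partial_{\alpha\beta}^2\partial_XU\|_{L^2}$ and invokes the $L^\infty$ bootstrap \eqref{eq:assumption2paranx} with $n=1$ (size $M^{36}\epsilon^{3/2}e^{3s/2}$). This turns the cubic into a term \emph{linear} in $\|\partial_{\alpha\beta}^2\partial_XU\|_{L^2}$, so it simply joins the forcing and the positive damping $\tfrac{3}{8}$ is preserved. Incidentally, the paper identifies the dominant genuine forcing term as $(\partial_{\alpha\beta}^2U+e^{s/4}\partial_{\alpha\beta}^2\kappa)\,\partial_X^2U$, of size $M^{33\frac{3}{4}}\epsilon^{3/2}e^{3s/2}$ via $L^\infty$--$L^2$; the two terms you named as dominant are in fact considerably smaller (order $M^{25}$ and $M^{10}$ respectively).
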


\begin{proof}
We take $L^2$ inner product of \eqref{eq:eqnxUalphabeta} with $n=1$ with $\partial_{\alpha\beta}^2\partial_XU$
\begin{align*}
\frac{1}{2}\frac{d}{ds}\|\partial_{\alpha\beta}^2\partial_XU\|_{L^2}^2+\frac{3}{8}\|\partial_{\alpha\beta}^2\partial_XU\|_{L^2}^2+\frac{3}{2(1-\dot{\tau})}\int_\mathbb{R}\partial_XU(\partial_{\alpha\beta}^2\partial_XU)^2\,dX= \text{RHS},
\end{align*}
where by $L^\infty$-$L^2$-$L^2$ H\"older's inequality,
\begin{align*}
|\text{right hand side}|\lesssim  M^{33\frac{3}{4}}  \epsilon^\frac{3}{2}e^{\frac{3}{2}s}\|\partial_{\alpha\beta}^2\partial_XU\|_{L^2},
\end{align*}
since the dominant term is 
\begin{align*}
    \frac{1}{1-\dot{\tau}}\int_\mathbb{R}(\partial_{\alpha\beta}^2U+e^{\frac{1}{4}s}\partial_{\alpha\beta}^2\kappa)\partial_X^2U\partial_{\alpha\beta}^2\partial_XU\,dX.
\end{align*}
We in fact move the third term on the left to the right hand side and use
\begin{align*}
\int_\mathbb{R}\partial_XU(\partial_{\alpha\beta}^2\partial_XU)^2\,dX\leq \|\partial_XU\|_{L^2}\|\partial_{\alpha\beta}^2\partial_XU\|_{L^\infty}\|\partial_{\alpha\beta}^2\partial_XU\|_{L^2}\lesssim M^{36}\epsilon^\frac{3}{2}e^{\frac{3}{2}s}\|\partial_{\alpha\beta}^2\partial_XU\|_{L^2}.
\end{align*}
So
\begin{align*}
\frac{d}{ds}\|\partial_{\alpha\beta}^2\partial_XU\|_{L^2}+\frac{3}{8}\|\partial_{\alpha\beta}^2\partial_XU\|_{L^2}&\lesssim M^{36}\epsilon^\frac{3}{2}e^{\frac{3}{2}s}\\
\|\partial_{\alpha\beta}^2\partial_XU\|_{L^2}&\leq CM^{36}\epsilon^\frac{3}{2}e^{\frac{3}{2}s}\leq \frac{1}{2}M^{37}\epsilon^\frac{3}{2}e^{\frac{3}{2}s},
\end{align*}
since initial data vanishes. 
\end{proof}

\begin{proposition}[$\partial_{\alpha\beta}^2\partial_X^7U$]
We have
\begin{align*}
    \|\partial_{\alpha\beta}^2\partial_X^7U(\cdot,s)\|_{L^2}\leq M^{130} \epsilon^\frac{3}{2}e^{\frac{3}{2}s}.
\end{align*}
\end{proposition}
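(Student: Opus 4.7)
The plan mirrors the two preceding second-order-parameter $L^2$ propositions and, more closely, the top-order first-order proposition on $\|\partial_\alpha\partial_X^8U\|_{L^2}$. Take the $L^2$ inner product of \eqref{eq:eqnxUalphabeta} at $n=7$ against $\partial_{\alpha\beta}^2\partial_X^7U$ and integrate by parts in the transport term, using $\partial_XV=\partial_XU/(1-\dot{\tau})+\tfrac{5}{4}$. This produces the energy identity
\begin{align*}
\frac{1}{2}\frac{d}{ds}\|\partial_{\alpha\beta}^2\partial_X^7U\|_{L^2}^2+\frac{63}{8}\|\partial_{\alpha\beta}^2\partial_X^7U\|_{L^2}^2+\frac{15}{2(1-\dot{\tau})}\int_\mathbb{R}\partial_XU\,(\partial_{\alpha\beta}^2\partial_X^7U)^2\,dX=\int_\mathbb{R}F_{\alpha,\beta}^{(7)}\,\partial_{\alpha\beta}^2\partial_X^7U\,dX,
\end{align*}
where $\frac{63}{8}=\frac{5}{4}\cdot 7-\frac{1}{4}-\frac{5}{8}$. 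By the global bound $\|\partial_XU\|_{L^\infty}\leq 1+\epsilon^{17/20}$ from \eqref{eq:conse1xULinfty}, the cross term absorbs into the left, leaving a net damping $\bigl(\tfrac{63}{8}-\tfrac{15}{2}(1+\text{small})\bigr)\|\partial_{\alpha\beta}^2\partial_X^7U\|_{L^2}^2\geq\tfrac{3}{8}\|\partial_{\alpha\beta}^2\partial_X^7U\|_{L^2}^2$, exactly as in the $\partial_\alpha\partial_X^8U$ proof.

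For the forcing, bound $\bigl|\int F_{\alpha,\beta}^{(7)}\partial_{\alpha\beta}^2\partial_X^7U\,dX\bigr|\leq \|F_{\alpha,\beta}^{(7)}\|_{L^2}\|\partial_{\alpha\beta}^2\partial_X^7U\|_{L^2}$ and control each term of \eqref{eq:forcingnxUalphabeta} by $L^\infty$--$L^2$ Hölder using the already-established $L^2$ bounds $\|\partial_X^nU\|_{L^2}\lesssim M^{35(n-1)/4}$ from \eqref{eq:assumptionL2}, $\|\partial_\alpha\partial_X^nU\|_{L^2}\lesssim M^{11n+2}\epsilon^{3/4}e^{3s/4}$ from \eqref{eq:assumptionparaixL2}, and the interpolation formula $\|\partial_{\alpha\beta}^2\partial_X^nU\|_{L^2}\lesssim M^{\frac{31}{2}n+\frac{23}{2}}\epsilon^{3/2}e^{3s/2}$, paired with the pointwise bounds of Sections~\ref{sec:closure}--6. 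The dominant contributions come from products such as $\partial_X^8U\cdot(\partial_{\alpha\beta}^2U+e^{s/4}\partial_{\alpha\beta}^2(\kappa-\dot{\xi}))$, $\partial_\beta\partial_X^8U\cdot(\partial_\alpha U+e^{s/4}\partial_\alpha(\kappa-\dot{\xi}))$, and the heaviest summands inside $\partial_\beta G_{1,\alpha}^{(7)}$ (e.g.\ $\partial_{\alpha\beta}^2\partial_X^6U\cdot\partial_X^2U$), all bounded by $CM^{129}\epsilon^{3/2}e^{3s/2}$. The Hilbert-transform pieces inherit a $e^{-s}$ prefactor, so via \eqref{eq:HilbertparanxULinfty} and its analogue for second derivatives they are subdominant.

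For closure, observe that the initial datum \eqref{eq:initialdata} is linear in $\alpha$ and in $\beta$ separately, with no $\alpha^2,\beta^2,\alpha\beta$ monomials, hence $\partial_{\alpha\beta}^2\partial_X^7U_{\alpha,\beta}(\,\cdot\,,-\log\epsilon)\equiv 0$. Grönwall's inequality then yields
\begin{align*}
\|\partial_{\alpha\beta}^2\partial_X^7U(\cdot,s)\|_{L^2}\leq CM^{129}\epsilon^{3/2}\int_{-\log\epsilon}^s e^{\frac{3}{2}s'}e^{-\frac{3}{8}(s-s')}\,ds'\leq \frac{8}{15}CM^{129}\epsilon^{3/2}e^{\frac{3}{2}s}\leq \frac{3}{4}M^{130}\epsilon^{\frac{3}{2}}e^{\frac{3}{2}s},
\end{align*}
closing the bootstrap \eqref{eq:assumption2paranxL2}.

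The main obstacle is purely bookkeeping: $F_{\alpha,\beta}^{(7)}$ contains a large number of terms, and since the target $M^{130}$ only leaves a one-power margin above the interpolation bound $M^{\frac{31\cdot 7}{2}+\frac{23}{2}}=M^{120}$ for the highest intermediate $\partial_{\alpha\beta}^2\partial_X^n$, one must choose the $L^\infty$--$L^2$ pairing in every convolution-type sum (in $\partial_\beta G_{1,\alpha}^{(7)}$ and $\partial_\beta G_{2,\alpha}^{(7)}$) so that the combined weight never exceeds $M^{129}$. The argument is otherwise routine transport energy estimation, identical in structure to the $\partial_\alpha\partial_X^8U$ estimate but with an extra parameter derivative and one fewer spatial derivative.
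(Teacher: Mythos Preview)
Your proposal is correct and follows essentially the same route as the paper: take the $L^2$ inner product of \eqref{eq:eqnxUalphabeta} at $n=7$ with $\partial_{\alpha\beta}^2\partial_X^7U$, integrate by parts to get the energy identity with coefficients $\tfrac{63}{8}$ and $\tfrac{15}{2(1-\dot\tau)}$, absorb the $\partial_XU$ term using \eqref{eq:conse1xULinfty}, bound the forcing in $L^2$ by $CM^{129+}\epsilon^{3/2}e^{3s/2}$, note the initial data vanish, and apply Grönwall. The only cosmetic differences are that the paper keeps the slightly more conservative net damping $\tfrac14$ (versus your $\tfrac38$) and records the forcing as $M^{129\frac{3}{4}}$ rather than $M^{129}$; these do not affect the closure.
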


\begin{proof}
Consider equation \eqref{eq:eqnxUalphabeta} with $n=7$. Note that $F^{(7)}_{\alpha\beta}$ contains $U$ to $\partial_X^8U$, $\partial_\alpha U$ to $\partial_\alpha\partial_X^8U$ (and $\partial_\beta$), and $\partial_{\alpha\beta}^2 U$ to $\partial_{\alpha\beta}^2\partial_X^7 U$. Take $L^2$ inner product with $\partial_{\alpha\beta}\partial_X^7U$, and integrate by parts, we arrive at
\begin{align*}
\frac{1}{2}\frac{d}{ds}\|\partial^2_{\alpha\beta}\partial_X^7U\|_{L^2}^2+\frac{63}{8}\|\partial_{\alpha\beta}^2\partial_X^7U\|_{L^2}^2+\frac{15}{2(1-\dot{\tau})}\int_\mathbb{R}\partial_XU(\partial_{\alpha\beta}^2\partial_X^7U)^2\,dX\lesssim M^{129\frac{3}{4}}\epsilon^\frac{3}{2}e^{\frac{3}{2}s}.
\end{align*}
Hence,
\begin{align*}
\frac{1}{2}\frac{d}{ds}\|\partial_{\alpha\beta}^2\partial_X^7U\|_{L^2}^2+\frac{1}{4}\|\partial_{\alpha\beta}^2\partial_X^7U\|_{L^2}^2&\lesssim M^{129\frac{3}{4}}\epsilon^\frac{3}{2}e^{\frac{3}{2}s}\|\partial_{\alpha\beta}^2\partial_X^7U\|_{L^2}\\
\frac{d}{ds}\|\partial_{\alpha\beta}^2\partial_X^7U\|_{L^2}+\frac{1}{4}\|\partial_{\alpha\beta}^2\partial_X^7U\|_{L^2}&\lesssim M^{129\frac{1}{4}}\epsilon^\frac{3}{2}e^{\frac{3}{2}s}\\
\|\partial_{\alpha\beta}^2\partial_X^7U\|_{L^2}&\leq CM^{129\frac{1}{4}}\epsilon^\frac{3}{2}e^{\frac{3}{2}s}\leq \frac{1}{2}M^{130}\epsilon^\frac{3}{2}e^{\frac{3}{2}s}.
\end{align*}
\end{proof}

\subsection{$L^\infty$ estimates of $\partial^2_{\alpha\beta} U$ and its derivatives}
\begin{proposition}[$\partial_{\alpha\beta}^2U$]
We have
\begin{align*}
    \|\partial_{\alpha\beta}^2U(\cdot,s)+e^{\frac{1}{4}s}\partial_{\alpha\beta}^2\kappa\|_{L^\infty}\leq M^{25}\epsilon^\frac{3}{2}e^{\frac{3}{2}s}.
\end{align*}
\end{proposition}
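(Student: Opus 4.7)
The plan is to convert the target into an $L^\infty$ transport estimate for the quantity $G(X,s) := e^{-s/4}\partial_{\alpha\beta}^2 U(X,s) + \partial_{\alpha\beta}^2\kappa(s)$, following the same template used earlier in this section for $\|e^{-s/4}\partial_\alpha U + \partial_\alpha \kappa\|_{L^\infty}$ and, before it, for $\|e^{-s/4}U + \kappa\|_{L^\infty}$. Starting from \eqref{eq:eqUalphabeta} and using $\partial_s\partial_{\alpha\beta}^2\kappa = \tfrac{e^{-s}}{1-\dot\tau}\partial_{\alpha\beta}^2\dot\kappa$ (chain rule for $s = -\log(\tau-t)$), I derive
\begin{equation*}
\partial_s G + V\partial_X G + \frac{\partial_X U}{1-\dot\tau}G \;=\; \frac{\partial_X U}{1-\dot\tau}\partial_{\alpha\beta}^2\kappa + e^{-s/4}F_{\alpha,\beta} + \frac{e^{-s}}{1-\dot\tau}\partial_{\alpha\beta}^2\dot\kappa \;=:\; \mathcal{F}_G,
\end{equation*}
with $F_{\alpha,\beta}$ as in \eqref{eq:forcingUalphabeta}. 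The damping coefficient $\partial_X U/(1-\dot\tau)$ is only weakly anti-damping: by \eqref{eq:conse1xULinfty} and the modulation bootstrap, $-\partial_X U/(1-\dot\tau)\le 1 + C\epsilon^{1/10}e^{-3s/4}$, so the amplification $\exp\!\big(-\!\int_{s'}^s \partial_X U/(1-\dot\tau)\circ\Phi\,ds''\big)$ along any Lagrangian trajectory is bounded by $Ce^{s-s'}$ uniformly in $X_0$.

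The initial datum $G(X_0,-\log\epsilon)$ vanishes identically: by \eqref{eq:initialdata}, $U_{\alpha,\beta}(X,-\log\epsilon)$ depends linearly on each of $\alpha,\beta$ with no cross term, and $\kappa_0=u_0(0)$ is independent of $\alpha,\beta$ thanks to the constraint $U(0,s)=0$. Composing with the trajectory $\Phi(X_0,\cdot)$ and integrating therefore yields the Duhamel formula
\begin{equation*}
(G\circ\Phi)(X_0,s) \;=\; \int_{-\log\epsilon}^s (\mathcal{F}_G\circ\Phi)(X_0,s')\exp\!\Big(-\!\int_{s'}^s \tfrac{\partial_X U}{1-\dot\tau}\circ\Phi\,ds''\Big)\,ds',
\end{equation*}
so it suffices to obtain a pointwise bound on $\mathcal{F}_G$.

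The two modulation pieces of $\mathcal{F}_G$ are immediate: by \eqref{eq:assumption2paramodulation}, both $\big|\tfrac{\partial_X U}{1-\dot\tau}\partial_{\alpha\beta}^2\kappa\big|$ and $\big|\tfrac{e^{-s}}{1-\dot\tau}\partial_{\alpha\beta}^2\dot\kappa\big|$ are $\lesssim M\epsilon^{5/4}e^s\le M\epsilon^{3/2}e^{5s/4}$. The main obstacle is the middle piece $e^{-s/4}F_{\alpha,\beta}$, whose definition \eqref{eq:forcingUalphabeta} comprises roughly a dozen terms: the Hilbert transforms of $\partial_{\alpha\beta}^2 U + e^{s/4}\partial_{\alpha\beta}^2\kappa$, $\partial_\alpha U + e^{s/4}\partial_\alpha\kappa$, and $U + e^{s/4}\kappa$, all controlled by the estimates at the end of Section~\ref{sec:paraforce}; pieces linear in the modulation-parameter derivatives $\partial_\alpha\dot\tau,\partial_\beta\dot\tau,\partial_{\alpha\beta}^2\dot\tau,\partial_{\alpha\beta}^2(\kappa-\dot\xi)$, made small by the extra powers of $\epsilon^{1/2}$ in \eqref{eq:assumptionparamodulation} and \eqref{eq:assumption2paramodulation}; and, dominantly, the quadratic cross products such as $\partial_\beta\partial_X U\cdot\big(\partial_\alpha U+e^{s/4}\partial_\alpha(\kappa-\dot\xi)\big)/(1-\dot\tau)$. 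Using the bootstrap bounds \eqref{eq:assumptionparaU}--\eqref{eq:assumptionparanxfar} (worst at $X=0$, where $|\partial_\alpha\partial_X U|\lesssim M^9\epsilon^{3/4}e^{3s/4}$ and $|\partial_\alpha U+e^{s/4}\partial_\alpha\kappa|\lesssim M^4\epsilon^{3/4}e^{3s/4}$), each such product is at worst $CM^{13}\epsilon^{3/2}e^{3s/2}$, so after multiplying by $e^{-s/4}$ we obtain $|\mathcal{F}_G|\lesssim M^{C_0}\epsilon^{3/2}e^{5s/4}$ for some $C_0\le 13$. Feeding this into the Duhamel formula and integrating against $Ce^{s-s'}$ from $-\log\epsilon$ to $s$ gives $\|G\|_{L^\infty}\le 4CM^{C_0}\epsilon^{3/2}e^{5s/4}$, and multiplying by $e^{s/4}$ yields \eqref{eq:assumption2paraU} with $M^{25}$ replaced by $\tfrac12 M^{25}$. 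The only delicate step is the term-by-term accounting of the cross products in $F_{\alpha,\beta}$; the transport structure, the vanishing initial data, and the damping estimate are direct translations of the earlier first-order-parameter argument.
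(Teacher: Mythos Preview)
Your proposal is correct and follows essentially the same approach as the paper: derive a transport equation for $G=e^{-s/4}\partial_{\alpha\beta}^2U+\partial_{\alpha\beta}^2\kappa$, observe the initial data vanishes, bound the forcing by $CM^{13}\epsilon^{3/2}e^{5s/4}$ via the quadratic cross-product $\partial_\beta\partial_X U\cdot(\partial_\alpha U+e^{s/4}\partial_\alpha(\kappa-\dot\xi))$, and integrate against the mild amplification from the damping. The only cosmetic differences are that the paper uses the cruder damping bound $\partial_X U/(1-\dot\tau)\ge -\tfrac{9}{8}$ (yours is slightly sharper but both suffice since $9/8<5/4$), and the paper expands $F$ directly rather than writing $\mathcal{F}_G=e^{-s/4}F_{\alpha,\beta}+\text{modulation pieces}$; after cancellation of the $\partial_{\alpha\beta}^2\dot\kappa$ and $\partial_{\alpha\beta}^2\kappa$ contributions these are the same.
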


\begin{proof}
We look at the equation for $e^{-\frac{1}{4}s}\partial_{\alpha\beta}^2U+\partial_{\alpha\beta}^2\kappa$
\begin{align*}
\big(\partial_s+\frac{\partial_XU}{1-\dot{\tau}}\big)(e^{-\frac{1}{4}s}\partial_{\alpha\beta}^2U+\partial_{\alpha\beta}^2\kappa)+V\partial_X( e^{-\frac{1}{4}s}\partial_{\alpha\beta}^2U+\partial_{\alpha\beta}^2\kappa)=F,
\end{align*}
where 
\begin{align*}
    F&=\frac{e^{-\frac{5}{4}s}}{1-\dot{\tau}}H[\partial_{\alpha\beta}^2U+e^{\frac{1}{4}s}\partial_{\alpha\beta}^2\kappa]+\frac{\partial_\beta\dot{\tau}e^{-\frac{5}{4}s}}{(1-\dot{\tau})^2}H[\partial_\alpha U+e^{\frac{1}{4}s}\partial_\alpha\kappa]+\frac{e^{-\frac{5}{4}s}\partial_\alpha\dot{\tau}}{(1-\dot{\tau})^2}H[\partial_\beta U+e^{\frac{1}{4}s}\partial_\beta\kappa]
    \\
    &+\Big(\frac{e^{-\frac{5}{4}s}\partial_{\alpha\beta}^2\dot{\tau}}{(1-\dot{\tau})^2}+\frac{2e^{-\frac{5}{4}s}\partial_\alpha\dot{\tau}\partial_\beta\dot{\tau}}{(1-\dot{\tau})^3}\Big)H[U+e^{\frac{1}{4}s}\kappa]-\big(\frac{\partial_\beta\partial_XU}{1-\dot{\tau}}+\frac{\partial_\beta\dot{\tau}\partial_XU}{(1-\dot{\tau})^2}\big)(e^{-\frac{1}{4}s}\partial_\alpha U+\partial_\alpha\kappa)\\
    &-\Big(\frac{\partial_\beta U+e^{\frac{1}{4}s}\partial_\beta(\kappa-\dot{\xi})}{1-\dot{\tau}}+\frac{\partial_\beta\dot{\tau}\big(U+e^{\frac{1}{4}s}(\kappa-\dot{\xi})\big)}{(1-\dot{\tau})^2}\Big)e^{-\frac{1}{4}s}\partial_\alpha\partial_XU+\frac{\partial_XU\partial_{\alpha\beta}^2\dot{\xi}}{1-\dot{\tau}}+\frac{\partial_\beta\partial_XU\partial_\alpha\dot{\xi}}{1-\dot{\tau}}\\
    &+\frac{\partial_\beta\dot{\tau}\partial_XU\partial_\alpha\dot{\xi}}{(1-\dot{\tau})^2}-\frac{\partial_\alpha\dot{\tau}}{(1-\dot{\tau})^2}\partial_XU\big(e^{-\frac{1}{4}s}\partial_\beta U+\partial_\beta(\kappa-\dot{\xi})\big)-\frac{\partial_\alpha\dot{\tau}}{(1-\dot{\tau})^2}\partial_\beta\partial_XU(e^{-\frac{1}{4}s}U+\kappa-\dot{\xi})\\
    &-\Big(\frac{\partial_{\alpha\beta}^2\dot{\tau}}{(1-\dot{\tau})^2}+\frac{2\partial_\alpha\dot{\tau}\partial_\beta\dot{\tau}}{(1-\dot{\tau})^3}\Big)\partial_XU(e^{-\frac{1}{4}s}U+\kappa-\dot{\xi})-\frac{e^{-s}\partial_\alpha\dot{\tau}\partial_\beta\dot{\kappa}}{(1-\dot{\tau})^2}-\frac{e^{-s}\dot{\kappa}\partial_{\alpha\beta}^2\dot{\tau}}{(1-\dot{\tau})^2}-\frac{2e^{-s}\dot{\kappa}\partial_\alpha\dot{\tau}\partial_\beta\dot{\tau}}{(1-\dot{\tau})^3}.
\end{align*}
We plug in the relevant assumptions to get (the dominant term comes from the first product in the third line)
\begin{align*}
    |F|&\lesssim M^{13}\epsilon^\frac{3}{2}e^{\frac{5}{4}s}.
\end{align*}
For damping, we have
\begin{align*}
\frac{1}{1-\dot{\tau}}\partial_XU\geq -\frac{9}{8},
\end{align*}
So 
\begin{align*}
|(e^{-\frac{1}{4}s}\partial_{\alpha\beta}^2U+\partial_{\alpha\beta}^2\kappa)\circ \Phi(X_0,s)|\leq \int_{-\log\epsilon}^s CM^{13}\epsilon^\frac{3}{2}e^{\frac{5}{4}s'}e^{\frac{9}{8}(s-s')}\,ds'\\
\leq CM^{13}\epsilon^\frac{3}{2}e^{\frac{5}{4}s}\leq \frac{3}{4}M^{25}\epsilon^\frac{3}{2}e^{\frac{5}{4}s}.
\end{align*}
Hence, we have
\begin{align*}
\|\partial_{\alpha\beta}^2U+e^{\frac{1}{4}s}\partial_{\alpha\beta}^2\kappa\|_{L^\infty}\leq \frac{3}{4} M^{25}\epsilon^\frac{3}{2}e^{\frac{3}{2}s}.
\end{align*}
\end{proof}

\begin{proposition}[$\partial_{\alpha\beta}^2\partial_X^nU$, $n=1,...,6$]
We have
\begin{align*}
    \|\partial_{\alpha\beta}^2\partial_X^nU(\cdot,s)\|_{L^\infty}\leq M^{(n+5)^2}\epsilon^\frac{3}{2}e^{\frac{3}{2}s},\qquad n=1,...,6.
\end{align*}
\end{proposition}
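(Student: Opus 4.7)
The plan is to apply a direct transport estimate to \eqref{eq:eqnxUalphabeta}, mirroring the argument just carried out for $\|\partial_{\alpha\beta}^2U+e^{\frac{1}{4}s}\partial_{\alpha\beta}^2\kappa\|_{L^\infty}$, and to proceed by induction on $n=1,\dots,6$. Composing with the Lagrangian trajectory $\Phi(X_0,s)$ yields an ODE of the form $\tfrac{d}{ds}\bigl(\partial_{\alpha\beta}^2\partial_X^nU\circ\Phi\bigr)+\bigl(D_{\alpha\beta}^{(n)}\circ\Phi\bigr)\partial_{\alpha\beta}^2\partial_X^nU\circ\Phi=F_{\alpha,\beta}^{(n)}\circ\Phi$. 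The key observation is that the initial datum $U_{\alpha,\beta}(X,-\log\epsilon)$ from \eqref{eq:initialdata} depends on $(\alpha,\beta)$ only through the affine correction $\chi(X)(\alpha X^2+\beta X^3)$, so $\partial_{\alpha\beta}^2\partial_X^nU(X,-\log\epsilon)\equiv 0$ for each of $\partial_{\alpha\alpha}^2$, $\partial_{\alpha\beta}^2$, $\partial_{\beta\beta}^2$ and every $n$; therefore only the Duhamel (forcing) contribution needs to be estimated.

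For the damping,
\[
D_{\alpha\beta}^{(n)}=\frac{5n}{4}-\frac{1}{4}+\frac{n+1}{1-\dot{\tau}}\partial_XU,
\]
and using $\|\partial_XU\|_{L^\infty}\leq 1+\epsilon^{1/10}e^{-3s/4}$ from \eqref{eq:conse1xULinfty} together with $|\dot\tau|\leq\epsilon^{1/5}e^{-3s/4}$ gives $D_{\alpha\beta}^{(n)}\geq \tfrac{5(n-1)}{4}-(n+1)-O(\epsilon)$. This is minimized at $n=1$, where it yields $D_{\alpha\beta}^{(1)}\geq -1-O(\epsilon)\geq -\tfrac{5}{4}$. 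Hence the integrating factor obeys $\exp\bigl(\int_{s'}^{s}(-D_{\alpha\beta}^{(n)})\circ\Phi\,ds''\bigr)\leq e^{5(s-s')/4}$ uniformly for $n=1,\dots,6$, and this is precisely what allows the $e^{3s/2}$ growth of the forcing to dominate in Duhamel.

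For the forcing one inspects each term of \eqref{eq:forcingnxUalphabeta}. The Hilbert-transform contribution is estimated by Sobolev interpolation from the $L^2$ assumptions \eqref{eq:assumption2paraUL2}-\eqref{eq:assumption2paranxL2}, yielding $\|H[\partial_{\alpha\beta}^2\partial_X^nU]\|_{L^\infty}\lesssim M^{\frac{31}{2}n+44\frac{3}{4}}\epsilon^{3/2}e^{3s/2}$, already recorded above. The product terms fall into three families: (i) the dominant $\partial_X^{n+1}U\cdot(\partial_{\alpha\beta}^2U+e^{s/4}\partial_{\alpha\beta}^2\kappa)$, controlled by $M^{(n+1)^2}\!\cdot\!M^{25}\epsilon^{3/2}e^{3s/2}$ through \eqref{eq:assumptionnxmiddle}-\eqref{eq:assumptionnxfar} and the previous proposition; (ii) the cross terms $\partial_\alpha\partial_X^jU\cdot\partial_\beta\partial_X^{n-j+1}U$, controlled by $M^{(j+2)^2+(n-j+3)^2}\epsilon^{3/2}e^{3s/2}$ via \eqref{eq:assumptionparanxmiddle}-\eqref{eq:assumptionparanxfar}; and (iii) the modulation-driven terms, each involving at least one factor from \eqref{eq:assumption2paramodulation} and therefore readily absorbed. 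All such $M$-exponents are strictly below $(n+5)^2$, so a Duhamel integration against $e^{5(s-s')/4}$ produces
\[
|\partial_{\alpha\beta}^2\partial_X^nU\circ\Phi(X_0,s)|\leq C_n\int_{-\log\epsilon}^{s}|F_{\alpha,\beta}^{(n)}|\,e^{5(s-s')/4}\,ds'\leq \tfrac{3}{4}M^{(n+5)^2}\epsilon^{3/2}e^{3s/2},
\]
upon using $(3/2-5/4)^{-1}=4$ and choosing $M$ large enough to absorb the constants $C_n$ and the $n$-independent binomial coefficients.

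The main obstacle is not any analytic subtlety but the bookkeeping needed to verify that every summand in \eqref{eq:forcingnxUalphabeta} respects the claimed $M^{(n+5)^2}$ power uniformly in $(\alpha,\beta)\in B$. The most delicate case is $n=6$, where $\partial_X^7U$ and $\partial_\alpha\partial_X^7U$ appear through $F_{\alpha,\beta}^{(6)}$ and $G_{1,\alpha}^{(6)}$ and are only directly controlled in $L^2$; one must invoke Sobolev interpolation from \eqref{eq:assumptionL2} and \eqref{eq:assumptionparaixL2} (giving $\|\partial_X^7U\|_{L^\infty}\lesssim M^{49}$ and $\|\partial_\alpha\partial_X^7U\|_{L^\infty}\lesssim M^{79\frac{1}{2}}\epsilon^{3/4}e^{3s/4}$) to pointwise-bound these factors before inserting them into the transport estimate.
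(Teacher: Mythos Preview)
Your approach is essentially identical to the paper's: vanishing initial data, the uniform damping lower bound $D_{\alpha\beta}^{(n)}\geq -\tfrac{5}{4}$, a forcing bound of the form $|F_{\alpha,\beta}^{(n)}|\lesssim M^{(n+5)^2-1}\epsilon^{3/2}e^{3s/2}$, and Duhamel against $e^{5(s-s')/4}$. One small inaccuracy: $\partial_X^7U$ and $\partial_\alpha\partial_X^7U$ are in fact directly controlled pointwise by the bootstrap assumptions \eqref{eq:assumptionnxmiddle}--\eqref{eq:assumptionnxfar} and \eqref{eq:assumptionparanxmiddle}--\eqref{eq:assumptionparanxfar} (these go up to $n=8$ and $n=7$ respectively), so no interpolation detour is needed at $n=6$; your workaround is harmless but unnecessary.
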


\begin{proof}
We can bound the forcing terms
\begin{align*}
F_{U,\alpha,\beta}^{(n)}&\lesssim \big(M^{9+(n+2)^2}+M^{4+(n+3)^2}+\sum_{j=1}^{n-1}M^{(j+5)^2}+M^{(j+2)^2+(n-j+3)^2}\big)\epsilon^\frac{3}{2}e^{\frac{3}{2}s}+M^{25}\epsilon^\frac{3}{2}e^{\frac{3}{2}s}\\
&\lesssim M^{(n+5)^2-1}\epsilon^\frac{3}{2}e^{\frac{3}{2}s}.
\end{align*}
For damping, we have 
\begin{align*}
D_{U,\alpha,\beta}^{(n)}=\frac{5}{4}n-\frac{1}{4}+\frac{n+1}{1-\dot{\tau}}\partial_XU\geq -\frac{5}{4}.
\end{align*}
Notice that $\partial_{\alpha\beta}^2\partial_X^nU(X,-\log\epsilon)\equiv 0$, so
\begin{align*}
|\partial_{\alpha\beta}^2\partial_X^nU\circ\Phi(X_0,s)|&\leq \int_{-\log\epsilon}^s CM^{(n+5)^2-1}\epsilon^\frac{3}{2}e^{\frac{3}{2}s'}e^{\frac{5}{4}(s-s')}\,ds'\\
&\leq CM^{(n+5)^2-1}\epsilon^\frac{3}{2}e^{\frac{3}{2}s}\leq \frac{1}{2}M^{(n+5)^2}\epsilon^{\frac{3}{2}}e^{\frac{3}{2}s}.
\end{align*}
\end{proof}
 
\subsection{Modulation variables}
We consider equations
\begin{gather}
\begin{aligned}
e^{-s}H[\partial_{\alpha\beta}^2\partial_XU](0,s)&=\partial_{\alpha\beta}^2\dot{\tau}+e^{\frac{1}{4}s}\partial_{\alpha\beta}^2(\kappa-\dot{\xi})\partial_X^2U(0,s)+e^{\frac{1}{4}s}\partial_\alpha(\kappa-\dot{\xi})\partial_\beta\partial_X^2U(0,s)\\
&\qquad+e^{\frac{1}{4}s}\partial_\beta(\kappa-\dot{\xi})\partial_\alpha\partial_X^2U(0,s)+e^{\frac{1}{4}s}(\kappa-\dot{\xi})\partial_{\alpha\beta}^2\partial_X^2U(0,s),
\end{aligned}\\
\begin{aligned}
e^{\frac{1}{4}s}\partial_{\alpha\beta}^2(\kappa-\dot{\xi})\partial_X^5U(0,s)+e^{\frac{1}{4}s}\partial_\alpha(\kappa-\dot{\xi})&\partial_\beta\partial_X^5U(0,s)+e^{\frac{1}{4}s}\partial_\beta(\kappa-\dot{\xi})\partial_\alpha\partial_X^5U(0,s)\\+e^{\frac{1}{4}s}(\kappa-\dot{\xi})\partial_{\alpha\beta}^2\partial_X^5U(0,s)
&=e^{-s}H[\partial_{\alpha\beta}^2\partial_X^4U](0,s)-10\partial_{\alpha\beta}^2\partial_X^2U(0,s)\partial_X^3U(0,s)\\&\qquad-10\partial_\alpha\partial_X^2U(0,s)\partial_\beta\partial_X^3U(0,s)
-10\partial_\beta\partial_X^2U(0,s)\partial_\alpha\partial_X^3U(0,s)\\
&\qquad-10\partial_X^2U(0,s)\partial_{\alpha\beta}^2\partial_X^3U(0,s),
\end{aligned}\\
\begin{aligned}
e^{\frac{1}{4}s}\partial_{\alpha\beta}^2(\kappa-\dot{\xi})&=e^{-\frac{3}{4}s}\partial_{\alpha\beta}^2\dot{\kappa}-e^{-s}H[\partial_{\alpha\beta}^2U+e^{\frac{1}{4}s}\partial_{\alpha\beta}^2\kappa](0,s).
\end{aligned}
\end{gather}
We start with the second equation to estimate $\partial_{\alpha\beta}^2(\kappa-\dot{\xi})$. In the estimate of 
\begin{align*}
    \partial_\alpha\partial_X^2U\partial_\beta\partial_X^3U(0,s)+\partial_\beta\partial_X^2U(0,s)\partial_\alpha\partial_X^3U(0,s),
\end{align*}
(where the $\alpha,\,\beta$ here are place holders and can be either true $\alpha$ or $\beta$)
the worst scenario is 
\begin{align*}
    \lesssim \epsilon^\frac{3}{4}e^{\frac{3}{4}s}\cdot\epsilon^\frac{1}{2}e^{\frac{1}{2}s}\lesssim \epsilon^\frac{5}{4}e^{\frac{5}{4}s}.
\end{align*}
\begin{align*}
e^{\frac{1}{4}s}\partial_{\alpha\beta}^2(\kappa-\dot{\xi})\partial_X^5U(0,s)&\lesssim \epsilon^\frac{5}{4}e^{\frac{5}{4}s}\\
\implies \partial_{\alpha\beta}^2(\kappa-\dot{\xi})&\lesssim \epsilon^\frac{5}{4}e^s\leq \frac{1}{2}M\epsilon^\frac{5}{4}e^s.
\end{align*}

Now we consider the first equation
\begin{align*}
\partial_{\alpha\beta}^2\dot{\tau}&\leq M\epsilon^\frac{5}{4}e^{\frac{5}{4}s}\epsilon^\frac{1}{10}e^{-\frac{3}{4}s}+2e^{\frac{1}{4}s}M\epsilon^\frac{1}{2}e^{-\frac{1}{2}s}\cdot 4\epsilon^\frac{3}{4}e^{\frac{3}{4}s}+e^{\frac{1}{4}s}\epsilon^\frac{1}{5}e^{-s}M^{49}\epsilon^\frac{3}{2}e^{\frac{3}{2}s}+e^{-s}M^{60\frac{1}{4}}\epsilon^\frac{3}{2}e^{\frac{3}{2}s}\\
&\leq CM^{49}\epsilon^\frac{17}{10}e^{\frac{3}{4}s}
\leq \frac{1}{2}\epsilon e^{\frac{3}{4}s}.
\end{align*}
(this is the worst scenario where we have $\partial_{\alpha\alpha}^2$). 

Using the last equation,
\begin{align*}
|e^{-\frac{3}{4}s}\partial_{\alpha\beta}^2\dot{\kappa}|&\leq e^{\frac{1}{4}s}M\epsilon^\frac{5}{4}e^{s}+Ce^{-s}M^{25}\epsilon^\frac{3}{2}se^{\frac{3}{2}s}\\
|\partial_{\alpha\beta}^2\dot{\kappa}|&\leq \frac{3}{2}M\epsilon^\frac{5}{4}e^{2s}.
\end{align*}
Using integration (the initial value vanishes)
\begin{align*}
|\partial_{\alpha\beta}^2\kappa|&\leq \int_{-\log\epsilon}^s 2M\epsilon^\frac{5}{4}e^{2s'}\frac{e^{-s'}}{1-\dot{\tau}}\,ds'\leq \frac{5}{2}\int_{-\log\epsilon}^sM\epsilon^\frac{5}{4}e^{s'}\,ds'\\
&\leq \frac{5}{2}M\epsilon^\frac{5}{4} e^s.
\end{align*}

Finally,\begin{align*}
|\partial_{\alpha\beta}^2\dot{\xi}|\leq |\partial_{\alpha\beta}^2(\kappa-\dot{\xi})|+|\partial_{\alpha\beta}^2\kappa|\leq 4M\epsilon^\frac{5}{4}e^s.
\end{align*}


\section{Closure of bootstrap: estimates at $X=0$}\label{sec:close0}
We recall that $s_n:=-\log\epsilon+n$, the initial data \eqref{eq:initialdata}, and the iteration assumption \eqref{eq:inductassumption} and \eqref{eq:sizealphabetan}.
\subsection{Without parameter derivatives}
\begin{proposition}[$\partial_X^2U$, $\partial_X^3U$]
For all $s_n\leq s\leq s_{n+1}$, we have
\begin{align*}
|\partial_X^2U_{\alpha_n,\beta_n}(0,s)|\leq M^{13\frac{1}{2}}e^{-s_n},\qquad |\partial_X^3U_{\alpha_n,\beta_n}(0,s)\leq M^{22}e^{-s_n}.
\end{align*}
We also have, for all $(\alpha,\beta)\in B$
\begin{align*}
    |\partial_X^2U_{\alpha,\beta}(0,s)|\leq \epsilon^\frac{1}{10}e^{-\frac{3}{4}s},\qquad |\partial_X^3U_{\alpha,\beta}(0,s)|\leq M^{27}e^{-s}.
\end{align*}
\end{proposition}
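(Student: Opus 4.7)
\emph{Setup.} The ODEs for $y_2(s):=\partial_X^2U(0,s)$ and $y_3(s):=\partial_X^3U(0,s)$ arise by evaluating equation \eqref{eq:eqUderivative} for $n=2,3$ at $X=0$ and invoking the constraints $U(0,s)=0$, $\partial_XU(0,s)=-1$, $\partial_X^4U(0,s)=0$. The effective damping coefficients become $\tfrac{9}{4}-\tfrac{3}{1-\dot\tau}\approx-\tfrac{3}{4}$ and $\tfrac{14}{4}-\tfrac{4}{1-\dot\tau}\approx-\tfrac{1}{2}$, so both equations carry \emph{negative} damping with homogeneous growth rates $e^{3s/4}$ and $e^{s/2}$. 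Crucially, the constraint $\partial_X^4U(0,s)=0$ kills the transport term $V(0,s)\partial_X^4U(0,s)$ in the $y_3$-equation, and the remaining cross coupling $V(0,s)\,y_3$ in the $y_2$-equation is harmless because $|V(0,s)|\leq C\epsilon^{1/5}e^{-3s/4}$ follows from $|\kappa-\dot\xi|\leq\epsilon^{1/5}e^{-s}$ in \eqref{eq:assumptionmodulation}.

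\emph{Intermediate step for $(\alpha_n,\beta_n)$.} The iteration hypothesis \eqref{eq:inductassumption} provides $y_2(s_n)=y_3(s_n)=0$. Integrate the (essentially decoupled) $y_3$-ODE by Duhamel on $[s_n,s_{n+1}]$: the homogeneous factor is at most $e^{1/2}$, and the forcing from \eqref{eq:forcingUderivative} reduces at $X=0$ to $\tfrac{e^{-s}}{1-\dot\tau}H[\partial_X^3U](0,s)-\tfrac{3}{1-\dot\tau}y_2^2$. The Hilbert piece is $\lesssim M^{21\frac{7}{8}}e^{-s}$ by \eqref{eq:HilbertnxLinfty}, and the quadratic term is negligible near $s_n$, yielding $|y_3(s)|\lesssim M^{22}e^{-s_n}$. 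Feeding this back, the $y_2$-equation has forcing $\lesssim M^{13\frac{1}{8}}e^{-s}$ plus the cross term $|V(0,s)y_3|\lesssim\epsilon^{1/5}M^{22}e^{-3s/4}e^{-s_n}$, the latter being subleading, so Duhamel integration gives $|y_2(s)|\lesssim M^{13\frac{1}{2}}e^{-s_n}$, closing \eqref{eq:finerinductassumptioin}.

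\emph{Extension to all $(\alpha,\beta)\in B$.} Given $(\alpha,\beta)\in B$ and $s\geq-\log\epsilon$, choose $n$ with $s\in[s_n,s_{n+1}]$ and apply the mean value theorem to write
\begin{align*}
\partial_X^jU_{\alpha,\beta}(0,s)-\partial_X^jU_{\alpha_n,\beta_n}(0,s) &= (\alpha-\alpha_n)\partial_\alpha\partial_X^jU_\star(0,s)+(\beta-\beta_n)\partial_\beta\partial_X^jU_\star(0,s),
\end{align*}
with $(\alpha_\star,\beta_\star)$ on the segment. The first-order parameter-derivative bounds \eqref{eq:assumptionjacobian} supply $|\partial_\alpha\partial_X^2U|\leq 4\epsilon^{3/4}e^{3s/4}$, $|\partial_\beta\partial_X^2U|\leq\epsilon e^{3s/4}$, and analogous $e^{s/2}$ bounds for $\partial_X^3U$. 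The displacement $|\alpha-\alpha_n|+|\beta-\beta_n|$ is controlled by telescoping along the Cauchy sequence via the $B_m$-radii of \eqref{eq:sizealphabetan}, which decay geometrically like $M^{15}\epsilon e^{-7m/4}+\epsilon^{6/5}e^{-3m/2}$. Chasing the $M$- and $\epsilon$-powers, the correction is dominated by $\epsilon^{1/10}e^{-3s/4}$ for $j=2$ and $M^{27}e^{-s}$ for $j=3$ whenever $\epsilon$ is small enough relative to $M$.

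\emph{Main obstacle.} The fundamental tension is that $\partial_\alpha\partial_X^2U$ grows at the very same rate $e^{3s/4}$ as the homogeneous $y_2$-ODE, so a naive MVT over the full radius of $B$ would produce a correction of order $M^{15}\epsilon^{7/4}e^{3s/4}$, which at large $s$ vastly exceeds the target $\epsilon^{1/10}e^{-3s/4}$. The resolution, and the most delicate point of the proof, is that one cannot apply MVT at $(\alpha_0,\beta_0)$ once and for all; instead one must use the $n$-th iteration point $(\alpha_n,\beta_n)$ adapted to the current $s$, harnessing the shrinking radius of $B_n$ rather than the fixed radius of $B$, and telescoping the small MVT displacements along the entire Cauchy sequence. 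A careful bookkeeping of the geometric sum then produces the improvement needed to close the bootstrap strictly.
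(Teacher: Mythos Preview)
Your proposal is correct and follows essentially the same route as the paper: write the ODEs at $X=0$ with exact negative damping constants $-\tfrac34$ and $-\tfrac12$ (the paper splits off $\frac{3\dot\tau}{1-\dot\tau}y_2$ into the forcing to make the damping exactly constant, which is equivalent to your approximation), bound the forcings via \eqref{eq:HilbertnxLinfty} and the bootstrap assumptions \eqref{eq:assumptionx=0}, integrate by Duhamel from $s_n$ using the vanishing initial values from \eqref{eq:inductassumption} to obtain the intermediate bounds, and then apply the mean value theorem anchored at $(\alpha_n,\beta_n)$ together with \eqref{eq:assumptionjacobian} to pass to general parameters.

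One small difference: in the MVT step the paper bounds the displacement $|\alpha-\alpha_n|$, $|\beta-\beta_n|$ directly by the single $B_n$-radius \eqref{eq:sizealphabetan}, without any telescoping. Your telescoping over the $B_m$-radii is harmless (the geometric sum from $m\geq n$ is comparable to the $B_n$-radius itself) but unnecessary, and it is only literally applicable to points along the Cauchy sequence or its limit, not to a generic $(\alpha,\beta)\in B$; the paper's direct substitution of the $B_n$-radius is both simpler and what is actually needed for the iteration. Your ``main obstacle'' paragraph correctly pinpoints why the MVT must be centered at the $n$-th iterate rather than at $(\alpha_0,\beta_0)$.
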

\begin{proof}
Consider the following equations
\begin{equation}
\begin{cases}
\begin{aligned}
&\big(\partial_s+\frac{9}{4}-\frac{3}{1-\dot{\tau}}\big)\partial_X^2U_{\alpha,\beta}(0,s)=-\frac{e^{\frac{s}{4}}}{1-\dot{\tau}}(\kappa-\dot{\xi})\partial_X^3U_{\alpha,\beta}(0,s)+\frac{e^{-s}}{1-\dot{\tau}}H[\partial_X^2U_{\alpha,\beta}](0,s),\\
&\big(\partial_s+\frac{7}{2}-\frac{4}{1-\dot{\tau}}\big)\partial_X^3U_{\alpha,\beta}(0,s)=\frac{e^{-s}}{1-\dot{\tau}}H[\partial_X^3U_{\alpha,\beta}](0,s)-\frac{3}{1-\dot{\tau}}\partial_X^2U_{\alpha,\beta}(0,s)^2.
\end{aligned}
\end{cases}
\end{equation}
This is a non-linear system. We rewrite the system as 
\begin{equation}
\begin{cases}
\begin{aligned}
&(\partial_s-\frac{3}{4})\partial_X^2U_{\alpha,\beta}(0,s)=\frac{3\dot{\tau}}{1-\dot{\tau}}\partial_X^2U_{\alpha,\beta}(0,s)-\frac{e^{\frac{1}{4}s}(\kappa-\dot{\xi})}{1-\dot{\tau}}\partial_X^3U_{\alpha,\beta}(0,s)+\frac{e^{-s}}{1-\dot{\tau}}H[\partial_X^2U_{\alpha,\beta}](0,s),\\
&(\partial_s-\frac{1}{2})\partial_X^3U_{\alpha,\beta}(0,s)=\frac{4\dot{\tau}}{1-\dot{\tau}}\partial_X^3U_{\alpha,\beta}(0,s)+\frac{e^{-s}}{1-\dot{\tau}}H[\partial_X^3U_{\alpha,\beta}](0,s)-\frac{3}{1-\dot{\tau}}\partial_X^2U_{\alpha,\beta}(0,s)^2.
\end{aligned}
\end{cases}\label{eq:systemx=0}
\end{equation}
The reason to do this is that $\dot{\tau}$ decays in $s$, and now damping is constant in time: damping for $\partial_X^2U_{\alpha,\beta}(0,s)$ is $-\frac{3}{4}$, for $\partial_X^3U_{\alpha,\beta}(0,s)$ is $-\frac{1}{2}$\footnote{The damping constants determines the decay rates of $\partial_X^2U(0,s),\,\partial_X^3U(0,s)$.}.

For forcing,
\begin{align*}
F_{U,0}^{(2)}&=\frac{3\dot{\tau}}{1-\dot{\tau}}\partial_X^2U_{\alpha,\beta}(0,s)-\frac{e^{\frac{1}{4}s}}{1-\dot{\tau}}(\kappa-\dot{\xi})\partial_X^3U_{\alpha,\beta}(0,s)+\frac{e^{-s}}{1-\dot{\tau}}H[\partial_X^2U_{\alpha,\beta}](0,s)\\
&\lesssim \epsilon^\frac{1}{5}e^{-\frac{3}{4}s}\epsilon^\frac{1}{10}e^{-\frac{3}{4}s}+\epsilon^\frac{1}{5}e^{-\frac{3}{4}s}M^{27} e^{-s}+e^{-s}M^{12\frac{3}{8}}\lesssim M^{13\frac{1}{8}}e^{-s},\\
F^{(3)}_{U,0}&=\frac{4\dot{\tau}}{1-\dot{\tau}}\partial_X^3U_{\alpha,\beta}(0,s)+\frac{e^{-s}}{1-\dot{\tau}}H[\partial_X^3U_{\alpha,\beta}](0,s)-\frac{3}{1-\dot{\tau}}\partial_X^2U_{\alpha,\beta}(0,s)^2\\
&\lesssim \epsilon^\frac{1}{5}e^{-\frac{3}{4}s}M^{27} e^{-s}+e^{-s}M^{21\frac{7}{8}}+\epsilon^\frac{1}{5}e^{-\frac{3}{2}s}\lesssim M^{21\frac{7}{8}}e^{-s}.
\end{align*}
For any $s_n\leq s\leq s_{n+1}=s_n+1$, we integrate from $s_n$ (note the initial value is 0)
\begin{align*}
\partial_X^2U_{\alpha_n,\beta_n}(0,s)&=\partial_X^2U_{\alpha_n,\beta_n}(0,s_n)e^{\frac{3}{4}(s-s_n)}+\int_{s_n}^se^{\frac{3}{4}(s-s')}F_{U,0}^{(2)}(s')\,ds',\\
|\partial_X^2U_{\alpha_n,\beta_n}(0,s)|&\lesssim M^{13\frac{1}{8}}e^{-s_n}\lesssim M^{13\frac{1}{8}}e^{-s}.
\end{align*}
The constant is independent of $n$. Similarly,
\begin{align*}
|\partial_X^3U_{\alpha_n,\beta_n}(0,s)|\lesssim M^{21\frac{7}{8}}e^{-s}\qquad\text{for }s_n\leq s\leq s_{n+1}.
\end{align*}
Hence, by raising the $M$ exponent, we can make them 
\begin{align*}
\leq \frac{1}{2}M^{13\frac{1}{2}}e^{-s},\qquad \leq \frac{1}{2}M^{22}e^{-s}
\end{align*}
respectively.

By the mean value theorem, for $s_n\leq s\leq s_{n+1}$,
\begin{align*}
|\partial_X^2U_{\alpha,\beta}(0,s)|&\leq |\partial_X^2U_{\alpha_n,\beta_n}(0,s)|+|\alpha-\alpha_n|\sup_{\alpha\in\mathcal{B}_n}|\partial_\alpha\partial_X^2U(0,s)|+|\beta-\beta_n|\sup_{\beta\in\mathcal{B}_n}|\partial_\beta\partial_X^2U(0,s)|\\
&\leq M^{13\frac{1}{2}}e^{-s}+(M^{15}\epsilon^{-\frac{3}{4}}e^{-\frac{7}{4}s_n}+\epsilon^{-\frac{3}{10}}e^{-\frac{3}{2}s_n})4\epsilon^\frac{3}{4}e^{\frac{3}{4}s}+M^{25}\epsilon^{-\frac{1}{2}}e^{-\frac{3}{2}s_n}\epsilon e^{\frac{3}{4}s}\\
&\lesssim M^{13\frac{1}{2}}e^{-s}+{4}M^{15}e^{-s}+\epsilon^\frac{9}{20}e^{-\frac{3}{4}s}+M^{25}\epsilon^\frac{1}{2}e^{-\frac{3}{4}s}\leq \frac{3}{4}\epsilon^\frac{1}{10}e^{-\frac{3}{4}s},\\
|\partial_X^3U_{\alpha,\beta}(0,s)|&\leq |\partial_X^3U_{\alpha_n,\beta_n}(0,s)|+|\alpha-\alpha_n|\sup_{\alpha\in\mathcal{B}_n}|\partial_\alpha\partial_X^3U(0,s)|+|\beta-\beta_n|\sup_{\beta\in\mathcal{B}_n}|\partial_\beta\partial_X^3U(0,s)|\\
&\leq M^{22}e^{-s}+(M^{15}\epsilon^{-\frac{3}{4}}e^{-\frac{7}{4}s_n}+\epsilon^{-\frac{3}{10}}e^{-\frac{3}{2}s_n})\epsilon e^{\frac{1}{2}s}+M^{25}\epsilon^{-\frac{1}{2}}e^{-\frac{3}{2}s_n}8\epsilon^\frac{1}{2}e^{\frac{1}{2}s}\\
&\lesssim M^{21}e^{-s}+\epsilon^\frac{1}{4}e^{-\frac{5}{4}s}+\epsilon^\frac{7}{10}e^{-s}+M^{25}e^{-s}\leq \frac{3}{4}M^{27}e^{-s}.
\end{align*}
\end{proof}

\begin{proposition}[$\partial_X^5\widetilde{U}$]\label{prop:5xtileU}
For all $(\alpha,\beta)\in B$ and all $s\geq -\log\epsilon$, we have
\begin{align*}
    |\partial_X^5\widetilde{U}(0,s)|\leq \epsilon^\frac{1}{2}.
\end{align*}
\end{proposition}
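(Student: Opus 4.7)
The plan is to derive and integrate the pointwise ODE for $\partial_X^5 U(0,s)$ obtained by evaluating \eqref{eq:eqUderivative} at $n=5$ and $X=0$. Substituting the constraints $U(0,s)=0$, $\partial_X U(0,s)=-1$, $\partial_X^4 U(0,s)=0$ and observing that the odd parity of $U_2$ kills $U_2^{(j)}(0)$ unless $j\in\{1,5,9,\dots\}$, the damping collapses from $6-\tfrac{6}{1-\dot\tau}=-\tfrac{6\dot\tau}{1-\dot\tau}$, which is already $O(\epsilon^{1/5}e^{-3s/4})$ by \eqref{eq:assumptionmodulation}. Writing $\partial_X^5\widetilde U(0,s)=\partial_X^5 U(0,s)-120$, the ODE becomes
\begin{align*}
\partial_s\partial_X^5\widetilde U(0,s)=\tfrac{6\dot\tau}{1-\dot\tau}\bigl(\partial_X^5\widetilde U(0,s)+120\bigr)-V(0,s)\partial_X^6 U(0,s)+\tfrac{e^{-s}}{1-\dot\tau}H[\partial_X^5U](0,s)-\tfrac{10}{1-\dot\tau}\bigl(\partial_X^3U(0,s)\bigr)^2,
\end{align*}
where the $\partial_X^2U\partial_X^4U$ cross term in \eqref{eq:forcingUderivative} vanishes at $X=0$ thanks to the $\partial_X^4U(0,s)=0$ constraint.

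Next I verify the initial value. Plugging $X=0$ into \eqref{eq:initialdata} one reads off $\chi(0)=1$, $\widehat U_0^{(5)}(0)=0$ (by \eqref{eq:hatU0vanish0}), and the fifth derivative of $\chi(X)(\alpha X^2+\beta X^3)$ vanishes at $0$ since the polynomial has degree $3$ and $\chi\equiv 1$ near $0$; hence $\partial_X^5U_{\alpha,\beta}(0,-\log\epsilon)=U_2^{(5)}(0)=120$, i.e.\ $\partial_X^5\widetilde U(0,-\log\epsilon)=0$. I then estimate each source term using the already-closed bootstrap bounds: the damping contribution is $\lesssim\epsilon^{1/5}e^{-3s/4}$ (since $|\partial_X^5 U(0,s)|\le 121$); $V(0,s)=e^{s/4}(\kappa-\dot\xi)/(1-\dot\tau)$ is $\lesssim\epsilon^{1/5}e^{-3s/4}$ by \eqref{eq:assumptionmodulation} and $|\partial_X^6 U(0,s)|\le|\partial_X^6\widetilde U(0,s)|+|U_2^{(6)}(0)|\le\epsilon^{1/5}$ by \eqref{eq:assumptionnear6-8} and Lemma~\ref{lem:U2}, giving $|V(0,s)\partial_X^6U(0,s)|\lesssim\epsilon^{2/5}e^{-3s/4}$; the Hilbert term is $\lesssim M^{35\cdot 9/8}e^{-s}$ by interpolation as in \eqref{eq:HilbertnxLinfty}; and $(\partial_X^3U(0,s))^2\lesssim M^{54}e^{-2s}$ by \eqref{eq:assumptionx=0}.

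Integrating from $-\log\epsilon$ to $s$ and using the trivial initial data:
\begin{align*}
|\partial_X^5\widetilde U(0,s)|\le C\int_{-\log\epsilon}^{+\infty}\bigl(\epsilon^{1/5}e^{-3s'/4}+\epsilon^{2/5}e^{-3s'/4}+M^{40}e^{-s'}+M^{54}e^{-2s'}\bigr)\,ds'\lesssim \epsilon^{19/20}+M^{40}\epsilon+M^{54}\epsilon^2.
\end{align*}
Taking $\epsilon$ sufficiently small relative to $M$ yields the bound $|\partial_X^5\widetilde U(0,s)|\le\tfrac{1}{2}\epsilon^{1/2}$, which closes the bootstrap assumption in \eqref{eq:assumptionx=0} with room to spare.

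There is no serious obstacle: the heart of the argument is the algebraic observation that every would-be dangerous term in $F_U^{(5)}(0,s)$ either vanishes identically (via the constraints and the parity of $U_2$ at $0$) or carries a $\dot\tau$ prefactor, turning the effective equation into $\partial_s\partial_X^5\widetilde U(0,s)=O(\epsilon^{1/5}e^{-3s/4})+O(M^{40}e^{-s})+O(M^{54}e^{-2s})$. The only care required is bookkeeping: confirming that no forcing contribution is slower than $e^{-s/2}$ in $s$ so that the time integral is $o(\epsilon^{1/2})$, and checking the initial datum vanishes exactly at $X=0$.
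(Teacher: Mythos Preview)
Your proposal is correct and follows essentially the same approach as the paper. The only cosmetic difference is that the paper starts from the $\widetilde U$ equation \eqref{eq:eqnxtildeU} with $n=5$ directly, while you start from \eqref{eq:eqUderivative} for $U$ and then subtract off the constant $U_2^{(5)}(0)=120$; since $U_2^{(6)}(0)=0$ the resulting ODE is identical, and your term-by-term estimates and the final integration match the paper's.
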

\begin{proof}
We plug in $X=0$ to \eqref{eq:eqnxtildeU} with $n=5$, using that fact that $U_2^{(j)}(0)=0$ when $j=0,2,3,4,6$, $U_2^{(5)}(0)=120$, and the constraints that $\partial_X^j\widetilde{U}(0,s)=0$ when $j=0,1,4$, to get
\begin{align*}
\partial_s\partial_X^5\widetilde{U}(0,s)&=\frac{6\dot{\tau}}{1-\dot{\tau}}\partial_X^5\widetilde{U}(0,s)+\frac{e^{-s}}{1-\dot{\tau}}H[\partial_X^5U](0,s)+\frac{720\dot{\tau}}{1-\dot{\tau}}\\
&\qquad-\frac{e^{\frac{1}{4}s}(\kappa-\dot{\xi})}{1-\dot{\tau}}\partial_X^6\widetilde{U}(0,s)-\frac{10}{1-\dot{\tau}}(\partial_X^3\widetilde{U})^2(0,s).\numberthis\label{eq:eq5xtildex=0}
\end{align*}
We have
\begin{align*}
\text{RHS}\lesssim \epsilon^\frac{1}{5}e^{-\frac{3}{4}s}l\epsilon^\frac{1}{5}+M^{39\frac{3}{8}}e^{-s}+\epsilon^\frac{1}{5}e^{-\frac{3}{4}s}+e^{-\frac{3}{4}s}\epsilon^\frac{1}{5}+M^{54}e^{-2s}\lesssim \epsilon^\frac{1}{5}e^{-\frac{3}{4}s}.
\end{align*}
So once we integrate,
\begin{align*}
|\partial_X^5\widetilde{U}(0,s)|&\leq|\partial_X^5\widetilde{U}(0,-\log\epsilon)|+\int_{-\log\epsilon}^s C\epsilon^\frac{1}{5}e^{-\frac{3}{4}s'}\,ds'\\
&\leq |\partial_X^5\widehat{U}(0)|+C\epsilon^{\frac{1}{5}+\frac{3}{4}}\leq \epsilon+C\epsilon^{\frac{19}{20}}\leq
\frac{1}{2}\epsilon^\frac{1}{2}.
\end{align*}
\end{proof}

\subsection{First order parameter derivatives}
\begin{proposition}[$\partial_\alpha\partial_X^2U,\,\partial_\beta\partial_X^2U,\,\partial_\alpha\partial_X^3U,\,\partial_\beta\partial_X^3U$]
For all $(\alpha,\beta)\in B$, we have
\begin{align*}
    \epsilon^\frac{3}{4}e^{\frac{3}{4}s}\leq \partial_\alpha\partial_X^2U(0,s)\leq 4\epsilon^\frac{3}{4}e^{\frac{3}{4}s},\quad |\partial_\alpha\partial_X^3U(0,s)|\leq \epsilon e^{\frac{1}{2}s},\\
    |\partial_\beta\partial_X^2U(0,s)|\leq \epsilon e^{\frac{3}{4}s},\quad 4\epsilon^\frac{1}{2}e^{\frac{1}{2}s}\leq \partial_\beta\partial_X^3U(0,s)\leq 8\epsilon^\frac{1}{2}e^{\frac{1}{2}s}.
\end{align*}
\end{proposition}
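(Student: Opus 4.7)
The plan is to derive a $4\times 4$ linear ODE system for $\partial_\alpha\partial_X^2 U(0,s)$, $\partial_\alpha\partial_X^3 U(0,s)$, $\partial_\beta\partial_X^2 U(0,s)$, $\partial_\beta\partial_X^3 U(0,s)$ by differentiating the system \eqref{eq:systemx=0} in $\alpha$ and in $\beta$, then integrate it from $s=-\log\epsilon$, keeping track of the exact initial data and showing that all forcing contributions are subdominant relative to the homogeneous growth.

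First, I compute the initial data from \eqref{eq:initialdata}. Using $U_2^{(2)}(0)=U_2^{(3)}(0)=0$, $\widehat{U}_0^{(2,3)}(0)$ being $\alpha,\beta$-independent, $\chi(0)=1$, and $\chi'(0)=0$ (since $\chi$ is even), Leibniz gives
\begin{align*}
\partial_\alpha\partial_X^2 U(0,-\log\epsilon)=2,\qquad \partial_\alpha\partial_X^3 U(0,-\log\epsilon)=0,\\
\partial_\beta\partial_X^2 U(0,-\log\epsilon)=0,\qquad \partial_\beta\partial_X^3 U(0,-\log\epsilon)=6.
\end{align*}
This explains the stated sizes: the ``diagonal'' quantities will be dominated by their homogeneous parts $2\,e^{\frac{3}{4}(s+\log\epsilon)}=2\epsilon^{3/4}e^{3s/4}$ and $6\,e^{\frac{1}{2}(s+\log\epsilon)}=6\epsilon^{1/2}e^{s/2}$, while the ``off-diagonal'' quantities start at $0$ and are driven purely by (small) forcing.

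Next, differentiating \eqref{eq:systemx=0} in $\alpha$ (and similarly in $\beta$) produces
\begin{equation*}
\begin{cases}
(\partial_s-\tfrac{3}{4})\partial_\alpha\partial_X^2U(0,s)=\widetilde{F}^{(2)}_{U,0,\alpha},\\
(\partial_s-\tfrac{1}{2})\partial_\alpha\partial_X^3U(0,s)=\widetilde{F}^{(3)}_{U,0,\alpha},
\end{cases}
\end{equation*}
where the forcing terms contain, schematically, $\partial_\alpha\dot\tau\cdot\partial_X^{2,3}U(0,s)$, $\partial_\alpha(\kappa-\dot\xi)\cdot\partial_X^3U(0,s)$, $(\kappa-\dot\xi)\cdot\partial_\alpha\partial_X^3U(0,s)$, the Hilbert term $e^{-s}H[\partial_\alpha\partial_X^{2,3}U](0,s)$ together with $\partial_\alpha\dot\tau\cdot e^{-s}H[\partial_X^{2,3}U](0,s)$, and the nonlinear term $\partial_X^2U(0,s)\partial_\alpha\partial_X^2U(0,s)$. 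Using the bootstrap assumptions \eqref{eq:assumptionmodulation}, \eqref{eq:assumptionparamodulation}, \eqref{eq:assumptionx=0}, \eqref{eq:HilbertparaUat0}-\eqref{eq:HilbertparanxULinfty} and the diagonal bound $|\partial_\alpha\partial_X^3U(0,s)|\leq \epsilon e^{s/2}$ I am trying to prove (used consistently in a bootstrap loop over the same interval), every such term is $\lesssim M^{c}\epsilon^{7/8}e^{3s/4}$ for the $\partial_X^2$ equation and $\lesssim M^{c}\epsilon^{7/8}e^{s/2}$ for the $\partial_X^3$ equation, with the dominant contribution coming from the Hilbert term bounded by \eqref{eq:HilbertparanxULinfty}.

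Integrating with the homogeneous factors $e^{3(s-s')/4}$ and $e^{(s-s')/2}$ respectively, from $s'=-\log\epsilon$, I obtain
\begin{align*}
\partial_\alpha\partial_X^2U(0,s)&=2\epsilon^{3/4}e^{3s/4}+O\!\left(M^c\epsilon^{7/8}e^{3s/4}\right),\\
\partial_\alpha\partial_X^3U(0,s)&=O\!\left(M^c\epsilon^{7/8}e^{s/2}\right),
\end{align*}
and likewise with $\alpha\leftrightarrow\beta$ swapped, yielding the main term $6\epsilon^{1/2}e^{s/2}$ for $\partial_\beta\partial_X^3U(0,s)$ and only a forcing contribution for $\partial_\beta\partial_X^2U(0,s)$. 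Taking $\epsilon$ sufficiently small (depending on $M$) absorbs the $M^c\epsilon^{1/8}$ factor, so the main terms $2\epsilon^{3/4}e^{3s/4}$ and $6\epsilon^{1/2}e^{s/2}$ sit strictly between the claimed lower and upper bounds $[1,4]\epsilon^{3/4}e^{3s/4}$ and $[4,8]\epsilon^{1/2}e^{s/2}$, and the off-diagonal quantities remain $\leq \epsilon\,e^{3s/4}$ and $\leq \epsilon\,e^{s/2}$.

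The main obstacle is the Hilbert-type forcing $e^{-s}H[\partial_\alpha\partial_X^nU](0,s)$: without the sharp interpolation/near-middle-far bound \eqref{eq:HilbertparanxULinfty} it would be too large to conclude, since weaker bounds would saturate or exceed the target. Once this sharp bound is in hand, the rest is bookkeeping of the many modulation-variable products and verifying they all carry at least a spare $\epsilon^{1/8}$ after the integration. It is important that the lower-bound inequalities are genuinely one-sided: since the $\alpha$-derivative of the initial condition $(\alpha X^2)''$ gives a \emph{positive} constant $2$, and the forcing is signed only by its size estimate, the two-sided bound is obtained by $2\epsilon^{3/4}e^{3s/4}\pm C M^c\epsilon^{7/8}e^{3s/4}$ lying inside $[\epsilon^{3/4}e^{3s/4},4\epsilon^{3/4}e^{3s/4}]$ for $\epsilon$ small, and analogously for the $\beta\partial_X^3$ case.
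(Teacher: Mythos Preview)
Your approach is exactly the paper's: differentiate \eqref{eq:systemx=0} in $\alpha,\beta$, read off the initial data $2,0,0,6$, and integrate the resulting Duhamel formula. The structure and conclusion are right, but there is a genuine quantitative gap in your forcing estimate that, taken at face value, would prevent the argument from closing.

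You claim the forcing for the $\partial_X^2$ equation satisfies $|\widetilde{F}^{(2)}_{U,0,\alpha}|\lesssim M^c\epsilon^{7/8}e^{3s/4}$ and then assert that integrating against $e^{3(s-s')/4}$ yields $O(M^c\epsilon^{7/8}e^{3s/4})$. This is false: a forcing at the \emph{resonant} rate $e^{3s/4}$ produces a secular factor,
\[
\int_{-\log\epsilon}^s e^{\frac{3}{4}(s-s')}\,e^{\frac{3}{4}s'}\,ds'=(s+\log\epsilon)\,e^{\frac{3}{4}s},
\]
which is unbounded in $s$ and destroys the two-sided bound on $\partial_\alpha\partial_X^2U(0,s)$. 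The same issue arises for the $\partial_X^3$ equation at rate $e^{s/2}$.

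The fix is that the forcing does \emph{not} grow like $e^{3s/4}$; it is in fact bounded or decaying in $s$. You seem to have dropped the $e^{-s}$ prefactor on the Hilbert term: the contribution is $\frac{e^{-s}}{1-\dot\tau}H[\partial_\alpha\partial_X^2U](0,s)$, and by \eqref{eq:HilbertparanxULinfty} this is $\lesssim M^{29\frac{1}{2}}\epsilon^{3/4}e^{-s/4}$, not $\epsilon^{3/4}e^{3s/4}$. Every other term (e.g.\ $\dot\tau\,\partial_\alpha\partial_X^2U$, $(\kappa-\dot\xi)\partial_\alpha\partial_X^3U$, $\partial_X^2U\,\partial_\alpha\partial_X^2U$) is likewise $O_s(1)$ or decaying. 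With the correct bound $|\widetilde{F}^{(2)}_{U,0,\alpha}|\lesssim Ml^{1/2}\epsilon^{19/20}+M^{29\frac{1}{2}}\epsilon^{3/4}e^{-s/4}$ (and $|\widetilde{F}^{(3)}_{U,0,\alpha}|\lesssim \epsilon^{17/20}$), the Duhamel integral gains the needed $\epsilon^{3/4}$ (resp.\ $\epsilon^{1/2}$) from $\int_{-\log\epsilon}^s e^{\frac34(s-s')}\,ds'\lesssim \epsilon^{3/4}e^{3s/4}$, and the perturbation is $\ll \epsilon^{3/4}e^{3s/4}$ as required.
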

\begin{proof}
Taking $\partial_\alpha$ of \eqref{eq:systemx=0} we have
\begin{align*}
(\partial_s-\frac{3}{4})\partial_\alpha\partial_X^2U(0,s)=\partial_\alpha F_{U,0}^{(2)}(0,s),\\
(\partial_s-\frac{1}{2})\partial_\alpha\partial_X^3U(0,s)=\partial_\alpha F_{U,0}^{(3)}(0,s),
\end{align*}
where
\begin{align*}
\partial_\alpha F_{U,0}^{(2)}&=\frac{3\dot{\tau}}{1-\dot{\tau}}\partial_\alpha\partial_X^2U_{\alpha,\beta}(0,s)+\frac{3\partial_\alpha\dot{\tau}}{(1-\dot{\tau})^2}\partial_X^2U_{\alpha,\beta}(0,s)-\frac{e^{\frac{1}{4}s}}{1-\dot{\tau}}(\kappa-\dot{\xi})\partial_\alpha\partial_X^3U_{\alpha,\beta}(0,s)\\
&\qquad-\frac{e^{\frac{1}{4}s}}{1-\dot{\tau}}\partial_\alpha(\kappa-\dot{\xi})\partial_X^3U_{\alpha,\beta}(0,s)-\frac{e^{\frac{1}{4}s}\partial_\alpha\dot{\tau}}{(1-\dot{\tau})^2}(\kappa-\dot{\xi})\partial_X^3U_{\alpha\beta}(0,s)\\
&\qquad+\frac{e^{-s}}{1-\dot{\tau}}H[\partial_\alpha\partial_X^2U_{\alpha,\beta}](0,s)+\frac{e^{-s}\partial_\alpha\dot{\tau}}{(1-\dot{\tau})^2}H[\partial_X^2U_{\alpha,\beta}](0,s),\\
\partial_\alpha F_{U,0}^{(3)}(0,s)&=\frac{4\dot{\tau}}{1-\dot{\tau}}\partial_\alpha\partial_X^3U_{\alpha,\beta}(0,s)+\frac{4\partial_\alpha\dot{\tau}}{(1-\dot{\tau})^2}\partial_X^3U_{\alpha,\beta}(0,s)+\frac{e^{-s}}{1-\dot{\tau}}H[\partial_\alpha\partial_X^3U_{\alpha,\beta}](0,s)\\
&\qquad+\frac{e^{-s}\partial_\alpha\dot{\tau}}{(1-\dot{\tau})^2}H[\partial_X^3U_{\alpha,\beta}](0,s)-\frac{6}{1-\dot{\tau}}\partial_X^2U_{\alpha,\beta}(0,s)\partial_\alpha\partial_X^2U_{\alpha,\beta}(0,s)\\
&\qquad-\frac{3\partial_\alpha\dot{\tau}}{(1-\dot{\tau})^2}\partial_X^2U_{\alpha,\beta}(0,s)^2.
\end{align*}
Due to our choice of initial data,
\begin{align*}
\partial_\alpha\partial_X^2U(0,-\log\epsilon)=2,\quad \partial_\alpha\partial_X^3U(0,-\log\epsilon)=0,\\
\partial_\beta\partial_X^2U(0,-\log\epsilon)=0,\quad\partial_\beta\partial_X^3U(0,-\log\epsilon)=6,
\end{align*}
we have
\begin{align*}
\partial_\alpha\partial_X^2U(0,s)&=2e^{\frac{3}{4}(s+\log\epsilon)}+\int_{-\log\epsilon}^se^{\frac{3}{4}(s-s')}\partial_\alpha F^{(2)}_{U,0}(0,s')\,ds',\\
\partial_\beta\partial_X^2U(0,s)&=\int_{-\log\epsilon}^se^{\frac{3}{4}(s-s')}\partial_\beta F^{(2)}_{U,0}(0,s')\,ds',\\
\partial_\alpha\partial_X^3U(0,s)&=\int_{-\log\epsilon}^se^{\frac{1}{2}(s-s')}\partial_\alpha F^{(3)}_{U,0}(0,s')\,ds',\\
\partial_\alpha\partial_X^3U(0,s)&=6e^{\frac{1}{2}(s+\log\epsilon)}+\int_{-\log\epsilon}^se^{\frac{1}{2}(s-s')}\partial_\beta F^{(3)}_{U,0}(0,s')\,ds'.
\end{align*}
And we have 
\begin{align*}
|\partial_\alpha F_{U,0}^{(2)}|&\lesssim \epsilon^\frac{1}{5}e^{-\frac{3}{4}s}l^\frac{1}{2}M\epsilon^\frac{3}{4}e^{\frac{3}{4}s}+\epsilon^\frac{1}{2}\epsilon^\frac{1}{10}e^{-\frac{3}{4}s}+e^{\frac{1}{4}s}\epsilon^\frac{1}{5}e^{-s}l^\frac{1}{2}M\epsilon^\frac{3}{4}e^{\frac{3}{4}s}\\
&\qquad+e^{-s}\epsilon^\frac{1}{2}M^{13\frac{1}{8}}\\
&\lesssim Ml^\frac{1}{2}\epsilon^{\frac{3}{4}+\frac{1}{5}}+M^{29\frac{1}{2}}\epsilon^\frac{3}{4}e^{-\frac{1}{4}s}.
\end{align*}
This also holds for $\partial_\beta F^{(2)}_{U,0}$. Hence,
\begin{align*}
\Big|\int_{-\log\epsilon}^se^{\frac{3}{4}(s-s')}\partial_\alpha F_{U,0}^{(2)}(0,s')\,ds'\Big|&\leq CMl^\frac{1}{2}\epsilon^\frac{19}{20}\epsilon^\frac{3}{4}e^{\frac{3}{4}s},\\
\implies\qquad|\partial_\beta\partial_X^2U(0,s)|\leq \frac{1}{2}\epsilon e^{\frac{3}{4}s},\quad
|\partial_\alpha\partial_X^2U(0,s)&-2\epsilon^\frac{3}{4}e^{\frac{3}{4}s}|\leq \frac{1}{2}\epsilon^\frac{3}{4}e^{\frac{3}{4}s}.
\end{align*}

For the third derivatives,
\begin{align*}
|\partial_\alpha F_{U,0}^{(3)}(0,s)|&\lesssim \epsilon^\frac{1}{5}e^{-\frac{3}{4}s}\epsilon^\frac{1}{2}e^{\frac{1}{2}s}+\epsilon^\frac{1}{2}M^{27} e^{-s}+e^{-s}M^{40\frac{1}{2}}\epsilon^\frac{3}{4}e^{\frac{3}{4}s}+e^{-s}\epsilon^\frac{1}{2}M^{21\frac{7}{8}}+\epsilon^\frac{1}{10}e^{-\frac{3}{4}s}\epsilon^\frac{3}{4}e^{\frac{3}{4}s}\\
&\qquad+\epsilon^\frac{1}{2}\epsilon^\frac{1}{5}e^{-\frac{3}{2}s}\lesssim\epsilon^\frac{17}{20}.
\end{align*}
The same holds for $\partial_\beta F_{U,0}^{(3)}$. Hence
\begin{gather*}
\Big|\int_{-\log\epsilon}^se^{\frac{1}{2}(s-s')}\partial_\alpha F_{U,0}^{(3)}(0,s')\,ds'\Big|\leq C\epsilon^{\frac{17}{20}+\frac{1}{2}}e^{\frac{1}{2}s}\leq \frac{1}{2}\epsilon e^{\frac{1}{2}s}\\
|\partial_\alpha\partial_X^3U(0,s)|\leq \frac{1}{2}\epsilon e^{\frac{1}{2}s},\quad |\partial_\beta\partial_X^3U(0,s)-6\epsilon^\frac{1}{2}e^{\frac{1}{2}s}|\leq \frac{1}{2}\epsilon^\frac{1}{2}e^{\frac{1}{2}s}.
\end{gather*}
\end{proof}

\begin{proposition}[$\partial_\alpha\partial_X^5U$]
We have (same holds for $\beta$)
\begin{align*}
    |\partial_\alpha\partial_X^5U(0,s)|\leq \epsilon^\frac{3}{8}e^{\frac{1}{8}s}.
\end{align*}
\end{proposition}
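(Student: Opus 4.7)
The plan is to specialize equation \eqref{eq:eqUderivative} with $n=5$ at $X=0$ and then differentiate with respect to $\alpha$. At $X=0$ the constraints \eqref{eq:constraint} give $U(0,s)=0$, $\partial_XU(0,s)=-1$ and $\partial_X^4U(0,s)=0$, and since these hold for every $(\alpha,\beta)\in B$ we also have $\partial_\alpha U(0,s)=\partial_\alpha\partial_XU(0,s)=\partial_\alpha\partial_X^4U(0,s)=0$. The explicit damping coefficient $\tfrac{25-1}{4}+\tfrac{6\partial_XU(0,s)}{1-\dot{\tau}}=6-\tfrac{6}{1-\dot{\tau}}=-\tfrac{6\dot{\tau}}{1-\dot{\tau}}$ is negligibly small by \eqref{eq:assumptionmodulation}, and the $\binom{6}{2}\partial_X^2U\,\partial_X^4U$ contribution to $F^{(5)}_U$ drops out at $X=0$. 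After rewriting the small damping as a source and applying $\partial_\alpha$ I obtain, in the same spirit as \eqref{eq:systemx=0} and \eqref{eq:eq5xtildex=0},
\begin{align*}
\partial_s\partial_\alpha\partial_X^5U(0,s)=\frac{6\dot{\tau}}{1-\dot{\tau}}\partial_\alpha\partial_X^5U(0,s)+\mathcal{R}(s).
\end{align*}

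The initial value vanishes: the only $\alpha$-dependence of the initial datum \eqref{eq:initialdata} is $\alpha\chi(X)X^2$, and since $\chi\equiv 1$ near $X=0$ its fifth derivative at the origin is zero (and likewise for $\partial_\beta$). The remainder $\mathcal{R}(s)$ collects: (i) $\tfrac{6\partial_\alpha\dot{\tau}}{(1-\dot{\tau})^2}\partial_X^5U(0,s)$; (ii) the convective pieces $-\tfrac{e^{s/4}\partial_\alpha(\kappa-\dot{\xi})}{1-\dot{\tau}}\partial_X^6U(0,s)$ and $-\tfrac{e^{s/4}(\kappa-\dot{\xi})}{1-\dot{\tau}}\partial_\alpha\partial_X^6U(0,s)$ together with their $\partial_\alpha\dot{\tau}$ companions; (iii) the Hilbert contributions $\tfrac{e^{-s}}{1-\dot{\tau}}H[\partial_\alpha\partial_X^5U](0,s)+\tfrac{e^{-s}\partial_\alpha\dot{\tau}}{(1-\dot{\tau})^2}H[\partial_X^5U](0,s)$; and (iv) the quadratic pieces $-\tfrac{20\,\partial_X^3U(0,s)\,\partial_\alpha\partial_X^3U(0,s)}{1-\dot{\tau}}-\tfrac{10\partial_\alpha\dot{\tau}\,(\partial_X^3U(0,s))^2}{(1-\dot{\tau})^2}$.

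Using the previously closed assumptions together with \eqref{eq:HilbertparanxULinfty} and \eqref{eq:HilbertnxLinfty}, the dominant contribution is (i), of size $\lesssim \epsilon^{1/2}$ (constant in $s$, since $|\partial_\alpha\dot{\tau}|\leq\epsilon^{1/2}$ and $\partial_X^5U(0,s)\in[119,121]$ by Proposition \ref{prop:5xtileU}). Every other piece comes with an extra decaying factor: for example the Hilbert piece in (iii) is $\lesssim M^{62\frac{1}{2}}\epsilon^{3/4}e^{-s/4}$, the convective pieces in (ii) yield at most $M^{37}\epsilon^{1/2}e^{-s/4}+Ml^{1/2}\epsilon^{19/20}$, and (iv) is of order $M^{27}\epsilon e^{-s/2}$. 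Since the integrating factor generated by $\tfrac{6\dot{\tau}}{1-\dot{\tau}}$ is bounded by a constant, integrating from $s=-\log\epsilon$ with vanishing initial value yields
\begin{align*}
|\partial_\alpha\partial_X^5U(0,s)|\leq C\epsilon^{1/2}(s+\log\epsilon)+CM^{62\frac{1}{2}}\epsilon^{3/4}.
\end{align*}

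The final step is to verify this is $\leq \tfrac{1}{2}\epsilon^{3/8}e^{s/8}$ for every $s\geq -\log\epsilon$. Setting $t=s+\log\epsilon\geq 0$ reduces the dominant inequality to $2C\epsilon^{1/4}t\leq e^{t/8}$, which holds for all $t\geq 0$ once $16C\epsilon^{1/4}<1$, since then $t\mapsto e^{t/8}-2C\epsilon^{1/4}t$ is increasing on $[0,\infty)$ and equals $1$ at $t=0$. The main obstacle is bookkeeping: $\partial_\alpha$ acting on each rational factor $\tfrac{1}{1-\dot{\tau}}$ and on $V$ produces a long list of cross terms decorated by $\partial_\alpha\dot{\tau}$, $\partial_\alpha(\kappa-\dot{\xi})$, and $\partial_\alpha\partial_X^nU(0,s)$ for $n\leq 6$, and one must check term by term that the only non-decaying contribution is the (already small) $\epsilon^{1/2}$ piece in (i); the large powers of $M$ entering through the Hilbert bound are then absorbed by taking $\epsilon$ sufficiently small relative to $M$, as throughout the paper.
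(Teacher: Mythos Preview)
Your proof is correct and follows essentially the same route as the paper: you differentiate the $n=5$ equation at $X=0$ with respect to $\alpha$, observe that the effective damping $-6\dot\tau/(1-\dot\tau)$ is negligible, identify the dominant forcing as the $\partial_\alpha\dot\tau$-piece of size $O(\epsilon^{1/2})$, and integrate from $s=-\log\epsilon$ with zero initial data. The paper works instead with the $\widetilde U$-version \eqref{eq:eq5xtildex=0}, so your term (i) appears there as $720\,\partial_\alpha\dot\tau/(1-\dot\tau)^2$ (via $6\cdot U_2^{(5)}(0)=720$), but the two formulations are identical after the shift, and the final bound $C\epsilon^{1/2}s\leq\tfrac12\epsilon^{3/8}e^{s/8}$ is the same.
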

\begin{proof}
We take $\partial_\alpha$ of \eqref{eq:eq5xtildex=0} (note that $\partial_\alpha\partial_X^5U=\partial_\alpha\partial_X^5\widetilde{U}$), and we get
\begin{align*}
    \partial_s\partial_\alpha\partial_X^5U(0,s)&=\frac{6\dot{\tau}}{1-\dot{\tau}}\partial_\alpha\partial_X^5\widetilde{U}(0,s)+\frac{6\partial_\alpha\dot{\tau}}{(1-\dot{\tau})^2}\partial_X^5\widetilde{U}(0,s)+\frac{e^{-s}}{1-\dot{\tau}}H[\partial_\alpha\partial_X^5U](0,s)\\
    &\qquad+\frac{e^{-s}\partial_\alpha\dot{\tau}}{(1-\dot{\tau})^2}H[\partial_X^5U](0,s)+\frac{720\partial_\alpha\dot{\tau}}{(1-\dot{\tau})^2}-\frac{e^{\frac{1}{4}s(\kappa-\dot{\xi})}}{1-\dot{\tau}}\partial_\alpha\partial_X^6U(0,s)\\
    &\qquad+\Big(\frac{e^{\frac{1}{4}s}\partial_\alpha(\kappa-\dot{\xi})}{1-\dot{\tau}}+\frac{e^{\frac{1}{4}s}\partial_\alpha\dot{\tau}(\kappa-\dot{\xi})}{(1-\dot{\tau})^2}\Big)\partial_X^6\widetilde{U}(0,s)\\
    &\qquad-\frac{40}{1-\dot{\tau}}\partial_\alpha\partial_X^3U(0,s)\partial_X^3\widetilde{U}(0,s)-\frac{10\partial_\alpha\dot{\tau}}{(1-\dot{\tau})^2}(\partial_X^3\widetilde{U})^2.
\end{align*}
The right hand side can be bounded by
\begin{align*}
    \mathrm{RHS}\lesssim Ml^\frac{1}{2}\epsilon^{\frac{1}{5}+\frac{3}{4}}+\epsilon^\frac{1}{2}.
\end{align*}
Since $\partial_\alpha\partial_X^5U(0,-\log\epsilon)=0$, after integration, we have
\begin{align*}
    |\partial_\alpha\partial_X^5U(0,s)|&\leq \int_{-\log\epsilon}^s (Ml^\frac{1}{2}\epsilon^{\frac{1}{5}+\frac{3}{4}}+\epsilon^\frac{1}{2})\,ds'\\
    &\leq (Ml^\frac{1}{2}\epsilon^{\frac{1}{5}+\frac{3}{4}}+\epsilon^\frac{1}{2})s\leq \frac{1}{2}\epsilon^\frac{3}{8}e^{\frac{1}{8}s}.
\end{align*}
\end{proof}

\section{Finding $\alpha,\,\beta$ by iterations}\label{sec:findpara}
Let $s_n=-\log\epsilon+n$ for $n=0,1,2,...$. We start with $$\alpha_0=-\frac{1}{2}\widehat{U}_0''(0),\qquad\beta_0=-\frac{1}{6}\widehat{U}_0^{(3)}(0),$$ so that
\begin{align*}
    \partial_X^2U(0,s_0)=0,\qquad \partial_X^3U(0,s_0)=0.
\end{align*}
Suppose we can find $(\alpha_n,\beta_n)$ such that 
\begin{align*}
\partial_X^2U_{\alpha_n,\beta_n}(0,s_n)=\partial_X^3U_{\alpha_n,\beta_n}(0,s_n)=0.
\end{align*}
We want to find $(\alpha_{n+1},\beta_{n+1})\in B_n(\alpha_n,\beta_n)$ defined in \eqref{eq:sizealphabetan} such that at time $s_{n+1}=s_n+1$,
\begin{align*}
\partial_X^2U_{\alpha_{n+1},\beta_{n+1}}(0,s_{n+1})=\partial_X^3U_{\alpha_{n+1},\beta_{n+1}}(0,s_{n+1})=0.
\end{align*}
Define the map $T_n:B_n(\alpha_n,\beta_n)\subset\mathbb{R}^2\to\mathbb{R}^2$ by
\begin{align*}
T_n(\alpha,\beta):=\Big(\partial_X^2U_{\alpha,\beta}(0,s_{n+1}),\partial_X^3U_{\alpha,\beta}(0,s_{n+1})\Big).
\end{align*}
We want to find $(\alpha_{n+1},\beta_{n+1})\in B_n(\alpha_n,\beta_n)$ such that $T_n(\alpha_{n+1},\beta_{n+1})=0$. To this end, we do first order Taylor expansion of $T_n$ near $(\alpha_n,\beta_n)$
\begin{align}
\begin{aligned}
\partial_X^2U_{\alpha_{n+1},\beta_{n+1}}(0,s_{n+1})&=\partial_X^2U_{\alpha_n,\beta_n}(0,s_{n+1})
+\partial_\alpha\partial_X^2U(0,s_{n+1})\Big|_{\alpha_*,\beta_*}(\alpha_{n+1}-\alpha_n)\\
&\qquad+\partial_\beta\partial_X^2U(0,s_{n+1})\Big|_{\alpha_*,\beta_*}(\beta_{n+1}-\beta_n),
\end{aligned}\label{eq:inductionTayloralpha}\\
\begin{aligned}
\partial_X^3U_{\alpha_{n+1},\beta_{n+1}}(0,s_{n+1})&=\partial_X^3U_{\alpha_n,\beta_n}(0,s_{n+1})
+\partial_\alpha\partial_X^3U(0,s_{n+1})\Big|_{\alpha_*,\beta_*}(\alpha_{n+1}-\alpha_n)\\
&\qquad+\partial_\beta\partial_X^3U(0,s_{n+1})\Big|_{\alpha_*,\beta_*}(\beta_{n+1}-\beta_n),
\end{aligned}\label{eq:inductionTaylorbeta}
\end{align}
for some $(\alpha_*,\beta_*)$ such that 
\begin{align*}
|(\alpha_*,\beta_*)-(\alpha_n,\beta_n)|\leq |(\alpha_{n+1},\beta_{n+1})-(\alpha_n,\beta_n)|.
\end{align*}
Since 
\begin{align*}
\partial_X^2U_{\alpha_{n+1},\beta_{n+1}}(0,s_{n+1})=0,\quad \partial_X^3U_{\alpha_{n+1},\beta_{n+1}}(0,s_{n+1})=0,
\end{align*}
we can view $\alpha_{n+1},\beta_{n+1}$ as roots to this system of equations.
We denote the system as 
\begin{align*}
(F_1,F_2)(\alpha_n,\beta_n,\alpha,\beta)=F(\alpha_n,\beta_n,\alpha,\beta)
\end{align*}
for convenience. We use Newton's iteration method:
\begin{enumerate}
    \item The $F_1,\,F_2$ are bounded: because $\partial_X^2U_{\alpha_n,\beta_n}(0,s_{n+1})$, $\partial_X^3U_{\alpha_n,\beta_n}(0,s_{n+1})$ are bounded due to \eqref{eq:finerinductassumptioin}.
    \item The matrix consisting of partial derivatives of $F_1,\,F_2$ with respect to $\alpha,\,\beta$ evaluated at $(\alpha_n,\beta_n)$, i.e.\ $D_{\alpha_{n+1},\beta_{n+1}}F(\alpha_n,\beta_n)$, has non-zero determinant for all $(\alpha,\beta)\in B_n(\alpha_n,\beta_n)$: by \eqref{eq:assumptionjacobian},
    \begin{align*}
\mathrm{det} D_{\alpha,\beta} F(\alpha_n,\beta_n)&=\partial_\alpha\partial_X^2U_{\alpha_n,\beta_n}(0,s_{n+1})\partial_\beta\partial_X^3U_{\alpha_n,\beta_n}(0,s_{n+1})\\
&\qquad-\partial_\beta\partial_X^2U_{\alpha_n,\beta_n}(0,s_{n+1})\partial_\alpha\partial_X^3U_{\alpha_n,\beta_n}(0,s_{n+1})\\
&\geq \epsilon^\frac{3}{4}e^{\frac{3}{4}s_{n+1}}\cdot 4\epsilon^\frac{1}{2}e^{\frac{1}{2}s_{n+1}}-\epsilon e^{\frac{1}{2}s_{n+1}}\epsilon e^{\frac{3}{4}s_{n+1}}\\
&=4\epsilon^\frac{5}{4}e^{\frac{5}{4}s_{n+1}}-\epsilon^2e^{\frac{5}{4}s_{n+1}}>3\epsilon^\frac{5}{4}e^{\frac{5}{4}s_{n+1}}.
\end{align*}
\item The Hessian $\nabla^2F(\alpha,\beta)$ is bounded: since \eqref{eq:assumption2paranx}, every entry of $\nabla^2F(\alpha,\beta)(s)$ is $\leq M^{64}\epsilon^\frac{3}{2}e^{\frac{3}{2}s_{n+1}}$ for $s_n\leq s\leq s_{n+1}$. 
\end{enumerate}
Hence Newton's method guarantees a quadratic convergence\footnote{In one variable setting, let $r$ be the root of $f$, we search in the ``correct" interval $I$, $x_n$ be the approximation to $r$ after $n$-th iteration, then
\begin{align*}
|r-x_{n+1}|\leq \frac{1}{2}\sup_{x\in I}\frac{f''(x)}{f'(x)}|r-x_n|^2
\end{align*}}.

For the size of $B$, since $\alpha_0=-\frac{1}{2}\widehat{U}_0''(0)$, $\beta_0=-\frac{1}{6}\widehat{U}_0^{(3)}(0)$, and the size of $B_n$ for each $n\geq 0$, we have the uniform bounds of $\alpha_n,\,\beta_n$ for all $n\geq 0$:
\begin{align*}
    |\alpha_n|&\leq \frac{1}{2}\epsilon+ \sum_{n=0}^\infty M^{15}\epsilon^{-\frac{3}{4}}e^{-\frac{7}{4}(-\log\epsilon+n)}+\sum_{n=0}^{\infty}\epsilon^{-\frac{3}{10}}e^{-\frac{3}{2}(-\log\epsilon + n)}\\
    &=\frac{1}{2}\epsilon+ \frac{M^{15}\epsilon}{1-e^{-\frac{7}{4}}}+\frac{\epsilon^\frac{6}{5}}{1-e^{-\frac{3}{2}}}\leq 1.24 M^{15}\epsilon,\\
    |\beta_n|&\leq \frac{1}{6}\epsilon+ \sum_{n=0}^\infty M^{25}\epsilon^{-\frac{1}{2}}e^{-\frac{3}{2}(-\log\epsilon+n)}=\frac{1}{6}\epsilon+\frac{M^{25}\epsilon}{1-e^{-\frac{3}{2}}}\leq  1.4 M^{25}\epsilon.
\end{align*}
Hence, we have established \eqref{eq:sizealphabeta}.

For the size of $B_n$, recall the iteration step\footnote{Also from Newton's method, we see that the ball $B_n$ cannot be too small, because if we initialize at $(\alpha_n,\ \beta_n)$, after the first iteration we can be as far as roughly
\begin{align*}
|\alpha^1-\alpha_n|= \frac{\partial_X^2U_{\alpha_n,\beta_n}(0,s_{n+1})}{\partial_\alpha\partial_X^2U(0,s)}\sim\epsilon^\frac{1}{10}e^{-\frac{3}{4}s_{n+1}}\cdot\epsilon^{-\frac{3}{4}}e^{-\frac{3}{4}s_{n+1}}=\epsilon^{-\frac{13}{20}}e^{-\frac{3}{2}s_{n+1}}
\end{align*}
for $\beta$ this will be $M^{27} \epsilon^{-\frac{1}{2}}
e^{-\frac{3}{2}s_{n+1}}$, so we see $e^{-\frac{3}{2}s_{n+1}}$ is the smallest size we can consider.}, hence the root exists. Recall the Taylor expansions \eqref{eq:inductionTayloralpha} and 
\eqref{eq:inductionTaylorbeta}, 
\begin{align*}
|\alpha_{n+1}-\alpha_n|&\leq \epsilon^{-\frac{3}{4}}e^{-\frac{3}{4}s_{n+1}}\big(M^{13\frac{1}{2}}e^{-s_n}+\epsilon e^{\frac{3}{4}s_{n+1}}|\beta_{n+1}-\beta_n|\big)\\
&\leq eM^{13\frac{1}{2}}\epsilon^{-\frac{3}{4}}e^{-\frac{7}{4}s_{n+1}}+M^{25}\epsilon^{-\frac{1}{4}}e^{-\frac{3}{2}s_n}\\
&\leq \frac{3}{4}(M^{15}\epsilon^{-\frac{3}{4}}e^{-\frac{7}{4}s_n}+\epsilon^{-\frac{3}{10}}e^{-\frac{3}{2}s_n}),\quad(s_n=-\log\epsilon+n)\\
|\beta_{n+1}-\beta_n|&\leq \frac{1}{4}\epsilon^{-\frac{1}{2}}e^{-\frac{1}{2}s_{n+1}}\big(M^{22}e^{-s_n}+\epsilon e^{\frac{1}{2}s_{n+1}}(M^{15}\epsilon^{-\frac{3}{4}}e^{-\frac{7}{4}s_n}+\epsilon^{-\frac{3}{10}}e^{-\frac{3}{2}s_n})\big)\\
&\leq \frac{1}{4}e^{-\frac{1}{2}}M^{22}\epsilon^{-\frac{1}{2}}e^{-\frac{3}{2}s_n}+\frac{1}{4}M^{15}\epsilon^{-\frac{1}{4}}e^{-\frac{7}{4}s_n}+\frac{1}{4}\epsilon^\frac{1}{5}e^{-\frac{3}{2}s_n}\\
&\leq \frac{3}{4}M^{25}\epsilon^{-\frac{1}{2}}e^{-\frac{3}{2}s_n}.
\end{align*}

From the size of each $B_n$, we see that $\{\alpha_n\}$ and $\{\beta_n\}$ are Cauchy sequences, so our targeting values of $\alpha,\,\beta$ are \begin{align*}
     \alpha_*:=\lim_{n\to\infty}\alpha_n,\qquad\beta_*:=\lim_{n\to\infty}\beta_n
\end{align*}
respectively. Moreover, since for each $\alpha_n,\,\beta_n$, 
\begin{align*}
    \lim_{s\to+\infty}\partial_X^2U_{\alpha_n,\beta_n}(0,s)=\lim_{s\to+\infty}\partial_X^3U_{\alpha_n,\beta_n}(0,s)=0
\end{align*}
by \eqref{eq:assumptionx=0}, and we also have \eqref{eq:inductassumption}, we must have
\begin{align*}
    \lim_{s\to+\infty}\partial_X^2U_{\alpha_*,\beta_*}(0,s)=\lim_{s\to+\infty}\partial_X^3U_{\alpha_*,\beta_*}(0,s)=0.
\end{align*}

\section{Proof of the main theorem}\label{sec:proof}
\begin{proof}[Proof of Theorem \ref{thm:selfsimilar}]
The bootstrap assumptions and iterations to find $\alpha,\,\beta$ are done in Section \ref{sec:closure}-\ref{sec:findpara}. It remains to prove the pointwise convergence of $U$. The convergence at $X=0$ is trivial due to \eqref{eq:constraint}. 

We first show that $\nu:=\lim_{s\to+\infty}\partial_X^5U(0,s)$ exists. Indeed, by the fundamental theorem of calculus, 
\begin{align*}
    \partial_X^5U(0,s)=\partial_X^5U(0,-\log\epsilon)+\int_{-\log\epsilon}^s\partial_s\partial_X^5U(0,s')\,ds'.
\end{align*}
By the proof in Proposition \ref{prop:5xtileU}, $|\partial_s\partial_X^5U(0,s)|=|\partial_s\partial_X^5\widetilde{U}(0,s)|\leq C\epsilon^\frac{1}{5}e^{-\frac{3}{4}s}$. In particular, 
\begin{align*}
    \int_{-\log\epsilon}^{\infty}|\partial_s\partial_X^5U(0,s')|\,ds'<\infty.
\end{align*}
Hence, we have a well-defined limit
\begin{align*}
    \nu=\lim_{s\to\infty}\partial_X^5U(0,-\log\epsilon)+\int_{-\log\epsilon}^\infty\partial_s\partial_X^5U(0,s')\,ds'.
\end{align*}
By our choice of $U(\cdot,-\log\epsilon)$ given by \eqref{eq:initialdata}, we have 
\begin{align*}
    \partial_X^5U(0,-\log\epsilon)=U_2^{(5)}(0)=120.
\end{align*}
And by \eqref{eq:assumptionx=0}, $|\nu-120|\leq \epsilon^\frac{1}{2}$.

Next, we compute the derivatives of $U_2^\nu$:
\begin{align*}
    \frac{d^n}{dX^n}U_2^\nu(X)=\Big(\frac{\nu}{120}\Big)^{\frac{1}{4}(n-1)}U_2^{(n)}\bigg(\Big(\frac{\nu}{120}\Big)^\frac{1}{4}X\bigg),\qquad n=0,1,2,...
\end{align*}
In particular, by Lemma \ref{lem:U2},
\begin{gather*}
    \frac{d^n}{dX^n}U_2^\nu(0)=0,\qquad n=0,2,3,4,\\
    \frac{d}{dX}U_2^\nu(0)=-1,\qquad\frac{d^5}{dX^5}U_2^\nu(0)=\nu.
\end{gather*}
Let $\widetilde{U}^\nu:=U-U_2^\nu$. The Taylor expansion of $\widetilde{U}^\nu(X,s)$ at $X=0$ is
\begin{align*}
    \widetilde{U}^\nu(X,s)=\frac{1}{2}\partial_X^2U(0,s)X^2+\frac{1}{6}\partial_X^3U(0,s)X^3+\frac{1}{120}\big(\partial_X^5U(0,s)-\nu\big)X^5+\frac{1}{720}\partial_X^6\widetilde{U}^\nu(X',s)X^6
\end{align*}
for some $X'$ between 0 and $X$. By \eqref{eq:assumptionx=0},
\begin{align*}
    |\widetilde{U}^\nu(X,s))|\leq \frac{1}{2}\epsilon^\frac{1}{10}e^{-\frac{3}{4}s}X^2+\frac{1}{6}M^{27}e^{-s}|X|^3+\frac{1}{120}|\partial_X^5U(0,s)-\nu||X|^5+CM^{36}X^6,
\end{align*}
where we used $\|\partial_X^6U(\cdot,s)\|_{L^\infty}\leq M^{36}$ and \begin{align*}
    \Big\|\frac{d^4}{dX^4}U_2^\nu\Big\|_{L^\infty}=\Big(\frac{\nu}{120}\Big)^\frac{5}{4}\|U_2^{(6)}\|_{L^\infty}\leq C\Big(\frac{\nu}{120}\Big)^\frac{5}{4}\sim O(1).
\end{align*}
For $X_0\neq 0$ close to 0, fix a small $\delta>0$ such that $\delta<\frac{1}{2}M^{36}X^6_0$. Due to the decay in $s$, there exists a sufficiently large $s_0=s_0(X_0,\delta)$ such that for all $s\geq s_0$, 
\begin{align*}
    |\widetilde{U}^\nu(X_0,s)|\leq CM^{36}X_0^6+\delta.
\end{align*}
Now we need to find an equation for $\widetilde{U}^\nu$. First, note that $U_2^\nu$ also satisfies the self-similar Burgers equation \eqref{eq:U2}. Second, we can rewrite  \eqref{eq:ansatz} as (the left hand side without $\partial_s$ matches the self-similar Burgers equation)
\begin{align*}
    \Big(\partial_s-\frac{1}{4}\Big)U+\Big(U+\frac{5}{4}X\Big)\partial_XU=F_U-\frac{e^{\frac{1}{4}s}(\kappa-\dot{\xi})}{1-\dot{\tau}}\partial_XU-\frac{\dot{\tau}}{1-\dot{\tau}}U\partial_XU.
\end{align*}
Then we have an equation for $\widetilde{U}^\nu$
\begin{align*}
    \Big(\partial_s-\frac{1}{4}+\frac{d}{dX}U_2^\nu\Big)\widetilde{U}^\nu+\Big(U+\frac{5}{4}X\Big)\partial_X\widetilde{U}^\nu=F_U-\frac{e^{\frac{1}{4}s}(\kappa-\dot{\xi})}{1-\dot{\tau}}\partial_XU-\frac{\dot{\tau}}{1-\dot{\tau}}U\partial_XU.
\end{align*}
We denote the right hand side as $F_{\tilde{U}^\nu}$. We claim that 
\begin{align*}
    \int_{-\log\epsilon}^\infty\|F_{\tilde{U}^\nu}(\cdot,s')\|_{L^\infty}\,ds'<\infty.
\end{align*}
Indeed, by \eqref{eq:forcingU}, 
\begin{align*}
    \|F_{\tilde{U}^\nu}(\cdot,s)\|_{L^\infty}\lesssim Me^{-\frac{5}{8}s}+\epsilon^\frac{1}{5}e^{-\frac{3}{4}s}+\epsilon^\frac{1}{5}Me^{-\frac{1}{2}s},
\end{align*}
which is $s$-integrable. 

Let $\Psi(X_0,\cdot):[s_0,\infty)\to\mathbb{R}$ be the Lagrangian trajectory of $\widetilde{U}^\nu$, i.e.
\begin{align*}
    \frac{d}{ds}\Psi(X_0,s)&=\Big(U+\frac{3}{2}X\Big)\circ\Psi(X_0,s)\\
    \Psi(X,s_0)&=X_0.
\end{align*}
By a similar argument as in Lemma \ref{lem:lowerLagrangian} (using the mean value theorem), we can see that $\Psi$ is repelling for all $X_0\neq 0$
\begin{align}
    |\Psi(X_0,s)|\geq |X_0|e^{\frac{1}{4}(s-s_0)}.\label{eq:repelling}
\end{align}
Let $G(X,s)=e^{-\frac{5}{4}(s-s_0)}\widetilde{U}^\nu(X,s)$, then 
\begin{align*}
    \Big(\frac{d}{ds}+1+\frac{d}{dX}U_2^\nu\Big)G\circ\Psi(X_0,s)=e^{-\frac{5}{4}(s-s_0)}F_{\tilde{U}^\nu}\circ\Psi(X_0,s).
\end{align*}
The damping term $1+\frac{d}{dX}U_2^\nu\geq 0$. Hence, 
\begin{align*}
    |G\circ\Psi(X_0,s)|&\leq |G(X_0,s_0)|+\int_{s_0}^se^{-\frac{5}{4}(s'-s_0)}F_{\tilde{U}^\nu}\circ\Psi(X_0,s')\,ds'\\
    e^{-\frac{5}{4}(s-s_0)}|\widetilde{U}^\nu\circ\Psi(X_0,s)|&\leq |\widetilde{U}^\nu(X_0,s_0)|+\int_{s_0}^se^{-\frac{5}{4}(s'-s_0)}F_{\tilde{U}^\nu}\circ\Psi(X_0,s')\,ds'\\
    &\leq CM^{36}X_0^6+\delta+\delta\leq (C+1)M^{36}X_0^6\\
    |\widetilde{U}^\nu\circ\Psi(X_0,s)|&\leq (C+1)M^{36}e^{\frac{5}{4}(s-s_0)}X_0^6,
\end{align*}
if $s_0$ is sufficiently large. Then for $s_0\leq s\leq s_0-\frac{22}{5}\log|X_0| $, we have
\begin{align*}
    |\widetilde{U}^\nu\circ\Psi(X_0,s)|\leq (C+1)M^{36}|X_0|^\frac{1}{2}.
\end{align*}
For any $X$ between $X_0$ and $\Psi(X_0,s_0-\frac{22}{5}\log|X_0|)$, there exists $s_0<s<s_0-\frac{22}{5}\log|X_0|$ such that $X=\Psi(X_0,s)$, so for such $(X,s)$, we have
\begin{align*}
    |\widetilde{U}^\nu(X,s)|\leq (C+1)M^{36}|X_0|^\frac{1}{2}.
\end{align*}
By \eqref{eq:repelling}, this will cover at least all $X$ such that
\begin{align*}
    |X_0|\leq |X|\leq |X_0|^{-\frac{1}{10}}.
\end{align*}
So if we take the limit $s_0\to\infty$, then for all $X$ such that $|X_0|\leq |X|\leq |X_0|^{-\frac{1}{10}}$,
\begin{align*}
    \limsup_{s\to\infty}\widetilde{U}^\nu(X,s)\leq (C+1)M^{36}|X_0|^\frac{1}{2}.
\end{align*}
Finally, sending $X_0\to 0$, we get 
\begin{align*}
    \limsup_{s\to\infty}\widetilde{U}^\nu(X,s)=0
\end{align*}
for all $X\neq 0$.
\end{proof}

\begin{proof}[Proof of Theorem \ref{thm:main}]
Since $\|\partial_X^nU(\cdot,s)\|_{L^2}$ are uniformly bounded in $s$ for all $n=1,...,9$, by \eqref{eq:assumptionL2}, $\|\partial_x^nu(\cdot,t)\|_{L^2}$ remains finite for all $t<T_*$. And $\|u(\cdot,t)\|_{L^2}=\|u_0\|_{L^2}$. So $\|u(\cdot,t)\|_{H^9}<\infty$ for all $t\in[-\epsilon,T_*)$. By the standard well-posedness theorem, the solution $u\in\mathcal{C}\big([-\epsilon,T_*);H^9(\mathbb{R})\big)$ is unique. Moreover, we have the Sobolev embedding $H^9(\mathbb{R})\subset \mathcal{C}^8(\mathbb{R})$.

Blowup time: $|T_*|\leq 2\epsilon^\frac{39}{20}$ is proved in Section \ref{sec:modulation}.

Blowup location: by the fundamental theorem of calculus, for all $t\in[-\epsilon,T_*]$
\begin{align*}
    |\xi(t)|\leq |\xi(-\epsilon)|+\int_{-\epsilon}^{T_*}|\dot{\xi}(t')|\,dt'\leq (2\epsilon^\frac{39}{20}+\epsilon)\cdot 2M\leq \frac{5}{2}M\epsilon.
\end{align*}
Hence, $x_*=\xi(T_*)\leq 3M\epsilon$.

Solution is bounded: $\|U(\cdot,s)+e^{\frac{1}{4}s}\kappa\|_{L^\infty}\leq Me^{\frac{1}{4}s}$ is equivalent to $\|u(\cdot,t)\|_{L^\infty}\leq M$.

Shock formation and rate: since
\begin{align*}
    \partial_xu(x,t)=\big(\tau(t)-t\big)^{\frac{1}{4}-\frac{5}{4}}\partial_XU\bigg(\frac{x-\xi(t)}{\big(\tau(t)-t)^\frac{5}{4}},s\bigg)=\frac{\partial_XU(X,s)}{\tau(t)-t},
\end{align*}
we see that 
\begin{align*}
    \partial_xu\big(\xi(t),t\big)=\frac{\partial_XU(0,s)}{\tau(t)-t}=-\frac{1}{\tau(t)-t}.
\end{align*}
We claim that for all $t\in[-\epsilon,T_*)$,
\begin{align*}
    \frac{1}{2}\leq \frac{\tau(t)-t}{T_*-t}\leq 2.
\end{align*}
Indeed, this is equivalent to
\begin{align*}
    \begin{cases}
    T_*-t\leq 2\tau(t)-2t,\\
    \tau(t)-t\leq 2T_*-2t,
    \end{cases}
    \qquad\Longleftrightarrow\qquad
    \begin{cases}
    2\tau(t)-t\geq T_*,\\
    \tau(t)+t\leq 2T_*,
    \end{cases}
\end{align*}
which is true since 
\begin{align*}
    \begin{cases}
    \frac{d}{dt}\big(2\tau(t)-t\big)=2\dot{\tau}-1\leq 0,\\
    \frac{d}{dt}\big(\tau(t)+t\big)=\dot{\tau}+1\geq 0,
    \end{cases}
    \qquad\text{and}\qquad\begin{cases}
    2\tau(T_*)-T_*=T_*,\\
    \tau(T_*)+T_*=2T_*.
    \end{cases}
\end{align*}
Hence, as $t\to T_*$, $\partial_xu\big(\xi(t),t\big)\to-\infty$, so $\partial_xu$ blows up at $x_*=\xi(T_*)$. Moreover, we have the following rate:
\begin{align*}
    \frac{1}{2(T_*-t)}\leq |\partial_xu\big(\xi(t),t\big)|\leq \frac{2}{T_*-t}.
\end{align*}
For $x\neq x_*$, if $|x-x_*|>\frac{1}{2}$, then there exists $t_1\in[-\epsilon,T_*)$ such that $|x-\xi(t)|\geq \frac{1}{2}$ for all $t\in [t_1,T_*]$. In the self-similar coordinates, this means
\begin{align*}
    |X|\geq \frac{1}{2}\big(\tau(t)-t\big)^{-\frac{5}{4}}\geq \frac{1}{2}e^{\frac{5}{4}s}.
\end{align*}
Hence, by \eqref{eq:assumption1xfar},
\begin{align*}
    |\partial_xu(x,t)|=e^s\partial_XU(X,s)\leq 4\qquad\forall\,t\in[t_1,T_*].
\end{align*}
If $|x-x_*|< \frac{1}{2}$, then there exists $t_2\in[-\epsilon,T_*)$ such that for all $t\in[t_2,T_*)$, $
|x-\xi(t)|\leq \frac{1}{2}$ and $|x-\xi(t)|\geq \frac{1}{2}|x-x_*|>0$, so the corresponding $\frac{1}{2}|x-x_*|e^{\frac{5}{4}s}\leq |X|\leq \frac{1}{2}e^{\frac{5}{4}s}$.
By choosing a larger $t_2$
if necessary, we may also assume $|X|\geq 1$. Hence by \eqref{eq:assumption1xmiddle} and \eqref{eq:U2away0}, we have
\begin{align*}
    |\partial_xu(x,t)|=\frac{|\partial_XU(X,s)|}{\tau(t)-t}\leq \frac{1}{\tau(t)-t}|X|^{-\frac{4}{5}}\leq 2^\frac{4}{5}|x-x_*|^{-\frac{4}{5}}.
\end{align*}

Shock profile is a cusp: in fact, by \eqref{eq:U2away0} and $\eqref{eq:assumption1xmiddle}$, for $|X|\leq \frac{1}{2}e^{\frac{5}{4}s}$,
\begin{align*}
    \partial_XU(X,s)\sim (1+X^4)^{-\frac{1}{5}}.
\end{align*}
So for $x\neq x_*$ such that $|x-x_*|<\frac{1}{2}$, 
\begin{align*}
    |\partial_xu(x,t)|\sim |x-x_*|^{-\frac{4}{5}}\qquad\text{as }t\to T_*.
\end{align*}
This indicates that $u(\cdot,T_*)\in\mathcal{C}^\frac{1}{5}(\mathbb{R})$ and it has a cusp singularity (similar to the cusp of $|x|^\frac{1}{5}$ at $x=0$) at $(x_*,T_*)$.
\end{proof}

\begin{proof}[Proof of Corollary \ref{cor:codimension}]
First we note that we can replace $\leq$ by $<$ in the proof. Then we note that $M$, $\epsilon$ can be taken in an open set of values. For the initial data $u_0$ described by Section \ref{sec:initialdata},
\begin{gather*}
    u_0(0)=\kappa_0,\qquad u_0'(0)=-\frac{1}{\epsilon}=\min_xu_0'(x)=-\|u'\|_{L^\infty},\\
    \xi_0=0,\qquad u_0^{(4)}(0)=0,\qquad u_0^{(5)}(0)=120\epsilon^{-6},\qquad |u_0^{(6)}(0)|\leq \epsilon^{-\frac{25}{4}}.
\end{gather*}
We note that the minimum initial slope attaining at $x=0$ is not necessary (it is never used in the proof), as long as $u_0'(0)<0$ of order $\epsilon^{-1}$. For a suitable small perturbation $v_0$ of $u_0$, consider the Taylor expansion for $x\approx 0$
\begin{align*}
    v_0^{(4)}(x)=v_0^{(4)}(0)+\big(v_0^{(5)}(0)-120\epsilon^{-6}\big)x+120\epsilon^{-6}x+\frac{1}{2}v_0^{(6)}(0)x^2.
\end{align*}
If $v_0^{(6)}(0)\sim O(\epsilon^{-\frac{25}{4}})$, and $v_0^{(4)}(0)$ and $v_0^{(5)}-120\epsilon^{-6}$ are sufficiently small, we can find $|x_0|\ll 1$ such that $v_0^{(4)}(x_0)=0,\,v_0^{(5)}(0)>0$ with order $\epsilon^{-6}$ and $v_0'(x_0)<0$ with order $\epsilon^{-1}$. Hence, we can set
\begin{align*}
    \xi_0=x_0,\qquad\kappa_0=v_0(x_0),
\end{align*}
and apply the coordinate translation $x\mapsto x+x_0$. To match the 5th order derivative, apply the rescaling 
\begin{align*}
    \tilde{v}(x,t)=\bigg(\frac{120\epsilon^{-6}}{v_0^{(5)}(0)}\bigg)^{-\frac{1}{4}}v\bigg(\Big(\frac{120\epsilon^{-6}}{v_0^{(5)}(0)}\Big)^\frac{1}{4}x,t\bigg),
\end{align*}
so that we can find the corresponding $\widehat{V}_0$ according to \eqref{eq:initialdata}.
Finally, we see that $\widehat{U}_0$ satisfying the hypothesis in Section \ref{sec:initialdata}) can be taken from an open neighbourhood in $H^9(\mathbb{R})$ such that for every $\widehat{U}_0$ in this neighbourhood, there exist unique $\alpha,\beta\in\mathbb{R}$ such that the conclusion in Theorem \ref{thm:main} holds for initial data \eqref{eq:initialphysical}.
\end{proof}

\appendix
\section{Interpolation lemmas}
\begin{lemma}[Gagliardo-Nirenberg-Sobolev interpolation]
Let $f:\mathbb{R}^d\to\mathbb{R}$, and let $1\leq q,\,r\leq \infty$, $j,\,m\in \mathbb{N}$ (including 0) and $j/m\leq \alpha\leq 1$ be such that
\begin{align*}
\frac{1}{p}=\frac{j}{d}+\alpha(\frac{1}{r}-\frac{m}{d})+\frac{1-\alpha}{q},
\end{align*}
then
\begin{align*}
\|\partial^j f\|_{L^p}\lesssim \|\partial^mf\|_{L^r}^\alpha\|f\|_{L^q}^{1-\alpha},
\end{align*}
with two exceptions
\begin{enumerate}
\item If $j=0,\ mr<d$ and $q=\infty$, then we assume additionally that either $f$ tends to 0 at infinity or that $f\in L^{q'}$ for some $q'<\infty$.
\item If $1<r<\infty$ and $m-j-d/r\in\mathbb{N}$, then we also assume that $\alpha<1$. 
\end{enumerate}\label{lem:GNSinterpolation}
\end{lemma}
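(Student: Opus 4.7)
The Gagliardo-Nirenberg-Sobolev interpolation is a classical result; my plan is to recall the standard blueprint rather than reproduce every case. The proof naturally splits according to whether $mr < d$ (subcritical), $mr = d$ (critical), or $mr > d$ (supercritical Morrey regime), with the exceptional hypotheses isolating the degenerate endpoint cases.

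First, I would reduce the general $j \geq 1$ statement to the $j = 0$ statement. Given the inequality in the $j = 0$ form for an arbitrary function $g$ with exponents $m' = m-j$, $r' = r$, $q' = q$, applying it to $g := \partial^j f$ recovers the full statement. After this reduction, only the pure $j = 0$ interpolation
$$\|f\|_{L^p} \lesssim \|\partial^m f\|_{L^r}^{\alpha} \|f\|_{L^q}^{1-\alpha}, \qquad \frac{1}{p} = \alpha\Big(\frac{1}{r} - \frac{m}{d}\Big) + \frac{1-\alpha}{q},$$
remains to be proved.

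Second, in the subcritical regime $mr < d$, I would combine the Sobolev embedding $\dot{W}^{m,r}(\mathbb{R}^d) \hookrightarrow L^{r^*}(\mathbb{R}^d)$ with $r^* = dr/(d-mr)$ and the log-convexity of $L^p$ norms. Since the identity $1/p = \alpha/r^* + (1-\alpha)/q$ places $1/p$ on the segment from $1/q$ to $1/r^*$, Hölder's inequality yields $\|f\|_{L^p} \leq \|f\|_{L^{r^*}}^\alpha \|f\|_{L^q}^{1-\alpha}$, and then the Sobolev embedding on the first factor closes the estimate. In the supercritical case $mr > d$, I would instead invoke Morrey's inequality $\dot{W}^{m,r} \hookrightarrow \mathcal{C}^{0,\gamma}$ with $\gamma = m - d/r$, and use the supplementary decay/integrability hypothesis (the first exceptional assumption) to rule out a nonzero constant as the $L^\infty$ limit, so that $\|f\|_{L^\infty}$ is in fact controlled by the Morrey seminorm. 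The critical case $mr = d$ is handled either by an $\varepsilon$-regularization or by a direct BMO embedding; the second exceptional hypothesis, $\alpha < 1$ when $m - j - d/r \in \mathbb{N}$, excludes precisely the borderline where no $L^\infty$ control exists without a logarithmic correction.

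The main obstacle I anticipate is the bookkeeping around the exceptional cases: the subcritical inequality is an immediate Hölder-plus-Sobolev computation, but verifying that the scaling identity for $p$ is compatible with each regime, and that the Morrey embedding together with the integrability-at-infinity hypothesis truly delivers $L^\infty$ control (rather than just a Hölder seminorm), requires a careful argument at the endpoints. A cleaner unified proof uses Littlewood-Paley dyadic decomposition together with Bernstein's inequality, which avoids the case split entirely and handles the exceptional regimes transparently. For the applications in the body of the paper, however, only the benign range $d = 1$, $r = q = 2$, $\alpha = (n-1)/(m-1)$ with integer $n, m$ is used, where the classical Hölder/Sobolev proof above is entirely sufficient.
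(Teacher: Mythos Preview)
The paper does not prove this lemma; it is simply stated in the appendix as a classical result, so there is no proof in the paper to compare against. Your sketch of the $j=0$ case via Sobolev/Morrey embedding plus H\"older is the standard one and is fine.

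However, your reduction step from general $j$ to $j=0$ has a gap. Applying the $j=0$ inequality to $g:=\partial^j f$ with $m'=m-j$ yields
\[
\|\partial^j f\|_{L^p}\lesssim \|\partial^m f\|_{L^r}^{\alpha}\,\|\partial^j f\|_{L^q}^{1-\alpha},
\]
not $\|f\|_{L^q}^{1-\alpha}$ on the right. The reduction to $j=0$ is genuinely more involved: one typically first proves the elementary case $j=1$, $m=2$ (via the one-dimensional identity and integration by parts, or the Taylor-remainder trick) and then inducts, combining intermediate estimates to replace $\|\partial^j f\|_{L^q}$ by $\|f\|_{L^q}$. The Littlewood--Paley route you mention at the end does handle general $j$ directly and would close this gap; if you want to keep the elementary Sobolev/H\"older argument, you need to supply the induction.
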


\begin{lemma}[Sobolev interpolation]
As a special case, when $p=q=r=2$, we can make the constant to be 1:
\begin{align*}
\|\partial^jf\|_{L^2}\leq \|\partial^mf\|_{L^2}^\alpha\|f\|_{L^2}^{1-\alpha}
\end{align*}
where $\alpha=j/m$.\label{lem:Sinterpolation}
\end{lemma}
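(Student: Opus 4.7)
The plan is to prove the $L^2$ Sobolev interpolation by using the Fourier transform to reduce everything to a weighted $L^2$ inequality and then applying Hölder's inequality, which automatically gives constant $1$.

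First I would assume $f \in H^m(\mathbb{R}^d)$ (otherwise both sides can be interpreted in the obvious way; the inequality is trivial if the right-hand side is infinite). By Plancherel's theorem, one has
\begin{align*}
\|\partial^j f\|_{L^2}^2 = \int_{\mathbb{R}^d} |\xi|^{2j}\,|\widehat{f}(\xi)|^2\,d\xi,
\qquad
\|\partial^m f\|_{L^2}^2 = \int_{\mathbb{R}^d} |\xi|^{2m}\,|\widehat{f}(\xi)|^2\,d\xi,
\qquad
\|f\|_{L^2}^2 = \int_{\mathbb{R}^d} |\widehat{f}(\xi)|^2\,d\xi,
\end{align*}
where $|\partial^j f|$ is understood in the Fourier multiplier sense $|\xi|^j \widehat{f}$ (so the statement should be read for the homogeneous derivative of order $j$, which coincides with an inequality on all components $\partial^\beta f$ with $|\beta|=j$ in the multi-index setting up to the same constant).

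Next I would write $|\xi|^{2j} = \bigl(|\xi|^{2m}\bigr)^{\alpha}\cdot 1^{1-\alpha}$ with $\alpha = j/m \in [0,1]$, and split $|\widehat{f}(\xi)|^2 = |\widehat{f}(\xi)|^{2\alpha}\cdot |\widehat{f}(\xi)|^{2(1-\alpha)}$. Applying Hölder's inequality with conjugate exponents $1/\alpha$ and $1/(1-\alpha)$ yields
\begin{align*}
\int_{\mathbb{R}^d} |\xi|^{2j}|\widehat{f}(\xi)|^2\,d\xi
&= \int_{\mathbb{R}^d} \bigl(|\xi|^{2m}|\widehat{f}(\xi)|^2\bigr)^{\alpha}\cdot\bigl(|\widehat{f}(\xi)|^2\bigr)^{1-\alpha}\,d\xi \\
&\leq \Bigl(\int_{\mathbb{R}^d} |\xi|^{2m}|\widehat{f}(\xi)|^2\,d\xi\Bigr)^{\alpha}\Bigl(\int_{\mathbb{R}^d} |\widehat{f}(\xi)|^2\,d\xi\Bigr)^{1-\alpha}.
\end{align*}
Invoking Plancherel once more, the right-hand side equals $\|\partial^m f\|_{L^2}^{2\alpha}\|f\|_{L^2}^{2(1-\alpha)}$, and taking square roots produces the stated inequality with constant exactly $1$. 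The boundary cases $\alpha=0$ ($j=0$) and $\alpha=1$ ($j=m$) are trivial, so no further argument is needed there.

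There is essentially no obstacle: the only subtlety is interpreting $\partial^j$ consistently (as $|D|^j$, i.e.\ Fourier multiplier by $|\xi|^j$). For the mixed multi-index version that is used in the body of the paper (where $\partial_X^j$ denotes a genuine one-dimensional derivative of an $\mathbb{R}\to\mathbb{R}$ function), this distinction disappears since $|\xi|^j = |i\xi|^j$ up to a unimodular factor, and Plancherel still gives the identity $\|\partial_X^j f\|_{L^2}^2 = \int |\xi|^{2j}|\widehat{f}|^2\,d\xi$. Hence the same argument yields the bound with constant $1$, which is exactly the form invoked throughout the bootstrap estimates.
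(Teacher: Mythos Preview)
Your proof is correct and is the standard argument for obtaining the sharp constant $1$ in the $L^2$ case. The paper itself does not supply a proof of this lemma; it is stated in the appendix without justification and simply invoked throughout the bootstrap. Your Plancherel-plus-H\"older argument is exactly the canonical one, and your remark that in one dimension the distinction between $|D|^j$ and $\partial_X^j$ disappears is precisely what is needed for the applications in the body of the paper.
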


\end{document}